\newtheorem{theorem}[subsubsection]{Theorem}
\newtheorem{proposition}[subsubsection]{Proposition}
\newtheorem{lemma}[subsubsection]{Lemma}
\newtheorem{corollary}[subsubsection]{Corollary}
\newtheorem{conjecture}[subsubsection]{Conjecture}
\newtheorem{notation}[subsubsection]{Notation}
\theoremstyle{definition}
\newtheorem{definition}[subsubsection]{Definition}
\theoremstyle{remark}
\newtheorem{remark}[subsubsection]{Remark}
\newtheorem{example}[subsubsection]{Example}
\newtheorem{recall}[subsubsection]{Recall}
\numberwithin{equation}{subsubsection}
\DeclareMathAlphabet{\mathbbold}{U}{bbold}{m}{n}
\begin{document}
\title[K\"unneth formulas and additivity of traces]{K\"unneth formulas for  motives and additivity of traces}
\author{Fangzhou Jin}
\address{Fakult\"at f\"ur Mathematik,
Universit\"at Duisburg-Essen,
Thea-Leymann-Strasse 9,
45127 Essen,
Germany}
\email{fangzhou.jin@uni-due.de}

\author{Enlin Yang}
\address{School of Mathematical Sciences, Peking University, No.5 Yiheyuan Road Haidian District, Beijing, 100871, P. R. China}
\email{yangenlin@math.pku.edu.cn}

\date{\today}

\subjclass[2010]{14F42, 19E15}

\maketitle

\begin{abstract}

We prove several K\"unneth formulas in motivic homotopy categories and deduce a Verdier pairing in these categories following SGA5, which leads to the characteristic class of a constructible motive, an invariant closely related to the Euler-Poincar\'e characteristic. We prove an additivity property of the Verdier pairing using the language of derivators, following the approach of May and Groth-Ponto-Shulman; using such a result we show that in the presence of a Chow weight structure, the characteristic class for all constructible motives is uniquely characterized by proper covariance, additivity along distinguished triangles, refined Gysin morphisms and Euler classes. In the relative setting, we prove the relative K\"unneth formulas under some transversality conditions, and define the relative characteristic class.

\end{abstract}

\tableofcontents

\noindent

\section{Introduction}
\subsection{The Euler-Poincar\'e characteristic}
\subsubsection{}
The \emph{Euler-Poincar\'e characteristic} (or \emph{Euler characteristic}) is an important invariant of topological spaces in algebraic topology which gives rise to various generalizations in geometry, homological algebra and category theory. In topology, the Euler characteristic of a finite CW-complex is the alternating sum of the dimensions of its singular homology groups. In algebraic geometry, this notion is generalized for \'etale sheaves as follows: let $X$ be a separated scheme of finite type over a perfect field $k$ of characteristic $p$; if $\ell$ is a prime different from $p$ and $\mathcal{F}$ is a constructible complex of $\ell$-adic \'etale sheaves over $X$, then the Euler characteristic (with compact support)
\begin{equation}
\label{eq:EP}
\chi_c(X_{\bar{k}},\mathcal{F})=\sum_{i\geqslant0}(-1)^i\cdot\operatorname{dim} H^i_c(X_{\bar{k}},\mathcal{F})
\end{equation}
is a well-defined integer, by the finiteness theorems in \cite{SGA4.5}. 

\subsubsection{}
Morel and Voevodsky introduced motivic homotopy theory (\cite{MV}) where one study cohomology theories over algebraic varieties by means of the homotopy theory relative to the affine line $\mathbb{A}^1$, leading to several triangulated categories of motives: the stable motivic homotopy category $\mathbf{SH}$ classifies cohomology theories which satisfy $\mathbb{A}^1$-homotopy invariance, and Voevodsky's category of motivic complexes $\mathbf{DM}$ (\cite{VSF}) computes motivic cohomology. These categories are built in a style very close to the derived category of $\ell$-adic \'etale sheaves: the work of Ayoub (\cite{Ayo}) and Cisinski-D\'eglise (\cite{CD1}) establish a \emph{six functors formalism} similar to the powerful machinery in \cite{SGA4}, and the \'etale realization functor (\cite{Ayo2}, \cite{CD3}) gives a map from motives to the derived category \'etale sheaves which preserves the six functors, generalizing the cycle class map in \'etale cohomology \cite[Cycle]{SGA4.5}.

\subsubsection{}
A natural question arises to define the Euler characteristic of a motive. However, for constructible objects in the categories of motives mentioned above, the very definition with~\eqref{eq:EP} apparently does not work, since motivic cohomology groups, or equivalently Bloch's higher Chow groups (\cite{Blo}), are in general infinite-dimensional as vector spaces. Instead, there is a more categorical approach using the trace of a morphism: recall that if $\mathcal{C}$ is a symmetric monoidal category with unit $\mathbbold{1}$, $M$ is a \emph{(strongly) dualizable} object in $\mathcal{C}$ (which corresponds to \emph{locally constant} or \emph{smooth} sheaves in the \'etale setting) with dual $M^\vee$ and $u:M\xrightarrow{}M$ is an endomorphism of $M$, then the \emph{trace} of $u$ is the map
\begin{equation}
\label{eq:trace_def}
Tr(u):
\mathbbold{1}
\xrightarrow{\eta}
M^\vee\otimes M
\xrightarrow{id\otimes u}
M^\vee\otimes M
\simeq
M\otimes M^\vee
\xrightarrow{\epsilon}
\mathbbold{1}
\end{equation}
considered as an endomorphism of the unit $\mathbbold{1}$, where $\eta$ and $\epsilon$ are unit and counits of the duality. The \emph{Euler characteristic} of $M$ is defined as the trace of the identity map of $M$. If $k$ is a field, in the stable motivic homotopy category $\mathbf{SH}(k)$ the endomorphism ring of the unit $\mathbbold{1}_k$ is identified as 
\begin{equation}
End_{\mathbf{SH}(k)}(\mathbbold{1}_k)
\simeq
GW(k)
\end{equation}
where $GW(k)$ is the Grothendieck-Witt ring of $k$, that is, the Grothendieck group of non-degenerate quadratic forms over $k$. Therefore the Euler characteristic of motives in this case is an invariant in terms of quadratic forms, refining the usual integer-valued Euler characteristic.

\subsubsection{}
For example, if $f:X\to Y$ is a smooth and proper morphism, then the motive $M_Y(X)=f_\#\mathbbold{1}_X$ is dualizable (\cite{Hoy}, \cite{Lev}); the \emph{motivic Gauss-Bonnet formula} states that the Euler characteristic of $M_Y(X)$ can be computed as the degree of the (motivic) \emph{Euler class} of the tangent bundle of $f$ (\cite[Theorem 1]{Lev}, \cite[Theorem 4.6.1]{DJK}), and is a refinement of the classical Gauss-Bonnet formula (\cite[VII 4.9]{SGA5}). There are other examples of dualizable motives (\cite{Lev2}), and more generally if $k$ is a perfect field which has resolution of singularities, then every constructible object in $\mathbf{SH}(k)$ is dualizable.

\subsubsection{}
The \emph{Lefschetz trace formula} (\cite[Cycle]{SGA4.5}) plays an important role in Grothendieck's approach to the Weil conjectures via a cohomological interpretation of the $L$-functions (\cite[Rapport]{SGA4.5}); in \cite[III]{SGA5}, this formula for constant coefficients is generalized to a more general form, called the \emph{Lefschetz-Verdier formula}. In order to express this last formula, a very general cohomological pairing, called the \emph{Verdier pairing}, is constructed from several K\"unneth type formulas for \'etale sheaves and a delicate analysis on the six functors. The idea behind this construction is the formalism of \emph{Grothendieck-Verdier local duality} (\cite[4.4.23]{CD1}) which, in the setting of the six functors, gives rise to a generalized trace map (see~\eqref{eq:cc_trace_intro} below), in the way that the usual formalism of (strong) duality produces the trace map~\eqref{eq:trace_def}; the construction works not only for dualizable objects, but also for all constructible ones, which can be considered as \emph{weakly dualizable}. If $X$ is a scheme, $\mathcal{F}$ is a constructible complex of $\ell$-adic \'etale sheaves over $X$ and $u$ is an endomorphism of $\mathcal{F}$, this generalized trace for $u$ is an element in the group of global sections of the \emph{dualizing complex} over $X$, called the \emph{characteristic class} of $u$, or the characteristic class of $\mathcal{F}$ when $u$ is the identity map (\cite[Definition 2.1.1]{AS}).

\subsubsection{}
\label{num:proper_ec}
The characteristic class is closely related to the Euler characteristic: when $X$ is the spectrum of the base field $k$, the characteristic class agrees with the Euler characteristic; more generally, if $f:X\to\operatorname{Spec}(k)$ is a proper morphism, the Lefschetz-Verdier formula implies that the degree of the characteristic class of $\mathcal{F}$ agrees with the Euler characteristic of $Rf_*\mathcal{F}$.

\subsubsection{}
The main goal of this paper is to define the characteristic class for constructible motives and study its properties. Given the six functors formalism in the motivic context, analogous to the classical one, we would like to use the very definition of \cite[III]{SGA5} to define the Verdier pairing. For this, a major input is the proof of some K\"unneth formulas for motives.

\subsection{K\"unneth formulas for motives}
\subsubsection{}
In Section~\ref{chap_kun}, we prove several general K\"unneth formulas for motives that would lead to the Verdier pairing, summarized as follows:
\begin{theorem}[see Theorem~\ref{th:kun}]\label{introThmKun}
Let $f_1:X_1\to Y_1$ and $f_2:X_2\to Y_2$ be two morphisms between separated schemes of finite type over a field $k$, with the following commutative diagram
\begin{align}
\begin{gathered}
  \xymatrix{
   X_1 \ar[d]_-{f_1} & X_1\times_k X_2 \ar[l]_-{p_1} \ar[r]^-{p_2} \ar[d]_-{f} & X_2 \ar[d]^-{f_2}\\
   Y_1 & Y_1\times_k Y_2 \ar[l]^-{p_1'} \ar[r]_-{p_2'} & Y_2.
  }
  \end{gathered}
\end{align}
Let $\mathbf{T}_c$ be the category of constructible motivic spectra $\mathbf{SH}_c$ or the category of constructible $cdh$-motives $\mathbf{DM}_{cdh,c}$ (more generally, $\mathbf{T}_c$ can be the subcategory of constructible objects in a \emph{motivic triangulated category}, see Definition~\ref{def:mot_tri_cat}).

We assume resolution of singularities (by blowups or by alterations, see the condition~\ref{resol} in \ref{par:devissage} below). For $i=1,2$, consider objects $L_i\in\mathbf{T}_c(X_i)$ and $M_i,N_i\in\mathbf{T}_c(Y_i)$. Then there are canonical isomorphisms
\begin{align}
p'^*_1f_{1*}L_1\otimes p'^*_2f_{2*}L_2
&\to
f_*(p^*_1L_1\otimes p^*_2L_2);\\
\label{intro:eq:f2}p'^*_1f_{1!}L_1\otimes p'^*_2f_{2!}L_2
&\to
f_!(p^*_1L_1\otimes p^*_2L_2);\\
\label{intro:KuHom00}
p^*_1f^!_1M_1\otimes p^*_2f^!_2M_2
&\to
f^!(p'^*_1M_1\otimes p'^*_2M_2);\\
\label{intro:KuHom11}p_1'^*\underline{Hom}(M_1, N_1)\otimes p_2'^*\underline{Hom}(M_2, N_2)
&\to
\underline{Hom}(p_1'^*M_1\otimes p_2'^*M_2, p_1'^*N_1\otimes p_2'^*N_2).
\end{align}
\end{theorem}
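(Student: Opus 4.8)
\textbf{Step 1: the $f_!$-formula \eqref{intro:eq:f2}.} The plan is to establish the four isomorphisms in the order \eqref{intro:eq:f2}, then \eqref{intro:KuHom11} and \eqref{intro:KuHom00}, then the $f_*$-formula, deducing the last three from \eqref{intro:eq:f2} by Grothendieck--Verdier local duality on constructible motives. I expect \eqref{intro:eq:f2} itself to be purely formal: it holds for all $L_i$ and all separated morphisms of finite type, requiring neither resolution of singularities nor constructibility, and uses only the base change isomorphism for $f_!$ and the projection formula $g_!(A\otimes g^*B)\simeq g_!A\otimes B$, both of which are part of the axioms of a motivic triangulated category (Definition~\ref{def:mot_tri_cat}). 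Writing $L_1\boxtimes L_2:=p_1^*L_1\otimes p_2^*L_2$, one factors $f=(\operatorname{id}_{Y_1}\times f_2)\circ(f_1\times\operatorname{id}_{X_2})$, so that $f$ is a composite of two morphisms each obtained from $f_1$ or $f_2$ by base change along a projection; applying, for each factor, the $f_!$-base change isomorphism together with the projection formula then yields $f_!(L_1\boxtimes L_2)\simeq p_1'^*f_{1!}L_1\otimes p_2'^*f_{2!}L_2$, and a short verification shows the result is independent of the chosen factorization.

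\textbf{Step 2: local duality on $\mathbf{T}_c$.} This is where resolution of singularities (condition~\ref{resol} of~\ref{par:devissage}) enters, and I expect it to be the crux of the proof. Let $\pi_X\colon X\to\operatorname{Spec}(k)$ be the structure morphism, $\omega_X:=\pi_X^!\mathbbold{1}_k$ and $D_X:=\underline{Hom}(-,\omega_X)$. The key input is that every constructible motive is \emph{reflexive}, i.e.\ the biduality map $M\to D_XD_XM$ is an isomorphism for $M\in\mathbf{T}_c(X)$. Granting this, the formal identities of the six functors formalism --- among them $\underline{Hom}(f_!A,B)\simeq f_*\underline{Hom}(A,f^!B)$, $f^!\underline{Hom}(A,B)\simeq\underline{Hom}(f^*A,f^!B)$ and $f^!\omega_Y\simeq\omega_X$ --- yield, on constructible objects, the exchange isomorphisms $f_*\simeq D_Yf_!D_X$, $f^!\simeq D_Xf^*D_Y$ and $\underline{Hom}(A,B)\simeq D_X(A\otimes D_XB)$. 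Reflexivity itself I would prove by the d\'evissage of~\ref{par:devissage}: the class of $M$ for which biduality holds is thick, and resolution of singularities shows that $\mathbf{T}_c(X)$ is generated, as a thick subcategory, by objects $g_!\mathbbold{1}_W(n)[m]$ with $g\colon W\to X$ proper and $W$ smooth over $k$; for such generators biduality is checked directly, using that $g$ is proper and that $\omega_W$ is $\otimes$-invertible (purity). I will cite the corresponding statement from the earlier section rather than reprove it; with resolution by alterations one needs in addition rational coefficients and a trace argument to descend along the alteration.

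\textbf{Step 3: the $\underline{Hom}$- and $f^!$-formulas \eqref{intro:KuHom11}, \eqref{intro:KuHom00}.} First I construct the maps. For \eqref{intro:KuHom11} the canonical map comes, by tensor--hom adjunction, from the evaluations $p_i'^*\underline{Hom}(M_i,N_i)\otimes p_i'^*M_i\to p_i'^*N_i$; for \eqref{intro:KuHom00} it comes, by the adjunction $f_!\dashv f^!$, from
\[
f_!\bigl(p_1^*f_1^!M_1\otimes p_2^*f_2^!M_2\bigr)\ \xrightarrow{\ \sim\ }\ p_1'^*f_{1!}f_1^!M_1\otimes p_2'^*f_{2!}f_2^!M_2\ \longrightarrow\ p_1'^*M_1\otimes p_2'^*M_2,
\]
where the first arrow is \eqref{intro:eq:f2} and the second is given by the counits $f_{i!}f_i^!\to\operatorname{id}$. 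To prove these are isomorphisms I would pass to duals: by Step 2 one has $\underline{Hom}(A,B)\simeq D_X(A\otimes D_XB)$ and $f^!\simeq D_Xf^*D_Y$ on constructible objects, and $f^*$ is monoidal with $f^*p_i'^*\simeq p_i^*f_i^*$; together with the external-product duality isomorphism $D_{X_1\times_kX_2}(L_1\boxtimes L_2)\simeq D_{X_1}L_1\boxtimes D_{X_2}L_2$ this reduces \eqref{intro:KuHom11} and \eqref{intro:KuHom00} to statements involving only $f^*$, $\otimes$ and \eqref{intro:eq:f2}, which are then settled by the d\'evissage of~\ref{par:devissage} (on the generators, purity and base change finish the computation). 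The external-product duality isomorphism in turn follows from \eqref{intro:KuHom11} (applied with the $Y_i$ replaced by the $X_i$) and the identification $\omega_{X_1\times_kX_2}\simeq\omega_{X_1}\boxtimes\omega_{X_2}$, which comes from the base change isomorphisms for $f^!$; since these statements feed into one another I would organize them as a single simultaneous induction to avoid circularity.

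\textbf{Step 4: the $f_*$-formula, and the main obstacle.} On $\mathbf{T}_c$ one has $f_*\simeq D_Yf_!D_X$ (Step 2), and combining this with the external-product duality isomorphism, with Step 1 for the objects $D_{X_i}L_i$, and with biduality $D^2\simeq\operatorname{id}$, one gets
\begin{multline*}
f_*(L_1\boxtimes L_2)\simeq D_Yf_!\bigl(D_{X_1}L_1\boxtimes D_{X_2}L_2\bigr)\\
\simeq D_Y\bigl(p_1'^*f_{1!}D_{X_1}L_1\boxtimes p_2'^*f_{2!}D_{X_2}L_2\bigr)\simeq p_1'^*f_{1*}L_1\otimes p_2'^*f_{2*}L_2,
\end{multline*}
using in the last step that $D_Y$ carries external products to external products and intertwines $f_{i!}$ with $f_{i*}$. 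The main obstacle of the whole argument is thus Step 2, i.e.\ showing that all constructible motives are reflexive: this is the only point where resolution of singularities is genuinely needed, and it requires a careful d\'evissage --- tracking Tate twists and shifts, verifying stability of the relevant class under the six operations and under localization triangles, and, in the alterations variant, carrying out the rational descent. A secondary, purely bookkeeping, matter is to check that the maps produced by adjunction in Step 3 coincide with the ``naive'' exchange maps, so that the isomorphisms obtained are the canonical ones asserted in the statement.
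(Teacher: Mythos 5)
Your proposal is correct in outline but organizes the proof quite differently from the paper. You and the paper agree on \eqref{intro:eq:f2}: it is formal, by factoring $f=(\mathrm{id}\times f_2)\circ(f_1\times\mathrm{id})$ and using proper base change for $f_!$ plus the projection formula (Lemma~\ref{Kunneth_formula}). After that the routes diverge. The paper never invokes biduality: it proves the $f_*$-formula via \emph{universal strong local acyclicity}, established by d\'evissage (Lemma~\ref{ula}, Proposition~\ref{SD_ula}) and then fed into the classical argument of SGA5 III 1.6.4; it then proves the $f^!$- and $\underline{Hom}$-formulas by reducing, via Lemma~\ref{Ex!*_comp} and weak/strong d\'evissage, to generators $p_*\mathbbold{1}_W$ with $p$ proper, and finishing with explicit exchange-morphism diagram chases whose non-formal inputs are $\mathcal{S}_k$-base change and the invertibility of $Ex(\Delta^{*!})$ for morphisms in $\mathcal{S}_k$ (Lemma~\ref{lem:Ex*!_iso}). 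You instead import Grothendieck--Verdier biduality for constructible motives (which the paper only cites later, for the dualizing-object property), prove the external-product duality K\"unneth $\mathbb{D}(L_1\boxtimes_k L_2)\simeq\mathbb{D}(L_1)\boxtimes_k\mathbb{D}(L_2)$ by d\'evissage, purity on smooth generators and the $f_!$-formula, and then deduce the remaining three isomorphisms formally from $f_*\simeq\mathbb{D}f_!\mathbb{D}$, $f^!\simeq\mathbb{D}f^*\mathbb{D}$ and $\underline{Hom}(A,B)\simeq\mathbb{D}(A\otimes\mathbb{D}B)$. Your route is arguably more conceptual once local duality is granted, and it does prove the stated theorem; the paper's route buys two things your argument gives up: the $f_*$-formula for arbitrary (not necessarily constructible) $L_i$ and, more importantly, a formulation via local acyclicity and transversality over a general base, which is exactly what is reused in Section~\ref{sec:KFOGB}; it also produces the specific exchange maps that reappear verbatim in the construction of the Verdier pairing, so the ``bookkeeping'' you defer is done once and for all there.

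Two points to tighten. First, your claim that $\omega_{X_1\times_kX_2}\simeq\omega_{X_1}\boxtimes_k\omega_{X_2}$ ``comes from the base change isomorphisms for $f^!$'' is not right as stated: for singular $X_i$ this identification is itself an instance of \eqref{intro:KuHom00} (take $f_i$ the structure morphisms), and the relevant exchange map $p_2^*\pi_{X_2}^!\to p_1^!\pi_{X_1}^*$ is invertible only after one has $\mathcal{S}_k$-base change, i.e.\ the same non-formal d\'evissage input as in the paper (or purity, which is only available on smooth generators). Your ``simultaneous induction'' caveat can absorb this --- in the d\'evissage you only ever need purity for smooth $W_1\times_kW_2$ and the formal identity $\mathbb{D}(g_!\mathbbold{1}_W)\simeq g_*\omega_W$ --- but the statement should be phrased that way rather than as a formal consequence of base change. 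Second, in characteristic $p$ the generators are primitive Chow motives over finite purely inseparable extensions, so products like $W_1\times_kW_2$ need the $\mathbb{Z}[1/p]$-linearity and $\mathcal{Rad}$-base change of condition~\ref{resol2} (as in \cite{EK}), not a trace argument with rational coefficients; this is a minor but real adjustment to your alterations variant.
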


\subsubsection{}
The proof of Theorem~\ref{th:kun} is quite different from the classical case: the main ingredient of the proof is the strong devissage property (Definition~\ref{def:dev}), which says that under resolution of singularities, the category of constructible motives is generated by (relative) Chow motives as a thick subcategory; we therefore reduce to the case of Chow motives, in which case a careful manipulation of the functors gives the desired isomorphisms.
The isomorphism \eqref{intro:eq:f2} involving $f_!$ is quite formal, and holds more generally when we replace
the base field by any base scheme, while the other ones fail to hold in general. We will see later in Section \ref{sec:KFOGB} that under some assumptions they also hold in the relative case.

\subsection{The Verdier pairing and the characteristic class}

\subsubsection{}\label{subsubsec:introKu}
In Section~\ref{section:pairing}, 
we use the K\"unneth formulas to define the Verdier pairing, following \cite[III]{SGA5}. 
Let $X_1$ and $X_2$ be two separated schemes of finite type over $k$. 
Let $c:C\to X_1\times_kX_2$ and 
$d:D\to X_1\times_kX_2$ be two morphisms. 
We denote by $E=C\times_{X_1\times_kX_2}D$. 
For $i=1,2$, we denote by $p_i:X_1\times_k X_2\to X_i$ the projections, 
 $c_i=p_i\circ c:C\to X_i$, $d_i=p_i\circ d:D\to X_i$ and let $L_i\in\mathbf{T}_c(X_i)$. 
Then given two maps $u:c_1^*L_1\to c_2^!L_2$ and $v:d_2^*L_2\to d_1^!L_1$, 
the Verdier pairing $\langle u,v\rangle$ is an element of the bivariant group (or Borel-Moore theory group) $H_0(E/k)$ (see Definition~\ref{def:cc}) seen as a map
\begin{align}
\langle u,v\rangle:\mathbbold{1}_E\to\mathcal{K}_E
\end{align}
where $\mathcal{K}_E=\mathbb{D}(\mathbbold{1}_E)$ is the \emph{dualizing object}
(Definition~\ref{def:verdier_pairing}). The Lefschetz-Verdier formula (Proposition~\ref{proper_pf}) states that this pairing is compatible with proper direct images. 
In Proposition~\ref{prop:bil_tr} we show that this pairing can always be reduced to a generalized trace map, which we explicitly identify in Proposition~\ref{prop:pair_trace}. 

\subsubsection{}
Let $X$ be a scheme, $M$ be a motive over $X$ and $u:M\to M$ be an endomorphism of $M$.
We define the \emph{characteristic class}
 $C_X(M,u):=\langle u,1_M\rangle$ as a particular case of the Verdier pairing (Definition~\ref{def:cc}).  Explicitly, the characteristic class is the composition
 \begin{align}
 \label{eq:cc_trace_intro}
\mathbbold{1}_X
\xrightarrow{u}
\underline{Hom}(M,M)
\xrightarrow{}
\mathbb{D}(M)\otimes M
\simeq
M\otimes\mathbb{D}(M)
\xrightarrow{\epsilon_M}
\mathcal{K}_X
\end{align}
where the second map is deduced from the K\"unneth formulas.
We denote $C_X(M)=C_X(M,1_M)$. The bivariant group $H_0(X/k)$ in which it lives can be computed in many cases:
\begin{itemize}
\item If $M$ is a constructible $cdh$-motive in $\mathbf{DM}_{cdh,c}(X,\mathbb{Z}[1/p])$, the characteristic class $C_X(M)$ is a $0$-cycle in the Chow group $CH_0(X)[1/p]=CH_0(X)\otimes_{\mathbb{Z}}\mathbb{Z}[1/p]$ of $X$ up to $p$-torsion.

\item If $M$ is a constructible element in the homotopy category of $\mathbf{KGL}$-modules over $X$, the characteristic class $C_X(M)$ is an element in the $0$-th algebraic $G$-theory group $G_0(X)$.

\item If $M$ is a constructible motivic spectrum in $\mathbf{SH}_c(X)$ and when we apply the $\mathbb{A}^1$-regulator map with values in the Milnor-Witt spectrum (\cite[Example 4.4.6]{DJK}), then the Milnor-Witt-valued characteristic class $C_X^{MW}(M)$ is an element in the Chow-Witt group $\widetilde{CH}_0(X)$.
\end{itemize}
In other words, the characteristic class associates to every constructible motive a concrete object, which is realized as either a $0$-cycle, a formal sum of coherent sheaves or a Milnor-Witt $0$-cycle. It lifts the $\ell$-adic characteristic class to the cycle-theoretic level, therefore giving an illustration of the general philosophy of mixed motives.

\subsubsection{} 
The results in Section~\ref{section:pairing} are rather transcriptions of classical results in our context, but will be important for the next sections.
A particular case of the construction is already given in \cite{Ols}. For $h$-motives in $\mathbf{DM}_{h}$, the characteristic class is also defined independently in \cite{Cis}.
Note that since the \'etale realization functor is compatible with the six functors, our constructions are compatible with the ones in \cite{AS} and \cite[III]{SGA5}. 

\subsection{Additivity of traces}

\subsubsection{}
Given a distinguished triangle $L\to M\to N\to L[1]$ of motives, one can naturally ask about the relations between the characteristic classes of $L$, $M$ and $N$.
It is well known that for monoidal triangulated categories, the trace map fails to be additive along distinguished triangles in general (\cite{Fer}). This failure may be explained by the defect of the axioms of a triangulated category, where the cone of a morphism exists uniquely only up to isomorphism but not up to \emph{unique} isomorphism; more concretely, a commutative diagram between distinguished triangles in the derived category does not always reflect a commutative diagram in the category of complexes, which would mean to be ``truly commutative''. Behind such a phenomena lies the idea of higher category theory as illustrated by the vast theory of $(\infty,1)$-categories (\cite{HTT}).

\subsubsection{}
A landmarking breakthrough in this direction is made by \cite{May}, where it is shown that the trace map is additive along distinguished triangles for triangulated categories satisfying some extra axioms that arise naturally from topology; with the same spirit, the additivity of traces is generalized to stable derivators in \cite{GPS}, where it is shown that the additivity holds for an endomorphism of distinguished triangles if the diagram commutes in the strong sense of derivators. A derivator, in some sense, lies between a $1$-category and an $(\infty,1)$-category, in which one can define left and right homotopy Kan extensions using only the $1$-categorical language, and which carries enough information to characterize homotopy limits and colimits by $1$-categorical universal properties. In particular the axioms of a stable derivator produce functorial cone objects, fixing the problem above for triangulated categories.

\subsubsection{}
We use a very similar approach for the generalized trace map: in Section~\ref{section:additivity}, we prove the additivity of the characteristic class using the language of derivators in the motivic setting (\cite[Section 2.4.5]{Ayo}). 
Using the same notations as \ref{subsubsec:introKu}, the main result is the following additivity for the Verdier pairing:
\begin{theorem}[see Theorem~\ref{th:add_trace}]
Let $\mathcal{T}_c$ be a constructible motivic derivator (see Definition~\ref{def:cons_mot_der}) whose underlying motivic triangulated category $\mathbf{T}$ satisfies resolution of singularities (see condition~\ref{resol} in \ref{par:devissage}). For $i\in\{1,2\}$, let
\begin{align}
\begin{gathered}
  \xymatrix{
    L_i \ar^-{}[r] \ar_-{}[d] \ar@{}[rd]|{\Gamma_i} & M_i \ar^-{}[d]\\
    \ast \ar_-{}[r] & N_i
  }
\end{gathered}
\end{align}
be a coherent biCartesian square in $\mathcal{T}_c(X_i,\Box)$. Let $f:c_1^*\Gamma_1\to c_2^!\Gamma_2$ and $g:d_2^*\Gamma_2\to d_1^!\Gamma_1$ be morphisms of coherent squares in $\mathcal{T}_c(C,\Box)$ and $\mathcal{T}_c(D,\Box)$. 
Then the Verdier pairing satisfies
\begin{align}
\langle f_M ,g_M \rangle=\langle f_L ,g_L \rangle+\langle f_N ,g_N \rangle
\end{align}
where $f_M:c_1^*M_1\to c_2^!M_2$ is the restriction of $f$, and similarly for the other maps.
\end{theorem}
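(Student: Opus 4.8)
The plan is to transpose the additivity-of-traces argument of \cite{May} and \cite{GPS}, from the setting of a rigid closed symmetric monoidal stable derivator to the Grothendieck--Verdier (six functors) setting provided by the constructible motivic derivator $\mathcal{T}_c$; the entire argument is carried out at the level of the derivator, so that \emph{coherent} biCartesian squares and morphisms between them (and not merely levelwise compatible families in a triangulated category) are available at every step.

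First I would reduce the general Verdier pairing to a generalized trace. By Proposition~\ref{prop:bil_tr} the pairing $\langle u,v\rangle$ attached to $u:c_1^*L_1\to c_2^!L_2$ and $v:d_2^*L_2\to d_1^!L_1$ is identified with the generalized trace, over $E$, of a single endomorphism $\Phi(u,v)$ of a constructible object $P(u,v)\in\mathbf{T}_c(E)$ obtained from $(L_1,L_2,c,d,u,v)$ purely by composing the six operations $c_i^{*},c_i^{!},d_i^{*},d_i^{!}$, tensor products, and the K\"unneth isomorphisms of Theorem~\ref{introThmKun}, the trace itself being identified explicitly in Proposition~\ref{prop:pair_trace}. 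Every functor occurring in this recipe is an exact functor of stable derivators, hence preserves coherent biCartesian squares and morphisms of coherent squares; applied to $\Gamma_1,\Gamma_2$ and to $f,g$, the recipe therefore produces a coherent biCartesian square $\widetilde\Gamma$ in $\mathcal{T}_c(E,\Box)$ together with a coherent endomorphism $\widetilde\Phi:\widetilde\Gamma\to\widetilde\Gamma$ whose restrictions $\widetilde\Phi_L$, $\widetilde\Phi_M$, $\widetilde\Phi_N$ to the three non-zero vertices compute $\langle f_L,g_L\rangle$, $\langle f_M,g_M\rangle$, $\langle f_N,g_N\rangle$ respectively. It thus suffices to prove the following self-contained statement: for a coherent biCartesian square in $\mathcal{T}_c(Z,\Box)$ over a scheme $Z$, with coherent endomorphism $\widetilde\Phi$, the generalized traces at the three non-zero vertices satisfy $\operatorname{tr}(\widetilde\Phi_M)=\operatorname{tr}(\widetilde\Phi_L)+\operatorname{tr}(\widetilde\Phi_N)$.

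Second, I would run the \cite{GPS} argument for this statement. Writing the generalized trace of $\phi:P\to P$ as the explicit composite \eqref{eq:cc_trace_intro}, $\mathbbold{1}_Z\xrightarrow{\phi}\underline{Hom}(P,P)\xrightarrow{\sim}\mathbb{D}(P)\otimes P\xrightarrow{\epsilon_P}\mathcal{K}_Z$ with the middle isomorphism supplied by the K\"unneth formula \eqref{intro:KuHom11}, one assembles the three such composites, for the three non-zero vertices of $\widetilde\Gamma$, into a single coherent diagram over $Z$ indexed by a poset refining $\Box$, which simultaneously records the cofiber sequence $P_L\to P_M\to P_N$, its Verdier dual $\mathbb{D}(P_N)\to\mathbb{D}(P_M)\to\mathbb{D}(P_L)$, their pushout-product, and the coherent endomorphism $\widetilde\Phi$. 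The point is that $\mathbb{D}=\underline{Hom}(-,\mathcal{K}_Z)$ and $-\otimes-$ are exact at the level of derivators, so that $\mathbb{D}(\widetilde\Gamma)$ is again biCartesian and the pushout-product of $\widetilde\Gamma$ with $\mathbb{D}(\widetilde\Gamma)$ is a coherent cube of the expected shape. The additivity then follows from the two structural inputs of \cite{GPS}: the recognition criterion characterizing biCartesian squares in a stable derivator, which identifies the connecting maps in this cube with those of the cofiber sequence, and the fact that the shift negates generalized traces, $\operatorname{tr}(\phi[1])=-\operatorname{tr}(\phi)$ (a consequence of the compatibility of $\mathbb{D}$, $\otimes$ and the evaluation $\epsilon_P$ with the triangulated shift). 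These combine, exactly as in \cite{GPS}, to reorganize the total-cofiber contribution of the cube into the plain sum $\operatorname{tr}(\widetilde\Phi_M)=\operatorname{tr}(\widetilde\Phi_L)+\operatorname{tr}(\widetilde\Phi_N)$; together with the first step this yields $\langle f_M,g_M\rangle=\langle f_L,g_L\rangle+\langle f_N,g_N\rangle$.

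The main obstacle I expect is not any single computation but the coherence bookkeeping in the second step: in the rigid case of \cite{GPS} the duality data $(\eta,\epsilon)$ are part of the symmetric monoidal structure, whereas here they are replaced by the K\"unneth isomorphisms of Theorem~\ref{introThmKun} together with the units and counits of the $(f^*,f_*)$ and $(f_!,f^!)$ adjunctions, and one must check that all of these fit together compatibly enough --- and, crucially, at the level of the derivator $\mathcal{T}_c$ --- for the string-diagram manipulations of \cite{GPS} to go through verbatim. A smaller point is the promotion of Proposition~\ref{prop:bil_tr} to a statement functorial in coherent diagrams, which is immediate once one observes that the object and endomorphism it produces are obtained from the input by a fixed composite of six operations and K\"unneth isomorphisms, each compatible with the derivator structure.
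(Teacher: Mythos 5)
Your proposal is correct and follows essentially the same route as the paper: reduce the two-entry pairing to a one-entry generalized trace via Proposition~\ref{prop:bil_tr}, lifted to coherent squares exactly as in Lemma~\ref{lem:comp_corr_sq}, and then rerun the May--Groth--Ponto--Shulman additivity argument at the derivator level with the local duality $\mathbb{D}=\underline{Hom}(-,\mathcal{K})$ in place of strong duality. The "coherence bookkeeping" you anticipate is precisely what the paper carries out in Section~\ref{sect:stab_der} by reproving the axioms (TC3D), (TC3D'), (TC4), (TC5a), (TC5b) for a dualizing object and then assembling them, together with the compatibility of the six functors and K\"unneth isomorphisms with the coend construction, in the diagram of Proposition~\ref{prop:add_tr}.
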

\subsubsection{}The above result corresponds to \cite[III (4.13.1)]{SGA5} which claims the additivity of the Verdier pairing in the filtered derived category. The strategy of the proof is to first use Proposition~\ref{prop:bil_tr} to reduce the pairing to a generalized trace map with one single entry. Then we follow closely the same steps of proof as in \cite{GPS}, where we need to check the same axioms for the local duality functor instead of the usual duality functor. Note that all the usual examples such as $\mathbf{SH}_c$ or $\mathbf{DM}_{cdh,c}$ arise from constructible motivic derivators, so working with derivators is not a restriction in practice.

\subsection{A characterization of the characteristic class of a motive}

\subsubsection{}
There has been an extensive study in the literature around the Euler characteristic of \'etale sheaves via ramification theory, see for example \cite{AS}, \cite{KS} and \cite{Sai}. In this paper, we use a different approach to give a description of the characteristic class for $cdh$-motives in $\mathbf{DM}_{cdh,c}$. 
In Section~\ref{section:CC} we start with the study of some elementary properties of the characteristic class, using the (Fulton-style) intersection theory developed in \cite{DJK}. The main result is the following characterization of the characteristic class for $cdh$-motives over a perfect field:
\begin{theorem}[see Theorem~\ref{th:uniqueness_cc}]
Assume that the base field $k$ is perfect, and let $X$ be a scheme. Then the map
\begin{align}
\begin{split}
\mathbf{DM}_{cdh,c}(X,\mathbb{Z}[1/p])&\to CH_0(X)[1/p]\\
M&\mapsto C_X(M)
\end{split}
\end{align}
is the unique map satisfying the following properties:
\begin{enumerate}
\item For any distinguished triangle $L\to M\to N\to L[1]$ in $\mathbf{DM}_{cdh,c}(X)$, $C_X(M)=C_X(L)+C_X(N)$.
\item Let $f:Y\to X$ be a proper morphism with $Y$ smooth of dimension $d$ over $k$ and let $M$ be the direct summand of the Chow motive $f_*\mathbbold{1}_Y(n)$ defined by an endomorphism $u$. Then $u$ is identified as a cycle $u'\in CH_d(Y\times_XY)[1/p]$, and we have 
\begin{align}
C_X(M)=C_X(f_*\mathbbold{1}_Y(n),u)=f_*\Delta^!u'\in CH_0(X)[1/p]
\end{align}
where $f_*:CH_0(Y)[1/p]\to CH_0(X)[1/p]$ is the proper push-forward and $\Delta^!:CH_d(Y\times_XY)[1/p]\to CH_0(Y)[1/p]$ is the refined Gysin morphism (\cite[6.2]{Ful}) associated to the Cartesian square
\begin{align}
\begin{gathered}
  \xymatrix{
    Y \ar[r]^-{\delta_{Y/X}} \ar@{=}[d] \ar@{}[rd]|{\Delta} & Y\times_XY \ar[d]^-{}\\
    Y \ar[r]_-{\delta_{Y/k}} & Y\times_kY.
  }
\end{gathered}
\end{align}
There is an alternative description using the Euler class (i.e. top Chern class), see Proposition~\ref{prop:cc_endo_euler} below.
\end{enumerate}
\end{theorem}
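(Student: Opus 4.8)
The plan is to prove uniqueness and existence separately. Existence is already in hand: the characteristic class $C_X(-)$ is defined via the Verdier pairing in Section~\ref{section:pairing}, its additivity along distinguished triangles is Theorem~\ref{th:add_trace} (giving property (1)), and property (2) is a computation identifying the Verdier pairing $\langle u, 1_M\rangle$ on a direct summand of a relative Chow motive with a refined Gysin pushforward; this last identification should follow from Proposition~\ref{prop:pair_trace} together with the compatibility of the motivic six functors with Fulton-style intersection theory established in \cite{DJK}, via the standard dictionary between the unit/counit of Grothendieck--Verdier duality for $f_*\mathbbold{1}_Y$ and the Gysin maps attached to the diagonal $\delta_{Y/X}$. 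So the real content is uniqueness.

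For uniqueness, suppose $\varphi$ is any map $\mathbf{DM}_{cdh,c}(X,\mathbb{Z}[1/p])\to CH_0(X)[1/p]$ satisfying (1) and (2). First I would invoke the strong devissage property (Definition~\ref{def:dev}), which under resolution of singularities says that $\mathbf{DM}_{cdh,c}(X,\mathbb{Z}[1/p])$ is generated as a thick subcategory by the relative Chow motives $f_*\mathbbold{1}_Y(n)$ with $f:Y\to X$ proper and $Y$ smooth over $k$. Property (1) forces $\varphi$ to be additive on distinguished triangles and compatible with direct summands (apply (1) to the split triangle $M' \to M \to M'' \to M'[1]$), so the values of $\varphi$ on all of $\mathbf{DM}_{cdh,c}(X,\mathbb{Z}[1/p])$ are determined by its values on direct summands of the generators $f_*\mathbbold{1}_Y(n)$. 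But property (2) pins those values down exactly: a direct summand cut out by an idempotent $u$ has $\varphi$-value $f_*\Delta^! u'$, the same formula that $C_X(-)$ satisfies. Hence $\varphi = C_X(-)$.

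The point requiring care is the reduction via thick-subcategory generation: I must check that the prescription ``$\varphi(M) = f_*\Delta^! u'$ on summands of generators, extended additively'' is actually \emph{well-defined}, i.e.\ independent of the presentation of a given $M$ as an iterated cone of summands of Chow motives. Formally this is not needed for the uniqueness statement as phrased — we are given that $\varphi$ exists and satisfies (1) and (2), and the argument above shows any two such agree — so the well-definedness issue is exactly what the existence half (already established through Theorem~\ref{th:add_trace}) takes care of. Thus the main obstacle is really bookkeeping: making sure property (2) as stated (direct summands of $f_*\mathbbold{1}_Y(n)$ for $Y$ smost, not merely the Chow motives themselves) together with thickness genuinely exhausts $\mathbf{DM}_{cdh,c}(X,\mathbb{Z}[1/p])$, which is precisely the strong devissage input. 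I would also remark that the equivalent formulation via Euler classes in Proposition~\ref{prop:cc_endo_euler} is obtained by rewriting $f_*\Delta^! u'$ using the self-intersection formula for the diagonal $\delta_{Y/X}$, whose excess normal bundle is the relative tangent bundle $T_{Y/k}$ when $u = \mathrm{id}$, recovering the motivic Gauss--Bonnet shape of the formula.
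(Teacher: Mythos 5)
Your uniqueness argument has a genuine gap at the devissage step. Strong devissage only says that $\mathbf{DM}_{cdh,c}(X,\mathbb{Z}[1/p])$ is the \emph{thick} subcategory generated by the Chow motives $f_*\mathbbold{1}_Y(n)$; concretely, a general constructible $M$ is only a direct summand of some $N$ that is an iterated cone of such generators. Additivity along distinguished triangles determines $\varphi(M'\oplus M'')$ from $\varphi(M')$ and $\varphi(M'')$, but it goes the wrong way for your purposes: knowing $\varphi(N)$ for the iterated cone $N$ does not determine $\varphi$ on its summand $M$ unless you already know $\varphi$ on the complementary summand, and property (2) only pins down values on summands of the generators themselves, not on summands of arbitrary objects of the triangulated closure. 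So ``values on direct summands of the generators $+$ additivity'' does not determine $\varphi$ on all of $\mathbf{DM}_{cdh,c}(X,\mathbb{Z}[1/p])$ from thick generation alone; this is exactly why the theorem is not a formal consequence of Theorem~\ref{th:kun}-style devissage.

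The missing ingredient, which is the actual technical heart of the paper's proof, is Bondarko's theory of weight structures: by negativity of the primitive Chow motives over $X$ for $\mathbf{DM}_{cdh}$ (\cite{BI}) there is a bounded Chow weight structure on $\mathbf{DM}_{cdh,c}(X)$ whose heart is the idempotent completion $\mathbf{T}_{Chow}(X)$ of the additive category of Chow motives, and then \cite[Corollary 1.5.7]{Bon} (this is Lemma~\ref{lem:chow_neg}) shows that $\mathbf{DM}_{cdh,c}(X)$ is generated by this heart as a \emph{plain} triangulated category, i.e.\ every constructible motive is an honest iterated cone of direct summands of Chow motives. Since property (2) prescribes $\varphi$ exactly on the heart (summands cut out by an endomorphism $u$, via $f_*\Delta^!u'$), property (1) then propagates uniqueness through the weight decompositions with no further passage to summands needed. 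Your existence discussion and the Euler-class remark via Proposition~\ref{prop:cc_endo_euler} are consistent with the paper, but without the weight-structure input the uniqueness half does not close.
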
 
\subsubsection{} 
The idea is as follows: Bondarko's theory of weight structures (\cite{Bon}, \cite{BI}) implies that $\mathbf{DM}_{cdh,c}$ is generated by Chow motives not only as a thick triangulated category but also as a triangulated category, and therefore by additivity of traces, it suffices to compute the characteristic class for Chow motives, which can be achieved using intersection theory. This description also holds when we replace $\mathbf{DM}_{cdh,c}$ by homotopy category of $\mathbf{KGL}$-modules, since the Chow weight structure also exists by the results of \cite{BL}. In general, the characterization holds over the sub-triangulated category generated by direct summands of Chow motives.

\subsubsection{} 
While our result gives an abstract characterization for the characteristic class, we expect it to be related with the Grothendieck-Ogg-Shafarevich type results in \cite{AS}.
When the base field is not perfect, there is a similar characterization by perfection using the work of \cite{EK}, see Remark~\ref{rk:cc_char_perfection} below.

\subsubsection{} 
In Section~\ref{section:cc_rr} we show the compatibility between the characteristic class and Riemann-Roch transformations. If $X$ is a scheme and $M\in \mathbf{SH}_c(X)$, then we can canonically associate to $M$ a constructible element in the homotopy category of $\mathbf{KGL}$-modules over $X$, as well as an element in $\mathbf{DM}_{cdh,c}(X)$ (see~\ref{section:rr_eva}). Then the Riemann-Roch transformation
$$
\tau_X:G_0(X)\to\oplus_{i\in\mathbb{Z}}CH_i(X)_{\mathbb{Q}}
$$
constructed in~\cite[Theorem 18.3]{Ful} sends the characteristic class of the former to that of the latter. In Corollary~\ref{cor:rr_trace} we prove a more general version of such a result.

\subsection{The relative case}
\subsubsection{}
In Section \ref{sec:KFOGB} we prove some relative K\"unneth formulas, following the approach in~\cite{YZ18}. We first introduce the transversality conditions (Definition~\ref{def:Ftransversal}), which are closely related to the notion of purity in \cite{DJK} (see~\ref{recall_pur_trans}); instead of making use of the geometric notion of singular support as in \cite{YZ18}, our definition is a more categorical one extracted from the spirit of \cite{Sai}.
We show that under such conditions and some smoothness assumptions, the K\"unneth formulas \eqref{intro:KuHom00} and \eqref{intro:KuHom11} still holds over a general base scheme (see Theorem \ref{th:kun_rel}). The proof uses the K\"unneth formulas over a field in Section~\ref{chap_kun}. 
As a special case, we obtain the following result:
\begin{corollary}(see Corollary~\ref{cor:SSKunn})
Let $\mathbf{T}_c$ be the subcategory of constructible objects in a motivic triangulated category which satisfies resolution of singularities (see condition~\ref{resol} in \ref{par:devissage}).
Let $S$ be a smooth $k$-scheme, let $\pi\colon X\to S$ be a smooth morphism and let $F\in \mathbf{T}_c(X)$. 
If $\pi$ is universally $F$-transversal (see Definition \ref{def:Ftransversal} below), then there is a canonical isomorphism
\begin{align}
p_1^\ast F\otimes p^\ast_2\underline{Hom}(F, \pi^!\mathbbold{1}_S)
\xrightarrow{\sim}
\underline{Hom}(p_2^\ast F, p_1^! F)
\end{align}
where $p_i\colon X\times_S X\to X$ is the projection for $i=1,2$.
\end{corollary}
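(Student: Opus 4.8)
The plan is to deduce this corollary from the relative K\"unneth formula \eqref{intro:KuHom11} in its relative form (Theorem~\ref{th:kun_rel}), specialized to the situation where both factors coincide with $\pi\colon X\to S$ and where we take the ``identity'' diagonal situation. Concretely, I would apply the relative version of \eqref{intro:KuHom11} over the base $S$ (which is smooth over $k$) to the morphism $\pi\colon X\to S$ taken twice, with $c$ and $d$ both replaced by the diagonal setup giving $X\times_S X$, and with the two pairs of objects chosen as $(M_1,N_1)=(F,\pi^!\mathbbold{1}_S)$ on the first factor and $(M_2,N_2)=(\mathbbold{1}_X,F)$ — or rather, after unwinding, $(M_1,N_1)=(\mathbbold{1}_X,F)$ and $(M_2,N_2)=(F,\pi^!\mathbbold{1}_S)$, chosen precisely so that $\underline{Hom}(\mathbbold{1}_X,F)\simeq F$ and $\underline{Hom}(p_2^*\mathbbold{1}_X\otimes p_1^*F,\ p_2^*F\otimes p_1^*\pi^!\mathbbold{1}_S)$ simplifies to the right-hand side. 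The transversality hypothesis is exactly what Theorem~\ref{th:kun_rel} requires in order for the relative K\"unneth map to be an isomorphism, so the universal $F$-transversality of $\pi$ feeds directly into that input.

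The key steps, in order: first, set up the Cartesian diagram over $S$ analogous to the one in Theorem~\ref{introThmKun} but with $\times_k$ replaced by $\times_S$, so that $X\times_S X$ plays the role of the fibre product and the projections $p_1,p_2\colon X\times_S X\to X$ are the structural maps; second, invoke Theorem~\ref{th:kun_rel} to get that the relative analogue of \eqref{intro:KuHom11} is an isomorphism under the stated transversality and smoothness assumptions; third, make the substitutions $M_1=\mathbbold{1}_X$, $N_1=F$ (or symmetrically) and use $\underline{Hom}(\mathbbold{1}_X,-)\simeq \operatorname{id}$ together with the projection-type identities to rewrite the source of the K\"unneth map as $p_1^*F\otimes p_2^*\underline{Hom}(F,\pi^!\mathbbold{1}_S)$; fourth, identify the target: here one uses that for a smooth morphism $\pi$ the object $\pi^!\mathbbold{1}_S$ is $\otimes$-invertible (it is a Thom space), so $p_1^!F\simeq p_1^*F\otimes p_1^*\pi^!\mathbbold{1}_S\otimes(\text{twist})$ by relative purity along $p_1$ (which is smooth, being a base change of $\pi$), and then $\underline{Hom}(p_2^*F,p_1^!F)$ matches the target of the specialized K\"unneth map after cancelling the invertible twists.

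The main obstacle I expect is the bookkeeping in step four: matching the target $\underline{Hom}(p_2^*F,p_1^!F)$ with whatever the relative K\"unneth isomorphism \eqref{intro:KuHom11} produces requires carefully tracking the purity/Thom twists attached to the smooth morphisms $\pi$, $p_1$, $p_2$ and checking that they cancel coherently — in particular that the twist in $\pi^!\mathbbold{1}_S$ pulled back along $p_2$ and the twist from relative purity of $p_1$ are compatible via the identification $p_1^*\pi\simeq p_2^*\pi$ coming from the Cartesian square. One must also verify that universal $F$-transversality of $\pi$ implies the transversality hypothesis needed by Theorem~\ref{th:kun_rel} for this specific choice of data (the two factors being equal, with one of the coefficient objects being a pullback $\pi^!\mathbbold{1}_S$); this should be immediate from the definition (Definition~\ref{def:Ftransversal}) since transversality is a condition on $\pi$ relative to $F$ and the other object is built from $F$ by dualizing with the invertible $\pi^!\mathbbold{1}_S$. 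Once the twists are pinned down, the isomorphism in the statement is just the specialization of the general relative K\"unneth map, so no further argument is needed.
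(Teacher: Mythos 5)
Your proposal is correct and takes essentially the same route as the paper: the corollary is obtained by specializing the relative K\"unneth formula for $\underline{Hom}$ (Proposition~\ref{thm:relKuHom}) to the pairs $(M_1,N_1)=(\mathbbold{1}_X,F)$ and $(M_2,N_2)=(F,\pi^!\mathbbold{1}_S)$, and then identifying the target via the exchange isomorphism $p_2^\ast\pi^!\mathbbold{1}_S\simeq p_1^!\mathbbold{1}_X$ and purity for the smooth projection $p_1$, so that $p_1^\ast F\otimes p_1^!\mathbbold{1}_X\simeq p_1^!F$ (note the twist is pulled back along $p_2$, not $p_1$, exactly as your ``bookkeeping'' caveat anticipates). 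The auxiliary hypotheses of Proposition~\ref{thm:relKuHom} for these choices (diagonal transversality and transversality with respect to $\mathbbold{1}_X$ and the invertible $\pi^!\mathbbold{1}_S$) are indeed automatic from smoothness, at the same level of detail as the paper's own one-line verification.
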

\subsubsection{}
These K\"unneth formulas are sufficient to define the relative Verdier pairing, as well as the relative characteristic class (Definition~\ref{def:cc_rel}), in the same way as the absolute case. 
In the case of $\mathbf{DM}_{cdh}$, if $S$ is a smooth scheme of dimension $n$, then the relative characteristic class is given by a $n$-cycle up to $p$-torsion which, via Fulton's specialization of cycles (\cite[Section 10.1]{Ful}), specializes to the $0$-cycles given by the characteristic class of its fibers. We prove a more general version in Proposition~\ref{prop:cc_rel_spec}. 

\subsubsection{}
In Section~\ref{section:purity_la_trans} we establish an equivalence between several notions of local acyclicity and transversality conditions (Proposition~\ref{prop:BGB2} and Proposition~\ref{prop:la_fib}). We also give an application using the Fulton style specialization map in \cite[4.5.6]{DJK} (Corollary~\ref{cor:sp_split}).

\subsubsection*{\bf Acknowledgments}This work is inspired by a project initiated by Denis-Charles Cisinski about the interaction between motives and ramification theory, and we would like to thank him for helpful discussions. The first named author would like to thank Fr\'ed\'eric D\'eglise and Adeel Khan for the collaboration in \cite{DJK} in which we constructed the intersection-theoretic tools in motivic homotopy theory that are used in this paper. We would like to thank Marc Levine for suggesting the formulation of Theorem~\ref{th:add_trace}. We would like to thank Lie Fu for telling us the trick in Lemma~\ref{Kunneth_formula}. We would like to thank Oliver R\"ondigs and Jakob Scholbach for helpful comments on a preliminary version. The first named author is partially supported by the DFG Priority Programme SPP 1786. 
The second named author is supported by Peking University's Starting Grant Nr.7101302006.
Part of this work is done while both authors were members of SFB 1085 Higher Invariants at Universit\"at Regensburg.

\subsubsection*{\bf Notation and Conventions}

\begin{enumerate}
\item Throughout the paper, we denote by $k$ a field, and a scheme stands for a separated scheme of finite type over $k$. The category of schemes is denoted by $Sch$.

\item For any pair of adjoint functors $(F,G)$ between two categories, we denote by $ad_{(F,G)}:1\to GF$ and $ad'_{(F,G)}:GF\to 1$ the unit and conuit maps of the adjunction.

\item We say that a morphism of schemes $f:X\to Y$ is \emph{local complete intersection} (abbreviated as ``lci'') if it factors as the composition of a regular closed immersion followed by a smooth morphism. 
\footnote{This notion is called ``smoothable lci'' in \cite{DJK}.}
We denote by $L_f$ or $L_{X/Y}$ its virtual tangent bundle in $K_0(X)$.

\item If $A,B$ are objects in a closed symmetric monoidal category, we denote by
\begin{align}
\eta_A:\mathbbold{1}
\xrightarrow{}
\underline{Hom}(A,A)
\end{align}
\begin{align}
\epsilon_A:A\otimes\underline{Hom}(A,B)
\xrightarrow{}
B
\end{align}
the unit and counit maps of the monoidal structure.
\end{enumerate}

\section{K\"unneth formulas for motives}
\label{chap_kun}
In this section we prove several K\"unneth formulas for motives whose analogues for $\ell$-adic \'etale sheaves are proven in \cite{SGA4.5} and \cite{SGA5}. We start by recalling the axioms of a \emph{motivic triangulated category} in the sense of \cite[Definition 2.4.45]{CD1}. We denote by $SMTR$ the $2$-category of symmetric monoidal triangulated categories with (strong) monoidal functors.

\begin{definition}
\label{def:mot_tri_cat}
A \textbf{motivic triangulated category} is a (non-strict) $2$-functor $(\mathbf{T},\otimes):Sch^{op}\to SMTR$ satisfying the following properties:
\begin{enumerate}
\item The value of $\mathbf{T}$ at the empty scheme $\mathbf{T}(\emptyset)$ is the zero category.
\item For every morphism of schemes $f:Y\to X$, the functor $f^*:\mathbf{T}(X)\to\mathbf{T}(Y)$ is monoidal.
\item For every smooth morphism $f:Y\to X$, the functor $f^*:\mathbf{T}(X)\to\mathbf{T}(Y)$ has a left adjoint $f_\#$, such that
\begin{enumerate}
\item For any commutative square of schemes
\begin{equation}
\begin{gathered}
  \xymatrix{
    Y \ar[r]^-{q} \ar[d]_-{g} \ar@{}[rd]|{\Delta} & X \ar[d]^-{f}\\
    T \ar[r]_-{p} & S
  }
\end{gathered}
\end{equation}
The following natural transformation is an isomorphism:
\begin{equation}
q_\#g^*
\xrightarrow{ad_{(p_\#,p^*)}}
q_\#g^*p^*p_\#
=
q_\#q^*f^*p_\#
\xrightarrow{ad'_{(q_\#,q^*)}}
f^*p_\#.
\end{equation}

\item For any smooth morphism $f:Y\to X$ and any objects $M\in\mathbf{T}(Y)$, $N\in\mathbf{T}(X)$, the following transformation is an isomorphism:
\begin{equation}
f_\#(M\otimes f^*N)
\xrightarrow{ad_{(f_\#,f^*)}}
f_\#(f^*f_\#M\otimes f^*N)
\simeq
f_\#f^*(f_\#M\otimes N)
\xrightarrow{ad'_{(f_\#,f^*)}}
f_\#M\otimes N.
\end{equation}

\end{enumerate}

\item For every morphism of schemes $f:Y\to X$, the functor $f^*:\mathbf{T}(X)\to\mathbf{T}(Y)$ has a right adjoint $f_*$.

\item For any scheme $S$ with $p:\mathbb{A}^1_S\to S$ the canonical projection, the unit map $1\xrightarrow{ad_{(p^*,p_*)}}p_*p^*$ is an isomorphism.
\item For every proper morphism $f:Y\to X$, the functor $f_*:\mathbf{T}(Y)\to\mathbf{T}(X)$ has a right adjoint $f^!$.
\item For every smooth morphism $f:X\to S$ with a section $s:S\to X$, the functor $f_\#s_*:\mathbf{T}(S)\to\mathbf{T}(S)$ is an equivalence of categories.
\item For any closed immersion $i:Z\to X$ with open complement $j:U\to X$, the pair of functors $(j^*,i^*)$ is conservative, and the counit map $i^*i_*\xrightarrow{ad_{(i^*,i_*)}}1$ is an isomorphism.
\end{enumerate}

\end{definition}

In other words, for any scheme $X$ we have a triangulated category $\mathbf{T}(X)$, and such a formation satisfies the six functors formalism by \cite[Theorem 2.4.50]{CD1}. Examples are given by the stable motivic homotopy category $\mathbf{SH}$, the category of $cdh$-motivic complexes $\mathbf{DM}_{cdh}$ or the category of modules over a motivic ring spectrum. We gradually recall the formal properties we need in this section.

For any scheme $X$, we denote by $\mathbbold{1}_X\in\mathbf{T}(X)$ the unit object. We are mainly interested in constructible motives, see the discussion in~\ref{par:devissage} below.

\subsection{Local acyclicity and K\"unneth formula for $f_*$}
In this section, we work with a motivic triangulated category $\mathbf{T}$.

\subsubsection{}
For any commutative square of schemes
\begin{equation}\label{Cart_diag}
\begin{gathered}
  \xymatrix{
    Y \ar[r]^-{q} \ar[d]_-{g} \ar@{}[rd]|{\Delta} & X \ar[d]^-{f}\\
    T \ar[r]_-{p} & S
  }
\end{gathered}
\end{equation}
there is a canonical natural transformation 
\begin{equation}
\label{Ex**}
f^*p_*
\xrightarrow{ad_{(q^*,q_*)}}
q_*q^*f^*p_*
=
q_*g^*p^*p_*
\xrightarrow{ad'_{(p^*,p_*)}}
q_*g^*
\end{equation}
which is compatible with horizontal and vertical compositions of squares. 

\subsubsection{}
The map~\eqref{Ex**} is an isomorphism if $f$ is smooth, or if $p$ is proper. More generally, we give the following definition:
\begin{definition}
Let $f:X\rightarrow S$ be a morphism of schemes. We say that the category $\mathbf{T}$ satisfies \textbf{$f$-base change} if for any morphism $p:T\to S$ with a Cartesian square $\Delta$ as in~\eqref{Cart_diag}, the map~\eqref{Ex**} is an isomorphism. If $\mathcal{S}$ is a class of morphisms, we say that $\mathbf{T}$ satisfies $\mathcal{S}$-base change if it satisfies $f$-base change for any $f\in\mathcal{S}$. 
\end{definition}

\subsubsection{}
For any morphism $q:Y\to X$ and objects $K\in\mathbf{T}(X)$, $K'\in\mathbf{T}(Y)$, there is a canonical natural transformation
\begin{equation}
\label{Ex_*tens}
K\otimes q_*K'
\xrightarrow{ad_{(q^*,q_*)}}
q_*q^*(K\otimes q_*K')
\simeq
q_*(q^*K\otimes q^*q_*K')
\xrightarrow{ad'_{(q^*,q_*)}}
q_*(q^*K\otimes K').
\end{equation}
The map~\eqref{Ex_*tens} is an isomorphism for any proper morphism $q$. 

\subsubsection{}
For any commutative square $\Delta$ as in~\eqref{Cart_diag}, 
any $K\in\mathbf{T}(X)$ and any $L\in\mathbf{T}(T)$, 
there is a canonical natural transformation
\begin{equation}
\label{ExDelta**}
Ex(\Delta^*_*,\otimes):
K\otimes f^*p_*L
\to 
q_*(q^*K\otimes g^*L)
\end{equation}
defined as the composition
\begin{align}
K\otimes f^*p_*L
\xrightarrow{\eqref{Ex**}}
K\otimes q_*g^*L
\xrightarrow{\eqref{Ex_*tens}}
q_*(q^*K\otimes g^*L).
\end{align}

\subsubsection{}
For any Cartesian square $\Delta$ as in~\eqref{Cart_diag} with $p$ proper, 
the map~\eqref{ExDelta**} is an isomorphism. 
Our aim is to study the cases where the map~\eqref{ExDelta**} is an isomorphism 
for arbitrary $p$.
The following definition is inspired by \cite[Th. finitude, D\'efinition 2.12]{SGA4.5}:
\begin{definition}
\label{def:loc_acyclic}
Let $f:X\to S$ be a morphism of schemes and $K\in\mathbf{T}(X)$. We say that $f$ is \textbf{strongly locally acyclic} relatively to $K$ if for any morphism $p:T\to S$ with a Cartesian square $\Delta$ as in~\eqref{Cart_diag} and any object $L\in\mathbf{T}(T)$, the map~\eqref{ExDelta**} is an isomorphism. We say that $f$ is \textbf{universally strongly locally acyclic} relatively to $K$ if for any Cartesian square
\begin{align}
\begin{gathered}
  \xymatrix{
    X' \ar[r]^-{\phi} \ar[d]_-{f'} & X \ar[d]^-{f}\\
    S' \ar[r]^-{} & S
  }
\end{gathered}
\end{align}
the base change $f'$ of $f$ is strongly locally acyclic relatively to $K_{|X'}=\phi^*K$.

\end{definition}

\begin{lemma}
\label{ula}
Let $\mathcal{S}$ is a class of morphisms which is stable by base change and suppose that $\mathbf{T}$ satisfies $\mathcal{S}$-base change.
Let $f:X\to S$ be a morphism, $\phi:W\to X$ be a proper morphism such that the composition $f\circ\phi:W\to S$ lies in $\mathcal{S}$. Then $f$ is universally strongly locally acyclic relatively to the object $\phi_*\mathbbold{1}_W$. 
\end{lemma}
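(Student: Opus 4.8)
The plan is to reduce everything to the $\mathcal{S}$-base change hypothesis by means of the projection formula and proper base change; the statement is, in essence, a transitivity of strong local acyclicity along the proper morphism $\phi$. First I would dispose of the word ``universally''. Given a base change $\psi\colon X'\to X$ arising from a morphism $S'\to S$, set $W'=W\times_XX'$ with $\phi'\colon W'\to X'$ and $\psi'\colon W'\to W$; since $\phi$ is proper, \eqref{Ex**} yields a canonical isomorphism $(\phi_*\mathbbold{1}_W)_{|X'}=\psi^*\phi_*\mathbbold{1}_W\xrightarrow{\sim}\phi'_*\psi'^*\mathbbold{1}_W=\phi'_*\mathbbold{1}_{W'}$. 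Since $\phi'$ is again proper and $f'\circ\phi'$ is the base change of $h:=f\circ\phi$, hence lies in $\mathcal{S}$ (which is stable by base change), the universal statement follows from the non-universal one. It thus remains to prove that for every $p\colon T\to S$ with a Cartesian square $\Delta$ as in~\eqref{Cart_diag} and every $L\in\mathbf{T}(T)$, the map~\eqref{ExDelta**} with $K=\phi_*\mathbbold{1}_W$ is an isomorphism.

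Next I would complete~\eqref{Cart_diag} to a cube: put $Y=X\times_ST$ and $W_T:=W\times_ST\cong W\times_XY$, with projections $\pi\colon W_T\to W$, $\phi_T\colon W_T\to Y$ (the base change of $\phi$, hence proper) and $h_T\colon W_T\to T$ (the base change of $h$), so that $q\circ\phi_T=\phi\circ\pi$ and $g\circ\phi_T=h_T$. Then I would compute both ends of~\eqref{ExDelta**} for $K=\phi_*\mathbbold{1}_W$: the projection formula~\eqref{Ex_*tens} for the proper morphism $\phi$ identifies the source $\phi_*\mathbbold{1}_W\otimes f^*p_*L$ with $\phi_*h^*p_*L$, while on the target side~\eqref{Ex**} for $\phi$ gives $q^*\phi_*\mathbbold{1}_W\xrightarrow{\sim}\phi_{T*}\mathbbold{1}_{W_T}$ and then~\eqref{Ex_*tens} for the proper morphism $\phi_T$ identifies $q_*(q^*\phi_*\mathbbold{1}_W\otimes g^*L)$ with $q_*\phi_{T*}h_T^*L=\phi_*\pi_*h_T^*L$. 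The key point is that under these identifications the map~\eqref{ExDelta**} becomes $\phi_*$ applied to the base change map~\eqref{Ex**} of the Cartesian square with vertices $W_T,W,T,S$, namely $h^*p_*L\to\pi_*h_T^*L$; as $h\in\mathcal{S}$ and $\mathbf{T}$ satisfies $\mathcal{S}$-base change, this last map is an isomorphism, and the lemma follows.

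I expect the key point just stated to be the main obstacle, being a compatibility between several exchange transformations. A convenient way to organize it is to pass to adjoints along $(q^*,q_*)$: by its definition~\eqref{ExDelta**} corresponds to the morphism $q^*K\otimes g^*p^*p_*L\to q^*K\otimes g^*L$ induced by $ad'_{(p^*,p_*)}$, and for $K=\phi_*\mathbbold{1}_W$ the isomorphism $q^*\phi_*\mathbbold{1}_W\cong\phi_{T*}\mathbbold{1}_{W_T}$ together with the projection formula for $\phi_T$ turns this into $\phi_{T*}$ of the morphism $h_T^*p^*p_*L\to h_T^*L$ induced by $ad'_{(p^*,p_*)}$; transporting back along the adjunction $((q\circ\phi_T)^*,(q\circ\phi_T)_*)$ and using $\pi^*h^*=h_T^*p^*$ one recognizes the base change map of $h$. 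The ingredients of the verification are the naturality of~\eqref{Ex**} and~\eqref{Ex_*tens}, the compatibility of~\eqref{Ex**} with composition of squares (the squares attached to $f$ and to $\phi$ compose to the one attached to $h$), and the standard coherence between the base change transformation~\eqref{Ex**} and the projection formula~\eqref{Ex_*tens}; granting these, the rest is formal.
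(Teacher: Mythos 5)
Your argument is correct and follows essentially the same route as the paper's proof: reduce the universal statement to the plain one via properness of $\phi$ and stability of $\mathcal{S}$ under base change, then identify both sides of~\eqref{ExDelta**} for $K=\phi_*\mathbbold{1}_W$ using proper base change and the projection formula, so that the map becomes ($\phi_*$ applied to) the base change map of $f\circ\phi\in\mathcal{S}$, which is invertible by the $\mathcal{S}$-base change hypothesis. Your explicit recognition of the remaining map as $\phi_*$ of the exchange map~\eqref{Ex**} for the square $W_T$-$W$-$S$-$T$, verified via pasting of squares and the standard compatibility between~\eqref{Ex**} and~\eqref{Ex_*tens}, is exactly the coherence check the paper leaves as ``not hard to check''.
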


\proof

Consider the following commutative diagram with    Cartesian squares
\begin{align}
\begin{gathered}
  \xymatrix{
 V \ar[r]^-{r} \ar[d]_-{\psi} & W' \ar[r]^-{} \ar[d]^-{\phi'} & W \ar[d]^-{\phi}\\
 Y \ar[r]^-{q} \ar[d]_-{g} \ar@{}[rd]|{\Delta} & X' \ar[r]^-{\xi} \ar[d]^-{f'} & X \ar[d]^-{f}\\
 T \ar[r]_-{p} & S' \ar[r]^-{} & S
  }
\end{gathered}
\end{align}
and let $L\in\mathbf{T}(T)$. We want to show that the map
\begin{equation}
\label{ex_ula}
Ex(\Delta^*_*,\otimes):
\xi^*\phi_*\mathbbold{1}_W\otimes f'^*p_*L
\to 
q_*(q^*\xi^*\phi_*\mathbbold{1}_W\otimes g^*L)
\end{equation}
is an isomorphism. On the left hand side, by assumptions we have
\begin{equation}
\label{lhs_ula}
\xi^*\phi_*\mathbbold{1}_W\otimes f'^*p_*L
\simeq
\phi'_*\mathbbold{1}_{W'}\otimes f'^*p_*L
\simeq
\phi'_*\phi'^*f'^*p_*L
\simeq
\phi'_*r_*\psi^*g^*L
\end{equation}
where we use the fact that the morphism $f'\circ\phi'$ lies in $\mathcal{S}$. For the right hand side of \eqref{ex_ula}, we have
\begin{equation}
\label{rhs_ula}
q_*(q^*\xi^*\phi_*\mathbbold{1}_W\otimes g^*L)
\simeq
q_*(\psi_*\mathbbold{1}_V\otimes g^*L)
\simeq
q_*\psi_*\psi^*g^*L
\simeq
\phi'_*r_*\psi^*g^*L.
\end{equation}
It is not hard to check that the composition of~\eqref{lhs_ula} and the inverse of~\eqref{rhs_ula} agrees with the map~\eqref{ex_ula}, and therefore~\eqref{ex_ula} is an isomorphism.
\endproof

\subsubsection{}
The category $\mathbf{T}$ has a family of \emph{Tate twists} $\mathbbold{1}(n)$, which are $\otimes$-invertible objects that form a Cartesian section. By \cite[3.2]{FHM}, we have a canonical isomorphism $f_*(M\otimes\mathbbold{1}(n))\simeq f_*(M)\otimes\mathbbold{1}(n)$, and therefore Tate twists commute with both $f^*$ and $f_*$ in a canonical way. More generally, all the six functors commute with Tate twists (\cite[Section 1.1.d]{CD1}) via canonical isomorphisms, and therefore it is safe to ignore them in the proof of K\"unneth formulas.

\begin{definition}
\label{def:dev}

\begin{enumerate}
\item For any scheme $X$, a \textbf{projective motive} in $\mathbf{T}(X)$ is an object of the form $\varphi_*\mathbbold{1}_W(n)$, where $\varphi:W\to X$ is a projective morphism, and a \textbf{primitive Chow motive} is a projective motive with $W$ smooth over a finite purely inseperable extension of $k$.
\footnote{In \cite{Jin} it is shown that for $\mathbf{T}=\mathbf{DM}_{cdh,c}$ and $X$ quasi-projective over a perfect field, the idempotent completion of the additive subcategory generated by primitive Chow motives is equivalent to the category of relative Chow motives over $X$ defined by Corti and Hanamura, whence the terminology.}
\item We say that $\mathbf{T}$ satisfies
 \textbf{weak devissage} if for any scheme $X$, 
 the category $\mathbf{T}(X)$ agrees with the 
 smallest thick triangulated subcategory which is stable by 
 direct sums and contains all projective motives.

\item We say that $\mathbf{T}$ satisfies \textbf{strong devissage} if for any scheme $X$, the category $\mathbf{T}(X)$ agrees with the smallest thick triangulated subcategory which is stable by direct sums and contains all primitive Chow motives.

\item
We denote by $\mathcal{S}_k$ the family of morphisms 
$p_2:X\times_kY\to Y$ where $X$, $Y$ are 
schemes and $p_2$ is the projection onto the second factor. 
We say that $\mathbf{T}$ satisfies \textbf{$\mathcal{S}_k$-strong local acyclicity} if 
any morphism of the form $f:X\to k$ is universally strongly locally acyclic 
relatively to any object in $\mathbf{T}(X)$.
\end{enumerate}
\end{definition}

We denote by $\mathcal{Rad}$ the family of finite surjective radicial morphisms, namely the family of universal homeomorphisms.
\begin{proposition}
\label{SD_ula}
We suppose that $\mathbf{T}$ satisfies 
weak devissage. Then
\begin{enumerate}
\item \label{ula=bc}
$\mathbf{T}$ satisfies $\mathcal{S}_k$-strong local acyclicity if and only if it satisfies $\mathcal{S}_k$-base change.

\item If $\mathbf{T}$ satisfies 
strong devissage and one of the following conditions hold:
\begin{enumerate}
\item $k$ is perfect;
\item $\mathbf{T}$ satisfies $\mathcal{Rad}$-base change.
\end{enumerate}
Then $\mathbf{T}$ satisfies $\mathcal{S}_k$-strong local acyclicity. 
\end{enumerate}
\end{proposition}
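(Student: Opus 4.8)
The plan is to deduce both statements from Lemma~\ref{ula} together with a devissage argument. The key preliminary point I would establish is that, for a fixed morphism $f\colon X\to S$, the class of objects $K\in\mathbf{T}(X)$ relative to which $f$ is universally strongly locally acyclic is a thick triangulated subcategory of $\mathbf{T}(X)$ stable under direct sums and under Tate twists: for each Cartesian square exhibiting a base change of $f$, each morphism $p\colon T\to S'$ and each $L\in\mathbf{T}(T)$, the source and target of the comparison map~\eqref{ExDelta**} are triangulated functors of the variable $K$ that commute with direct sums, and~\eqref{ExDelta**} is a natural transformation between them, so the full subcategory of $K$ for which it is an isomorphism is thick and stable under sums; stability under Tate twists is immediate since the $\mathbbold{1}(n)$ are $\otimes$-invertible and commute with the six functors. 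Given this, weak (resp.\ strong) devissage reduces everything to checking that the structure morphisms $a_X\colon X\to\operatorname{Spec}(k)$ are universally strongly locally acyclic relative to projective motives (resp.\ primitive Chow motives).

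For part~(1), to prove that $\mathcal{S}_k$-base change implies $\mathcal{S}_k$-strong local acyclicity I would invoke Lemma~\ref{ula} with $\mathcal{S}=\mathcal{S}_k$: this class is stable under base change, and the structure morphism of any scheme $W$ is the projection $W\times_k\operatorname{Spec}(k)\to\operatorname{Spec}(k)$, hence lies in $\mathcal{S}_k$. The lemma then gives that $a_X$ is universally strongly locally acyclic relative to $\phi_*\mathbbold{1}_W$ for every proper $\phi\colon W\to X$, hence relative to every projective motive; the preliminary point and weak devissage conclude. For the converse, I would note that any $p_2\colon X\times_kY\to Y$ in $\mathcal{S}_k$ is the base change of $a_X$ along $Y\to\operatorname{Spec}(k)$, apply the hypothesis of $\mathcal{S}_k$-strong local acyclicity to the object $\mathbbold{1}_X$, and observe that, since~\eqref{ExDelta**} specializes to~\eqref{Ex**} when $K=\mathbbold{1}$, the resulting assertion is exactly that~\eqref{Ex**} is an isomorphism for every square over $p_2$. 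As $p_2$ ranges over $\mathcal{S}_k$, this gives $\mathcal{S}_k$-base change.

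For part~(2), by the preliminary point and strong devissage it suffices to show that $a_X$ is universally strongly locally acyclic relative to $\varphi_*\mathbbold{1}_W$ whenever $\varphi\colon W\to X$ is projective and $W$ is smooth over some finite purely inseparable extension $k'/k$ (the Tate twist being harmless). In that case $a_W\colon W\to\operatorname{Spec}(k)$ factors as a smooth morphism $W\to\operatorname{Spec}(k')$ followed by the finite surjective radicial morphism $\operatorname{Spec}(k')\to\operatorname{Spec}(k)$, so I would let $\mathcal{S}$ be the class of morphisms admitting a factorization as a smooth morphism followed by a morphism in $\mathcal{Rad}$. One checks that $\mathcal{S}$ is stable under base change, contains $a_W$, and that $\mathbf{T}$ satisfies $\mathcal{S}$-base change: the map~\eqref{Ex**} is an isomorphism for smooth morphisms and, by hypothesis, for morphisms in $\mathcal{Rad}$, and it is compatible with vertical composition of squares. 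Lemma~\ref{ula} applied with this $\mathcal{S}$, $f=a_X$ and $\phi=\varphi$ then yields the claim. When $k$ is perfect one has $k'=k$, so $a_W$ is already smooth and one may take $\mathcal{S}$ to be the class of smooth morphisms, for which base change is automatic; thus the hypothesis on $\mathcal{Rad}$-base change is only used in the imperfect case.

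I expect the only delicate point to be the justification of the preliminary claim: stability of the acyclicity locus under arbitrary direct sums requires that $q_*$ commutes with small direct sums for a morphism $q$ of finite type, which does hold in a motivic triangulated category — via compact generation and the preservation of constructible objects under $q^*$ — but is not literally listed among the axioms of Definition~\ref{def:mot_tri_cat}. Everything else is a matter of choosing the auxiliary class $\mathcal{S}$ correctly together with the bookkeeping already encapsulated in Lemma~\ref{ula}, which carries the substantive content.
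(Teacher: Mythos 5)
Your proposal is correct and follows essentially the same route as the paper: part (1) by noting that strong local acyclicity relative to the unit object is precisely the base change property and by combining weak devissage with Lemma~\ref{ula}, and part (2) by strong devissage, stability of the acyclicity locus, and Lemma~\ref{ula} applied to the class of morphisms factoring as a smooth morphism followed by one in $\mathcal{Rad}$ (a choice the paper leaves implicit). Your closing caveat about $q_*$ commuting with direct sums concerns a point the paper itself takes for granted when asserting that strong local acyclicity is stable under direct sums, so it is a fair observation rather than a gap in your argument.
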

\proof
\begin{enumerate}
\item The $\mathcal{S}_k$-base change property is a particular case of $\mathcal{S}_k$-strong local acyclicity since strong local acyclicity relative to the unit object is equivalent to base change property. The other direction follows from weak devissage by applying Lemma~\ref{ula}.
\item Since 
strong local acyclicity is stable under distinguished triangles, direct summands, direct sums and Tate twists, the result is straightforward from Lemma~\ref{ula}.
\end{enumerate}
\endproof

\subsubsection{}
\label{par:devissage}
Following \cite[2.4.1]{BD}, we consider the following conditions on resolution of singularities: 
\begin{enumerate}[label=(RS \arabic*)]
    \item\label{resol1} The field $k$ is perfect, over which the \emph{strong resolution of singularities} holds, in the sense that
    \begin{enumerate}
    \item For every separated integral scheme $X$ of finite type over $k$, there exists a proper birational surjective morphism $X'\to X$ with $X'$ regular;
    \item For every separated integral regular scheme $X$ of finite type over $k$ and every nowhere dense closed subscheme $Z$ of $X$, there exists a proper birational surjective morphism $b:X'\to X$ such that $X'$ is regular, $b$ induces an isomorphism $b^{-1}(X-Z)\simeq X-Z$, and $b^{-1}(Z)$ is a strict normal crossing divisor in $X'$.
    \end{enumerate}
    \item\label{resol2} The category $\mathbf{T}$ is $\mathbb{Z}[1/p]$ linear where $p$ is the characteristic exponent of $k$, and there exists a premotivic adjunction $\mathbf{SH}\rightleftharpoons\mathbf{T}$.
\end{enumerate}
\begin{enumerate}[label=(RS)]
    \item\label{resol} We say that the category $\mathbf{T}$ satisfies \ref{resol} if it satisfies one of the above conditions \ref{resol1} and \ref{resol2}.
\end{enumerate}

\subsubsection{}
We recall the following facts about devissage:
\begin{enumerate}
\item (\cite[Lemme 2.2.23]{Ayo}) Any motivic triangulated category satisfies weak devissage.

\item (\cite[Corollary 2.4.8]{BD}, \cite[Proposition 3.1.3]{EK}) If $\mathbf{T}$ is a motivic triangulated category which satisfies the condition~\ref{resol}, then it satisfies strong devissage.
\end{enumerate}
By \cite[Remark 2.1.13]{EK}, if $\mathbf{T}$ satisfies the condition~\ref{resol2} above, then $\mathbf{T}$ satisfies $\mathcal{Rad}$-base change.
As a consequence, Proposition~\ref{SD_ula} implies that 
\begin{corollary}
If $\mathbf{T}$ is a motivic triangulated category which satisfies~\ref{resol} in~\ref{par:devissage}, 
then it satisfies $\mathcal{S}_k$-strong local acyclicity (or equivalently $\mathcal{S}_k$-base change).
\end{corollary}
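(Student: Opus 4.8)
The plan is to combine the two results recalled just above with Proposition~\ref{SD_ula}. The statement asserts that a motivic triangulated category $\mathbf{T}$ satisfying~\ref{resol} enjoys $\mathcal{S}_k$-strong local acyclicity, equivalently $\mathcal{S}_k$-base change. The equivalence of these two phrasings is exactly Proposition~\ref{SD_ula}(\ref{ula=bc}), which only requires weak devissage; and weak devissage holds for \emph{any} motivic triangulated category by \cite[Lemme 2.2.23]{Ayo}. So the only real content is to produce $\mathcal{S}_k$-strong local acyclicity from~\ref{resol}, and for this I would invoke Proposition~\ref{SD_ula}(2), whose hypotheses are: strong devissage, together with either $k$ perfect or $\mathcal{Rad}$-base change.

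First I would split into the two cases of the disjunction defining~\ref{resol}. If $\mathbf{T}$ satisfies~\ref{resol1}, then $k$ is perfect (condition (a) of Proposition~\ref{SD_ula}(2) is met), and by \cite[Corollary 2.4.8]{BD} strong devissage holds; hence Proposition~\ref{SD_ula}(2) applies directly. If instead $\mathbf{T}$ satisfies~\ref{resol2}, then strong devissage holds by \cite[Proposition 3.1.3]{EK}, and moreover $\mathbf{T}$ satisfies $\mathcal{Rad}$-base change by \cite[Remark 2.1.13]{EK} (condition (b) of Proposition~\ref{SD_ula}(2) is met); so again Proposition~\ref{SD_ula}(2) applies. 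In both cases we conclude $\mathcal{S}_k$-strong local acyclicity, and then Proposition~\ref{SD_ula}(\ref{ula=bc}) upgrades this to the equivalence with $\mathcal{S}_k$-base change, completing the proof.

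There is essentially no obstacle here: the corollary is a bookkeeping assembly of the three cited facts, and the proof is a two-line case distinction. The one point deserving a word of care is making sure the hypotheses of Proposition~\ref{SD_ula}(2) are matched correctly in the~\ref{resol2} case — namely that $\mathcal{Rad}$-base change (the family of finite surjective radicial morphisms, i.e.\ universal homeomorphisms) is precisely what \cite[Remark 2.1.13]{EK} delivers, and that this is the condition (b) appearing in Proposition~\ref{SD_ula}(2). Since the excerpt has already spelled out all of this immediately before the statement, the proof is just:

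\proof
By \cite[Lemme 2.2.23]{Ayo}, $\mathbf{T}$ satisfies weak devissage, so by Proposition~\ref{SD_ula}(\ref{ula=bc}) it suffices to show that $\mathbf{T}$ satisfies $\mathcal{S}_k$-strong local acyclicity. If $\mathbf{T}$ satisfies~\ref{resol1}, then $k$ is perfect and $\mathbf{T}$ satisfies strong devissage by \cite[Corollary 2.4.8]{BD}, so we conclude by Proposition~\ref{SD_ula}(2)(a). If $\mathbf{T}$ satisfies~\ref{resol2}, then $\mathbf{T}$ satisfies strong devissage by \cite[Proposition 3.1.3]{EK} and $\mathcal{Rad}$-base change by \cite[Remark 2.1.13]{EK}, so we conclude by Proposition~\ref{SD_ula}(2)(b).
\endproof
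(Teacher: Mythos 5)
Your proof is correct and is essentially the same as the paper's, which also deduces the corollary by combining the recalled devissage facts (\cite[Lemme 2.2.23]{Ayo}, \cite[Corollary 2.4.8]{BD}, \cite[Proposition 3.1.3]{EK}) and $\mathcal{Rad}$-base change from \cite[Remark 2.1.13]{EK} with Proposition~\ref{SD_ula}. The case split on~\ref{resol1} versus~\ref{resol2} is exactly how the hypotheses (a) and (b) of Proposition~\ref{SD_ula}(2) are meant to be matched.
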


\subsubsection{}
For any scheme $X$, we denote by $\mathbf{T}_c(X)$ the subcategory of \emph{constructible} objects, which is the thick triangulated subcategory of $\mathbf{T}(X)$ generated by elements of the form $f_\#\mathbbold{1}_Y(n)$, where $f:Y\to X$ is a smooth morphism (\cite[Definition 4.2.1]{CD1}). 
 By \cite[Theorem 6.4]{CD2}, if $\mathbf{T}$ satisfies~\ref{resol},  
 the six functors preserve constructible objects.
The devissage condition can be translated as follows:
\begin{enumerate}[label=(\arabic*')]
\item If $\mathbf{T}$ is motivic, then for any scheme $X$, the category $\mathbf{T}_c(X)$ agrees with the smallest thick triangulated subcategory which contains all projective motives;

\item If  $\mathbf{T}$ is motivic and satisfies the condition~\ref{resol} in 
\ref{par:devissage}, then for any scheme $X$, the category $\mathbf{T}_c(X)$ agrees
 with the smallest thick triangulated subcategory which contains all primitive 
 Chow motives.

\end{enumerate}

\begin{remark}
\begin{enumerate}
\item $\mathcal{S}_k$-strong local acyclicity can be generalized to quasi-compact quasi-separated schemes over a field by a passing to the limit argument $($\cite[Th. finitude Corollaire 2.16]{SGA4.5}, \cite[Appendix C]{Hoy}$)$.

\item 
In the derived category of \'etale sheaves, $\mathcal{S}_k$-base change is a particular case of Deligne's generic base change theorem (\cite[Th. finitude, Th\'eor\`eme 2.13]{SGA4.5}). This theorem is proved for $h$-motives in \cite{Cis} using a similar method.
\item 
Under the assumption~\ref{resol}, it follows from strong devissage that every object in $\mathbf{T}_c(k)$ is (strongly) dualizable, which is a well-known result (see for example \cite[Section 3]{Hoy} or \cite[Theorem 3.2.1]{EK}).
\end{enumerate}
\end{remark}

The following notation will be used repeatedly in the study of K\"unneth formulas:
\begin{notation}
\label{Kun_not}
Let $S$ be a scheme and let $f_1:X_1\to Y_1$, $f_2:X_2\to Y_2$ be two $S$-morphisms. Denote by $p_i:X_1\times_S X_2\to X_i$, $p'_i:Y_1\times_S Y_2\to Y_i$ the projections, and $f_1\times_S f_2:X_1\times_S X_2\to Y_1\times_S Y_2$ the fiber product. We have the following commutative diagram
\begin{align}
\begin{gathered}
  \xymatrix{
   X_1 \ar[d]_-{f_1} \ar@{}[rd]|{\Delta_1} & X_1\times_S X_2 \ar[l]_-{p_1} \ar[r]^-{p_2} \ar[d]_-{f} \ar@{}[rd]|{\Delta_2} & X_2 \ar[d]^-{f_2}\\
   Y_1 & Y_1\times_S Y_2 \ar[l]^-{p_1'} \ar[r]_-{p_2'} & Y_2.
  }
\end{gathered}
\end{align}
For $K_1\in {\mathbf T}(X_1)$ and $K_2\in {\mathbf T}(X_2)$, we denote
\begin{align}
K_1\boxtimes_S K_2\coloneqq p_1^\ast K_1\otimes p_2^\ast K_2 
\end{align}
which is an object of $\mathbf T(X_1\times_S X_2)$.
\end{notation}

\subsubsection{}
The first K\"unneth formula is related to the functor $f_*$. 
For any morphism $f:X\to S$ and any $A,B\in\mathbf{T}(X)$, there is a canonical map
\begin{equation}
\label{eq:Ex_lower*_tensor}
f_*A\otimes f_*B\to f_*(A\otimes B)
\end{equation}
defined as the composition
\begin{align}
\begin{split}
&f_*A\otimes f_*B
\xrightarrow{ad_{(f^*,f_*)}}
f_*f^*(f_*A\otimes f_*B)\\
=
&f_*(f^*f_*A\otimes f^*f_*B)\xrightarrow{ad'_{(f^*,f_*)}\otimes ad'_{(f^*,f_*)}}
f_*(A\otimes B).
\end{split}
\end{align}

\subsubsection{}
For $i=1,2$, let $L_i$ be an element of $\mathbf{T}(X_i)$. We have a canonical map
\begin{equation}
\label{Kunneth_*}
Kun_*:
p'^*_1f_{1*}L_1\otimes p'^*_2f_{2*}L_2
\to
f_*(p^*_1L_1\otimes p^*_2L_2)
\end{equation}
defined as the composition
\begin{align}
p'^*_1f_{1*}L_1\otimes p'^*_2f_{2*}L_2
\xrightarrow{Ex(\Delta^*_{1*})\otimes Ex(\Delta^*_{2*})}
f_*p_1^*L_1\otimes f_*p_2^*L_2
\xrightarrow{\eqref{eq:Ex_lower*_tensor}}
f_*(p^*_1L_1\otimes p^*_2L_2).
\end{align}
By a classical argument (see \cite[III 1.6.4]{SGA5}), the following is a consequence of $\mathcal{S}_k$-strong local acyclicity by Proposition~\ref{SD_ula}:
\begin{proposition}[K\"unneth formula for $f_*$]
\label{Kun_*}
Let $\mathbf{T}$ be a motivic triangulated category. If $X_i\to S$ is universally strongly locally acyclic relatively to $L_i$ for $i=1,2$, then the map~\eqref{Kunneth_*} is an isomorphism. 

In particular, if $S=\operatorname{Spec}(k)$ and if $\mathbf{T}$ satisfies the condition~\ref{resol} in \ref{par:devissage}, the map~\eqref{Kunneth_*} is an isomorphism for any $L_i\in\mathbf{T}(X_i)$, $i=1,2$.
\end{proposition}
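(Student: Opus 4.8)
The plan is to reduce the statement to the case of projective motives $L_i = \varphi_{i*}\mathbbold{1}_{W_i}(n_i)$ via weak devissage, and then to invoke Lemma~\ref{ula} to verify the universal strong local acyclicity hypothesis needed to apply Proposition~\ref{Kun_*}. More precisely, since the claim is about the map~\eqref{Kunneth_*} being an isomorphism, I would first argue that both source and target of $Kun_*$ are exact functors in each $L_i$ separately (they are built out of $f^*$, $f_*$, $\otimes$, and the Tate twists, all of which commute with finite direct sums and shifts, and $\otimes$ with one entry fixed is exact in the other since the category is closed symmetric monoidal), and that the natural transformation $Kun_*$ is compatible with distinguished triangles and direct summands; hence the class of pairs $(L_1,L_2)$ for which $Kun_*$ is an isomorphism is a thick subcategory in each variable, stable under Tate twists. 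By the translation of weak devissage in~\ref{par:devissage} (every constructible object is built from projective motives $\varphi_*\mathbbold{1}_W(n)$ by finite cones and summands), it therefore suffices to treat $L_i = \varphi_{i*}\mathbbold{1}_{W_i}$ with $\varphi_i : W_i \to X_i$ projective, the Tate twists being harmless.

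For this case, I would apply Lemma~\ref{ula} with $\mathcal{S} = \mathcal{S}_k$: by the Corollary following~\ref{par:devissage}, $\mathbf{T}$ satisfies $\mathcal{S}_k$-base change, and $\mathcal{S}_k$ is stable by base change; the structure morphism $X_i \to \operatorname{Spec}(k)$ lies in... well, actually it is the morphism $f : X_i \to k$, and the composite $f \circ \varphi_i : W_i \to \operatorname{Spec}(k)$ is itself a $k$-scheme structure morphism, so after identifying $X_i\times_k(\cdot)$ appropriately it lies in $\mathcal{S}_k$; more simply, $\mathcal{S}_k$-strong local acyclicity (Definition~\ref{def:dev}(4)) says precisely that $X_i \to k$ is universally strongly locally acyclic relative to \emph{any} object of $\mathbf{T}(X_i)$, and this holds by the Corollary above. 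Thus the hypothesis of Proposition~\ref{Kun_*} is satisfied for every $L_i$, and~\eqref{Kunneth_*} is an isomorphism.

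In fact, given the Corollary stating $\mathcal{S}_k$-strong local acyclicity outright, the devissage step is not even needed for the final sentence: for $S = \operatorname{Spec}(k)$, every $X_i \to \operatorname{Spec}(k)$ is universally strongly locally acyclic relative to every $L_i \in \mathbf{T}(X_i)$, so the first sentence of Proposition~\ref{Kun_*} applies directly. The main content — and the step I expect to be the real obstacle — is therefore already packaged in the earlier results: it is the passage, via Lemma~\ref{ula} and the devissage of Proposition~\ref{SD_ula}, from $\mathcal{S}_k$-base change (a statement only about the unit object) to strong local acyclicity relative to arbitrary constructible coefficients, together with the "classical argument" of \cite[III 1.6.4]{SGA5} that deduces the K\"unneth isomorphism~\eqref{Kunneth_*} from strong local acyclicity of the two factors — this last deduction amounts to rewriting $Kun_*$ as a composite of base-change isomorphisms~\eqref{Ex**} and projection-formula-type isomorphisms~\eqref{Ex_*tens}, \eqref{ExDelta**}, and checking the composite is the stated map, which is the one genuinely diagram-chasing part of the proof.
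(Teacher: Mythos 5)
Your final argument coincides with the paper's own proof: the paper likewise obtains the ``in particular'' statement by combining the corollary to Proposition~\ref{SD_ula} (which gives $\mathcal{S}_k$-strong local acyclicity under~\ref{resol}, i.e.\ universal strong local acyclicity of every $X_i\to\operatorname{Spec}(k)$ relative to every object) with the first statement, and for the first statement it, too, simply invokes the classical argument of \cite[III 1.6.4]{SGA5} rather than carrying out the diagram chase. The devissage-to-projective-motives detour in your opening paragraph is, as you yourself note, superfluous, since that devissage is exactly what Proposition~\ref{SD_ula} already packages.
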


\subsection{K\"unneth formula for $f_!$}
\label{Kun_lower!}
\subsubsection{}
The second K\"unneth formula is concerned with the exceptional direct image functor. As part of the six functors formalism, for any morphism of schemes $f:X\to S$, there is an exceptional direct image functor (or  direct image with compact support)
\begin{align}
f_!:\mathbf{T}(X)\to\mathbf{T}(S)
\end{align}
which is compatible with compositions, such that $f_*=f_!$ if $f$ is proper. We also have
\begin{enumerate}
\item For any morphism $f:X\to S$, any object $K\in\mathbf{T}(X)$ 
and any $L\in\mathbf{T}(S)$, there is an invertible natural transformation
\begin{equation}
\label{Ex_!tens}
Ex(f^*_!,\otimes):(f_!K)\otimes L\to f_!(K\otimes f^*L)
\end{equation}
which agrees with 
the map~\eqref{Ex_*tens} if $f$ is proper.
\item For any Cartesian square
\begin{equation}\label{Cart_diag2}
\begin{gathered}
  \xymatrix{
    Y \ar[r]^-{q} \ar[d]_-{g} \ar@{}[rd]|{\Delta} & X \ar[d]^-{f}\\
    T \ar[r]_-{p} & S
  }
\end{gathered}
\end{equation}
  there is an invertible natural transformation
\begin{equation}
\label{Ex*_!}
Ex(\Delta^*_!):f^*p_!\to q_!g^*
\end{equation}
which is compatible with horizontal and vertical compositions of squares, and agrees with the map
~\eqref{Ex**} if $p$ is proper.
\end{enumerate}

\subsubsection{}
We now state a K\"unneth formula for the functor $f_!$. We use the assumptions and notation as in Notation~\ref{Kun_not}, with the following diagram
\begin{align}
\begin{gathered}
  \xymatrix{
   X_1 \ar[d]_-{f_1} & X_1\times_S X_2 \ar[l]_-{p_1} \ar[r]^-{p_2} \ar[d]_-{f} & X_2 \ar[d]^-{f_2}\\
   Y_1 & Y_1\times_S Y_2 \ar[l]^-{p_1'} \ar[r]_-{p_2'} & Y_2.
  }
\end{gathered}
\end{align}
\begin{lemma}
\label{Kunneth_formula}
For $i=1,2$, let $L_i$ be an element of $\mathbf{T}(X_i)$. There is a canonical isomorphism
\begin{equation}
\label{Kun'}
Kun_{S,(f_1,f_2),!}(L_1,L_2):p'^*_1f_{1!}L_1\otimes p'^*_2f_{2!}L_2
\simeq
f_!(p^*_1L_1\otimes p^*_2L_2).
\end{equation}
\end{lemma}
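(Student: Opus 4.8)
The plan is to derive this K\"unneth formula purely formally from the projection formula \eqref{Ex_!tens} and the base change isomorphism \eqref{Ex*_!} for the exceptional direct image functor; in particular, contrary to the situation for $f_*$ in Proposition~\ref{Kun_*}, no assumption on the schemes or on the base $S$ is needed, in agreement with the comment in the introduction that the $f_!$-version ``is quite formal''.

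The first step is to factor $f=f_1\times_S f_2$ through $Y_1\times_S X_2$. Put $g_1:=f_1\times_S\mathrm{id}_{X_2}\colon X_1\times_S X_2\to Y_1\times_S X_2$ and $g_2:=\mathrm{id}_{Y_1}\times_S f_2\colon Y_1\times_S X_2\to Y_1\times_S Y_2$, so that $f=g_2\circ g_1$ and hence $f_!=g_{2!}\circ g_{1!}$ by compatibility of $f_!$ with composition. Writing $r_1\colon Y_1\times_S X_2\to Y_1$ and $r_2\colon Y_1\times_S X_2\to X_2$ for the two projections, one has $p_2=r_2\circ g_1$ and $r_1=p_1'\circ g_2$, and moreover the square with vertices $X_1\times_S X_2$, $Y_1\times_S X_2$, $X_1$, $Y_1$ and edges $g_1, r_1, f_1, p_1$ is Cartesian, as is the square with vertices $Y_1\times_S X_2$, $Y_1\times_S Y_2$, $X_2$, $Y_2$ and edges $g_2, p_2', f_2, r_2$.

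Next I would compute $f_!(p_1^*L_1\otimes p_2^*L_2)=g_{2!}g_{1!}(p_1^*L_1\otimes p_2^*L_2)$ in two stages. First, since $p_2^*L_2=g_1^*r_2^*L_2$, the projection formula \eqref{Ex_!tens} yields a canonical isomorphism $g_{1!}(p_1^*L_1\otimes p_2^*L_2)\simeq (g_{1!}p_1^*L_1)\otimes r_2^*L_2$, while base change \eqref{Ex*_!} for the first Cartesian square above gives $g_{1!}p_1^*L_1\simeq r_1^*f_{1!}L_1$; together these give $g_{1!}(p_1^*L_1\otimes p_2^*L_2)\simeq r_1^*f_{1!}L_1\otimes r_2^*L_2$. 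Now apply $g_{2!}$ and argue in the same way: using $r_1^*f_{1!}L_1=g_2^*\,p_1'^*f_{1!}L_1$ and \eqref{Ex_!tens} to extract the external factor, then base change \eqref{Ex*_!} for the second Cartesian square above in the form $g_{2!}r_2^*L_2\simeq p_2'^*f_{2!}L_2$, one obtains a chain of canonical isomorphisms
\[
f_!(p_1^*L_1\otimes p_2^*L_2)\ \simeq\ g_{2!}\bigl(r_1^*f_{1!}L_1\otimes r_2^*L_2\bigr)\ \simeq\ p_1'^*f_{1!}L_1\otimes g_{2!}r_2^*L_2\ \simeq\ p_1'^*f_{1!}L_1\otimes p_2'^*f_{2!}L_2.
\]
The inverse of this composite is the desired map $Kun_{S,(f_1,f_2),!}(L_1,L_2)$, and it is natural in $L_1$ and $L_2$ because each transformation used is.

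I do not expect a genuine obstacle here; the only care needed is the bookkeeping — verifying that the auxiliary squares are Cartesian and keeping track of the directions of \eqref{Ex_!tens} and \eqref{Ex*_!}. One minor point, relevant if one later wants compatibility with exchanging the two factors, is that factoring $f$ instead through $X_1\times_S Y_2$ produces the same isomorphism; this follows from the compatibility of \eqref{Ex_!tens} and \eqref{Ex*_!} with horizontal and vertical composition of Cartesian squares.
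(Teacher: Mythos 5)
Your argument is correct and is essentially the paper's own proof: both factor $f$ through the intermediate product $Y_1\times_S X_2$ as $(\mathrm{id}_{Y_1}\times_S f_2)\circ(f_1\times_S\mathrm{id}_{X_2})$ and use exactly the same two Cartesian squares together with the invertible projection formula~\eqref{Ex_!tens} and base change~\eqref{Ex*_!} for $f_!$. The only difference is cosmetic — you build the composite from $f_!(p_1^*L_1\otimes p_2^*L_2)$ back to $p_1'^*f_{1!}L_1\otimes p_2'^*f_{2!}L_2$ and invert, whereas the paper writes the chain in the other direction — so the resulting canonical isomorphism is the same.
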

\proof
We have the following commutative diagram 
\begin{align}
\begin{gathered}
  \xymatrix{
   X_1 \ar[d]_-{f_1} \ar@{}[rd]|{\Delta_1} & X_1\times_S X_2 \ar[l]_-{p_1} \ar[rd]^-{p_2} \ar[d]^-{f_1\times id} & \\
   Y_1 & Y_1\times_S X_2 \ar[l]_-{\underline{p'_1}} \ar[r]^-{\underline{p_2}} \ar[d]_-{id\times f_2} \ar@{}[rd]|{\Delta_2} & X_2 \ar[d]^-{f_2}\\
   & Y_1\times_S Y_2 \ar[lu]^-{p_1'} \ar[r]^-{p_2'} & Y_2
  }
\end{gathered}
\end{align}
and the following composition:
\begin{align}
\begin{split}
p'^*_1f_{1!}L_1\otimes p'^*_2f_{2!}L_2
&\xrightarrow{id\otimes Ex(\Delta^*_{2!})}
p'^*_1f_{1!}L_1\otimes (id\times f_2)_!\underline{p^*_2}L_2\\
&\xrightarrow{Ex((id\times f_2)^*_!,\otimes)}
(id\times f_2)_!(\underline{p^*_2}L_2\otimes (id\times f_2)^*p'^*_1f_{1!}L_1)\\
&=(id\times f_2)_!(\underline{p^*_2}L_2\otimes \underline{p'^*_1}f_{1!}L_1)\\
&\xrightarrow{Ex(\Delta^*_{1!})}
(id\times f_2)_!(\underline{p^*_2}L_2\otimes (f_1\times id)_!p_1^*L_1)\\
&\xrightarrow{Ex((f_1\times id)^*_!,\otimes)}
(id\times f_2)_!(f_1\times id)_!(p_1^*L_1\otimes (f_1\times id)^*\underline{p^*_2}L_2)\\
&=f_!(p^*_1L_1\otimes p^*_2L_2).
\end{split}
\end{align}
All the maps involved are isomorphisms, and the result follows.
\endproof

\begin{remark}
\label{onebyone}
\begin{enumerate}
\item The K\"unneth formula for $f_!$ is very formal and is valid over a general base $S$, while all the other ones are valid only when $S$ is the spectrum of a field.

\item By definition, the map~\eqref{Kun'}
agrees with the composition
\begin{align}
\begin{split}
 p'^*_1f_{1!}L_1\otimes p'^*_2f_{2!}L_2
&\xrightarrow{Kun_{S,(id_{Y_1},f_2),!}(f_{1!}L_1,L_2)}
(id_{Y_1}\times_S f_2)_!(\underline{p'^*_1}f_{1!}L_1\otimes\underline{p^*_2}L_2)\\
 &\xrightarrow{Kun_{S,(f_1,id_{X_2}),!}(L_1,L_2)}
f_!(p^*_1L_1\otimes p^*_2L_2).
\end{split}
\end{align}
In other words, the map~\eqref{Kun'} is the composition of two maps of the same type where one of the $f_i$'s equals identity.
\end{enumerate}
\end{remark}

\subsection{K\"unneth formula for $f^!$}

\subsubsection{}
\label{num:fct_upper!}
The third K\"unneth formula is concerned with the exceptional inverse image functor. We recall the following facts from the six functors formalism:
\begin{enumerate}
\item For any morphism of schemes $f:X\to S$, the functor $f_!$ has a right adjoint, with the following pair of adjoint functors
\begin{align}
f_!:\mathbf{T}(X)\rightleftharpoons\mathbf{T}(S):f^!,
\end{align}
such that $f^!=f^*$ if $f$ is \'etale. 

\item For any closed immersion $i$ with complementary open immersion $j$, the functor $i_*$ is conservative, and there is a canonical distinguished triangle
\begin{equation}
\label{eq:loc_seq}
i_*i^!
\xrightarrow{ad'_{(i_*,i^!)}}
1
\xrightarrow{ad_{(j^*,j_*)}}
j_*j^*
\xrightarrow{\partial}
i_*i^![1].
\end{equation}
\item 
\label{pt:orient_sq}
For any Cartesian square
\begin{equation}\label{Cart_diag3}
\begin{gathered}
  \xymatrix{
    Y \ar[r]^-{q} \ar[d]_-{g} \ar@{}[rd]|{\Delta} & X \ar[d]^-{f}\\
    T \ar[r]_-{p} & S
  }
\end{gathered}
\end{equation}
which will be denoted as $Y$-$X$-$S$-$T$ (this notation will be used in the proof of Proposition \ref{Kunneth_thm}), there is a canonical invertible natural transformation
\begin{equation}
\label{upper!lower*}
Ex(\Delta^!_*):q_*f^!\simeq g^!p_*.
\end{equation}
When $p$ and $q$ are proper, the map~\eqref{upper!lower*} agrees with the composition
\begin{align}
q_*f^!=q_!f^!\xrightarrow{ad_{(f_!,f^!)}}f^!f_!q_!g^!\simeq f^!p_!g_!g^!\xrightarrow{ad'_{(g_!,g^!)}}f^!p_!=f^!p_*.
\end{align}
\item 
For any Cartesian square as~\eqref{Cart_diag3}, we deduce from the map $Ex(\Delta^*_!)$ in~\eqref{Ex*_!} a canonical natural transformation
\begin{equation}
\label{Ex*!}
Ex(\Delta^{*!}):g^*p^!
\xrightarrow{ad_{(q_!,q^!)}}
q^!q_!g^*p^!
\stackrel{(Ex(\Delta^*_!))^{-1}}{\simeq}
q^!f^*p_!p^!
\xrightarrow{ad'_{(p_!,p^!)}}
q^!f^*,
\end{equation}
which is an isomorphism when $f$ is smooth. 
\end{enumerate}

\begin{lemma}
\label{lem:Ex*!_iso}
Take a Cartesian square $\Delta$ as in~\eqref{Cart_diag3}. Assume that $\mathbf{T}$ satisfies $f'$-base change for any morphism $f'$ obtained from $f$ by pull-back. Then the map $Ex(\Delta^{*!})$ in~\eqref{Ex*!} is an isomorphism.

\end{lemma}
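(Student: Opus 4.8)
The plan is to reduce, using the compatibility of the exchange transformations with composition of Cartesian squares, to the two elementary cases in which $p$ is smooth or a closed immersion, and then to treat these by relative purity and by the localization triangle~\eqref{eq:loc_seq} respectively.

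First I would isolate the formal input. Since $Ex(\Delta^{*!})$ is obtained from the isomorphism $Ex(\Delta^*_!)$ of~\eqref{Ex*_!} by taking mates against the adjunctions $(q_!,q^!)$ and $(p_!,p^!)$, and $Ex(\Delta^*_!)$ is compatible with horizontal composition of squares, the transformation $Ex(\Delta^{*!})$ is also compatible with horizontal composition; thus if $p$ factors through an intermediate base, $Ex(\Delta^{*!})$ is the (suitably whiskered) composite of the exchange transformations of the two stacked Cartesian squares. Moreover, if $p$ is an open immersion, then $p^!=p^*$ and its base change $q$ is again an open immersion with $q^!=q^*$, and one reads off from the construction that $Ex(\Delta^{*!})$ is the canonical isomorphism $g^*p^*\simeq(pg)^*=(fq)^*\simeq q^*f^*$. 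Since pullbacks along the members of an open cover are jointly conservative (Definition~\ref{def:mot_tri_cat}(8)), these two observations make the statement local on both $S$ and $T$: restricting $\Delta$ along an open immersion of $S$ (resp.\ of $T$) replaces $f$ by one of its pullbacks (resp.\ leaves $f$ unchanged), so the hypothesis is preserved.

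Hence I may assume $S$ and $T$ affine, so that $p$ is an affine morphism of finite type over an affine base and factors as a closed immersion $i\colon T\hookrightarrow\mathbb{A}^n_S$ followed by the projection $\pi\colon\mathbb{A}^n_S\to S$; by the composition compatibility it then suffices to prove the lemma for $p=\pi$ and for $p=i$ (for $p=i$ the vertical map becomes the pullback of $f$ to $\mathbb{A}^n_S$, which still satisfies the hypothesis since its pullbacks are pullbacks of $f$). If $p=\pi$, then $\pi$ and its base change $q$ are smooth, and relative purity (\cite{CD1}) yields canonical isomorphisms $\pi^!\simeq\pi^*(n)[2n]$ and $q^!\simeq q^*(n)[2n]$ compatible with the square, which identify $Ex(\Delta^{*!})$ with the tautological isomorphism $g^*\pi^*(n)[2n]=(\pi g)^*(n)[2n]=(fq)^*(n)[2n]=q^*f^*(n)[2n]$. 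For $p=i$, a closed immersion with open complement $j\colon U\hookrightarrow\mathbb{A}^n_S$, write $f'$ for the relevant pullback of $f$, $q$ for the base change of $i$ (a closed immersion), $j'$ for the complementary open immersion, and $g'$ for the base change of $j$; for $M\in\mathbf{T}(\mathbb{A}^n_S)$ apply the triangulated functor $f'^*$ to the localization triangle~\eqref{eq:loc_seq} for $i$ at $M$. Using $f'^*i_*\simeq q_*g^*$ (the map~\eqref{Ex**} is an isomorphism for the proper morphism $i$) on the first term and the hypothesis $f'^*j_*\simeq j'_*g'^*$ (which is precisely $f'$-base change for the square over $U$) on the third, one obtains a distinguished triangle agreeing with the localization triangle~\eqref{eq:loc_seq} for $q$ at $f'^*M$ on its last two objects and arrows; a morphism of distinguished triangles from the latter to the former which is the identity on the middle and third terms has, since two of its three components are isomorphisms, an isomorphism on the first terms, and, unwinding definitions, that isomorphism is $q_*$ applied to $Ex(\Delta^{*!})_M$. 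Since $q$ is a closed immersion, $q^*q_*\simeq\mathrm{id}$ by Definition~\ref{def:mot_tri_cat}(8), so $q_*$ reflects isomorphisms and $Ex(\Delta^{*!})_M$ is an isomorphism for all $M$.

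I expect the main obstacle to be the bookkeeping in the closed-immersion case: one must verify that the comparison of the two localization triangles is genuinely a morphism of distinguished triangles---that the base-change isomorphisms are compatible with the boundary maps and the adjunction units of~\eqref{eq:loc_seq}---and that the isomorphism it produces on the first terms coincides with $q_*(Ex(\Delta^{*!})_M)$ rather than with some a priori unrelated isomorphism; this is a diagram chase with the definitions of the exchange transformations. The mate-calculus compatibilities invoked in the reduction are routine but also need to be made explicit.
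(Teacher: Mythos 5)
Your proposal is correct and follows essentially the same strategy as the paper's (much terser) proof: localize so that $p$ becomes quasi-projective and reduce to the cases of a smooth morphism and of a closed immersion, handling the former by relative purity and the latter by the localization triangle~\eqref{eq:loc_seq} combined with the base-change hypothesis. Your version merely spells out the mate-compatibility and the comparison of localization triangles that the paper leaves implicit.
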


\proof

By localizing $p$ we may suppose $p$ quasi-projective, and therefore we only need to deal with the cases where $p$ is either a closed immersion or a smooth morphism. The smooth case follows from purity, and the case of a closed immersion follows from the localization sequence~\ref{eq:loc_seq} and the base change property.
\endproof

In particular, if $\mathbf{T}$ is a motivic triangulated category which satisfies the 
condition \ref{resol} in \ref{par:devissage}, by Proposition~\ref{SD_ula}, Lemma~\ref{lem:Ex*!_iso} applies whenever $f\in\mathcal{S}_k$ (see Definition~\ref{def:dev}).

\subsubsection{}
We now state a K\"unneth formula for the functor $f^!$. We use the assumptions and 
notation as in Notation~\ref{Kun_not}, with the following diagram
\begin{align}
\begin{gathered}
  \xymatrix{
   X_1 \ar[d]_-{f_1} & X_1\times_S X_2 \ar[l]_-{p_1} \ar[r]^-{p_2} \ar[d]_-{f} & X_2 \ar[d]^-{f_2}\\
   Y_1 & Y_1\times_S Y_2 \ar[l]^-{p_1'} \ar[r]_-{p_2'} & Y_2.
  }
\end{gathered}
\end{align}
For $i=1,2$, let $M_i$ be an object of $\mathbf{T}(Y_i)$. Taking $L_i=f^!_iM_i$ in Lemma~\ref{Kunneth_formula}, we obtain a canonical map 
\begin{equation}
\label{Kunneth}
Kun^{!}_{(f_1,f_2)}(M_1,M_2):
p^*_1f^!_1M_1\otimes p^*_2f^!_2M_2
\to
f^!(p'^*_1M_1\otimes p'^*_2M_2)
\end{equation}
by the composition
\begin{align}
\begin{split}
        p^*_1f^!_1M_1\otimes p^*_2f^!_2M_2&\xrightarrow{ad_{(f^!,f_!)}} f^!f_!(p^*_1f^!_1M_1\otimes p^*_2f^!_2M_2)\\
        &\xrightarrow[\simeq]{\eqref{Kun'}} f^!(p'^*_1f_{1!}f_1^!M_1\otimes p'^*_2f_{2!}f_2^!M_2)\\
        &\xrightarrow{ad'_{(f_1^!,f_{1!})}\otimes ad'_{(f_2^!,f_{2!})}} f^!(p'^*_1M_1\otimes p'^*_2M_2).
        \end{split}
\end{align}
We now establish the same principle as in Remark~\ref{onebyone}. Consider the following diagram
\begin{align}
\begin{gathered}
  \xymatrix{
   X_1 \ar[d]_-{f_1} & X_1\times_S X_2 \ar[l]_-{p_1} \ar[rd]^-{p_2} \ar[d]^-{f_1\times id} & \\
   Y_1 & Y_1\times_S X_2 \ar[l]_-{\underline{p'_1}} \ar[r]^-{\underline{p_2}} \ar[d]_-{id\times f_2} & X_2 \ar[d]^-{f_2}\\
   & Y_1\times_S Y_2 \ar[lu]^-{p_1'} \ar[r]^-{p_2'} & Y_2
  }
\end{gathered}
\end{align}
As particular cases of the \eqref{Kunneth}, we have the following maps:
\begin{equation}
\label{part_kunneth1}
Kun^{!}_{(id_{Y_1},f_2)}(M_1,M_2):
\underline{p'^*_1}M_1\otimes\underline{p^*_2}f^!_2M_2
\to
(id_{Y_1}\times_Sf_2)^!(p'^*_1M_1\otimes p'^*_2M_2)
\end{equation}
\begin{equation}
\label{part_kunneth2}
Kun^{!}_{(f_1,id_{X_2})}(M_1,f_2^!M_2):
p^*_1f^!_1M_1\otimes p^*_2f^!_2M_2
\to
(f_1\times_Sid_{X_2})^!(\underline{p'^*_1}M_1\otimes\underline{p^*_2}f^!_2M_2)
\end{equation}

\begin{lemma}
\label{Ex!*_comp}
The map
\begin{equation}
\label{anotherKunneth}
\begin{split}
p^*_1f^!_1M_1\otimes p^*_2f^!_2M_2
\xrightarrow{\eqref{part_kunneth2}}
&(f_1\times_Sid_{X_2})^!(\underline{p'^*_1}M_1\otimes\underline{p^*_2}f^!_2M_2)\\
\xrightarrow{\eqref{part_kunneth1}}
&f^!(p'^*_1M_1\otimes p'^*_2M_2)
\end{split}
\end{equation}
obtained from \eqref{part_kunneth1} and \eqref{part_kunneth2} agrees with the map~\eqref{Kunneth}.
\end{lemma}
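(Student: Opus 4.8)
The plan is to compare \eqref{Kunneth} and \eqref{anotherKunneth} by passing to their \emph{mates} under the adjunction $f_!\dashv f^!$. Write $g_1=f_1\times_S\mathrm{id}_{X_2}\colon X_1\times_SX_2\to Y_1\times_SX_2$ and $g_2=\mathrm{id}_{Y_1}\times_Sf_2\colon Y_1\times_SX_2\to Y_1\times_SY_2$, so that $f=g_2\circ g_1$, $f_!=g_{2!}g_{1!}$ and $f^!=g_1^!g_2^!$. Since the unit of $f_!\dashv f^!$ induces a bijection $\operatorname{Hom}\bigl(p_1^*f_1^!M_1\otimes p_2^*f_2^!M_2,\,f^!(p_1'^*M_1\otimes p_2'^*M_2)\bigr)\xrightarrow{\sim}\operatorname{Hom}\bigl(f_!(p_1^*f_1^!M_1\otimes p_2^*f_2^!M_2),\,p_1'^*M_1\otimes p_2'^*M_2\bigr)$, it is enough to check that \eqref{Kunneth} and \eqref{anotherKunneth} have the same mate.

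First I would record the mate of a general map of the form \eqref{Kunneth}. Unwinding its definition, $Kun^{!}_{(f_1,f_2)}(M_1,M_2)$ equals $f^!(\psi)\circ f^!(\kappa^{-1})\circ ad_{(f_!,f^!)}$, where $\kappa=Kun_{S,(f_1,f_2),!}(f_1^!M_1,f_2^!M_2)$ is the isomorphism \eqref{Kun'} and $\psi=ad'_{(f_{1!},f_1^!)}\otimes ad'_{(f_{2!},f_2^!)}$ is the tensor product of the two counits. Whiskering by $f_!$ and composing with $ad'_{(f_!,f^!)}$, the naturality of $ad'_{(f_!,f^!)}$ pulls $\psi\circ\kappa^{-1}$ to the outside, and the triangle identity collapses the remaining $ad'_{(f_!,f^!)}\circ f_!(ad_{(f_!,f^!)})$ to the identity; hence the mate of $Kun^{!}_{(f_1,f_2)}(M_1,M_2)$ is simply $\psi\circ\kappa^{-1}$. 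In particular the mate of \eqref{Kunneth} is $\psi\circ\kappa^{-1}$, and specializing $f_2=\mathrm{id}_{X_2}$ (resp. $f_1=\mathrm{id}_{Y_1}$) gives the same description of the mate of \eqref{part_kunneth2} under $g_{1!}\dashv g_1^!$ (resp. of \eqref{part_kunneth1} under $g_{2!}\dashv g_2^!$), where now one of the two counits in $\psi$ is an identity.

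Next I would compute the mate of \eqref{anotherKunneth}. The operation of taking mates is compatible with the composition of the adjunctions $f_!=g_{2!}g_{1!}\dashv g_1^!g_2^!=f^!$, so the mate under $f_!\dashv f^!$ of $g_1^!(\eqref{part_kunneth1})\circ(\eqref{part_kunneth2})$ equals the mate under $g_{2!}\dashv g_2^!$ of the composite of \eqref{part_kunneth1} with the mate of \eqref{part_kunneth2} under $g_{1!}\dashv g_1^!$. Plugging in the formulas from the previous step, using the identity $\operatorname{mate}_{g_2}(\beta\circ h)=\operatorname{mate}_{g_2}(\beta)\circ g_{2!}(h)$ and the naturality of $Kun_{S,(\mathrm{id}_{Y_1},f_2),!}(-,f_2^!M_2)$ in its first variable (to pull the counit $ad'_{(f_{1!},f_1^!)}$ outside), this mate becomes
\begin{align}
\psi\circ Kun_{S,(\mathrm{id}_{Y_1},f_2),!}(f_{1!}f_1^!M_1,f_2^!M_2)^{-1}\circ g_{2!}\bigl(Kun_{S,(f_1,\mathrm{id}_{X_2}),!}(f_1^!M_1,f_2^!M_2)^{-1}\bigr).
\end{align}
The composite of the last two arrows is the inverse of $g_{2!}\bigl(Kun_{S,(f_1,\mathrm{id}_{X_2}),!}\bigr)\circ Kun_{S,(\mathrm{id}_{Y_1},f_2),!}$, which by Remark~\ref{onebyone}(2), applied with $L_i=f_i^!M_i$, is exactly $\kappa$. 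Therefore the mate of \eqref{anotherKunneth} is also $\psi\circ\kappa^{-1}$, and the two maps coincide.

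The essential input is Remark~\ref{onebyone}(2), which splits the $f_!$-K\"unneth isomorphism into its two one-variable factors; everything else is the standard calculus of mates. The point requiring care is the bookkeeping of the two counits together with the naturality square for $Kun_{S,(\mathrm{id}_{Y_1},f_2),!}$ at the intermediate scheme $Y_1\times_SX_2$. One may alternatively avoid mates and give a direct (longer) diagram chase: rewrite $ad_{(f_!,f^!)}$ through the units of $g_{1!}\dashv g_1^!$ and $g_{2!}\dashv g_2^!$, then repeatedly invoke naturality of the exchange transformations and the triangle identities; the argument above is a compact repackaging of that chase.
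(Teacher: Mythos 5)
Your proof is correct and is essentially the paper's argument in different clothing: the paper unwinds \eqref{anotherKunneth} into its six constituent arrows and concludes by Remark~\ref{onebyone} together with naturality of $f\mapsto f_!$, $f\mapsto f^!$, which is exactly the content of your computation. Passing to mates under $f_!\dashv f^!$ (and its factorization through $g_{1!}\dashv g_1^!$, $g_{2!}\dashv g_2^!$) just organizes that same chase, with the triangle identities absorbing the units and counits that the paper handles implicitly, so the two proofs rest on the same inputs.
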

\proof
By definition, the map~\eqref{anotherKunneth} is the composition
\begin{align}
\begin{split}
p^*_1f^!_1M_1\otimes p^*_2f^!_2M_2 
&\xrightarrow{ad_{((f_1\times id)_!,(f_1\times id)^!)}}
(f_1\times id)^!(f_1\times id)_!(p^*_1f^!_1M_1\otimes p^*_2f^!_2M_2)\\
&\xrightarrow{\eqref{Kun'}}
(f_1\times id)^!(\underline{p'^*_1}f_{1!}f_1^!M_1\otimes\underline{p^*_2}f^!_2M_2)\\
&\xrightarrow{ad'_{(f_1^!,f_{1!})}}
(f_1\times id)^!(\underline{p'^*_1}M_1\otimes\underline{p^*_2}f^!_2M_2)\\
&\xrightarrow{ad_{((id\times f_2)_!,(id\times f_2)^!)}}
(f_1\times id)^!(id\times f_2)^!(id\times f_2)_!(\underline{p'^*_1}M_1\otimes\underline{p^*_2}f^!_2M_2)\\
&\xrightarrow{\eqref{Kun'}}
f^!(p_1'^*M_1\otimes p'^*_2f_{2!}f^!_2M_2)\xrightarrow{ad'_{(f_2^!,f_{2!})}}
f^!(p_1'^*M_1\otimes p'^*_2M_2).
\end{split}
\end{align}
The results then follows from Remark~\ref{onebyone} and the naturality of the functors $f\mapsto f^!$ and $f\mapsto f_!$.
\endproof

\begin{proposition}
\label{Kunneth_thm}
Suppose that $\mathbf{T}$ is a motivic triangulated category which satisfies the
condition ~\ref{resol} in \ref{par:devissage}. 
Then for $S=\operatorname{Spec}(k)$, the map~\eqref{Kunneth} is an isomorphism.
\end{proposition}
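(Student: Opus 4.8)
The plan is to use the strong devissage property together with the one-variable reduction of Lemma~\ref{Ex!*_comp} to reduce the claim to a completely explicit computation for projective (primitive Chow) motives. More precisely, since $\mathbf{T}$ satisfies \ref{resol}, by the discussion in \ref{par:devissage} the constructible category $\mathbf{T}_c(Y_i)$ is generated as a thick triangulated subcategory by primitive Chow motives $\varphi_{i*}\mathbbold{1}_{W_i}(n_i)$, and the source and target of the map \eqref{Kunneth} are both triangulated functors (on each variable $M_i$ separately) in $M_i$; the map \eqref{Kunneth} itself is natural. Hence, after noting that Tate twists can be ignored, it suffices to prove that \eqref{Kunneth} is an isomorphism when each $M_i=\varphi_{i*}\mathbbold{1}_{W_i}$ with $\varphi_i:W_i\to Y_i$ projective and $W_i$ smooth over a finite purely inseparable extension of $k$.

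Next I would invoke Lemma~\ref{Ex!*_comp} to factor the map \eqref{Kunneth} as the composition of \eqref{part_kunneth2} followed by \eqref{part_kunneth1}, each of which is a K\"unneth map in which one of the two morphisms $f_i$ is the identity. This reduces the whole statement to the single-variable case: it is enough to show that a map of the form $Kun^{!}_{(f_1,id_{X_2})}$ (equivalently $Kun^!_{(id,f_2)}$) is an isomorphism when the relevant $M_i$ is a primitive Chow motive. So from now on we may assume $f_2=id_{X_2}$, i.e.\ we are proving the isomorphy of
\begin{equation*}
p_1^*f_1^!M_1\otimes p_2^*f_2^!M_2 \to f^!(p_1'^*M_1\otimes p_2'^*M_2)
\end{equation*}
where one side of the product is trivial in the sense that $Y_2=X_2$ and $f_2$ is the identity.

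For this single-variable computation I would combine two inputs. First, over the base field $k$ every constructible object is dualizable (this is the devissage remark recalled just before Notation~\ref{Kun_not}), so $M_1$ and the structure morphisms to $\operatorname{Spec}(k)$ have good duality behaviour; concretely $f_1^!M_1$ can be rewritten in terms of $f_1^*$ and the dualizing object via the absolute purity/duality isomorphisms, and similarly on the product. Second, I would use the purity isomorphism to handle smooth pieces and the localization triangle \eqref{eq:loc_seq} together with $\mathcal{S}_k$-base change (Proposition~\ref{SD_ula}, via the Corollary following \ref{par:devissage}) and Lemma~\ref{lem:Ex*!_iso} to handle closed immersions, after factoring the projective morphism $\varphi_i$ through a closed immersion into projective space followed by a smooth projection. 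The point is that the map \eqref{Kunneth} is built entirely out of the exchange transformations $Ex(\Delta_!^*)$, $Ex((\,\cdot\,)^*_!,\otimes)$, $Ex(\Delta_*^!)$ and $Ex(\Delta^{*!})$, each of which is known to be an isomorphism in the relevant smooth or proper or $\mathcal{S}_k$ situation; a diagram chase then shows the composite is an isomorphism for $\varphi_i$ projective, hence for primitive Chow motives.

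The main obstacle I anticipate is the bookkeeping in the single-variable case: one must carefully match the abstractly-defined map \eqref{Kunneth} with the concrete composite of purity and base-change isomorphisms, and in particular verify that the two ways of introducing the factor coming from $f_2=id$ (the $ad_{(f_!,f^!)}$/$ad'$ zig-zag on one hand, and the purity rewriting of $f^!$ on the other) are compatible. The purely inseparable subtlety — that $W_i$ is only smooth over a finite radicial extension of $k$, not over $k$ itself — is handled by $\mathcal{Rad}$-base change, which holds under \ref{resol} by \cite[Remark 2.1.13]{EK}, so it does not cause real trouble but must be mentioned. Once the exchange maps are identified, the fact that each is an isomorphism in the smooth/proper/$\mathcal{S}_k$-local-acyclic setting finishes the proof.
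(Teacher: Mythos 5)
Your overall skeleton does match the paper's --- reduce via Lemma~\ref{Ex!*_comp} to the case where one $f_i$ is the identity, devisse, and then identify the abstract map with a composite of base-change isomorphisms --- but the middle of your argument has a real gap. The assertion that \eqref{Kunneth} ``is built entirely out of the exchange transformations \dots each of which is known to be an isomorphism'' is not true: by definition \eqref{Kunneth} is the composite of the unit $ad_{(f_!,f^!)}$, the isomorphism \eqref{Kun'}, and the counits $ad'_{(f_i^!,f_{i!})}$, and these units/counits are not invertible. The whole content of the proof is to produce explicit chains of isomorphisms (in the paper, \eqref{*!1} and \eqref{*!2}, built from proper base change, the projection formula, and crucially $Ex(\Delta_4^{*!})$ for the $\mathcal{S}_k$-projection $W\times_kX_2\to X_2$, which is invertible by Lemma~\ref{lem:Ex*!_iso} together with Proposition~\ref{SD_ula} and \ref{resol}) and then to prove, by a nontrivial diagram chase (the hexagon lemma and the compatibility of $Ex(\Delta^*_!)$ with composition of Cartesian squares), that the unit/counit composite \eqref{Kunneth} coincides with that chain. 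You flag this identification as ``bookkeeping'', but you never specify the chain of isomorphisms, and your setup does not naturally produce one.

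The reason is the order of your reductions. You devisse both $M_1$ and $M_2$ into primitive Chow motives before applying Lemma~\ref{Ex!*_comp}; but after that reduction the identity-slot entry of \eqref{part_kunneth2} is $f_2^!M_2$, which is no longer a Chow motive, so the structure you arranged sits in the wrong slot. The paper proceeds the other way round: first reduce to $f_1=id$ with both entries arbitrary, then apply weak devissage (valid in any motivic triangulated category, with no constructibility hypothesis) to the identity-slot object, writing it as $p_*\mathbbold{1}_W$ with $p$ proper; it is exactly this form that allows $p_*$ to be shuffled across the diagram by proper base change and the projection formula, reducing everything to Lemma~\ref{lem:Ex*!_iso}, while the other entry $M_2$ stays completely general. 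If instead you intend to exploit the Chow structure of the non-identity-slot entry, your sketch falls back on rewriting $f_1^!M_1$ by local duality, but making that work requires K\"unneth compatibilities for $\mathbb{D}$ (such as \eqref{Kunneth_K} and \eqref{dual_hom}), which in the paper are deduced from the very proposition you are proving --- so as written that route is circular, or at least needs an independent argument you have not given. (Your use of strong devissage and $\mathcal{Rad}$-base change for the purely inseparable issue is fine and corresponds to the variant mentioned in the paper's footnote, where $p_{2W}$ becomes smooth and purity replaces $\mathcal{S}_k$-base change.)
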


\proof
By Lemma~\ref{Ex!*_comp}, it suffices to show that the maps \eqref{part_kunneth1} 
and \eqref{part_kunneth2} are isomorphisms. By symmetry, it suffices to show that 
\eqref{part_kunneth1} is an isomorphism. 
In other words we can assume that $X_1=Y_1$ and $f_1=id_{X_1}$. 
By weak devissage we may suppose that $M_1=p_*\mathbbold{1}_W$, 
where $p:W\to Y_1=X_1$ is a proper morphism.  
The map~\eqref{Kunneth} then becomes the following
\begin{equation}
\label{Kunneth_special}
Kun^{!}:
p^*_1p_*\mathbbold{1}_W\otimes p^*_2f^!_2M_2
\to
(id_{X_1}\times_kf_2)^!(p'^*_1p_*\mathbbold{1}_W\otimes p'^*_2M_2).
\end{equation}
We have the following commutative diagram
\begin{align}
\begin{gathered}\label{*!_diagram}
\xymatrix{
W\ar[dd]_-p&&W\times_k X_2\ar[ll]_{p_{1W}}\ar[dd]_<(.6){p\times_k id}\ar[rr]^-{p_{2W}}
\ar[rrd]_<(.2){id\times_k f_2}&&X_2\ar[rrd]^-{f_2}\\
&&&&W\times_k Y_2\ar@/^1pc/[llllu]^<(.7){p'_{1W}}\ar[dd]_<(.4){p\times_k id}
\ar[rr]_<(0.3){p'_{2W}}&&Y_2\\
X_1&&X_1\times_kX_2\ar[ll]_-{p_1}\ar[rrd]^-{id\times_k f_2}\ar[uurr]_<(.3){p_2}\\
&&&&X_1\times_k Y_2\ar@/^1pc/[ullll]^<(.7){p'_1}\ar[uurr]_-{p'_2}.
}
\end{gathered}
\end{align}
and with the convention in~\ref{num:fct_upper!}~\eqref{pt:orient_sq} we denote by
\begin{itemize}
\item $\Delta_1$ the Cartesian square formed by $W\times_kY_2$-$X_1\times_kY_2$-$X_1$-$W$;
\item $\Delta_2$ the Cartesian square formed by $W\times_kX_2$-$X_1\times_kX_2$-$X_1$-$W$;
\item $\Delta_3$ the Cartesian square formed by $W\times_kX_2$-$W\times_kY_2$-$X_1\times_kY_2$-$X_1\times_kX_2$, and $\Delta'_3$ the transpose of the same square, oriented as $W\times X_2$-$X_1\times_kX_2$-$X_1\times_kY_2$-$W\times_kY_2$;
\item $\Delta_4$ the Cartesian square formed by $W\times_kX_2$-$W\times_kY_2$-$Y_2$-$X_2$;
\item $\Delta_5$ the Cartesian square formed by $X_1\times_kX_2$-$X_1\times_kY_2$-$Y_2$-$X_2$.
\end{itemize}
To show that the map~\eqref{Kunneth_special} is an isomorphism, we transform it into other maps which are know to be isomorphisms: we have the following two composition maps, where all arrows are isomorphisms by Lemma~\ref{lem:Ex*!_iso}:
\begin{equation}
\label{*!1}
\begin{split}
p^*_1p_*\mathbbold{1}_W\otimes p^*_2f^!_2M_2
&\xrightarrow{Ex(\Delta^*_{2!})}
(p\times_kid_{X_2})_*\mathbbold{1}_{W\times_kX_2}\otimes p^*_2f^!_2M_2 \\
&\xrightarrow{Ex((p\times_kid_{X_2})^*_!,\otimes)}
(p\times_kid_{X_2})_*(p\times_kid_{X_2})^*p^*_2f^!_2M_2\\
&=(p\times_kid_{X_2})_*p^*_{2W}f^!_2M_2\\
&\xrightarrow{(Ex(\Delta_4^{*!}))^{-1}}
(p\times id_{X_2})_*(id_W\times_kf_2)^!p'^*_{2W}M_2\\
&\xrightarrow{Ex(\Delta'^!_{3*})}
(id_{Y_1}\times f_2)^!(p\times_kid_{Y_2})_*p'^*_{2W}M_2;
\end{split}
\end{equation}

\begin{equation}
\label{*!2}
\begin{split}
(id_{Y_1}\times_kf_2)^!(p_1'^*p_*\mathbbold{1}_W\otimes p_2'^*M_2)&\xrightarrow{Ex(\Delta^*_{1!})}
(id_{Y_1}\times_kf_2)^!((p\times_kid_{Y_2})_*\mathbbold{1}_{W\times_kY_2}\otimes p_2'^*M_2) \\
&\xrightarrow{Ex((p\times_kid_{Y_2})^*_!,\otimes)}
(id_{Y_1}\times_kf_2)^!(p\times_kid_{Y_2})_*p'^*_{2W}M_2,
\end{split}
\end{equation}
using the fact that $p$ is proper and $p_{2W}:W\times X_2\to X_2$ belongs to $\mathcal{S}_k$.
\footnote{Alternatively, we could also use strong devissage and suppose that $p_{2W}$ is smooth.}
Therefore we are reduced to show that the following diagram is commutative:
\begin{align}
\begin{gathered}
  \xymatrix{
    & p^*_1p_*\mathbbold{1}_{W}\otimes p^*_2f^!_2M_2 \ar[d]^-{\eqref{*!1}} \ar[ld]_-{\eqref{Kunneth}}\\
    (id_{Y_1}\times_kf_2)^!(p_1'^*p_*\mathbbold{1}_W\otimes p_2'^*M_2) \ar[r]^-{~\eqref{*!2}~} & (id_{Y_1}\times_kf_2)^!(p\times_kid_{Y_2})_*p'^*_{2W}M_2.
  }
\end{gathered}
\end{align}
This follows from a diagram chase of the following form:
\begin{align*}
\begin{gathered}
\resizebox{\textwidth}{!}{%
  \xymatrix{
   (id_{X_1}\times_kf_2)_!(p_1^*p_*\mathbbold{1}_W\otimes p_2^*f_2^!M_2) \ar[r]^-{Ex(\Delta^*_{1!})}_-{\sim} \ar[d]^-{Ex(\Delta^*_{2!})}_-{\wr} & (id_{X_1}\times_kf_2)_!((id_{X_1}\times_kf_2)^*(p\times_kid_{Y_2})_*\mathbbold{1}_{W\times_kY_2}\otimes p_2^*f_2^!M_2) \ar[d]_-{\wr}^-{(Ex((id_{X_1}\times_kf_2)^*_!,\otimes))^{-1}} \\
   (id_{X_1}\times_kf_2)_!((p\times_kid_{X_2})_*\mathbbold{1}_{W\times X_2}\otimes p_2^*f_2^!M_2) \ar[ru]_-{(Ex(\Delta'^*_{3!}))^{-1}}^-{\sim} \ar[d]^-{Ex((p\times_kid_{X_2})^*_!,\otimes)}_-{\wr} & (p\times_kid_{Y_2})_*\mathbbold{1}_{W\times_kY_2}\otimes (id_{X_1}\times_kf_2)_!p_2^*f_2^!M_2 \ar[d]_-{\wr}^-{Ex((p\times_kid_{Y_2})^*_!,\otimes)} \\
   (id_{X_1}\times_kf_2)_!(p\times_kid_{X_2})_*p_{2W}^*f_2^!M_2 \ar@{=}[d] & (p\times_kid_{Y_2})_*(p\times_kid_{Y_2})^*(id_{X_1}\times_kf_2)_!p_2^*f_2^!M_2 \ar[d]_-{\wr}^-{(Ex(\Delta^*_{5!}))^{-1}} \\
   (p\times_kid_{Y_2})_*(id_W\times_kf_2)_!p_{2W}^*f_2^!M_2 \ar[d]^-{ad_{((id_W\times_kf_2)_!,(id_W\times_kf_2)^!)}} \ar[ru]^-{(Ex(\Delta^*_{3!}))^{-1}}_-{\sim} & (p\times_kid_{Y_2})_*p_{2W}'^*f_{2!}f_2^!M_2 \\
   (p\times_kid_{Y_2})_*(id_W\times_kf_2)_!(id_W\times_kf_2)^!(id_W\times_kf_2)_!p_{2W}^*f_2^!M_2 \ar[r]^-{(Ex(\Delta^*_{4!}))^{-1}}_-{\sim} & (p\times_kid_{Y_2})_*(id_W\times_kf_2)_!(id_W\times_kf_2)^!p_{2W}'^*f_{2!}f_2^!M_2. \ar[u]_-{ad'_{((id_W\times_kf_2)_!,(id_W\times_kf_2)^!)}}
  }
}
\end{gathered}
\end{align*}
The commutativity of the hexagon follows from the following general fact: for any Cartesian diagram of the form
\begin{align}
\begin{gathered}
  \xymatrix{
    Y \ar[r]^-{q} \ar[d]_-{g} \ar@{}[rd]|{\Delta} & X \ar[d]^-{f}\\
    T \ar[r]_-{p} & S
  }
\end{gathered}
\end{align}
and any object $M\in\mathbf{T}(X)$, we have a commutative diagram
\begin{align}
\begin{gathered}
  \xymatrix@C=40pt{
   f_!(q_!\mathbbold{1}_Y\otimes M) \ar[d]_-{Ex(q^*_!,\otimes)}^-{\wr} & f_!(f^*p_!(\mathbbold{1}_T\otimes M)) \ar[l]_-{Ex(\Delta'^*_!)}^-{\sim}  & p_!\mathbbold{1}_T\otimes f_!M \ar[l]_-{Ex(f^*_!,\otimes)}^-{\sim} \ar[d]^-{Ex(p^*_!,\otimes)}_-{\wr} \\
   f_!q_!q^*M \ar[r]^-{\sim} & p_!g_!q^*M & p_!p^*f_!M \ar[l]_-{Ex(\Delta^*_!)}^-{\wr}
  }
\end{gathered}
\end{align}
where the square $\Delta'$ is the transpose of the square $\Delta$, oriented as $Y$-$T$-$S$-$X$. The rest of the diagram follows from standard adjunctions of functors and the fact that the maps of the form $Ex(\Delta^*_!)$ are compatible with horizontal and vertical compositions of Cartesian squares, given that
\begin{itemize}
\item The square $\Delta_2$ is the composition of $\Delta_1$ and $\Delta'_3$;
\item The square $\Delta_4$ is the composition of $\Delta_3$ and $\Delta_5$.
\end{itemize}
\endproof

\subsection{K\"unneth formula for $\protect\underline{Hom}$}\

\subsubsection{}
The last K\"unneth formula is concerned with the internal Hom functor, the remaining one of the six functors. If $\mathbf{T}$ is motivic, its monoidal structure is closed, namely the tensor product on $\mathbf{T}$ has a right adjoint given by the internal Hom functor $\underline{Hom}(\cdot,\cdot)$, such that we have natural bijections
\begin{align}
Hom(A\otimes B,C)=Hom(A,\underline{Hom}(B,C)).
\end{align}
For any $A$, we denote by $\eta_A:B\to\underline{Hom}(A,A\otimes B)$ the unit (or coevaluation) map, and $\epsilon_A:A\otimes\underline{Hom}(A,B)\to B$ the counit (or evaluation) map. We have the following exchange structures: for any morphism $f:X\to Y$ and objects $K\in\mathbf{T}(X)$, $L,M\in\mathbf{T}(Y)$ we have the following natural isomorphisms:
\begin{equation}
\label{Ex_!Hom}
Ex(f_!,\underline{Hom}):\underline{Hom}(f_!K,L)\xrightarrow{\sim} f_*\underline{Hom}(K,f^!L);
\end{equation}
\begin{equation}
\label{Ex^!Hom}
Ex(f^!,\underline{Hom}):f^!\underline{Hom}(L,M)\xrightarrow{\sim} \underline{Hom}(f^*L,f^!M).
\end{equation}
We deduce the following isomorphism:
\begin{equation}
\label{eq:intern_Hom_part}
\underline{Hom}(f_!\mathbbold{1}_X,L)\xrightarrow{\sim} f_*\underline{Hom}(\mathbbold{1}_X,f^!L)=f_*f^!L.
\end{equation}
When $f$ is proper, the isomorphism~\eqref{eq:intern_Hom_part} fits into the two following commutative diagrams:
\begin{equation}\label{diag:compat_hom_lower!}
\begin{gathered}
  \xymatrix{
   \underline{Hom}(f_!\mathbbold{1}_X,L) \ar[d]_-{\eqref{eq:intern_Hom_part}}^-{\wr} \ar[r]^-{} & \underline{Hom}(f_!\mathbbold{1}_X,L)\otimes f_!\mathbbold{1}_X \ar[d]^-{\epsilon_{f_!\mathbbold{1}_X}} \\
   f_!f^!L \ar[r]^-{ad'_{(f_!,f^!)}} & L
  }
\end{gathered}
\end{equation}
\begin{equation}\label{diag:compat_hom_lower!2}
\begin{gathered}
  \xymatrix{
   f_!M \ar[r]^-{\eta_{f_!\mathbbold{1}_X}} \ar[d]_-{ad_{(f_!,f^!)}} & \underline{Hom}(f_!\mathbbold{1}_X,f_!M\otimes f_!\mathbbold{1}_X) \ar[d]^-{\eqref{eq:intern_Hom_part}}_-{\wr} \\
   f_!f^!f_!M \ar[r]^-{} & f_*f^!(f_!M\otimes f_!\mathbbold{1}_X)
  }
\end{gathered}
\end{equation}
where the two unlabeled horizontal maps are induced by the unit $\mathbbold{1}_Y\to f_*\mathbbold{1}_X=f_!\mathbbold{1}_X$.

\subsubsection{}
Let $Y_1$ and $Y_2$ be two $S$-schemes. For $i=1,2$, 
we denote by $p'_i:Y_1\times_S Y_2\to Y_i$ the canonical projection, 
and let $M_i$ and $N_i$ be objects of $\mathbf{T}(Y_i)$. We have a canonical map
\begin{equation}
\label{Kunneth_dual}
p_1'^*M_1\otimes p_1'^*\underline{Hom}(M_1, N_1)\otimes p_2'^*M_2\otimes p_2'^*\underline{Hom}(M_2, N_2)
\to
p_1'^*N_1\otimes p_2'^*N_2
\end{equation}
which induces a map
\begin{equation}
\label{Kunneth_part}
p_1'^*\underline{Hom}(M_1, N_1)\otimes p_2'^*\underline{Hom}(M_2, N_2)
\to\underline{Hom}(p_1'^*M_1\otimes p_2'^*M_2, p_1'^*N_1\otimes p_2'^*N_2).
\end{equation}
\begin{proposition}
\label{hom_product}
Suppose that $\mathbf{T}$ is a motivic triangulated category which satisfies 
the condition~\ref{resol} in \ref{par:devissage} and $M_1,M_2$ are constructible. Then for $S=\operatorname{Spec}(k)$, the map~\eqref{Kunneth_part} is an isomorphism.
\end{proposition}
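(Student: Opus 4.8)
The plan is to reduce the statement to the previously established K\"unneth formula for $f^!$ (Proposition~\ref{Kunneth_thm}) by exploiting the adjunctions~\eqref{Ex_!Hom} and~\eqref{Ex^!Hom} together with the devissage result. First I would use weak devissage to reduce to the case where $M_i=\varphi_{i!}\mathbbold{1}_{W_i}$ for a proper (projective) morphism $\varphi_i\colon W_i\to Y_i$: since both sides of~\eqref{Kunneth_part} are triangulated in $M_1$ and $M_2$ separately (the right-hand side via $\underline{Hom}(-,-)$ being triangulated in the first variable, the left-hand side via $\underline{Hom}(M_1,N_1)$), and both commute with Tate twists, the subcategory of those $M_i$ for which the map is an isomorphism is thick and stable by twists, hence contains all of $\mathbf{T}_c(Y_i)$ once it contains all projective motives.

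Next I would rewrite everything using~\eqref{eq:intern_Hom_part}. With $M_i=\varphi_{i!}\mathbbold{1}_{W_i}$ we get $\underline{Hom}(M_i,N_i)\simeq\varphi_{i*}\varphi_i^!N_i$, so the left-hand side of~\eqref{Kunneth_part} becomes $p_1'^*\varphi_{1*}\varphi_1^!N_1\otimes p_2'^*\varphi_{2*}\varphi_2^!N_2$. Here I would apply the base change isomorphism~\eqref{Ex**} (available since $\varphi_i$ is proper, hence a fortiori via the $\mathcal{S}_k$-strong local acyclicity, but proper base change suffices) to move the $\varphi_{i*}$ past $p_i'^*$, and the K\"unneth formula for $f_*$ (Proposition~\ref{Kun_*}, applicable since $S=\operatorname{Spec}(k)$) to combine the two pushforwards into a single pushforward along $\varphi_1\times_k\varphi_2\colon W_1\times_kW_2\to Y_1\times_kY_2$. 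This identifies the left-hand side with $(\varphi_1\times_k\varphi_2)_*\bigl(q_1^*\varphi_1^!N_1\otimes q_2^*\varphi_2^!N_2\bigr)$ where $q_i$ are the projections from $W_1\times_kW_2$. On the other hand, for the right-hand side I would use~\eqref{Ex_!Hom} for the proper morphism $\varphi_1\times_k\varphi_2$ to write $\underline{Hom}\bigl(p_1'^*M_1\otimes p_2'^*M_2,\,p_1'^*N_1\otimes p_2'^*N_2\bigr)=\underline{Hom}\bigl((\varphi_1\times_k\varphi_2)_!\mathbbold{1},\,-\bigr)\simeq(\varphi_1\times_k\varphi_2)_*(\varphi_1\times_k\varphi_2)^!\bigl(p_1'^*N_1\otimes p_2'^*N_2\bigr)$, where I have used that $p_1'^*\varphi_{1!}\mathbbold{1}_{W_1}\otimes p_2'^*\varphi_{2!}\mathbbold{1}_{W_2}\simeq(\varphi_1\times_k\varphi_2)_!\mathbbold{1}_{W_1\times_kW_2}$ by the (formal, already-proven) K\"unneth formula for $f_!$ in Lemma~\ref{Kunneth_formula}.

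At this point both sides are pushforwards along $\varphi_1\times_k\varphi_2$, and the comparison map is induced by the natural transformation $q_1^*\varphi_1^!N_1\otimes q_2^*\varphi_2^!N_2\to(\varphi_1\times_k\varphi_2)^!(p_1'^*N_1\otimes p_2'^*N_2)$, which is exactly the K\"unneth map~\eqref{Kunneth} for $f^!$ associated to $\varphi_1,\varphi_2$ over $k$. Since $W_i$ may be taken smooth over $k$ (using strong devissage, replacing projective motives by primitive Chow motives) or more simply since Proposition~\ref{Kunneth_thm} applies to arbitrary proper $\varphi_i$ over $k$ under~\ref{resol}, this map is an isomorphism; applying $(\varphi_1\times_k\varphi_2)_*$ preserves the isomorphism. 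The one technical point requiring care, and the step I expect to be the main obstacle, is the bookkeeping of compatibilities: one must check that the isomorphisms just assembled — the instances of~\eqref{Ex**},~\eqref{eq:Ex_lower*_tensor},~\eqref{Ex_!Hom},~\eqref{Kun'}, and the definition~\eqref{Kunneth_part} itself via~\eqref{Kunneth_dual} — fit into a commutative diagram whose outer rectangle is precisely the claim. This amounts to a diagram chase using the compatibility of~\eqref{eq:intern_Hom_part} with the evaluation and coevaluation maps recorded in~\eqref{diag:compat_hom_lower!} and~\eqref{diag:compat_hom_lower!2}, together with the naturality of all the exchange transformations; it is routine but lengthy, and is where the bulk of the work lies.
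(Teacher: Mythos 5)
Your proposal is correct and follows essentially the same route as the paper: weak devissage to $M_i=f_{i!}\mathbbold{1}_{X_i}$ with $f_i$ proper, identification of both sides of~\eqref{Kunneth_part} with $f_*f^!(p_1'^*N_1\otimes p_2'^*N_2)$ via~\eqref{eq:intern_Hom_part}, the $f_!$-K\"unneth of Lemma~\ref{Kunneth_formula} and Proposition~\ref{Kunneth_thm}, followed by a compatibility diagram chase that uses~\eqref{diag:compat_hom_lower!} and~\eqref{diag:compat_hom_lower!2}. The only cosmetic difference is that the paper first reduces that final commutativity check to the case $f_1=\mathrm{id}_{X_1}$ (as in the proof of Proposition~\ref{Kunneth_thm}) before carrying out the chase.
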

\proof
By weak devissage we may suppose that $M_i=f_{i!}\mathbbold{1}_{X_i}$ where $f_i:X_i\to Y_i$ is a proper morphism. We use the following notations:
\begin{align}
\begin{gathered}
  \xymatrix{
   X_1 \ar[d]_-{f_1} & X_1\times_k X_2 \ar[l]_-{p_1} \ar[r]^-{p_2} \ar[d]_-{f} & X_2 \ar[d]^-{f_2}\\
   Y_1 & Y_1\times_k Y_2 \ar[l]^-{p_1'} \ar[r]_-{p_2'} & Y_2.
  }
  \end{gathered}
\end{align}
Then the map~\eqref{Kunneth_part} becomes
\begin{align}
p_1'^*\underline{Hom}(f_{1!}\mathbbold{1}_{X_1}, N_1)\otimes p_2'^*\underline{Hom}(f_{2!}\mathbbold{1}_{X_2}, N_2)
\to
\underline{Hom}(p_1'^*f_{1!}\mathbbold{1}_{X_1}\otimes p_2'^*f_{2!}\mathbbold{1}_{X_2}, p_1'^*N_1\otimes p_2'^*N_2).
\end{align}
As in the proof of Proposition~\ref{Kunneth_thm}, we transform this map into other isomorphisms.
By Lemma~\ref{Kunneth_formula} and Proposition~\ref{Kunneth_thm}, 
we have the following two composition maps, where all arrows are isomorphisms:
\begin{equation}
\label{Hom*_LHS}
\begin{split}
p_1'^*\underline{Hom}(f_{1!}\mathbbold{1}_{X_1}, N_1)\otimes p_2'^*\underline{Hom}(f_{2!}\mathbbold{1}_{X_2}, N_2)
&\overset{\eqref{eq:intern_Hom_part}}{\simeq}
p_1'^*f_{1*}f^!_1N_1\otimes p_2'^*f_{2*}f^!_2N_2\\
&\overset{\eqref{Kun'}}{\simeq}
f_*(p_1^*f^!_1N_1\otimes p_2^*f^!_2N_2)\\
&\overset{\eqref{Kunneth}}{\simeq}
f_*f^!(p_1'^*N_1\otimes p_2'^*N_2);
\end{split}
\end{equation}
\begin{equation}
\label{Hom*_RHS}
\begin{split}
\underline{Hom}(p_1'^*f_{1!}\mathbbold{1}_{X_1}\otimes p_2'^*f_{2!}\mathbbold{1}_{X_2}, p_1'^*N_1\otimes p_2'^*N_2)
&\overset{\eqref{Kun'}}{\simeq}
\underline{Hom}(f_!\mathbbold{1}_{X_1\times_kX_2}, p_1'^*N_1\otimes p_2'^*N_2)\\
&\overset{\eqref{eq:intern_Hom_part}}{\simeq}
f_*f^!(p_1'^*N_1\otimes p_2'^*N_2).
\end{split}
\end{equation}
Therefore we are reduced to show that the following diagram is commutative:
\begin{align}
\begin{gathered}
  \xymatrix{
   p_1'^*\underline{Hom}(f_{1!}\mathbbold{1}_{X_1}, N_1)\otimes p_2'^*\underline{Hom}(f_{2!}\mathbbold{1}_{X_2}, N_2) \ar[d]_-{\eqref{Kunneth_part}} \ar[rd]^-{\eqref{Hom*_LHS}} & \\
    \underline{Hom}(p_1'^*f_{1!}\mathbbold{1}_{X_1}\otimes p_2'^*f_{2!}\mathbbold{1}_{X_2}, p_1^*N_1\otimes p_2^*N_2) \ar[r]^-{\eqref{Hom*_RHS}} & f_*f^!(p'^*_1N_1\otimes p'^*_2N_2).
  }
\end{gathered}
\end{align}
By the same argument as in the proof of Proposition~\ref{Kunneth_thm}, we can assume that $X_1=Y_1$ and $f_1=id_{X_1}$. The result then follows from a diagram chase, using the following general fact: for any Cartesian diagram of the form
\begin{align}
\begin{gathered}
  \xymatrix{
    Y \ar[r]^-{g} \ar[d]_-{q} \ar@{}[rd]|{\Delta} & X \ar[d]^-{p}\\
    T \ar[r]_-{f} & S
  }
\end{gathered}
\end{align}
with $f$ and $g$ proper, and any objects $A\in\mathbf{T}(X)$, $B\in\mathbf{T}(S)$, there is a canonical isomorphism
\begin{equation}
\label{eq:Ex_twice_AB}
\begin{split}
\xymatrix{
A\otimes p^*f_!f^!B
\ar[r]^-{Ex(\Delta^*_!)}_-{\sim}
&A\otimes g_!q^*f^!B
\ar[r]^-{Ex(g^*_!,\otimes)}_-{\sim}
&g_!(g^*A\otimes q^*f^!B)
}
\end{split}
\end{equation}
and a commutative diagram of the form
$$
\resizebox{\textwidth}{!}{
  \xymatrix{
    A\otimes p^*\underline{Hom}(f_!\mathbbold{1}_T,B) \ar[r]^-{\eta_{p^*f_!\mathbbold{1}_T}} \ar[d]^-{\wr}_-{\eqref{eq:intern_Hom_part}} & \underline{Hom}(p^*f_!\mathbbold{1}_T,A\otimes p^*\underline{Hom}(f_!\mathbbold{1}_T,B)\otimes p^*f_!\mathbbold{1}_T) \ar[r]^-{\epsilon_{f_!\mathbbold{1}_T}} \ar[d]^-{Ex(\Delta^*_!)}_-{\wr} & \underline{Hom}(p^*f_!\mathbbold{1}_T,A\otimes p^*B) \ar[d]^-{Ex(\Delta^*_!)}_-{\wr}\\
    A\otimes p^*f_*f^!B \ar[d]^-{\wr}_-{\eqref{eq:Ex_twice_AB}} &  \underline{Hom}(g_!\mathbbold{1}_Y,A\otimes p^*\underline{Hom}(f_!\mathbbold{1}_T,B)\otimes p^*f_!\mathbbold{1}_T) \ar[r]^-{\epsilon_{f_!\mathbbold{1}_T}} \ar[d]^-{\eqref{eq:intern_Hom_part}}_-{\wr} & \underline{Hom}(g_!\mathbbold{1}_Y,A\otimes p^*B) \ar[d]^-{\eqref{eq:intern_Hom_part}} \\
    g_!(g^*A\otimes q^*f^!B) \ar[d]_-{ad_{(g_!,g^!)}} & g_*g^!(A\otimes p^*\underline{Hom}(f_!\mathbbold{1}_T,B)\otimes p^*f_!\mathbbold{1}_T) \ar[r]^-{\epsilon_{f_!\mathbbold{1}_T}} \ar[d]_-{\wr}^-{\eqref{eq:intern_Hom_part}} & g_*g^!(A\otimes p^*B) \\
    g_!g^!g_!(g^*A\otimes q^*f^!B) & g_*g^!(A\otimes p^*f_*f^!B\otimes p^*f_!\mathbbold{1}_T) & g_!g^!(A\otimes p^*f_*f^!B) \ar[u]_-{ad'_{(f_!,f^!)}} \ar[l]^-{} \ar@/^2pc/[ll]^-{\sim}_-{\eqref{eq:Ex_twice_AB}}
  }
}
$$
using the diagrams~\eqref{diag:compat_hom_lower!} and~\eqref{diag:compat_hom_lower!2}.
\endproof

\begin{remark}
When $\mathbf{T}=DM_h$ is the category of $h$-motives, Proposition~\ref{hom_product} also holds when $M_1$ and $M_2$ are \emph{locally constructible} (\cite[Definition 6.3.1]{CD3}). This is because to show that the map~\eqref{Kunneth_part} is an isomorphism it suffices to check it \'etale locally, and for any \'etale morphism $f:Y\to X$ and objects $M,N\in\mathbf{T}(X)$, the canonical map $f^*\underline{Hom}(M,N)\to\underline{Hom}(f^*M,f^*N)$ is an isomorphism.
\end{remark}

\subsubsection{}
We end this section by summarizing the K\"unneth formulas we have obtained:
\begin{theorem}
\label{th:kun}
Let $S$ be a scheme and let $f_1:X_1\to Y_1$, $f_2:X_2\to Y_2$ be two $S$-morphisms. Denote by $f:X_1\times_S X_2\to Y_1\times_S Y_2$ be their product. 
Let $\mathbf{T}$ be a motivic triangulated category. For $i=1,2$, consider objects $L_i\in\mathbf{T}(X_i)$ and $M_i,N_i\in\mathbf{T}(Y_i)$. Then with the notations of~\ref{Kun_not} there are canonical maps
\begin{align}
\label{eq:th_lower*}
f_{1\ast}L_1\boxtimes_S f_{2\ast}L_2
\to 
f_\ast(L_1\boxtimes_S L_2)
\end{align}
\begin{align}
\label{eq:th_lower!}
f_{1!}L_1\boxtimes_S f_{2!}L_2
\to 
f_!(L_1\boxtimes_S L_2)
\end{align}
\begin{align}
\label{eq:th_upper!}
f^!_1M_1\boxtimes_S f^!_2M_2
\to
f^!(M_1\boxtimes_S M_2)
\end{align}
\begin{align}
\label{eq:th_hom}
\underline{Hom}(M_1, N_1)\boxtimes_S \underline{Hom}(M_2, N_2)
\to
\underline{Hom}(M_1\boxtimes_S M_2, N_1\boxtimes_S N_2)
\end{align}
such that
\begin{enumerate}
\item The map~\eqref{eq:th_lower!} is an isomorphism.
\item If $\mathbf{T}$ satisfies the condition~\ref{resol} in \ref{par:devissage} and 
$S=\operatorname{Spec}(k)$, 
then the maps~\eqref{eq:th_lower*},~\eqref{eq:th_upper!} are isomorphisms.
\item If $\mathbf{T}$ satisfies the condition~\ref{resol} in \ref{par:devissage},
 $S=\operatorname{Spec}(k)$ and $M_1$, $M_2$ are constructible, 
 then the map~\eqref{eq:th_hom} is an isomorphism.
\end{enumerate}
\end{theorem}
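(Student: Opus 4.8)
The plan is to assemble Theorem~\ref{th:kun} as a summary of the results already proven in this section, translating each of the four maps into the compact $\boxtimes_S$ notation of Notation~\ref{Kun_not} and then invoking the corresponding proposition for each invertibility claim. First I would construct the four maps~\eqref{eq:th_lower*}--\eqref{eq:th_hom}: the map~\eqref{eq:th_lower*} is the map $Kun_*$ of~\eqref{Kunneth_*}, rewritten with $p_i^*L_i \otimes p_j^*L_j$ abbreviated as $L_1\boxtimes_S L_2$; the map~\eqref{eq:th_lower!} is the map $Kun_{S,(f_1,f_2),!}$ of~\eqref{Kun'}; the map~\eqref{eq:th_upper!} is the map $Kun^!_{(f_1,f_2)}(M_1,M_2)$ of~\eqref{Kunneth} (note that here the target is literally $f^!(p'^*_1M_1\otimes p'^*_2M_2) = f^!(M_1\boxtimes_S M_2)$, matching the notation once one recalls that $p'_i$ are the projections from $Y_1\times_S Y_2$); and~\eqref{eq:th_hom} is the map~\eqref{Kunneth_part}, since $p_i'^*\underline{Hom}(M_i,N_i)$ and $\underline{Hom}(p_1'^*M_1\otimes p_2'^*M_2, p_1'^*N_1\otimes p_2'^*N_2)$ are precisely $\underline{Hom}(M_1,N_1)\boxtimes_S\underline{Hom}(M_2,N_2)$ and $\underline{Hom}(M_1\boxtimes_S M_2, N_1\boxtimes_S N_2)$ respectively. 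None of this requires new content; it is purely a matter of unwinding the abbreviation $\boxtimes_S$.

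For the three invertibility statements I would then argue as follows. Claim~(1) is immediate from Lemma~\ref{Kunneth_formula}, which shows~\eqref{Kun'} is an isomorphism over an \emph{arbitrary} base $S$ with no hypotheses on $\mathbf{T}$ beyond being motivic; this is exactly the content of the first point. Claim~(2) for~\eqref{eq:th_lower*}: when $S=\operatorname{Spec}(k)$ and $\mathbf{T}$ satisfies~\ref{resol}, the Corollary after~\ref{par:devissage} gives $\mathcal{S}_k$-strong local acyclicity, so every $X_i\to\operatorname{Spec}(k)$ is universally strongly locally acyclic relatively to any $L_i$, and Proposition~\ref{Kun_*} applies. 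Claim~(2) for~\eqref{eq:th_upper!} is Proposition~\ref{Kunneth_thm} verbatim. Claim~(3) for~\eqref{eq:th_hom} is Proposition~\ref{hom_product} verbatim, noting the hypothesis that $M_1,M_2$ be constructible is carried over unchanged.

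Since every ingredient has already been established, there is essentially no obstacle; the only point requiring a small amount of care is the bookkeeping of the canonical isomorphisms identifying, for each of the six functors, the twisted/projected objects $p_i^*(\cdot)$ on $X_1\times_S X_2$ or $p_i'^*(\cdot)$ on $Y_1\times_S Y_2$ with the notation $(\cdot)\boxtimes_S(\cdot)$, and checking that under these identifications the maps of~\eqref{Kunneth_*}, \eqref{Kun'}, \eqref{Kunneth}, \eqref{Kunneth_part} really do land in the stated source and target. This is routine given that all six functors commute with Tate twists via canonical isomorphisms (as recalled before Definition~\ref{def:dev}), so one may, as remarked there, safely ignore twists throughout. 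I would therefore present the proof in two short steps: (i) identify the four maps with the previously constructed ones, and (ii) cite Lemma~\ref{Kunneth_formula}, Proposition~\ref{Kun_*}, Proposition~\ref{Kunneth_thm} and Proposition~\ref{hom_product} in turn for the invertibility assertions.
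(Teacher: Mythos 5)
Your proposal is correct and coincides with the paper's own treatment: Theorem~\ref{th:kun} is stated there as a summary, with the four maps being exactly those of \eqref{Kunneth_*}, \eqref{Kun'}, \eqref{Kunneth}, \eqref{Kunneth_part} rewritten in the $\boxtimes_S$ notation, and the invertibility claims follow verbatim from Lemma~\ref{Kunneth_formula}, Proposition~\ref{Kun_*}, Proposition~\ref{Kunneth_thm} and Proposition~\ref{hom_product}. No gap; the bookkeeping you flag (projections, Tate twists) is exactly the only remaining routine check.
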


\section{The Verdier pairing}
\label{section:pairing}

In this section, we define the Verdier pairing using the K\"unneth formulas in 
Section~\ref{chap_kun}, following \cite[III]{SGA5}. This pairing is compatible 
with proper direct images, which implies the Lefschetz-Verdier formula. 
We assume that $\mathbf{T}_c$ is a motivic triangulated category 
of constructible objects 
which satisfies the condition~\ref{resol} in \ref{par:devissage}.
\subsection{The pairing} 
\label{section_vpairing}
\subsubsection{}
\label{3.1.1}
Let $X_1$ and $X_2$ be two schemes. We denote by $X_{12}=X_1\times_k X_2$ 
and $p_i:X_{12}\to X_i$ the projections. Let $L_i\in\mathbf{T}_c(X_i)$ and
let $q_i:X_i\to {\rm Spec}(k)$ be the structure map for $i=1,2$. 
We denote by $\mathbb{D}(L_i)=\underline{Hom}(L_i, \mathcal{K}_{X_i})=\underline{Hom}(L_i, q_i^!\mathbbold{1}_k)$. 
See \cite[Section 4.4]{CD1} for the detailed duality formalism related to this functor.
By Proposition~\ref{Kunneth_thm} and Proposition~\ref{hom_product}, we have the following isomorphism
\begin{equation}
\label{dual_hom}
p_1^*\mathbb{D}(L_1)\otimes p_2^*L_2
\simeq
\underline{Hom}(p_1^*L_1, p_2^!L_2),
\end{equation}
given by the composition
\begin{align}
\begin{split}
p_1^*\mathbb{D}(L_1)\otimes p_2^*L_2&=p_1^*\underline{Hom}(L_1,\mathcal{K}_{X_1})\otimes p_2^*L_2\\
&\overset{\eqref{Kunneth_part}}{\simeq}
\underline{Hom}(p_1^*L_1, p_1^*\mathcal{K}_{X_1}\otimes p_2^*L_2)
\overset{\eqref{Kunneth}}{\simeq}
\underline{Hom}(p_1^*L_1, p_2^!L_2).
\end{split}
\end{align}
By symmetry, we also get an isomorphism
\begin{equation}
p_2^*\mathbb{D}(L_2)\otimes p_1^*L_1
\simeq
\underline{Hom}(p_2^*L_2, p_1^!L_1).
\end{equation}
The particular case of~\eqref{dual_hom} for $L_1=\mathbbold{1}_{X_1}$ and $L_2=\mathcal{K}_{X_2}$ gives an isomorphism
\begin{equation}
\label{Kunneth_K}
p_1^*\mathcal{K}_{X_1}\otimes p_2^*\mathcal{K}_{X_2}
\simeq
\mathcal{K}_{X_{12}}.
\end{equation}

\subsubsection{}
\label{abstract_map}
Now let $c:C\to X_{12}$ and $d:D\to X_{12}$ be two morphisms, and form the following Cartesian square
\begin{align}
\begin{gathered}
  \xymatrix{
    E \ar[r]^-{c'} \ar[d]_-{d'} \ar@{}[rd]|{\Delta} & D \ar[d]^-{d}\\
    C \ar[r]_-{c} & X_{12}
  }
\end{gathered}
\end{align}
Denote by $e:E\to X_{12}$ the canonical morphism. For any two objects $P,Q\in\mathbf{T}(X_{12})$, we have a canonical map
\begin{equation}
\label{eq:cdPQ}
\begin{split}
d'^*c^!P\otimes c'^*d^!Q
&\xrightarrow{ad_{(e_!,e^!)}}
e^!e_!(d'^*c^!P\otimes c'^*d^!Q)\\
&\xrightarrow{\eqref{Kun'}}
e^!(c_!c^!P\otimes d_!d^!Q)
\xrightarrow{ad'_{(c_!,c^!)}\otimes ad'_{(d_!,d^!)}}
e^!(P\otimes Q)
\end{split}
\end{equation}
where we use Lemma~\ref{Kunneth_formula} relative to the base scheme $X_{12}$ and the morphisms $c$ and $d$. Therefore we deduce a canonical map
\begin{equation}
\label{eePQ}
c_*c^!P\otimes d_*d^!Q\to e_*e^!(P\otimes Q)
\end{equation}
which is given by the composition
\begin{align}
\begin{split}
c_*c^!P\otimes d_*d^!Q
&\xrightarrow{ad_{(e^*,e_*)}}
e_*e^*(c_*c^!P\otimes d_*d^!Q)
=e_*(e^*c_*c^!P\otimes e^*d_*d^!Q)\\
&=e_*(d'^*c^*c_*c^!P\otimes c'^*d^*d_*d^!Q)\\
&\xrightarrow{ad'_{(c^*,c_*)}\otimes ad'_{(d^*,d_*)}}
e_*(d'^*c^!P\otimes c'^*d^!Q)\xrightarrow{\eqref{eq:cdPQ}}
e_*e^!(P\otimes Q).
\end{split}
\end{align}

\subsubsection{}
For $i=1,2$ denote by $c_i=p_i\circ c:C\to X_i$, $d_i=p_i\circ d:D\to X_i$, and let $L_i\in\mathbf{T}_c(X_i)$. By~\ref{3.1.1} and the projection formula for $\underline{Hom}$ we have an isomorphism 
\begin{equation}
\label{Homc12}
\begin{split}
\underline{Hom}(c_1^*L_1, c_2^!L_2)
&=\underline{Hom}(c^*p_1^*L_1, c^!p_2^!L_2)\\
\overset{\eqref{Ex^!Hom}}{\simeq}
&c^!\underline{Hom}(p_1^*L_1, p_2^!L_2)
\overset{\eqref{dual_hom}}{\simeq} 
c^!(p_1^*\mathbb{D}(L_1)\otimes p_2^*L_2).
\end{split}
\end{equation}
By symmetry we have
\begin{equation}
\label{Homd12}
\underline{Hom}(d_2^*L_2, d_1^!L_1)\simeq d^!(p_2^*\mathbb{D}(L_2)\otimes p_1^*L_1).
\end{equation}
Then by~\eqref{Homc12}, \eqref{Homd12} and \eqref{eePQ}, we define a map
\begin{equation}
\label{e*KE}
c_*\underline{Hom}(c_1^*L_1, c_2^!L_2)\otimes d_*\underline{Hom}(d_2^*L_2, d_1^!L_1)
\to 
e_*\mathcal{K}_E
\end{equation}
as the composition
\begin{align}
\begin{split}
&c_*\underline{Hom}(c_1^*L_1, c_2^!L_2)\otimes d_*\underline{Hom}(d_2^*L_2, d_1^!L_1)\\
\simeq
&c_*c^!(p_1^*\mathbb{D}(L_1)\otimes p_2^*L_2)\otimes d_*d^!(p_2^*\mathbb{D}(L_2)\otimes p_1^*L_1)\\
\xrightarrow{\eqref{eePQ}}
&e_*e^!(p_1^*\mathbb{D}(L_1)\otimes p_2^*L_2\otimes p_2^*\mathbb{D}(L_2)\otimes p_1^*L_1)\\
\simeq
&e_*e^!(p_1^*L_1\otimes p_1^*\mathbb{D}(L_1)\otimes p_2^*L_2\otimes p_2^*\mathbb{D}(L_2))\\
\xrightarrow{\epsilon_{L_1}\otimes\epsilon_{L_2}}
&e_*e^!(p_1^*\mathcal{K}_{X_1}\otimes p_2^*\mathcal{K}_{X_2})
\overset{\eqref{Kunneth_K}}{\simeq}
e_*e^!\mathcal{K}_{X_{12}}=e_*\mathcal{K}_E.
\end{split}
\end{align}

\subsubsection{}
\label{num:vp_proper}
We now construct a proper functoriality of the map~\eqref{e*KE}.
Let $X'_1$ and $X'_2$ be two schemes, and denote by $X'_{12}=X'_1\times_kX'_2$. 
Let $f_1:X_1\to X'_1$ and $f_2:X_2\to X'_2$ be two morphisms, and 
denote by $f=f_1\times_kf_2:X_{12}\to X'_{12}$ their product. 
For $i=1,2$, we denote by $p'_i:X'_{12}\to X_i$ the projection, 
and let $L_i\in\mathbf{T}_c(X_i)$. 
There is a canonical isomorphism
\begin{equation}
\label{eq:3.3.1}
f_*\underline{Hom}(p_1^*L_1,p_2^!L_1)
\simeq
\underline{Hom}(p'^*_1f_{1!}L_1,p'^!_2f_{2*}L_2)
\end{equation}
given by the composition
\begin{align}
\begin{split}
f_*\underline{Hom}(p_1^*L_1,p_2^!L_1)
&\overset{\eqref{dual_hom}}{\simeq}
f_*(p_1^*\mathbb{D}(L_1)\otimes p_2^*L_2)
\overset{\eqref{Kunneth_*}}{\simeq}
p'^*_1f_{1*}\mathbb{D}(L_1)\otimes p'^*_2f_{2*}L_2\\
&\quad\simeq
 p'^*_1\mathbb{D}(f_{1!}L_1)\otimes p'^*_2f_{2*}L_2
\overset{\eqref{dual_hom}}{\simeq}
\underline{Hom}(p'^*_1f_{1!}L_1,p'^!_2f_{2*}L_2)
\end{split}
\end{align}
where we use the canonical duality isomorphism $\mathbb{D}(f_{1!}L_1)\simeq f_{1*}\mathbb{D}(L_1)$ (\cite[Corollary 4.4.24]{CD1}).

\subsubsection{}
Now we consider another Cartesian square of schemes
\begin{align}
\begin{gathered}
  \xymatrix{
    E' \ar[r]^-{} \ar[d]_-{} & D' \ar[d]^-{}\\
    C' \ar[r]^-{} & X'_{12}
  }
\end{gathered}
\end{align}
such that there is a commutative cube
\begin{align}
\begin{gathered}
  \xymatrix{
    E \ar[rr]^-{} \ar[dd]_-{} \ar[rd]^-{f_E} &  & D \ar[rd]^-{f_D} \ar[dd]^-{d} &\\
    & E' \ar[rr]^-{} \ar[dd]_-{} & & D' \ar[dd]^-{d'} \\
    C \ar[rr]^-{\ \ c} \ar[rd]_-{f_C} & & X_{12} \ar[rd]^-{f} &\\
    & C' \ar[rr]^-{c'} & & X'_{12}.
  }
\end{gathered}
\end{align}
Assume that $f_1,f_2,c,c',d,d'$ are proper. Consider the following commutative diagram
\begin{align}
\begin{gathered}
  \xymatrix{
  C \ar[r]_-{\underline{c}} \ar@/^-.5pc/[rd]^-{f_C} \ar@/^1pc/[rr]^-{c} & C'\times_{X'_{12}}X_{12} \ar[r]_-{c'_f} \ar[d]^-{f_{C'}} & X_{12} \ar[d]^-{f}\\
   & C' \ar[r]^-{c'} & X'_{12}
  }
\end{gathered}
\end{align}
where $\underline{c}$ is necessarily proper. There is a natural transformation of functors
\begin{equation}
\label{eq:trans_fcc}
f_*c_*c^!=c'_*f_{C'*}\underline{c}_*\underline{c}^!c'^!_f
\xrightarrow{ad'_{(\underline{c}_*,\underline{c}^!)}}
c'_*f_{C'*}c'^!_f
\overset{\eqref{upper!lower*}}{\simeq}
c'_*c'^!f_*.
\end{equation}

For $i=1,2$, denote by $c'_i=p'_i\circ c':C'\to X'_i$, $d'_i=p'_i\circ d':D'\to X'_i$. Then there is a canonical map
\begin{equation}
\label{eq:pf_corr1}
f_*c_*\underline{Hom}(c_1^*L_1, c_2^!L_2)
\to
c'_*\underline{Hom}(c'^*_1f_{1*}L_1, c'^!_2f_{2*}L_2)
\end{equation}
given by the composition
\begin{align}
\begin{split}
f_*c_*\underline{Hom}(c_1^*L_1, c_2^!L_2)
&\overset{\eqref{Ex^!Hom}}{\simeq}
f_*c_*c^!\underline{Hom}(p_1^*L_1, p_2^!L_2)
\xrightarrow{\eqref{eq:trans_fcc}}
c'_*c'^!f_*\underline{Hom}(p_1^*L_1, p_2^!L_2)\\
&\overset{\eqref{eq:3.3.1}}{\simeq}
c'_*c'^!\underline{Hom}(p'^*_1f_{1!}L_1,p'^!_2f_{2*}L_2)
\overset{\eqref{Ex^!Hom}}{\simeq}
c'_*\underline{Hom}(c'^*_1f_{1*}L_1, c'^!_2f_{2*}L_2).
\end{split}
\end{align}
By symmetry we have a map
\begin{equation}
\label{eq:pf_corr2}
f_*d_*\underline{Hom}(d_2^*L_2, d_1^!L_1)
\to
d'_*\underline{Hom}(d'^*_2f_{2*}L_2, d'^!_1f_{1*}L_1).
\end{equation}

\begin{proposition}
\label{proper_pf}
The square
\begin{align}
\begin{gathered}
  \xymatrix{
    f_*c_*\underline{Hom}(c_1^*L_1, c_2^!L_2)\otimes f_*d_*\underline{Hom}(d_2^*L_2, d_1^!L_1) \ar[r]^-{} \ar[d]_-{} & f_*e_*\mathcal{K}_E \ar[d]^-{}\\
    c'_*\underline{Hom}(c'^*_1f_{1*}L_1, c'^!_2f_{2*}L_2)\otimes d'_*\underline{Hom}(d'^*_2f_{2*}L_2, d'^!_1f_{1*}L_1) \ar[r]^-{\eqref{e*KE}} & e'_*\mathcal{K}_{E'}
  }
\end{gathered}
\end{align}
is commutative, where the left vertical map is given by the maps~\eqref{eq:pf_corr1} and~\eqref{eq:pf_corr2}, and the upper horizontal row is the composition
\begin{align}
\begin{split}
        &f_*c_*\underline{Hom}(c_1^*L_1, c_2^!L_2)\otimes f_*d_*\underline{Hom}(d_2^*L_2, d_1^!L_1)\\
      &\xrightarrow{\eqref{eq:Ex_lower*_tensor}}  f_*(c_*\underline{Hom}(c_1^*L_1, c_2^!L_2)\otimes d_*\underline{Hom}(d_2^*L_2, d_1^!L_1)) \xrightarrow{\eqref{e*KE}} f_*e_*\mathcal{K}_E.
 \end{split}
\end{align}
\end{proposition}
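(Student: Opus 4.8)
The plan is to run a large diagram chase parallel to the argument in \cite[III]{SGA5}, reducing the commutativity of the square to a handful of coherences among the basic exchange transformations of the six functors formalism together with the duality coherences of \cite[Section 4.4]{CD1}.

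First I would expand both composites. On the bottom (and right) side, the pairing \eqref{e*KE} is built from \eqref{Homc12}, \eqref{Homd12}, the map \eqref{eePQ}, the evaluations $\epsilon_{L_1}\otimes\epsilon_{L_2}$ and the isomorphism \eqref{Kunneth_K}; on the top side the same is preceded by the lax monoidal structure \eqref{eq:Ex_lower*_tensor} of $f_*$ and followed by the canonical push-forward $f_*e_*\mathcal{K}_E=e'_*f_{E!}f_E^!\mathcal{K}_{E'}\xrightarrow{ad'_{(f_{E!},f_E^!)}}e'_*\mathcal{K}_{E'}$, where $f_E\colon E\to E'$ is the (proper) map in the cube; and the left vertical maps \eqref{eq:pf_corr1}, \eqref{eq:pf_corr2} are built from \eqref{Ex^!Hom}, the proper base change \eqref{eq:trans_fcc} (for $c$ and for $d$), the isomorphism \eqref{eq:3.3.1}, and the duality isomorphism $\mathbb{D}(f_{i!}L_i)\simeq f_{i*}\mathbb{D}(L_i)$ of \cite[Corollary 4.4.24]{CD1}. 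Using \eqref{Homc12}, \eqref{Homd12} and their primed analogues together with the naturality of \eqref{Ex^!Hom}, one may replace $\underline{Hom}(c_1^*L_1,c_2^!L_2)$ by $c^!(p_1^*\mathbb{D}(L_1)\otimes p_2^*L_2)$ and $\underline{Hom}(d_2^*L_2,d_1^!L_1)$ by $d^!(p_2^*\mathbb{D}(L_2)\otimes p_1^*L_1)$ throughout (and similarly on the primed side, where $f_{i!}=f_{i*}$ since $f_i$ is proper). This reduces the claim to (i) the compatibility of the map \eqref{eePQ} with proper push-forward along the commutative cube, and (ii) the compatibility of the evaluations $\epsilon_{L_i}$, of the duality isomorphism $\mathbb{D}(f_{i!}L_i)\simeq f_{i*}\mathbb{D}(L_i)$, and of \eqref{Kunneth_K} with $f_*$.

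For (i), recall that \eqref{eePQ} is assembled from the unit and counits of $(e^*,e_*)$, $(c^*,c_*)$, $(d^*,d_*)$ and the map \eqref{eq:cdPQ}, which in turn uses only the unit and counits of $(e_!,e^!)$, $(c_!,c^!)$, $(d_!,d^!)$ together with the exceptional Künneth isomorphism \eqref{Kun'} over the base $X_{12}$. Since the $f_!$-Künneth formula is formal and valid over an arbitrary base (Remark~\ref{onebyone}), being built from $Ex(f^*_!,\otimes)$ in \eqref{Ex_!tens} and $Ex(\Delta^*_!)$ in \eqref{Ex*_!}, both of which are compatible with horizontal and vertical compositions of Cartesian squares, the map \eqref{Kun'} is compatible with base change; combining this with the explicit description of $Ex(\Delta^!_*)$ in \eqref{upper!lower*} in the proper case (where $q_*f^!\simeq g^!p_*$ is obtained from $ad_{(f_!,f^!)}$ and $ad'_{(g_!,g^!)}$), statement (i) becomes a formal consequence of the triangle identities and of the compatibility of $Ex(\Delta^*_!)$ and $Ex(\Delta^!_*)$ along the faces of the cube. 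Statement (ii) — the compatibility of $\mathbb{D}(f_!L)\simeq f_*\mathbb{D}(L)$ with the evaluation, and of \eqref{Kunneth_K} (a special case of \eqref{dual_hom}) with $f_*$ — is encoded in the coherence of \eqref{Ex_!Hom} and \eqref{Ex^!Hom} and is part of the Grothendieck-Verdier duality formalism of \cite[Section 4.4]{CD1}.

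The main obstacle is purely organizational: the diagram one must verify is very large, and the work lies in splitting it into sub-diagrams each of which is a naturality square, a (co)unit triangle identity, one of the compatibility statements above, or a square-composition identity for $Ex(\Delta^*_!)$ and $Ex(\Delta^!_*)$. The single most delicate piece is the sub-diagram relating the proper base change \eqref{eq:trans_fcc}, $f_*c_*c^!\simeq c'_*c'^!f_*$, to the Künneth isomorphism \eqref{Kun'}, since it mixes the adjunction $(\underline{c}_*,\underline{c}^!)$ occurring in \eqref{eq:trans_fcc} with the $f_!$-formalism; here one uses that $\underline{c}$, $c$, $c'$ (and likewise for $d$) are proper, so that all the relevant instances of $Ex(\Delta^!_*)$ coincide with the explicit composites of \eqref{upper!lower*}, and commutativity again follows formally. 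No geometric input beyond what is already established in Section~\ref{chap_kun} enters, and the argument is, as in the rest of this section, a transcription of \cite[III]{SGA5}.
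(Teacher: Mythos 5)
Your proposal is essentially the paper's own argument: the paper proves Proposition~\ref{proper_pf} by declaring the proof identical to \cite[III 4.4]{SGA5} (see also \cite[Theorem 3.3.2]{YZ18}), i.e.\ precisely the formal diagram chase you outline, decomposed into naturality squares, (co)unit identities, the compatibilities of $Ex(\Delta^*_!)$, $Ex(\Delta^!_*)$ and \eqref{Kun'} with compositions of Cartesian squares, and the duality coherences of \cite[Section 4.4]{CD1}. Your sketch is correct and takes the same route, so nothing further is needed.
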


The proof is identical to the one given in \cite[III 4.4]{SGA5}, see also \cite[Theorem 3.3.2]{YZ18}.

\begin{remark}
\label{rk:lci_contravariant}
Following \cite[4.2.1]{DJK} (see also~\ref{recall_pur_trans} below), there is a natural transformation $f^*\to f^!$, given a lci morphism $f$ together with an isomorphism $Th_X(L_f)\simeq\mathbbold{1}_X$, where the left hand side is the Thom space of the trivial virtual tangent bundle.
\footnote{In particular this condition means that $f$ has relative dimension $0$. If $\mathbf{T}_c$ is \emph{oriented} (i.e. endowed with a trivialization of Thom spaces of all vector bundles), then any lci morphism of relative dimension $0$ satisfy this property.}
For example \'etale morphisms satisfy this property, but the class of such morphisms is bigger.
The map~\eqref{e*KE} is also compatible with pull-backs along these morphisms, which we do not develop here.
\end{remark}

\begin{definition}
\label{def:verdier_pairing}
With the notations of~\eqref{abstract_map}, for two maps $u:c_1^*L_1\to c_2^!L_2$ and $v:d_2^*L_2\to d_1^!L_1$ which are called \textbf{(cohomological) correspondences}, we define the \textbf{Verdier pairing}
\begin{equation}
\label{verdier_pairing}
\langle u,v\rangle:\mathbbold{1}_{E}\to\mathcal{K}_E
\end{equation}
obtained by adjunction from the composition
\begin{align}
\begin{split}
&\mathbbold{1}_{X_{12}}
\to
c_*\mathbbold{1}_C\otimes d_*\mathbbold{1}_D
\xrightarrow{c_*\eta_{L_1}\otimes d_*\eta_{L_2}}
c_*\underline{Hom}(c_1^*L_1,c_1^*L_1)\otimes d_*\underline{Hom}(d_2^*L_2,d_2^*L_2)\\
&\xrightarrow{u_*\otimes v_*}
c_*\underline{Hom}(c_1^*L_1,c_2^!L_2)\otimes d_*\underline{Hom}(d_2^*L_2,d_1^!L_1)
\xrightarrow{\eqref{e*KE}}
e_*\mathcal{K}_E.
\end{split}
\end{align}
\end{definition}

\begin{remark}
\begin{enumerate}
\item The map $\langle u,v\rangle$ can be seen as an element of the \emph{bivariant group} $H_0(X/k)$, and the map~\eqref{eq:prop_pf_1k} below corresponds to the natural proper functoriality (see \cite{DJK}). We will come back to this point of view later in Section~\ref{section:CC}.
\item In the case where every scheme is equal to $\operatorname{Spec}(k)$, 
the assumptions imply that every object $L\in\mathbf{T}_c(k)$ is dualizable and 
$\mathbb{D}(L)=L^\vee$ is the dual object. In this case, given two maps 
$u:L_1\to L_2$, $v:L_2\to L_1$, the map $\langle u,v\rangle$ is the composition
\begin{align}
\begin{split}
\mathbbold{1}_k
\xrightarrow{\eta_{L_1}\otimes\eta_{L_2}}
&L_1^\vee\otimes L_1\otimes L_2^\vee\otimes L_2
\xrightarrow{u\otimes1\otimes v\otimes1}
L_1^\vee\otimes L_2\otimes L_2^\vee\otimes L_1\\
\simeq
&L_1\otimes L_1^\vee\otimes L_2\otimes L_2^\vee
\xrightarrow{\epsilon_{L_1}\otimes\epsilon_{L_2}}
\mathbbold{1}_k.
\end{split}
\end{align}
It is not hard to check that $\langle u,v\rangle$ agrees with the trace of the composition $v\circ u:L_1\to L_1$. We will see a more general result in Proposition~\ref{prop:bil_tr} below.

\item For $\mathbf{T}_c=\mathbf{DM}_{cdh,c}$, we recover the construction in \cite[5.8]{Ols} as a particular case of the Verdier pairing.
\end{enumerate}
\end{remark}

\subsubsection{}
Consider the setting of~\ref{num:vp_proper}. We can define the \textbf{proper direct image} of correspondences as follows: given two correspondences $u:c_1^*L_1\to c_2^!L_2$ and $v:d_2^*L_2\to d_1^!L_1$, using the maps~\eqref{eq:pf_corr1} and~\eqref{eq:pf_corr2}, we obtain the following maps
\begin{align}
&f_*u:c'^*_1f_{1*}L_1\to c'^!_2f_{2*}L_2,\\
&f_*v:d'^*_2f_{2*}L_2\to d'^!_1f_{1*}L_1
\end{align}
regarded as correspondences between $f_{1*}L_1$ and $f_{2*}L_2$. On the other hand, since the canonical morphism $f_E:E\to E'$ is proper, for any map $w:\mathbbold{1}_{E}\to\mathcal{K}_E$ we define the proper direct image as
\begin{equation}
\label{eq:prop_pf_1k}
f_{E*}w:
\mathbbold{1}_{E'}
\xrightarrow{}
f_{E*}\mathbbold{1}_{E}
\xrightarrow{w}
f_{E*}\mathcal{K}_E
=
f_{E*}f_E^!\mathcal{K}_{E'}
\xrightarrow{ad'_{(f_{E*},f_E^!)}}
\mathcal{K}_{E'}.
\end{equation}
With the conventions above, the following is a consequence of Proposition~\ref{proper_pf}:
\begin{corollary}
\label{cor:vp_proper}
The Verdier pairing is compatible with proper direct images, i.e. we have
\begin{align}
\langle f_*u,f_*v\rangle=f_{E*}\langle u,v\rangle:\mathbbold{1}_{E'}\to\mathcal{K}_{E'}.
\end{align}

\end{corollary}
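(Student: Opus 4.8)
The plan is to deduce Corollary~\ref{cor:vp_proper} directly from Proposition~\ref{proper_pf} by unwinding both sides of the claimed identity in terms of the commutative square of that proposition. First I would write out $\langle f_*u, f_*v\rangle$ using Definition~\ref{def:verdier_pairing} applied to the primed data: it is the adjoint of the composition
\begin{align*}
\mathbbold{1}_{X'_{12}}\to c'_*\mathbbold{1}_{C'}\otimes d'_*\mathbbold{1}_{D'}\xrightarrow{\eta\otimes\eta}c'_*\underline{Hom}(c'^*_1f_{1*}L_1,c'^*_1f_{1*}L_1)\otimes d'_*\underline{Hom}(d'^*_2f_{2*}L_2,d'^*_2f_{2*}L_2)\xrightarrow{(f_*u)_*\otimes (f_*v)_*}(\cdots)\xrightarrow{\eqref{e*KE}}e'_*\mathcal{K}_{E'}.
\end{align*}
On the other hand, $f_{E*}\langle u,v\rangle$ is obtained from the unprimed composition of Definition~\ref{def:verdier_pairing} by applying $f_*$ everywhere and post-composing with the adjunction counit as in~\eqref{eq:prop_pf_1k}. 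The key is that Proposition~\ref{proper_pf} identifies the bottom-right corner of the two constructions: the maps~\eqref{eq:pf_corr1} and~\eqref{eq:pf_corr2} transport $u_*$ and $v_*$ (pre-composed with the coevaluations) to $(f_*u)_*$ and $(f_*v)_*$, so it suffices to check that these coevaluation inputs are compatible and that the outputs $f_*e_*\mathcal{K}_E\to e'_*\mathcal{K}_{E'}$ match the counit in~\eqref{eq:prop_pf_1k}.

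The key steps, in order, are: (i) rewrite $\langle f_*u,f_*v\rangle$ as the adjoint of a composition built from the primed coevaluations, $(f_*u)_*\otimes(f_*v)_*$, and~\eqref{e*KE} for the primed square; (ii) observe that the primed coevaluation $\mathbbold{1}_{X'_{12}}\to c'_*\underline{Hom}(c'^*_1f_{1*}L_1,c'^*_1f_{1*}L_1)$ is obtained from the unprimed one $\mathbbold{1}_{X_{12}}\to c_*\underline{Hom}(c_1^*L_1,c_1^*L_1)$ by applying $f_*$ and using the unit $\mathbbold{1}_{X'_{12}}\to f_*\mathbbold{1}_{X_{12}}$, together with the fact that~\eqref{eq:pf_corr1} sends $\eta_{L_1}$-composed-with-$u$ to $\eta_{f_{1*}L_1}$-composed-with-$f_*u$ (this is where one uses the coherence of the duality isomorphism $\mathbb{D}(f_{1!}L_1)\simeq f_{1*}\mathbb{D}(L_1)$ from~\cite[Corollary 4.4.24]{CD1} with the evaluation maps); (iii) apply Proposition~\ref{proper_pf} to replace the composite ``$f_*$ of (unprimed \eqref{e*KE})'' by ``(primed \eqref{e*KE}) $\circ$ (\eqref{eq:pf_corr1}$\otimes$\eqref{eq:pf_corr2})''; (iv) match the resulting map $f_*e_*\mathcal{K}_E\to e'_*\mathcal{K}_{E'}$ with the counit appearing in~\eqref{eq:prop_pf_1k}, using that the isomorphism~\eqref{upper!lower*} for the square $E$-$X_{12}$-$X'_{12}$-$E'$ together with $ad'_{(f_{E*},f_E^!)}$ is exactly the passage $f_*e_*\mathcal{K}_E=f_*e_*e^!\mathcal{K}_{X_{12}}\to e'_*e'^!\mathcal{K}_{X'_{12}}=e'_*\mathcal{K}_{E'}$ encoded in~\eqref{eq:trans_fcc}; (v) take adjoints to conclude $\langle f_*u,f_*v\rangle=f_{E*}\langle u,v\rangle$.

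The main obstacle I expect is step (ii): verifying that the proper pushforward of correspondences~\eqref{eq:pf_corr1} is genuinely compatible with the coevaluation $\eta$, i.e. that passing $u$ to $f_*u$ intertwines $\eta_{L_1}$ with $\eta_{f_{1*}L_1}$ at the level of the maps $\mathbbold{1}\to c_*\underline{Hom}(\cdots)$. This requires tracing the isomorphism~\eqref{eq:3.3.1} and the duality isomorphism $\mathbb{D}(f_{1!}L_1)\simeq f_{1*}\mathbb{D}(L_1)$ through the definition of $\eta$, and checking a compatibility between the evaluation/coevaluation of the local duality and the base-change and K\"unneth isomorphisms of Section~\ref{chap_kun}. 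Since all of this is already implicit in the hypotheses of Proposition~\ref{proper_pf} (whose proof follows \cite[III 4.4]{SGA5}), the verification is essentially a bookkeeping exercise, but it is the one place where genuine diagram-chasing is needed rather than a formal appeal. Everything else is a direct translation of \cite[III 4.5]{SGA5}, so I would simply remark that the proof is formally identical once the compatibility of Definition~\ref{def:verdier_pairing} with the maps~\eqref{eq:pf_corr1} and~\eqref{eq:pf_corr2} is established via Proposition~\ref{proper_pf}.
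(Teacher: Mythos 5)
Your proposal is correct and takes essentially the same route as the paper, which simply records Corollary~\ref{cor:vp_proper} as a direct consequence of Proposition~\ref{proper_pf}; your steps (i)--(v) are exactly the intended unwinding of Definition~\ref{def:verdier_pairing} and~\eqref{eq:prop_pf_1k} against the commutative square. Note only that the compatibility you worry about in step (ii) is essentially automatic, since $f_*u$ and $f_*v$ are \emph{defined} by applying~\eqref{eq:pf_corr1} and~\eqref{eq:pf_corr2} to the adjoints $\eta_{L_1}$ followed by $u_*$ (resp.\ $\eta_{L_2}$ followed by $v_*$), so no further diagram chase beyond naturality of units and of~\eqref{eq:Ex_lower*_tensor} is needed.
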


When $X'_1=X'_2=C'=D'=\operatorname{Spec}(k)$, the maps $f_*u:f_{1*}L_1\to f_{2*}L_2$ and $f_*v:f_{2*}L_2\to f_{1*}L_1$ are maps between dualizable objects in $\mathbf{T}_c(\operatorname{Spec}(k))$, and the map $f_{E*}w$ is known as the degree map and is traditionally written as $\int_Ew$.
As a particular case of Corollary~\ref{cor:vp_proper}, we obtain the following Lefschetz-Verdier formula (\cite[III 4.7]{SGA5}):
\begin{corollary}[Lefschetz-Verdier formula]
\label{Lef_Ver}
When $X'_1=X'_2=C'=D'=\operatorname{Spec}(k)$, we have the following equality
\begin{align}
Tr(f_*v\circ f_*u)=\int_E\langle u,v\rangle
\end{align}
as an endomorphism of $\mathbbold{1}_{k}$.

\end{corollary}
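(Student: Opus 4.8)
The plan is to derive the Lefschetz--Verdier formula as a direct specialization of Corollary~\ref{cor:vp_proper}, so that essentially no new argument is needed beyond unwinding definitions. First I would apply Corollary~\ref{cor:vp_proper} in the special case where $X'_1=X'_2=C'=D'=\operatorname{Spec}(k)$, so that $E'=\operatorname{Spec}(k)$ as well. The corollary then gives the equality $\langle f_*u,f_*v\rangle = f_{E*}\langle u,v\rangle$ as maps $\mathbbold{1}_k\to\mathcal{K}_k$; since the structure morphism $\operatorname{Spec}(k)\to\operatorname{Spec}(k)$ is the identity, $\mathcal{K}_k=\mathbbold{1}_k$ canonically, and by definition $f_{E*}\langle u,v\rangle$ is exactly the degree map $\int_E\langle u,v\rangle$ applied to the element $\langle u,v\rangle\in H_0(E/k)$.

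Next I would identify the left-hand side with the trace of a composite. In this degenerate situation $L_1,L_2\in\mathbf{T}_c(k)$ are dualizable (as recalled in the excerpt, under~\ref{resol} every object of $\mathbf{T}_c(k)$ is strongly dualizable), and the maps $f_*u:f_{1*}L_1\to f_{2*}L_2$ and $f_*v:f_{2*}L_2\to f_{1*}L_1$ are honest morphisms between dualizable objects. The second item of the Remark following Definition~\ref{def:verdier_pairing} already records that in this ``everything is $\operatorname{Spec}(k)$'' case the Verdier pairing $\langle a,b\rangle$ of two maps $a\colon M_1\to M_2$, $b\colon M_2\to M_1$ equals $Tr(b\circ a)$; applying this with $M_i=f_{i*}L_i$, $a=f_*u$, $b=f_*v$ yields $\langle f_*u,f_*v\rangle = Tr(f_*v\circ f_*u)$ as an endomorphism of $\mathbbold{1}_k$. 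Combining the two identifications gives $Tr(f_*v\circ f_*u)=\int_E\langle u,v\rangle$, which is the claim.

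The only genuinely nontrivial input is the fact cited in the remark that the Verdier pairing reduces to a trace in the dualizable setting; if one wants a self-contained argument one should instead invoke Proposition~\ref{prop:bil_tr} (promised in the introduction) which reduces an arbitrary Verdier pairing to a generalized trace map, and then note that over $\operatorname{Spec}(k)$ the generalized trace for dualizable objects coincides with the classical categorical trace~\eqref{eq:trace_def}. I expect the main obstacle to be purely bookkeeping: checking that the canonical isomorphisms $\mathcal{K}_k\simeq\mathbbold{1}_k$ and $p_1^*\mathcal{K}_{X_1}\otimes p_2^*\mathcal{K}_{X_2}\simeq\mathcal{K}_{X_{12}}$ used in the construction of~\eqref{e*KE} are compatible with the unit/counit maps of the duality, so that the composite defining $\langle f_*u,f_*v\rangle$ literally becomes the composite~\eqref{eq:trace_def} for $u=f_*v\circ f_*u$. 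This is exactly the content of \cite[III 4.7]{SGA5} transported to the motivic setting, and since all the building blocks (K\"unneth isomorphisms, proper functoriality) have been established in Sections~\ref{chap_kun} and~\ref{section_vpairing}, the verification is routine. Hence the proof is a one-line deduction from Corollary~\ref{cor:vp_proper} together with the identification of the Verdier pairing with the trace in the dualizable case.
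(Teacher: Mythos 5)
Your proposal is correct and follows the paper's own route: the paper likewise deduces the formula directly from Corollary~\ref{cor:vp_proper} specialized to $X'_1=X'_2=C'=D'=\operatorname{Spec}(k)$, using the remark after Definition~\ref{def:verdier_pairing} that the Verdier pairing of maps between dualizable objects over $k$ equals $Tr(f_*v\circ f_*u)$ and that $f_{E*}$ is the degree map $\int_E$.
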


\begin{remark}

The formula in \cite[Theorem 1.3]{Hoy} has a similar appearance, but is indeed of different nature.

\end{remark}

\subsection{Composition of correspondences and generalized traces}

In this section, we study compositions of correspondences and show that the general Verdier pairing~\eqref{verdier_pairing} can be reduced to a generalized trace map, which will be a key ingredient for the general form of additivity of traces in Section~\ref{section:additivity}.

\subsubsection{}

Let $X_1,X_2$ and $X_3$ be three schemes. Denote by $X_{ijl}=X_i\times_kX_j\times_kX_l$, and $p^{ijl}_i:X_{ijl}\to X_i$ and $p^i:X_i\to k$ the canonical projections, and we use similar notations for other schemes and morphisms.

For $i\in\{1,2,3\}$, let $L_i$ be an object of $\mathbf{T}_c(X_{i})$. We have a canonical map
\begin{equation}
\label{eq:hom_comp}
p^{123*}_{12}\underline{Hom}(p^{12*}_1L_1,p^{12!}_2L_2)
\otimes
p^{123*}_{23}\underline{Hom}(p^{23*}_2L_2,p^{23!}_3L_3)
\to
p^{123!}_{13}\underline{Hom}(p^{13*}_1L_1,p^{13!}_3L_3)
\end{equation}
given by the composition
\begin{align}
\begin{split}
&p^{123*}_{12}\underline{Hom}(p^{12*}_1L_1,p^{12!}_2L_2)
\otimes
p^{123*}_{23}\underline{Hom}(p^{23*}_2L_2,p^{23!}_3L_3)\\
&\overset{\eqref{dual_hom}}{\simeq}
p^{123*}_{1}\mathbb{D}(L_1)\otimes p^{123*}_{2}L_2\otimes p^{123*}_{2}\mathbb{D}(L_2)\otimes p^{123*}_{3}L_3\\
&\xrightarrow{\epsilon_{L_2}}
p^{123*}_{1}\mathbb{D}(L_1)\otimes p^{123*}_{2}\mathcal{K}_{X_2}\otimes p^{123*}_{3}L_3
\simeq
p^{123*}_{2}\mathcal{K}_{X_2}\otimes p^{123*}_{1}\mathbb{D}(L_1)\otimes p^{123*}_{3}L_3\\
&\overset{\eqref{Kunneth}}{\simeq}
p^{123!}_{13}(p^{13*}_{1}\mathbb{D}(L_1)\otimes p^{13*}_{3}L_3)
\overset{\eqref{dual_hom}}{\simeq}
p^{123!}_{13}\underline{Hom}(p^{13*}_1L_1,p^{13!}_3L_3).
\end{split}
\end{align}

\subsubsection{}
\label{num:comp_corr}
Now consider two morphisms $c_{12}:C_{12}\to X_{12}$ and $c_{23}:C_{23}\to X_{23}$. Let $C_{13}=C_{12}\times_{X_2}C_{23}$ together with a canonical morphism $c^{13}_{123}:C_{13}\to X_{123}$. We denote by $c_{13}=p^{123}_{13}\circ c^{13}_{123}:C_{13}\to X_{13}$, and $c^{ij}_i=p^{ij}_i\circ c_{ij}:C_{ij}\to X_i$, etc. Consider the following diagram
\begin{align}
\begin{gathered}
  \xymatrix{
    C_{13} \ar[d]_-{q'_1} \ar[r]^-{q'_3} \ar@/^.65cm/[rr]^-{q_3} \ar@/_.6cm/[dd]_-{q_1} \ar[rd]^-{c^{13}_{123}\ \ \ \ \ \ } & C'_{23} \ar[d]^-{c'_{23}} \ar[r]^-{q_{23}} & C_{23} \ar[d]^-{c_{23}} \\
    C'_{12} \ar[r]_-{c'_{12}} \ar[d]^-{q_{12}} & X_{123} \ar[r]^-{p^{123}_{23}} \ar[d]^-{p^{123}_{12}} & X_{23} \ar[d]^-{p^{23}_{2}}\\
  C_{12} \ar[r]^-{c_{12}} & X_{12} \ar[r]^-{p^{12}_{2}} & X_{2}
  }
\end{gathered}
\end{align}
where all the four squares are Cartesian. For any objects $P\in\mathbf{T}_c(X_{12})$ and $Q\in\mathbf{T}_c(X_{23})$, we have a canonical map
\begin{equation}
\label{eq:trick2}
\begin{split}
q_1^*c_{12}^!P\otimes q_3^*c_{23}^!Q
&=q_1'^*q_{12}^*c_{12}^!P\otimes q_3'^*q_{23}^*c_{23}^!Q\\
&\xrightarrow{\eqref{Ex*!}}
q_1'^*c_{12}'^!p_{12}^{123*}P\otimes q_3'^*c_{23}'^!p_{23}^{123*}Q
\xrightarrow{\eqref{eq:cdPQ}}
c_{123}^{13!}(p_{12}^{123*}P\otimes p_{23}^{123*}Q).
\end{split}
\end{equation}
Therefore we deduce a map
\begin{equation}
\label{eq:c_comp}
q_1^*\underline{Hom}(c_1^{12*}L_1,c_2^{12!}L_2)
\otimes
q_3^*\underline{Hom}(c_2^{23*}L_2,c_3^{23!}L_3)
\to
\underline{Hom}(c_1^{13*}L_1,c_3^{13!}L_3)
\end{equation}
given by the composition
\begin{align}
\begin{split}
&q_1^*\underline{Hom}(c_1^{12*}L_1,c_2^{12!}L_2)
\otimes
q_3^*\underline{Hom}(c_2^{23*}L_2,c_3^{23!}L_3)\\
&\overset{\eqref{Ex^!Hom}}{\simeq}
q_1^*c_{12}^!\underline{Hom}(p_1^{12*}L_1,p_2^{12!}L_2)
\otimes
q_3^*c_{23}^!\underline{Hom}(p_2^{23*}L_2,p_3^{23!}L_3)\\
&\xrightarrow{\eqref{eq:trick2}}
c_{123}^{13!}(
p^{123*}_{12}\underline{Hom}(p^{12*}_1L_1,p^{12!}_2L_2)
\otimes
p^{123*}_{23}\underline{Hom}(p^{23*}_2L_2,p^{23!}_3L_3))\\
&\xrightarrow{\eqref{eq:hom_comp}}
c_{123}^{13!}p^{123!}_{13}\underline{Hom}(p^{13*}_1L_1,p^{13!}_3L_3)\\
&=
c_{13}^!\underline{Hom}(p^{13*}_1L_1,p^{13!}_3L_3)
\overset{\eqref{Ex^!Hom}}{\simeq}
\underline{Hom}(c^{13*}_1L_1,c^{13!}_3L_3).
\end{split}
\end{align}
\begin{definition}
\label{def:comp_corr}
Given two correspondences $u:c_1^{12*}L_1\to c_2^{12!}L_2$ and $v:c_2^{23*}L_2\to c_3^{23!}L_3$, we deduce from the map~\eqref{eq:c_comp} a correspondence from $L_1$ to $L_3$
\begin{align}
vu:c^{13*}_1L_1\to c^{13!}_3L_3
\end{align}
which we call the \textbf{composition} of the correspondences $u$ and $v$.
\end{definition}

\subsubsection{}
Now assume that $X_1=X_3$ and $L_1=L_3$. Consider two morphisms $c:C\to X_{12}$, $d:D\to X_{21}\simeq X_{12}$. For $i\in\{1,2\}$, we denote by $c_i=p^{12}_{i}\circ c:C\to X_i$ and $d_i=p^{21}_{i}\circ d:D\to X_i$ the canonical maps. Let $F=C\times_{X_2}D$ and $E=C\times_{X_{12}}D$, then there is a Cartesian diagram of the form
\begin{align}
\begin{gathered}
  \xymatrix{
    E \ar[r]^-{f'} \ar[d]_-{\delta'} & X_{12} \ar[r]^-{p^{12}_1} \ar[d]_-{\delta_1} & X_1 \ar[d]^-{\delta}\\
    F \ar[r]^-{f} & X_{121} \ar[r]^-{p^{121}_{11}} & X_{11}
    }
\end{gathered}
\end{align}
and $E$ is canonically identified with the fiber product $F\times_{X_{11}}X_1$ via the diagonal map $\delta:X_1\to X_{11}$. As in~\ref{num:comp_corr} we denote by $q_1:F\to C$ and $q_3:F\to D$ the canonical maps, and $f_1,f_3$ the compositions
\begin{align}
&f_1:F
\xrightarrow{q_1}
C
\xrightarrow{c}
X_{12}
\xrightarrow{p^{12}_1}
X_1\\
&f_3:F
\xrightarrow{q_3}
D
\xrightarrow{d}
X_{21}
\xrightarrow{p^{21}_1}
X_1.
\end{align}
Suppose that $u:c_1^*L_1\to c_2^!L_2$ and $v:d_2^*L_2\to d_1^!L_1$ are two correspondences. By Definition~\ref{def:comp_corr}, there is a composition $vu:f_1^*L_1\to f_3^!L_1$. The following is stated in \cite[III (5.2.10)]{SGA5} without proof:
\begin{proposition}
\label{prop:bil_tr}
The Verdier pairing~\eqref{verdier_pairing} satisfies the following equality
\begin{align}
\langle u,v\rangle=\langle vu,1\rangle:\mathbbold{1}_E\to\mathcal{K}_E
\end{align}
where $1$ is the identity correspondence $id:\mathbbold{1}_{X_1}\to\mathbbold{1}_{X_1}$.
\end{proposition}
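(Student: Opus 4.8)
\emph{Proof plan.} The plan is to unwind both $\langle u,v\rangle$ and $\langle vu,1\rangle$ into explicit morphisms $\mathbbold{1}_E\to\mathcal{K}_E$ and to identify them by a (large) diagram chase, following the pattern of the constructions in \cite[III]{SGA5}. The starting point is that the entry $1$ carried by the diagonal $\delta\colon X_1\to X_{11}$ contributes only trivial data: using $\underline{Hom}(\mathbbold{1}_{X_1},\mathbbold{1}_{X_1})=\mathbbold{1}_{X_1}$, the triangle identities for the $(\otimes,\underline{Hom})$ adjunction and the fact that the evaluation attached to the unit is the canonical isomorphism, $\langle vu,1\rangle$ reduces to the trace along the diagonal of the composed correspondence $vu$ of Definition~\ref{def:comp_corr}. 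Concretely, using the identification $E\simeq F\times_{X_{11}}X_1$ recorded in the statement, $\langle vu,1\rangle$ is adjoint to the composite that applies the coevaluation $\eta_{L_1}$, post-composes with $vu$ to produce a section of $\underline{Hom}(f_1^*L_1,f_3^!L_1)$ over $F$, and then contracts this section against $\delta$ via the isomorphisms~\eqref{Homc12}, \eqref{eePQ}, the single evaluation $\epsilon_{L_1}$ and~\eqref{Kunneth_K}.

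I would then expand $vu$ by its definition~\eqref{eq:c_comp}: it is obtained from $u$ and $v$ by pulling back along $q_1,q_3$, transporting through the exchange isomorphisms $Ex(\Delta^{*!})$ appearing in~\eqref{eq:trick2}, applying the map~\eqref{eq:cdPQ} relative to the base $X_{123}$, and inserting the evaluation $\epsilon_{L_2}$ inside~\eqref{eq:hom_comp}, together with the shuffles~\eqref{Ex^!Hom}, \eqref{dual_hom}, \eqref{Kun'} and~\eqref{Kunneth}. Substituting this in, $\langle vu,1\rangle$ becomes a composite built from $\eta_{L_1}$, the maps $u$ and $v$, the isomorphisms~\eqref{Kun'} and~\eqref{Kunneth} relative to the successive bases $X_{12},X_{21},X_{123},X_{11}$, the exchange maps~\eqref{Ex*_!}, \eqref{Ex*!} and~\eqref{upper!lower*}, and finally the two evaluations $\epsilon_{L_2}$ (from~\eqref{eq:hom_comp}, contracting the middle object) and $\epsilon_{L_1}$ (the surviving part of the $1$-entry), followed by~\eqref{Kunneth_K}. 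On the other hand, Definition~\ref{def:verdier_pairing} together with~\eqref{Homc12}, \eqref{Homd12} and the construction of~\eqref{e*KE} presents $\langle u,v\rangle$ as the same data, the only differences being that the base change is carried out in a single step over $X_{12}$ and that the evaluations $\epsilon_{L_1}\otimes\epsilon_{L_2}$ are performed at once.

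Matching the two presentations then rests on two points. First, the K\"unneth isomorphism~\eqref{Kun'} of Lemma~\ref{Kunneth_formula} is compatible with composition of base schemes, in the one-by-one form already used in Remark~\ref{onebyone}, so that the contraction with $c$ and $d$ over $X_{12}$ factors---via the exchange maps $Ex(\Delta^*_!)$, $Ex(\Delta^{*!})$, $Ex(\Delta^!_*)$ and their compatibility with horizontal and vertical compositions of Cartesian squares (recalled in~\ref{num:fct_upper!})---through the chain $C\times_{X_2}D=F$ and then $F\times_{X_{11}}X_1=E$. Second, the evaluations $\epsilon_{L_1}$ and $\epsilon_{L_2}$ act on disjoint tensor factors, so contracting $L_2$ first (inside $vu$) and $L_1$ afterwards agrees with contracting both at once. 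Combining these two identifications, the two composites coincide, which is the assertion.

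The main obstacle is the size and coherence of this diagram chase: one must track the K\"unneth isomorphism~\eqref{Kun'} and all the exchange maps across the successive base changes $X_{12},X_{123},X_{11}$, and verify that the isomorphisms glued in~\eqref{eq:trick2} are compatible both with the identification $E\simeq F\times_{X_{11}}X_1$ and with the map~\eqref{eePQ} taken relative to the two bases $X_{12}$ and $X_{11}$, as well as with the triangle identities used to dispose of the $1$-entry. No input beyond the six-functor formalism and Section~\ref{chap_kun} is needed---every commutativity is an instance of the compatibility of exchange maps with compositions of Cartesian squares and with the monoidal structure---but the bookkeeping is substantial, which is presumably why \cite[III (5.2.10)]{SGA5} leaves it out.
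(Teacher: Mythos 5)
There is a genuine gap at the very first step of your plan. In Definition~\ref{def:verdier_pairing} the two entries of a pairing must be correspondences between the same pair of objects, so in $\langle vu,1\rangle$ the second entry is necessarily the identity of $L_1$ supported on the diagonal $\delta\colon X_1\to X_{11}$ (the notation $id\colon\mathbbold{1}_{X_1}\to\mathbbold{1}_{X_1}$ in the statement is an abuse); its contribution to the pairing is the coevaluation $\eta_{L_1}\colon\mathbbold{1}_{X_1}\to\underline{Hom}(L_1,L_1)$ — exactly the map~\eqref{eq:new_unit} — together with a second copy of the evaluation $\epsilon_{L_1}$ inside~\eqref{e*KE}. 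Hence your opening claim that the $1$-entry ``contributes only trivial data'' because $\underline{Hom}(\mathbbold{1}_{X_1},\mathbbold{1}_{X_1})=\mathbbold{1}_{X_1}$ rests on a misreading, and the reduction of $\langle vu,1\rangle$ to the one-coevaluation/one-evaluation trace you describe is not a formal consequence of the triangle identities: one must cancel the diagonal coevaluation $\eta_{L_1}$ against one of the two evaluations $\epsilon_{L_1}$ after they have been transported through~\eqref{Homc12}, \eqref{eePQ}, the exchange maps and~\eqref{Kunneth_K}, and this is precisely the content of Proposition~\ref{prop:pair_trace}, which the paper proves separately by a multi-page diagram chase (the zig-zag identity appears only at the very end of that argument). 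As written, your step~(1) silently assumes a substantive result and justifies it with an argument that does not apply.

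The second stage of your plan — expanding $vu$ via~\eqref{eq:c_comp} and matching against $\langle u,v\rangle$ using the compatibility of~\eqref{Kun'}, \eqref{Kunneth} and the exchange maps with compositions of Cartesian squares, plus the fact that $\epsilon_{L_2}$ and $\epsilon_{L_1}$ contract disjoint tensor factors — is in the right spirit and is essentially what the paper's large diagram does. But note that the paper avoids your step~(1) altogether: it introduces the intermediate composite $\mathbbold{1}_E\to\delta'^*f^*f_*\underline{Hom}(f_1^*L_1,f_3^!L_1)\xrightarrow{\eqref{eq:trick_unit}}f'^!p_1^{12!}(\mathbb{D}(L_1)\otimes L_1)\xrightarrow{\epsilon_{L_1}}\mathcal{K}_E$, which still carries the diagonal coevaluation~\eqref{eq:new_unit} and both evaluations, and shows by diagram chases that each of $\langle u,v\rangle$ and $\langle vu,1\rangle$ equals it. To repair your route you should either keep the diagonal entry in play, as the paper does, or first establish Proposition~\ref{prop:pair_trace} independently (its proof does not rely on the present proposition) and then invoke it for the correspondence $vu$ over $F\to X_{11}$; in the latter case the remaining comparison is still a diagram chase of essentially the same size, so the proposed reduction buys no real savings.
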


\proof

Denote by $p^{11}_1,p^{11}_{3}:X_{11}\to X_1$ the projections to the first and the second summands. We have a canonical map
\begin{equation}
\label{eq:new_unit}
\mathbbold{1}_{X_1}
\xrightarrow{\eta_{L_1}}
\underline{Hom}(L_1,L_1)
\overset{\eqref{Ex^!Hom}}{\simeq}
\delta^!\underline{Hom}(p^{11*}_3L,p^{11!}_1L)
\overset{\eqref{dual_hom}}{\simeq}
\delta^!(p^{11*}_3\mathbb{D}(L)\otimes p^{11*}_1L),
\end{equation}
from which we deduce a canonical map
\begin{equation}
\label{eq:trick_unit}
\delta'^*f^*f_*\underline{Hom}(f_1^*L_1,f_3^!L_1)\to f'^!p_1^{12!}(\mathbb{D}(L_1)\otimes L_1)
\end{equation}
given by the composition
\begin{align}
\begin{split}
&\delta'^*f^*f_*\underline{Hom}(f_1^*L_1,f_3^!L_1)
\overset{\eqref{Ex^!Hom}}{\simeq}
\delta'^*f^*f_*f^!p^{121!}_{11}\underline{Hom}(p^{11*}_1L_1,p^{11!}_3L_1)\\
&\overset{\eqref{Ex^!Hom}}{\simeq}
\delta'^*f^*f_*f^!p^{121!}_{11}(p^{11*}_1\mathbb{D}(L_1)\otimes p^{11*}_3L_1)\\
&\xrightarrow{\eqref{eq:new_unit}}
\delta'^*f^*(f_*f^!p^{121!}_{11}(p^{11*}_1\mathbb{D}(L_1)\otimes p^{11*}_3L_1)\otimes \delta_{1*}p^{12*}_1\delta^!(p^{11*}_3\mathbb{D}(L)\otimes p^{11*}_1L))\\
&\xrightarrow{\eqref{Ex*!}}
\delta'^*f^*(f_*f^!p^{121!}_{11}(p^{11*}_1\mathbb{D}(L_1)\otimes p^{11*}_3L_1)\otimes \delta_{1*}\delta_1^!p^{121*}_{11}(p^{11*}_3\mathbb{D}(L)\otimes p^{11*}_1L))\\
&\xrightarrow{\eqref{eePQ}}
\delta'^*f^*f_*\delta'_*\delta'^!f^!(p^{121!}_{11}(p^{11*}_1\mathbb{D}(L_1)\otimes p^{11*}_3L_1)\otimes p^{121*}_{11}(p^{11*}_3\mathbb{D}(L)\otimes p^{11*}_1L)))\\
&\xrightarrow{ad'_{((f\circ\delta')^*,(f\circ\delta')_*)}}
\delta'^!f^!(p^{121!}_{11}(p^{11*}_1\mathbb{D}(L_1)\otimes p^{11*}_3L_1)\otimes p^{121*}_{11}(p^{11*}_3\mathbb{D}(L)\otimes p^{11*}_1L)))\\
&\xrightarrow{\eqref{eq:nat_upper*!_upper!}}
\delta'^!f^!p^{121!}_{11}(p^{11*}_1\mathbb{D}(L_1)\otimes p^{11*}_3L_1\otimes p^{11*}_3\mathbb{D}(L)\otimes p^{11*}_1L)\\
&\xrightarrow{\epsilon_{L_1}}
\delta'^!f^!p^{121!}_{11}(p^{11*}_1\mathbb{D}(L_1)\otimes p^{11*}_1L\otimes p^{11*}_3\mathcal{K}_{X_1})\\
&\overset{\eqref{Kunneth}}{\simeq}
\delta'^!f^!p^{121!}_{11}p^{11!}_1(\mathbb{D}(L_1)\otimes L_1)
=
f'^!p_1^{12!}(\mathbb{D}(L_1)\otimes L_1).
\end{split}
\end{align}
Note that we have natural transformations $f'^*c_*\to f^*f_*q^*_1$ and $f'^*d_*\to f^*f_*q^*_3$. We want to show that both the maps $\langle u,v\rangle$ and $\langle vu,1\rangle$ are equal to the following composition
$$
\mathbbold{1}_E
\xrightarrow{}
\delta'^*f^*f_*\mathbbold{1}_F
\xrightarrow{vu}
\delta'^*f^*f_*\underline{Hom}(f_1^*L_1,f_3^!L_1)
\xrightarrow{\eqref{eq:trick_unit}}
f'^!p_1^{12!}(\mathbb{D}(L_1)\otimes L_1)
\xrightarrow{\epsilon_{L_1}}
f'^!p_1^{12!}\mathcal{K}_{X_1}
=
\mathcal{K}_E.
$$
This follows from the commutativty of the following diagram
$$
\resizebox{\textwidth}{!}{
  \xymatrix{
    \mathbbold{1}_E \ar[r]^-{} \ar[d]_-{} \ar[rd]^-{} & f'^*(c_*\mathbbold{1}_C\otimes d_*\mathbbold{1}_D) \ar[r]^-{u\otimes v} & f'^*(c_*\underline{Hom}(c^{*}_1L_1,c^{!}_2L_2)\otimes d_*\underline{Hom}(d^{*}_2L_2,d^{!}_1L_1)) \ar[d]_-{} \ar@/^5cm/[dd]^-{\eqref{e*KE}} \\
    \delta'^*f^*p^{121*}_{11}p^{121}_{11*}f_*\mathbbold{1}_F \ar[r]^-{} \ar[d]_-{vu} & \delta'^*f^*f_*\mathbbold{1}_F \ar[d]_-{vu} & \delta'^*f^*f_*(q_1^*\underline{Hom}(c^{*}_1L_1,c^{!}_2L_2)\otimes q_3^*d_*\underline{Hom}(d^{*}_2L_2,d^{!}_1L_1)) \ar[ld]_-{\eqref{eq:c_comp}} \\
     \delta'^*f^*p^{121*}_{11}p^{121}_{11*}f_*\underline{Hom}(f_1^*L_1,f_3^!L_1) \ar[r]^-{} \ar@/_.7cm/[rr]^-{} & \delta'^*f^*f_*\underline{Hom}(f_1^*L_1,f_3^!L_1) \ar[r]^-{\eqref{eq:trick_unit}} & f'^!p_1^{12!}(\mathbb{D}(L_1)\otimes L_1)
  }
}
$$
where each subdiagram follows either from definition or from a straightforward check.
\endproof

\begin{remark}
Proposition~\ref{prop:bil_tr} says that the Verdier pairing~\eqref{verdier_pairing} can always be reduced to the case where $X_1=X_2$, $L_1=L_2$ and one of the correspondences is the identity. As such we reduce the pairing with two entries to a generalized trace map, therefore making it much easier to deal with additivity along distinguished triangles.

\end{remark}

\subsubsection{}
\label{num:gen_tr}
We now give a more explicit description of the map $\langle u,1\rangle$ 
(see \cite[Proposition 2.1.7]{AS}
\footnote{The statement in loc. cit. holds for $c$ a closed immersion, 
but our modified version holds in general.}
). Let $X$ be a scheme and $c:C\to X\times_kX$ be a morphism. We use the notation
 in~\ref{abstract_map}, with $D=X$, $d=\delta:X\to X\times_kX$ 
 the diagonal morphism and the Cartesian diagram
\begin{align}
\begin{gathered}
  \xymatrix{
    E \ar[r]^-{c'} \ar[d]_-{\delta'} \ar@{}[rd]|{\Delta} & X \ar[d]^-{\delta}\\
    C \ar[r]_-{c} & X\times_kX.
  }
\end{gathered}
\end{align}
\begin{proposition}
\label{prop:pair_trace}
Let $L\in\mathbf{T}_c(X)$ and $u:c_1^*L\to c_2^!L$ be a correspondence. Denote by $1=id_L:L\to L$ the identity correspondence, and by $u'$ the following map:
\begin{equation}
\begin{split}
\mathbbold{1}_C
&\xrightarrow{\eta_{c_1^*L}}
\underline{Hom}(c_1^*L,c_1^*L)
\xrightarrow{u_*}
\underline{Hom}(c_1^*L,c_2^!L)\\
&\overset{\eqref{Ex^!Hom}}{\simeq}
c^!\underline{Hom}(p_1^*L,p_2^!L)
\overset{\eqref{dual_hom}}{\simeq}
c^!(p_1^*\mathbb{D}(L)\otimes p_2^*L).
\end{split}
\end{equation}
Then the map $\langle u,1\rangle:\mathbbold{1}_E\to\mathcal{K}_E$ is obtained by adjunction from the map
\begin{equation}
\label{eq:gen_tr}
\begin{split}
c'_!\mathbbold{1}_E
&\overset{(Ex(\Delta^*_!))^{-1}}{\simeq}
\delta^*c_!\mathbbold{1}_C
\xrightarrow{u'}
\delta^*c_!c^!(p_1^*\mathbb{D}(L)\otimes p_2^*L)\\
&\xrightarrow{ad'_{(c_!,c^!)}}
\delta^*(p_1^*\mathbb{D}(L)\otimes p_2^*L)
=
\mathbb{D}(L)\otimes L
\simeq
L\otimes\mathbb{D}(L)
\xrightarrow{\epsilon_L}
\mathcal{K}_X.
\end{split}
\end{equation}
\end{proposition}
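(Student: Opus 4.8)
The plan is to unwind Definition~\ref{def:verdier_pairing} for the data $X_1=X_2=X$, $L_1=L_2=L$, $D=X$, $d=\delta$, $v=1$, to identify the two factors entering the map~\eqref{e*KE}, and then to match the result with~\eqref{eq:gen_tr} by a diagram chase, in the spirit of \cite[Proposition 2.1.7]{AS} (but dropping the hypothesis that $c$ be a closed immersion).

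First I would pin down the two ``sides'' of the pairing. By the definition of $u'$ and of the isomorphism~\eqref{Homc12}, the composition $c_*\mathbbold{1}_C\xrightarrow{c_*\eta_L}c_*\underline{Hom}(c_1^*L,c_1^*L)\xrightarrow{u_*}c_*\underline{Hom}(c_1^*L,c_2^!L)\overset{\eqref{Homc12}}{\simeq}c_*c^!P$ is $c_*$ applied to $u'\colon\mathbbold{1}_C\to c^!P$, where $P=p_1^*\mathbb{D}(L)\otimes p_2^*L$. Since $\delta_1=\delta_2=\mathrm{id}_X$ and $v=1$, the analogous composition for the $\delta$-side is $\delta_*$ applied to the map $v'\colon\mathbbold{1}_X\xrightarrow{\eta_L}\underline{Hom}(L,L)=\underline{Hom}(\delta_2^*L,\delta_1^!L)\overset{\eqref{Ex^!Hom}}{\simeq}\delta^!\underline{Hom}(p_2^*L,p_1^!L)\overset{\eqref{dual_hom}}{\simeq}\delta^!Q$, where $Q=p_2^*\mathbb{D}(L)\otimes p_1^*L$. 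Hence $\langle u,1\rangle$ is the $(e^*,e_*)$-adjoint of
\[
\mathbbold{1}_{X_{12}}\to c_*\mathbbold{1}_C\otimes\delta_*\mathbbold{1}_X\xrightarrow{c_*u'\otimes\delta_*v'}c_*c^!P\otimes\delta_*\delta^!Q\xrightarrow{\eqref{eePQ}}e_*e^!(P\otimes Q)\xrightarrow{\epsilon_L\otimes\epsilon_L,\,\eqref{Kunneth_K}}e_*\mathcal{K}_E .
\]

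Next I would pass to $E$ along $e=c\circ\delta'=\delta\circ c'$ and take the relevant adjoints step by step. The opening of~\eqref{eePQ} is the unit $ad_{(e^*,e_*)}$ followed by the counits $ad'_{(c^*,c_*)}$ and $ad'_{(\delta^*,\delta_*)}$, so the triangle identities cancel the unit maps at the start and replace $c_*u'\otimes\delta_*v'$ by $\delta'^*u'\otimes c'^*v'$ on $E$; the remaining part of~\eqref{eePQ} is~\eqref{eq:cdPQ}, whose $(e_!,e^!)$-adjoint is the Künneth isomorphism~\eqref{Kun'} (over the base $X_{12}$, for the morphisms $c$ and $\delta$) followed by $ad'_{(c_!,c^!)}\colon c_!c^!P\to P$ and $ad'_{(\delta_!,\delta^!)}\colon\delta_!\delta^!Q\to Q$. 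This shows that $\langle u,1\rangle\colon\mathbbold{1}_E\to\mathcal{K}_E$ is the composition
\[
\mathbbold{1}_E\xrightarrow{\delta'^*u'\otimes c'^*v'}\delta'^*c^!P\otimes c'^*\delta^!Q\xrightarrow{\eqref{eq:cdPQ}}e^!(P\otimes Q)\xrightarrow{\epsilon_L\otimes\epsilon_L,\,\eqref{Kunneth_K}}\mathcal{K}_E .
\]
I would then take its $(c'_!,c'^!)$-adjoint, using $\mathcal{K}_E=c'^!\mathcal{K}_X$, rewrite the source $c'_!\mathbbold{1}_E$ via the base-change isomorphism $Ex(\Delta^*_!)\colon\delta^*c_!\xrightarrow{\sim}c'_!\delta'^*$ of~\eqref{Ex*_!}, and use $\delta^*p_i^*=\mathrm{id}$: the Künneth isomorphism~\eqref{Kun'}, the counit $ad'_{(\delta_!,\delta^!)}$ and the coevaluation $v'$ collapse via the triangle identity for $(\eta_L,\epsilon_L)$, leaving only the $C$-contribution $\delta^*c_!\mathbbold{1}_C\xrightarrow{\delta^*c_!u'}\delta^*c_!c^!P\xrightarrow{ad'_{(c_!,c^!)}}\delta^*P=\mathbb{D}(L)\otimes L$, followed by the single surviving evaluation $\epsilon_L$ into $\mathcal{K}_X$. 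This is precisely~\eqref{eq:gen_tr}.

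The main obstacle is the bookkeeping of this last step: one has to verify in a single large commutative diagram the mutual compatibility of the Künneth isomorphisms~\eqref{Kun'} and~\eqref{Kunneth} (the latter entering through~\eqref{dual_hom} and~\eqref{Kunneth_K}), the base-change isomorphism $Ex(\Delta^*_!)$, the definition of $v'$, and the triangle identities, and in particular to see precisely how the construction~\eqref{e*KE} --- symmetric in the factors $P$ and $Q$, carrying one evaluation on each $p_i$-slot --- degenerates over the diagonal into the single-evaluation formula~\eqref{eq:gen_tr}. Everything else (identifying the two sides and shuffling between the $(e^*,e_*)$-, $(e_!,e^!)$- and $(c'_!,c'^!)$-adjunctions of a single morphism) is formal.
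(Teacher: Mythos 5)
Your proposal is correct and follows essentially the same route as the paper: unwind Definition~\ref{def:verdier_pairing} with $d=\delta$, $v=1$, recognize the $\delta$-factor as the coevaluation map ($v'$, the paper's $\eta'$), pass through the $(e^*,e_*)$-, $(e_!,e^!)$- and $(c'_!,c'^!)$-adjunctions, and reduce everything to a large compatibility diagram whose crux is that the coevaluation $\eta_L$ cancels one of the two evaluations by the triangle identity, leaving exactly~\eqref{eq:gen_tr}. The bookkeeping you flag as the main obstacle is precisely what the paper's chain of diagrams carries out, terminating in the same triangle-identity argument for $\delta_!L\to\delta_!\delta^!p_1^!L=\delta_!L$.
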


\proof Similarly to the map~\eqref{eq:new_unit}, we denote by $\eta'$ the following map
\begin{align}
\begin{split}
\mathbbold{1}_X
\xrightarrow{\eta_L}
&\underline{Hom}(L,L)
=
\underline{Hom}(\delta^*p_2^*L,\delta^!p_1^!L)\\
\overset{\eqref{Ex^!Hom}}{\simeq}
&\delta^!\underline{Hom}(p_2^*L,p_1^!L)
\overset{\eqref{dual_hom}}{\simeq}
\delta^!(p_2^*\mathbb{D}(L)\otimes p_1^*L).
\end{split}
\end{align}
We are reduced to show the commutativity of the following diagram:
$$
\resizebox{\textwidth}{!}{
  \xymatrix{
    \mathbbold{1}_E \ar[r]^-{ad_{(c'_!,c'^!)}} \ar[d]_-{u'} & c'^!c'_!\mathbbold{1}_E \ar[r]^-{(Ex(\Delta^*_!))^{-1}}_-{\sim} & c'^!\delta^*c_!\mathbbold{1}_C \ar[d]^-{u'}\\
    \delta'^*c^!(p_1^*\mathbb{D}(L)\otimes p_2^*L) \ar[r]^-{\eqref{Ex*!}} \ar[d]_-{\eta'} & c'^!\delta^*(p_1^*\mathbb{D}(L)\otimes p_2^*L) \ar[d]_-{\eta'} \ar@{=}[rd] & c'^!\delta^*c_!c^!(p_1^*\mathbb{D}(L)\otimes p_2^*L) \ar[l]_-{ad'_{(c_!,c^!)}}\\
   \delta'^*c^!(p_1^*\mathbb{D}(L)\otimes p_2^*L)\otimes c'^*\delta^!(p_2^*\mathbb{D}(L)\otimes p_1^*L) \ar[r]^-{\eqref{Ex*!}} \ar[d]_-{\eqref{eq:cdPQ}} & c'^!\delta^*(p_1^*\mathbb{D}(L)\otimes p_2^*L)\otimes c'^*\delta^!(p_2^*\mathbb{D}(L)\otimes p_1^*L) \ar[ld]_-{\eqref{eq:nat_upper*!_upper!}} & c'^!(\mathbb{D}(L)\otimes L) \ar[d]^-{\wr} \\
  c'^!\delta^!(p_1^*\mathbb{D}(L)\otimes p_2^*L\otimes p_2^*\mathbb{D}(L)\otimes p_1^*L) \ar[r]^-{\sim} & c'^!\delta^!(p_1^*L\otimes p_1^*\mathbb{D}(L)\otimes p_2^*L\otimes p_2^*\mathbb{D}(L)) \ar[d]_-{\epsilon_L\otimes\epsilon_L} & c'^!(L\otimes\mathbb{D}(L)) \ar[d]^-{\epsilon_L} \\
  & c'^!\delta^!(p_1^*\mathcal{K}_X\otimes p_2^*\mathcal{K}_X) \ar[r]^-{\sim} & c'^!\mathcal{K}_X.
  }
}
$$
We show the commutativity of the octagon, while the rest of the diagram follows from a straightforward check. We are reduced to the following diagram:
$$
\resizebox{\textwidth}{!}{
  \xymatrix{
    \delta^*(p_1^*\mathbb{D}(L)\otimes p_2^*L) \ar@{=}[r]^-{} \ar[d]_-{\eta'} \ar[rd]^-{ad_{(\delta_!,\delta^!)}} & \mathbb{D}(L)\otimes L \ar[r]^-{\sim} & L\otimes\mathbb{D}(L) \ar[d]_-{ad_{(\delta_!,\delta^!)}} \ar@/^1.6cm/[ddd]_-{\epsilon_L} \\
    \delta^*(p_1^*\mathbb{D}(L)\otimes p_2^*L)\otimes\delta^!(p_2^*\mathbb{D}(L)\otimes p_1^*L)  \ar[d]_-{\eqref{eq:nat_upper*!_upper!}} & \delta^!\delta_!\delta^*(p_1^*\mathbb{D}(L)\otimes p_2^*L) \ar[r]^-{\sim} & \delta^!\delta_!(L\otimes\mathbb{D}(L)) \ar[d]_-{ad_{(q_!,q^!)}} \\
    \delta^!(p_1^*\mathbb{D}(L)\otimes p_2^*L\otimes p_2^*\mathbb{D}(L)\otimes p_1^*L) \ar[d]_-{\wr} & \delta^!(p_1^*\mathbb{D}(L)\otimes p_2^*L\otimes\delta_!\mathbbold{1}_X) \ar[u]^-{Ex(\delta^*_!,\otimes)}_-{\wr} \ar[ld]_-{\eta'} & p^!p_!(L\otimes\mathbb{D}(L)) \ar[d]_-{\epsilon_L} \\
    \delta^!(p_1^*L\otimes p_1^*\mathbb{D}(L)\otimes p_2^*L\otimes p_2^*\mathbb{D}(L)) \ar[r]^-{\epsilon_L\otimes\epsilon_L} \ar[rru]_-{\epsilon_L} & \delta^!(p_1^*\mathcal{K}_X\otimes p_2^*\mathcal{K}_X) \ar[r]^-{\eqref{Kunneth_K}}_-{\sim} & \mathcal{K}_X
  }
}
$$
where $p:X\to\operatorname{Spec}(k)$ and $q:X\times_kX\to\operatorname{Spec}(k)$ are structural morphisms. We are reduced to the commutativity of the pentagon, namely the following diagram:
$$
\resizebox{\textwidth}{!}{
  \xymatrix{
    \delta_!\delta^*(p_1^*\mathbb{D}(L)\otimes p_2^*L) \ar[r]^-{\sim} & \delta_!(L\otimes\mathbb{D}(L)) \ar[r]^-{ad_{(q_!,q^!)}} & q^!q_!\delta_!(L\otimes\mathbb{D}(L)) \\
    p_1^*\mathbb{D}(L)\otimes p_2^*L\otimes\delta_!\mathbbold{1}_X \ar[d]_-{\eta'} \ar[r]^-{ad_{(q_!,q^!)}} \ar[u]^-{Ex(\delta^*_!,\otimes)}_-{\wr} & q^!q_!(p_1^*\mathbb{D}(L)\otimes p_2^*L\otimes\delta_!\mathbbold{1}_X) \ar[d]_-{\eta'} \ar[ru]^-{\sim} & \\
    p_1^*\mathbb{D}(L)\otimes p_2^*L\otimes\delta_!\delta^!(p_2^*\mathbb{D}(L)\otimes p_1^*L) \ar[r]^-{ad_{(q_!,q^!)}} \ar[d]_-{ad'_{(\delta_!,\delta^!)}} & q^!q_!(p_1^*\mathbb{D}(L)\otimes p_2^*L\otimes\delta_!\delta^!(p_2^*\mathbb{D}(L)\otimes p_1^*L)) \ar[d]_-{ad_{(\delta_!,\delta^!)}} & q^!(p_!(L\otimes\mathbb{D}(L))\otimes p_!\mathcal{K}_X) \ar[d]_-{\eqref{Kun'}}^-{\wr} \ar[uu]^-{ad'_{(p_!,p^!)}} \\
    p_1^*L\otimes p_1^*\mathbb{D}(L)\otimes p_2^*L\otimes p_2^*\mathbb{D}(L) \ar[r]^-{ad_{(q_!,q^!)}} & q^!q_!(p_1^*L\otimes p_1^*\mathbb{D}(L)\otimes p_2^*L\otimes p_2^*\mathbb{D}(L)) \ar[r]^-{\epsilon_L} & q^!q_!(p_1^*(L\otimes\mathbb{D}(L))\otimes p_2^*\mathcal{K}_X).
  }
}
$$
By considering the hexagon in the diagram above, we are reduced to the following diagram:
$$
\resizebox{\textwidth}{!}{
  \xymatrix{
    q_!(p_1^*\mathbb{D}(L)\otimes p_2^*L\otimes\delta_!\mathbbold{1}_X) \ar[r]^-{\sim} \ar[d]_-{\eta'} & q_!\delta_!(L\otimes\mathbb{D}(L)) \ar[d]_-{ad'_{(\delta_!,\delta^!)}} & \\
    q_!(p_1^*\mathbb{D}(L)\otimes p_2^*L\otimes\delta_!\delta^!(p_2^*\mathbb{D}(L)\otimes p_1^*L)) \ar[d]_-{ad'_{(\delta_!,\delta^!)}} & q_!p_1^!(L\otimes\mathbb{D}(L)) & p_!(L\otimes\mathbb{D}(L))\otimes p_!\mathcal{K}_X \ar[lu]_-{ad'_{(p_!,p^!)}} \ar[ld]_-{\eqref{Kun'}}^-{\sim}\\
    q_!(p_1^*\mathbb{D}(L)\otimes p_2^*L\otimes p_2^*\mathbb{D}(L)\otimes p_1^*L) \ar[r]^-{\epsilon_L} & q_!(p_1^*(L\otimes\mathbb{D}(L))\otimes p_2^*\mathcal{K}_X). \ar[u]^-{\eqref{Kunneth}}_-{\wr}  &  
    }
}
$$
The right part is a straightforward check, and the left part is reduced to the following diagram:
$$
  \xymatrix{
    p_2^*L\otimes\delta_!\mathbbold{1}_X \ar[r]^-{\eta'} \ar[d]_-{Ex(\delta^*_!,\otimes)}^-{\wr} & p_2^*L\otimes p_2^*\mathbb{D}(L)\otimes p_1^*L \ar[r]^-{\epsilon_L} & p_1^*L\otimes p_2^*\mathcal{K}_X \ar[d]^-{\eqref{Kunneth}}_-{\wr}\\
    \delta_!L \ar[rr]^-{ad'_{(\delta_!,\delta^!)}} & & p_1^!L
    }
$$
which commutes since the composition
\begin{align}
\delta_!L
=
\delta_!\delta^*p_2^*L
\xrightarrow{}
\delta_!(\delta^*p_2^*L\otimes\underline{Hom}(\delta^*p_2^*L,\delta^!p_1^!L))
\xrightarrow{}
\delta_!\delta^!p_1^!L
=
\delta_!L
\end{align}
is the identity map.
\endproof

\begin{remark}
In the special case where $c=\delta:X\to X\times_kX$ is the diagonal map, the K\"unneth formulas indeed produce a map
\begin{equation}
\label{eq:form_counit}
\mathbbold{1}_X
=
\delta^*\delta_!\mathbbold{1}_X
\xrightarrow{\eta'}
\delta^*(p_2^*\mathbb{D}(L)\otimes p_1^*L)
=
\mathbb{D}(L)\otimes L.
\end{equation}
Together with the map $\epsilon_L:L\otimes\mathbb{D}(L)\to\mathcal{K}_X$, they can be seen as the counit and unit maps of a duality formalism similar to the usual (strong) duality, where the usual dualizing functor is replaced by the local duality functor $\mathbb{D}$, which gives rise to trace maps without requiring $L$ to be strongly dualizable; the general Verdier pairing is a more general form of the trace map in such a duality formalism. In Section~\ref{section:additivity} we will combine this point of view with the approach in \cite{May} in the study of additivity of traces. In particular, as mentioned in the introduction, the \emph{characteristic class} of $L$ is the composition
\begin{align}
\mathbbold{1}_X
\xrightarrow{\eqref{eq:form_counit}}
\mathbb{D}(L)\otimes L
\simeq
L\otimes\mathbb{D}(L)
\xrightarrow{\epsilon_L}
\mathcal{K}_X
\end{align}
which will be studied in more details in Section~\ref{section:CC}.

\end{remark}

\section{Additivity of the Verdier pairing}
\label{section:additivity}
In this section we prove the additivity of the Verdier pairing following \cite{May} and \cite{GPS}, using the language of derivators (\cite{Ayo}, \cite{GPS}).

\subsection{May's axioms in stable derivators}
\label{sect:stab_der}

In this section we recall the notion of closed symmetric monoidal stable derivators and obtain May's axioms following \cite{GPS}; the statements we need are slightly different from {\it loc.cit.} and can be obtained with very minor changes from the original proof. Since we are mostly interested in constructible motives, we only consider finite diagrams for convenience.
\begin{notation}

We denote by $FinCat$ the $2$-category of finite categories, $CAT$ the $2$-category of categories and $TR$ the subcategory of $CAT$ of triangulated categories and triangulated functors.

We denote by $\emptyset$ the empty category, $\underline{\mathbf{0}}$ the terminal category and $\underline{\mathbf{1}}=(0\to 1)$ the category with two objects and one non-identity morphism between them. We denote by $\Box$ the category $\underline{\mathbf{1}}\times\underline{\mathbf{1}}$, written as
\begin{align}
\begin{gathered}
  \xymatrix@=10pt{
   (0,0) \ar[d]_-{} \ar[r]_-{} & (0,1)  \ar[d]_-{} \\
   (1,0) \ar[r]_-{} & (1,1).
  }
\end{gathered}
\end{align}
We denote by $\lefthalfcap$ and respectively $\righthalfcup$ the full subcategories $\Box\setminus\{(1,1)\}$ and $\Box\setminus\{(0,0)\}$, with $i_\lefthalfcap:\lefthalfcap\to\Box$ and $i_\righthalfcup:\righthalfcup\to\Box$ the inclusions.

\end{notation}

\begin{definition}
\label{def:stable_der}
A \textbf{(strong) stable derivator} is a (non-strict) $2$-functor $\mathcal{T}_c:FinCat^{op}\to CAT$ satisfying the following properties:
\begin{enumerate}
\item $\mathcal{T}_c$ sends coproducts to products. In particular $\mathcal{T}_c(\emptyset)=\underline{\mathbf{0}}$.
\item 
For any $I,J\in FinCat$, the canonical functor $\mathcal{T}_c(I\times J)\to Fun(I^{op},\mathcal{T}_c(J))$ is conservative.
\item For any $A\in FinCat$, the canonical functor $\mathcal{T}_c(A\times\underline{\mathbf{1}})\to Fun(\underline{\mathbf{1}},\mathcal{T}_c(A))$ is full and essentially surjective.
\item For every functor $u:A\to B$ in $FinCat$, the functor $u^*:\mathcal{T}_c(B)\to\mathcal{T}_c(A)$ has a right adjoint $u_*$ and a left adjoint $u_\#$.
\item For any functor $u:A\to B$ and object $b$ of $B$, denote by $j_{A/b}:A/b\to A$, $j_{b\textbackslash A}:b\textbackslash A\to A$, $p_{A/b}:A/b\to\underline{\mathbf{0}}$ and $p_{b\textbackslash A}:b\textbackslash A\to\underline{\mathbf{0}}$ the canonical projections. Then the following canonical transformations are invertible:
\begin{align}
&b^*u_*
\to
p_{b\textbackslash A*}p_{b\textbackslash A}^*b^*u_*
\to
p_{b\textbackslash A*}j_{A/b}^*u^*u_*
\to
p_{b\textbackslash A*}j_{A/b}^*\\
&
p_{b\textbackslash A\#}j_{b\textbackslash A}^*
\to
p_{b\textbackslash A\#}j_{b\textbackslash A}^*u^*u_\#
\to
p_{b\textbackslash A\#}p_{b\textbackslash A}^*b^*u_\#
\to
b^*u_\#.
\end{align}

\item For any $I\in FinCat$ the category $\mathcal{T}_c(I)$ has a zero object, i.e. an object which is both initial and terminal.

\item For any $I\in FinCat$ and $X$ an object of $\mathcal{T}_c(\Box\times I)$, $X$ is Cartesian (i.e. the canonical map $X\to i_{\righthalfcup*}i_\righthalfcup^*X$ is invertible) if and only if $X$ is coCartesian (i.e. the canonical map $i_{\lefthalfcap\#}i_\lefthalfcap^*X\to X$ is invertible). We also say that $X$ is \textbf{biCartesian}.
\end{enumerate}

An object in $\mathcal{T}_c(A)$ is called an \textbf{($A$-shaped) coherent diagram}, who has an underlying \textbf{incoherent diagram} in $Fun(A,\mathcal{T}_c(\underline{\mathbf{0}}))$.

The \textbf{cofiber functor} $\operatorname{cof}:\mathcal{T}_c(\underline{\mathbf{1}})\to\mathcal{T}_c(\underline{\mathbf{1}})$ is the composition 
\begin{align}
\mathcal{T}_c(\underline{\mathbf{1}})
\xrightarrow{(0,\cdot)_*}
\mathcal{T}_c(\lefthalfcap)
\xrightarrow{(i_{\lefthalfcap})_\#}
\mathcal{T}_c(\Box)
\xrightarrow{(\cdot,1)^*}
\mathcal{T}_c(\underline{\mathbf{1}}).
\end{align}

If $\mathcal{T}_c$ is a stable derivator, for any $I\in FinCat$ we define a functor $\Sigma:\mathcal{T}_c(I)\to\mathcal{T}_c(I)$ by setting $\Sigma=(1,1)^*(i_\lefthalfcap)_\#(i_\lefthalfcap)^*(0,0)_*$. For a biCartesian $X\in\mathcal{T}_c(\Box\times I)$ depicted as
\begin{align}
\begin{gathered}
  \xymatrix{
   x \ar[d]_-{} \ar[r]^-{f} & y \ar[d]^-{g} \\
   0 \ar[r]_-{} & z
  }
\end{gathered}
\end{align}
with $x,y,z\in\mathcal{T}_c(I)$, we define a canonical map $z\xrightarrow{h}\Sigma x$ as follows:
\begin{align}
h:z
=
(1,1^*)X
\xleftarrow{\sim}
(1,1^*)(i_\lefthalfcap)_\#(i_\lefthalfcap)^*X
\xrightarrow{}
(1,1^*)(i_\lefthalfcap)_\#(i_\lefthalfcap)^*(0,0)_*(0,0)^*X
=
\Sigma(0,0)^*X
=
\Sigma x.
\end{align}
Then the category $\mathcal{T}_c(I)$ has the structure of a triangulated category by considering $\Sigma$ as the shift functor and letting distinguished triangles to be the ones isomorphic to a triangle of the form
\begin{align}
x\xrightarrow{f}y\xrightarrow{g}z\xrightarrow{h}\Sigma x.
\end{align}
Therefore we can also see stable derivators as $2$-functors $\mathcal{T}_c:FinCat^{op}\to TR$.

\end{definition}

\begin{notation}
We denote by $SMTR$ the $2$-category of symmetric monoidal triangulated categories with (strong) monoidal functors.

As in \cite{GPS}, if $\odot:\mathcal{T}_{c,1}\times\mathcal{T}_{c,2}\to\mathcal{T}_{c,3}$ is a two-variable morphism of stable derivators, it induces an \textbf{internal product} denoted as 
\begin{align}
\odot_A:\mathcal{T}_{c,1}(A)\times\mathcal{T}_{c,2}(A)\to\mathcal{T}_{c,3}(A).
\end{align}
We define the \textbf{external product} $\odot:\mathcal{T}_{c,1}(A)\times\mathcal{T}_{c,2}(B)\to\mathcal{T}_{c,3}(A\times B)$ as the composition
\begin{align}
\mathcal{T}_{c,1}(A)\times\mathcal{T}_{c,2}(B)
\xrightarrow{\pi_A^*\times\pi_B^*}
\mathcal{T}_{c,1}(A\times B)\times\mathcal{T}_{c,2}(A\times B)
\xrightarrow{\odot_{A\times B}}
\mathcal{T}_{c,3}(A\times B).
\end{align}
where $\pi_A:A\times B\to A$ and $\pi_B:A\times B\to B$ are the canonical projections.
\end{notation}

\begin{definition}
\label{def:sym_mon_stab_der}
A \textbf{symmetric monoidal stable derivator} is a $2$-functor $(\mathcal{T}_c,\otimes):FinCat^{op}\to SMTR$ such that
\begin{enumerate}
\item The composition $FinCat^{op}\xrightarrow{\mathcal{T}_c}SMTR\xrightarrow{}TR$ is a stable derivator.
\item For any $A,B,C\in FinCat$, $u:A\to B$, $X\in\mathcal{T}_c(A)$, $Y\in\mathcal{T}_c(C)$, the following canonical map of external products is an isomorphism:
\begin{align}
\begin{split}
(u\times1)_{\#}(X\otimes Y)
&\to
(u\times1)_{\#}(u^*u_\#X\otimes Y)\\
&\xrightarrow{\sim}
(u\times1)_{\#}(u\times1)^*(u_\#X\otimes Y)
\to
u_\#X\otimes Y.
\end{split}
\end{align}

\end{enumerate}
It is \textbf{closed} if the functor $\otimes$ has a right adjoint $\underline{Hom}(\cdot,\cdot)$.
\end{definition}

The following is \cite[Corollary 4.5]{GPS}:
\begin{proposition}[TC1]

Let $\mathcal{T}_c$ be a symmetric monoidal stable derivator and denote by $\mathbbold{1}$ the unit in $\mathcal{T}_c(\underline{\mathbf{0}})$.Then for any object $x$ in $\mathcal{T}_c(\underline{\mathbf{0}})$ there is a natural equivalence $\alpha:\Sigma x\simeq x\otimes\Sigma\mathbbold{1}$ such that the composition
\begin{align}
\Sigma\Sigma\mathbbold{1}
\xrightarrow{\alpha}
\Sigma\mathbbold{1}\otimes\Sigma\mathbbold{1}
\overset{ s}{\simeq}
\Sigma\mathbbold{1}\otimes\Sigma\mathbbold{1}
\xrightarrow{\alpha^{-1}}
\Sigma\Sigma\mathbbold{1}
\end{align}
is the multiplication by $-1$, where $ s$ is the isomorphism that exchanges the two summands.

\end{proposition}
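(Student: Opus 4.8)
The plan is to produce $\alpha$ directly from the monoidal compatibility axiom of a symmetric monoidal stable derivator, and then to reduce the sign assertion to the classical fact that the interchange of $S^1\wedge S^1$ has degree $-1$, following \cite[Section 4]{GPS}.

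\emph{Construction of $\alpha$.} Recall that $\Sigma=(1,1)^*(i_\lefthalfcap)_\#(i_\lefthalfcap)^*(0,0)_*$, where $(0,0)\colon\underline{\mathbf{0}}\to\Box$ is the inclusion of the vertex $(0,0)$, and that for $y\in\mathcal{T}_c(\underline{\mathbf{0}})$ the diagram $(0,0)_*y\in\mathcal{T}_c(\Box)$ has $y$ at $(0,0)$ and zero objects at the other three vertices; it is characterized among $\Box$-diagrams by this vanishing. Given $x\in\mathcal{T}_c(\underline{\mathbf{0}})$, I would tensor $(0,0)_*\mathbbold{1}$ with the constant diagram $\pi_\Box^*x$, where $\pi_\Box\colon\Box\to\underline{\mathbf{0}}$. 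Since every restriction functor of the derivator is monoidal and $-\otimes x$ is additive (being triangulated in each variable), this tensor product has $\mathbbold{1}\otimes x\simeq x$ at $(0,0)$ and $0\otimes x\simeq 0$ elsewhere, hence is canonically and naturally isomorphic to $(0,0)_*x$. Applying $(i_\lefthalfcap)^*$ (monoidal), then $(i_\lefthalfcap)_\#$ together with the compatibility isomorphism of Definition~\ref{def:sym_mon_stab_der}(2) for $u=i_\lefthalfcap$ — which identifies $(i_\lefthalfcap)_\#\bigl((i_\lefthalfcap)^*(0,0)_*\mathbbold{1}\otimes\pi_\lefthalfcap^*x\bigr)$ with $(i_\lefthalfcap)_\#(i_\lefthalfcap)^*(0,0)_*\mathbbold{1}\otimes\pi_\Box^*x$ — and finally $(1,1)^*$ (monoidal), I obtain a natural isomorphism $\Sigma x\simeq\Sigma\mathbbold{1}\otimes x$; composing with the symmetry $s$ yields $\alpha\colon\Sigma x\xrightarrow{\ \sim\ }x\otimes\Sigma\mathbbold{1}$.

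\emph{Reduction of the sign statement.} Specializing to $x=\Sigma\mathbbold{1}$, the composite $\theta:=\alpha^{-1}\circ s\circ\alpha$ is an automorphism of $\Sigma\Sigma\mathbbold{1}=\Sigma^2\mathbbold{1}$, and unwinding the construction of $\alpha$ shows that $\theta$ compares the two ways of realizing $\Sigma^2\mathbbold{1}$ as a suspension of $\Sigma\mathbbold{1}$: suspending in the $\Box$-direction used for the outer $\Sigma$, versus suspending in the $\Box$-direction whose coCartesian square produced, via $\alpha$, the tensor factor $\Sigma\mathbbold{1}$. Following \cite[Section 4]{GPS}, I would assemble a single coherent diagram on $\Box\times\Box$ whose restrictions along the two embeddings $\Box\hookrightarrow\Box\times\Box$ both compute $\Sigma$ of $\Sigma\mathbbold{1}$, whose value at the vertex $\bigl((1,1),(1,1)\bigr)$ is $\Sigma^2\mathbbold{1}$, and on which the transposition $\sigma$ of the two $\Box$-factors acts; by construction $\sigma$ fixes that vertex and, under $\alpha$, realizes $s$ on the intermediate object $\Sigma\mathbbold{1}\otimes\Sigma\mathbbold{1}$, so that $\theta$ is identified with the endomorphism of the corner induced by $\sigma^*$.

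\emph{The main obstacle.} The only nonformal point — and the content of the statement — is that this endomorphism equals $-1$ and not $+1$. This is the derivator incarnation of the classical computation that the flip on $S^1\wedge S^1$ has degree $-1$, and it is exactly where the stability axiom (biCartesian $\Leftrightarrow$ coCartesian) of Definition~\ref{def:stable_der} enters essentially, beyond mere pointedness: the connecting map $h$ fixing the triangulated structure in a given $\Box$-direction is the composite appearing in Definition~\ref{def:stable_der}, built from the comparison $(i_\lefthalfcap)_\#(i_\lefthalfcap)^*\to(i_\lefthalfcap)_\#(i_\lefthalfcap)^*(0,0)_*(0,0)^*$ and the inverse of a coCartesian instance, and under $\sigma^*$ the two such composites, one for each $\Box$-direction, are interchanged together with a reversal of the orientation of the relevant squares; this reversal is what inserts the sign. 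Carrying out this bookkeeping — evaluating $\theta$ against the explicit formula for $h$ and invoking the interchange of iterated left Kan extensions — is precisely \cite[Corollary 4.5]{GPS} together with the lemmas preceding it, and since we work only with finite diagrams their argument applies with purely cosmetic changes. The resulting statement is May's axiom (TC1), which, with the further axioms of this subsection, underlies the additivity of the Verdier pairing.
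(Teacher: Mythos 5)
Your proposal is correct and takes essentially the same route as the paper: the paper simply quotes this statement as \cite[Corollary 4.5]{GPS}, having remarked that the GPS arguments carry over to finite diagrams with only minor changes, and your sketch — building $\alpha$ from the monoidal compatibility axiom of Definition~\ref{def:sym_mon_stab_der} and reducing the sign to the $\Box\times\Box$ interchange — is precisely the GPS construction, with the decisive sign computation deferred to that same citation. Nothing in your outline diverges from what the paper relies on.
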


The following is \cite[Theorems 4.8 and 9.12]{GPS}:

\begin{proposition}[TC2]
\label{prop:TC2}
Let $\mathcal{T}_c$ be a closed symmetric monoidal stable derivator. Then for any distinguished triangle
\begin{align}
x\xrightarrow{f}y\xrightarrow{g}z\xrightarrow{h}\Sigma x
\end{align}
in $\mathcal{T}_c(\underline{\mathbf{0}})$ and any $t\in\mathcal{T}_c(\underline{\mathbf{0}})$, the following triangles are distinguished:
\begin{align}
x\otimes t
\xrightarrow{f\otimes 1}
y\otimes t
\xrightarrow{g\otimes 1}
z\otimes t
\xrightarrow{h\otimes 1}
\Sigma(x\otimes t).
\end{align}
\begin{align}
t\otimes x
\xrightarrow{1\otimes f}
t\otimes y
\xrightarrow{1\otimes g}
t\otimes z
\xrightarrow{1\otimes h}
\Sigma(t\otimes x).
\end{align}
\begin{align}
\Sigma^{-1}\underline{Hom}(x,t)
\xrightarrow{-\underline{Hom}(h,t)}
\underline{Hom}(z,t)
\xrightarrow{\underline{Hom}(g,t)}
\underline{Hom}(y,t)
\xrightarrow{\underline{Hom}(f,t)}
\underline{Hom}(x,t).
\end{align}
\begin{align}
\underline{Hom}(t,x)
\xrightarrow{\underline{Hom}(t,f)}
\underline{Hom}(t,y)
\xrightarrow{\underline{Hom}(t,g)}
\underline{Hom}(t,z)
\xrightarrow{\underline{Hom}(t,h)}
\Sigma\underline{Hom}(t,z).
\end{align}
\end{proposition}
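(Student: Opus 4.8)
The plan is to derive all four statements from a single observation: each of the functors $-\otimes t$, $t\otimes-$ and $\underline{Hom}(t,-)$ is compatible, at the level of derivators, with the homotopy (co)limits used in Definition~\ref{def:stable_der} to build the triangulated structure of $\mathcal{T}_c(\underline{\mathbf{0}})$, while $\underline{Hom}(-,t)$ is \emph{anti}-compatible with them; hence the first three preserve distinguished triangles and the fourth dualizes them. The first step is a reduction to biCartesian squares: by Definition~\ref{def:stable_der}, a distinguished triangle $x\xrightarrow{f}y\xrightarrow{g}z\xrightarrow{h}\Sigma x$ in $\mathcal{T}_c(\underline{\mathbf{0}})$ is, up to isomorphism of triangles, the one read off from a coherent biCartesian square $X\in\mathcal{T}_c(\Box)$ with $(1,0)$-vertex a zero object, with $h$ the canonical map assembled from $(i_\lefthalfcap)_{\#}\dashv(i_\lefthalfcap)^{*}$ and $(0,0)^{*}\dashv(0,0)_{*}$. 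Since applying a functor carries isomorphic triangles to isomorphic triangles, it is enough to treat triangles literally of this form.

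Next, for $-\otimes t$: taking $u=i_\lefthalfcap$ and the constant diagram $t$ in condition~(2) of Definition~\ref{def:sym_mon_stab_der}, and using that $(i_\lefthalfcap)^{*}$ commutes with $\otimes$ and that $X$ is coCartesian, one gets $(i_\lefthalfcap)_{\#}(i_\lefthalfcap)^{*}(X\otimes t)\simeq X\otimes t$; so $X\otimes t$ is coCartesian, hence biCartesian by condition~(7) of Definition~\ref{def:stable_der}. Since the closed structure makes $-\otimes t$ a left adjoint it preserves the zero object, so the $(1,0)$-vertex of $X\otimes t$ is again $0$, and reading off its triangle — together with the identification $\Sigma(x\otimes t)\simeq(\Sigma x)\otimes t$ supplied by \textbf{TC1} — yields exactly $x\otimes t\xrightarrow{f\otimes 1}y\otimes t\xrightarrow{g\otimes 1}z\otimes t\xrightarrow{h\otimes 1}\Sigma(x\otimes t)$. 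The statement for $t\otimes-$ is the mirror image, using the symmetry constraint. For $\underline{Hom}(t,-)$, which is a right adjoint, one argues dually: it commutes with all $u_{*}$, so $\underline{Hom}(t,X)$ is Cartesian, hence biCartesian, with zero $(1,0)$-vertex (right adjoints preserve the zero object), and its triangle is $\underline{Hom}(t,x)\xrightarrow{\underline{Hom}(t,f)}\underline{Hom}(t,y)\xrightarrow{\underline{Hom}(t,g)}\underline{Hom}(t,z)\xrightarrow{\underline{Hom}(t,h)}\Sigma\underline{Hom}(t,x)$.

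The remaining case $\underline{Hom}(-,t)$ is the genuine difficulty, and it is where I would follow \cite{GPS} closely. Using the canonical identifications $\Box^{\mathrm{op}}\cong\Box$ and $\lefthalfcap^{\mathrm{op}}\cong\righthalfcup$, one views $\underline{Hom}(-,t)$ as carrying coherent $\Box$-diagrams to coherent $\Box$-diagrams; the two-variable adjunction between $\otimes$ and $\underline{Hom}$ at the derivator level then converts the isomorphism $(i_\lefthalfcap)_{\#}(i_\lefthalfcap)^{*}X\xrightarrow{\sim}X$ into $\underline{Hom}(X,t)\xrightarrow{\sim}(i_\righthalfcup)_{*}(i_\righthalfcup)^{*}\underline{Hom}(X,t)$, so $\underline{Hom}(X,t)$ is Cartesian, hence biCartesian; its zero vertex sits in the $(0,1)$-spot, so to read off a triangle one transposes the square — the step that inserts a sign — and uses the identification $\underline{Hom}(\Sigma x,t)\simeq\Sigma^{-1}\underline{Hom}(x,t)$ governed by \textbf{TC1}. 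The outcome is the triangle $\Sigma^{-1}\underline{Hom}(x,t)\xrightarrow{-\underline{Hom}(h,t)}\underline{Hom}(z,t)\xrightarrow{\underline{Hom}(g,t)}\underline{Hom}(y,t)\xrightarrow{\underline{Hom}(f,t)}\underline{Hom}(x,t)$. The main obstacle is accordingly twofold: establishing that $\underline{Hom}$ really does exchange $u_{\#}$ and $u_{*}$ as a bifunctor of derivators (not merely on underlying categories), which requires the full strength of the closed structure of Definition~\ref{def:sym_mon_stab_der}; and tracking the sign through the opposite-category identifications and TC1. Both are handled in \cite{GPS}, whose statements differ from ours only in that we have restricted attention to finite diagrams, a change that affects none of the arguments.
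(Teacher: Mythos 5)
Your proposal is correct and takes essentially the same route as the paper: the paper gives no independent argument but simply cites \cite[Theorems 4.8 and 9.12]{GPS} (noting, as you do, that the restriction to finite diagrams changes nothing), and your sketch — tensoring/cotensoring preserving (co)Cartesian squares via the monoidal and closed structure, plus the opposite-category and sign bookkeeping for $\underline{Hom}(-,t)$ — is precisely the GPS argument being invoked. One cosmetic point: your final term $\Sigma\underline{Hom}(t,x)$ in the fourth triangle is the correct one; the statement's $\Sigma\underline{Hom}(t,z)$ is a typo.
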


\subsubsection{}
The following notions are specific to derivators and play a key role in the additivity of traces:

\begin{definition}
\label{def:cancel_prod}
Let $A$ be a small category. The \textbf{twisted arrow category} $tw(A)$ is defined as follows: its objects are morphisms $a\xrightarrow{f}b$ in $A$, and morphisms from $a_1\xrightarrow{f_1}b_1$ to $a_2\xrightarrow{f_2}b_2$ are pairs of morphisms $b_1\xrightarrow{h}b_2$ and $a_2\xrightarrow{g}b_1$ such that $f_2=hf_1g$. There is a canonical map $(t^{op},s^{op}):tw(A)^{op}\to A^{op}\times A$ sending $a\xrightarrow{f}b$ to $(b,a)$.

If $\mathcal{T}_c$ is a stable derivator and $X\in\mathcal{T}_c(A^{op}\times A)$, the \textbf{coend} of $X$ is defined as
\begin{align}
\int^AX:=(\pi_{tw(A)^{op}})_\#(t^{op},s^{op})^*X\in\mathcal{T}_c(\underline{\mathbf{0}})
\end{align}
where $\pi_{tw(A)^{op}}:tw(A)^{op}\to\underline{\mathbf{0}}$ is the canonical map.

If $\odot:\mathcal{T}_{c,1}\times\mathcal{T}_{c,2}\to\mathcal{T}_{c,3}$ is a two-variable morphism of stable derivators and $X\in\mathcal{T}_{c,1}(A^{op})$, $Y\in\mathcal{T}_{c,2}(A)$, the \textbf{cancelling tensor product} of $X$ and $Y$ is defined as
\begin{align}
X\odot_{[A]}Y:=\int^A(X\odot Y)\in\mathcal{T}_{c,3}(\underline{\mathbf{0}}).
\end{align}

\end{definition}

If $\mathcal{T}_c$ is a closed symmetric monoidal derivator, we have two natural two-variable morphisms $\otimes:\mathcal{T}_c\times\mathcal{T}_c\to\mathcal{T}_c$ and $\underline{Hom}:\mathcal{T}_c^{op}\times\mathcal{T}_c\to\mathcal{T}_c$. We denote by $X\otimes_{[A]}Y$ and $\underline{Hom}_{[A]}(X,Y)$ respectively the corresponding cancelling tensor products.

\begin{proposition}[TC3]
\label{prop:TC3}
Let $\mathcal{T}_c$ be a closed symmetric monoidal derivator and let $x\xrightarrow{f}y$ and $x'\xrightarrow{f'}y'$ be two maps which give rise to distinguished triangles in $\mathcal{T}_c(\underline{\mathbf{0}})$
\begin{align}
x\xrightarrow{f}y\xrightarrow{g}z\xrightarrow{h}\Sigma x.
\end{align}
\begin{align}
x'\xrightarrow{f'}y'\xrightarrow{g'}z'\xrightarrow{h'}\Sigma x'.
\end{align}
Then the following properties hold:
\begin{enumerate}
\item
For 
$v:=\underline{Hom}_{[\underline{\mathbf{1}}]}(\operatorname{cof}(f),f')$ there are distinguished triangles
\begin{align}
\underline{Hom}(y,x')\xrightarrow{p_1}v\xrightarrow{j_1}\underline{Hom}(z,z')\xrightarrow{\underline{Hom}(g,h')}\Sigma\underline{Hom}(y,x')
\end{align}
\begin{align}
\Sigma^{-1}\underline{Hom}(x,z')\xrightarrow{p_2}v\xrightarrow{j_2}\underline{Hom}(y,y')\xrightarrow{-\underline{Hom}(f,g')}\underline{Hom}(x,z')
\end{align}
\begin{align}
\underline{Hom}(z,y')\xrightarrow{p_3}v\xrightarrow{j_3}\underline{Hom}(x,x')\xrightarrow{\underline{Hom}(h,f')}\Sigma\underline{Hom}(z,y')
\end{align}
with a coherent diagram of the form
$$
\resizebox{\textwidth}{!}{
  \xymatrix{
    \Sigma^{-1}\underline{Hom}(y,z') \ar[rd]_(.15){\Sigma^{-1}\underline{Hom}(f,1_{z'})\ \ \ \ } \ar@/_2cm/[d]_-{\Sigma^{-1}\underline{Hom}(1_y,h')} & \underline{Hom}(z,x') \ar[ld]_(.85){\underline{Hom}(g,1_{x'})\ \ } \ar[rd]^(.85){\ \ \ \ \underline{Hom}(1_z,f')} & \Sigma^{-1}\underline{Hom}(x,y') \ar@/^2cm/[d]^-{\underline{Hom}(h,1_{y'})} \ar[ld]^(.15){\ \ \ \ \Sigma^{-1}\underline{Hom}(1_x,g')}\\
    \underline{Hom}(y,x') \ar@/_2cm/[dd]_-{\underline{Hom}(f,1_{x'})} \ar[rd]^(.4){p_1} \ar[rdd]_(.3){\underline{Hom}(1_y,f')} & \Sigma^{-1}\underline{Hom}(x,z') \ar[d]^-{p_2} \ar[rdd]^(.7){\underline{Hom}(h,1_{z'})} \ar[ldd]_(.7){\Sigma^{-1}\underline{Hom}(1_x,h')} & \underline{Hom}(z,y') \ar[ld]_(.4){p_3} \ar@/^2cm/[dd]^-{\underline{Hom}(1_z,g')} \ar[ldd]^(.3){\underline{Hom}(g,1_{y'})}\\
      & v \ar[ld]^(.6){j_3} \ar[d]^-{j_2} \ar[rd]_(.6){j_1} & \\
    \underline{Hom}(x,x') \ar@/_2cm/[d]_-{\underline{Hom}(1_x,f')} \ar[rd]_(.15){\Sigma\underline{Hom}(h,1_{x'})\ \ \ \ } & \underline{Hom}(y,y') \ar[rd]^(.85){\ \ \ \ \underline{Hom}(1_y,g')} \ar[ld]_(.85){\underline{Hom}(f,1_{y'})\ \ } & \underline{Hom}(z,z') \ar@/^2cm/[d]^-{\underline{Hom}(g,1_{z'})} \ar[ld]^(.15){\ \ -\underline{Hom}(1_z,h')}\\
    \underline{Hom}(x,y')  & \Sigma\underline{Hom}(z,x') & \underline{Hom}(y,z').
  }
}
$$
\item
For $w:=\underline{Hom}_{[\underline{\mathbf{1}}]}(\operatorname{cof}(f),\operatorname{cof}(f'))$, there are distinguished triangles
\begin{align}
\underline{Hom}(z,z')\xrightarrow{k_1}w\xrightarrow{q_1}\underline{Hom}(x,y')\xrightarrow{\underline{Hom}(h,g')}\Sigma\underline{Hom}(z,z')
\end{align}
\begin{align}
\underline{Hom}(y,y')\xrightarrow{k_2}w\xrightarrow{q_2}\Sigma\underline{Hom}(z,x')\xrightarrow{-\Sigma\underline{Hom}(g,f')}\Sigma\underline{Hom}(y,y')
\end{align}
\begin{align}
\underline{Hom}(x,x')\xrightarrow{k_3}w\xrightarrow{q_3}\underline{Hom}(y,z')\xrightarrow{\underline{Hom}(f,h')}\Sigma\underline{Hom}(x,x')
\end{align}
with a similar coherent diagram.

\item
For $u:=f\otimes_{[\underline{\mathbf{1}}]}\operatorname{cof}(f')$,
there are distinguished triangles
\begin{align}
x\otimes z'\xrightarrow{l_1}u\xrightarrow{r_1}z\otimes y'\xrightarrow{h\otimes g'}\Sigma x\otimes z'
\end{align}
\begin{align}
y\otimes y'\xrightarrow{l_2}u\xrightarrow{r_2}\Sigma x\otimes x'\xrightarrow{-\Sigma(g\otimes f')}\Sigma y\otimes y'
\end{align}
\begin{align}
z\otimes x'\xrightarrow{l_3}u\xrightarrow{r_3}y\otimes z'\xrightarrow{f\otimes h'}\Sigma z\otimes x'
\end{align}
with a similar coherent diagram.
\end{enumerate}

We call these statements respectively $(TC3D)$, $(TC3D')$ and $(TC3')$.

\end{proposition}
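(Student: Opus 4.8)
\proof
The plan is to follow the proofs of the corresponding statements in \cite{GPS} with only cosmetic changes: we work with finite diagrams throughout (harmless, since every index category appearing below is finite), and in $(TC3D)$ one of the two arguments of the cancelling Hom is $f'$ itself rather than $\operatorname{cof} f'$, which affects no step of the argument.

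First I would unwind the cancelling products over $\underline{\mathbf{1}}$. The twisted arrow category $tw(\underline{\mathbf{1}})$ is the cospan $\mathrm{id}_0\to(0\to 1)\leftarrow\mathrm{id}_1$, so $tw(\underline{\mathbf{1}})^{op}$ is a span and, along $(t^{op},s^{op})$, the coend $\int^{\underline{\mathbf{1}}}(-)$ is computed as a homotopy pushout, i.e. as the left Kan extension along $i_{\lefthalfcap}$ followed by evaluation at $(1,1)$, just as in the definition of the cofiber functor. Realizing the coherent triangles attached to $f$ and $f'$ as the $\Box$-diagrams underlying $\operatorname{cof} f$ and $\operatorname{cof} f'$, and using that the external $\underline{Hom}$ and $\otimes$ are compatible with these coherent diagrams, one identifies $v$ with the homotopy pushout of $\underline{Hom}(y,x')\xleftarrow{\underline{Hom}(g,x')}\underline{Hom}(z,x')\xrightarrow{\underline{Hom}(z,f')}\underline{Hom}(z,y')$, and similarly $w$ and $u$ with analogous homotopy pushouts extracted from the two coherent triangles. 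Equivalently, each of $v$, $w$, $u$ is the total cofiber of a coherent $\Box$-diagram whose four corners are the evident $\underline{Hom}$'s (resp. tensor products).

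Next I would invoke the standard fact, as used in \cite{GPS} and going back to the total-(co)fiber package for stable derivators, that the total cofiber $T$ of a coherent square fits coherently into three distinguished triangles: one built from the cofibers of its two columns, one from the cofibers of its two rows, and the diagonal one comparing $T$ with the fourth corner. Feeding in the square attached to $(f,f')$ and identifying each term through the long $\underline{Hom}$-sequences (resp. $\otimes$-sequences) of Proposition~\ref{prop:TC2} — for instance $\operatorname{cof}(\underline{Hom}(z,f'))\simeq\underline{Hom}(z,\operatorname{cof} f')=\underline{Hom}(z,z')$, $\operatorname{cof}(\underline{Hom}(g,x'))\simeq\underline{Hom}(x,x')$, and $\operatorname{cof}(v\to\underline{Hom}(y,y'))\simeq\underline{Hom}(x,z')$ — yields exactly the three triangles in $(TC3D)$; the cases $(TC3D')$ and $(TC3')$ are obtained in the same way, replacing $f'$ by $\operatorname{cof} f'$ in the former and $\underline{Hom}$ by $\otimes$ in the latter. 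The connecting maps come out as the asserted $\underline{Hom}(g,h')$, $-\underline{Hom}(f,g')$, $\underline{Hom}(h,f')$ and their analogues; the lone sign $-1$ is forced by the interaction of $\Sigma$ with the monoidal structure, i.e. by $(TC1)$, exactly as in \cite{GPS}. Assembling the three triangles, together with the auxiliary maps displayed in the coherent diagram, into a single coherent diagram on the relevant finite shape is then a Kan-extension computation governed by the stable-derivator axioms.

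The step I expect to require the most care is this last assembly: checking that every arrow in the large coherent diagram (the curved ones included) is the one claimed, with the correct sign, and that the three triangles genuinely share the same coherent $v$ rather than merely isomorphic objects. This is precisely the content of the delicate diagram chases of \cite{GPS}, and our only contribution is to note that none of them uses anything beyond the finite-diagram axioms of a stable derivator together with the statements $(TC1)$ and $(TC2)$, so that they transport verbatim to $\mathcal{T}_c$.
\endproof
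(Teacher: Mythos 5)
Your proposal is correct and follows essentially the same route as the paper, which simply cites \cite[Theorem 6.2 and Section 7]{GPS} and observes that the arguments carry over verbatim with $\otimes$ replaced by $\underline{Hom}$ (and restricted to finite diagrams). Your additional unwinding of the coend over $tw(\underline{\mathbf{1}})$ as a homotopy pushout and the identification of the cofibers via (TC1)/(TC2) is an accurate expansion of what that citation encapsulates, not a different method.
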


\proof
The proof of $(TC3D)$ is the same as that of \cite[Theorem 6.2]{GPS}, where we replace everywhere $\otimes$ by $\underline{Hom}$.
The statement of $(TC3')$ is proved in \cite[Section 7]{GPS}, and $(TC3D')$ follows from a similar argument.
\endproof

The following has the same proof as \cite[Theorem 7.3]{GPS}:
\begin{proposition}[TC4]
\label{prop:TC4}
With the notations in Proposition~\ref{prop:TC3}, there is a biCartesian square
\begin{align}
\begin{gathered}
  \xymatrix{
    v \ar^-{j_2}[r] \ar_-{(j_1,j_3)}[d] & \underline{Hom}(y,y') \ar^-{k_2}[d]\\
    \underline{Hom}(z,z')\oplus\underline{Hom}(x,x') \ar^-{k_1+k_3}[r] & w.
  }
\end{gathered}
\end{align}
\end{proposition}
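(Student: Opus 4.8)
The plan is to follow the argument of \cite[Theorem 7.3]{GPS} essentially verbatim, with the monoidal product $\otimes$ replaced by $\underline{Hom}$ throughout and the variance of the first argument reversed. First I would fix coherent representatives: by axiom (3) of Definition~\ref{def:stable_der} lift $f$ and $f'$ to coherent arrows $F,F'\in\mathcal{T}_c(\underline{\mathbf 1})$, and by the biCartesian axiom (7) of Definition~\ref{def:stable_der} extend them to coherent squares $\bar F,\bar F'\in\mathcal{T}_c(\Box)$ realizing the cofiber sequences $x\to y\to z$ and $x'\to y'\to z'$, so that $\operatorname{cof}(f)$ and $\operatorname{cof}(f')$ are recovered as edges of $\bar F$ and $\bar F'$. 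Applying the external $\underline{Hom}$ morphism of derivators to $\bar F$ (in the contravariant slot) and $\bar F'$ produces a single coherent diagram indexed by $\Box^{op}\times\Box$, which is the common source of all objects and maps in sight.

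Next I would extract the square. Restricting this coherent diagram along the relevant embeddings of twisted arrow categories of $\underline{\mathbf 1}$ (here $tw(\underline{\mathbf 1})^{op}$, because of the contravariance) and then applying homotopy left Kan extensions to the point, one reads off the objects $v=\underline{Hom}_{[\underline{\mathbf 1}]}(\operatorname{cof}(f),f')$, $w=\underline{Hom}_{[\underline{\mathbf 1}]}(\operatorname{cof}(f),\operatorname{cof}(f'))$, $\underline{Hom}(y,y')$, $\underline{Hom}(x,x')$, $\underline{Hom}(z,z')$, together with the maps $j_1,j_2,j_3$ and $k_1,k_2,k_3$ already produced in Proposition~\ref{prop:TC3} — the point being that they all come from this one coherent diagram rather than being assembled from the several distinguished triangles. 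Both $v$ and $w$ appear as iterated total cofibers (pushouts) of sub-$\Box$-diagrams, with $w$ the total cofiber of the $\underline{Hom}$ of the ``nine objects'' diagram and $v$ an intermediate stage. The pasting calculus for biCartesian squares, i.e.\ repeated use of axiom (7) of Definition~\ref{def:stable_der}, then identifies $w$ with the pushout of $\underline{Hom}(y,y')\xleftarrow{j_2}v\xrightarrow{(j_1,j_3)}\underline{Hom}(z,z')\oplus\underline{Hom}(x,x')$, the structure maps being $k_2$ and $k_1+k_3$, which is exactly the assertion.

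The only work beyond transcribing \cite{GPS} is bookkeeping variance and signs. Because $\underline{Hom}$ is contravariant in its first argument, every twisted arrow category and every structure map that appears with one orientation in \cite[\S 7]{GPS} appears with the opposite one here, and the signs in the distinguished triangles of Proposition~\ref{prop:TC2}, hence in Proposition~\ref{prop:TC3}, must be matched against those coming from $\Sigma$ in the derivator; but all of this has already been absorbed in the proofs of $(TC3D)$ and $(TC3D')$, so once the coherent diagram is written with the correct variances the pasting argument is formal. I expect the one genuinely delicate point to be checking that the maps $j_i$ and $k_i$, which Proposition~\ref{prop:TC3} obtained from the octahedral-type coherent diagram there, are literally the restrictions of the diagram used here — i.e.\ that the two coherent diagrams are compatible — so that axiom (7) of Definition~\ref{def:stable_der} applies on the nose and not merely up to a non-canonical isomorphism. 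This coherence is what makes the statement true and is precisely what the derivator formalism is designed to supply.
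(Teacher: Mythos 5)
Your proposal is correct and matches the paper's approach: the paper proves (TC4) by observing it "has the same proof as \cite[Theorem 7.3]{GPS}", i.e.\ by transcribing the coherent-diagram/pasting argument of Groth--Ponto--Shulman with $\otimes$ replaced by $\underline{Hom}$ (contravariant in the first slot), exactly as you outline. Your attention to the compatibility of the coherent diagram with the maps $j_i$, $k_i$ from (TC3D), (TC3D$'$) is the same point implicitly absorbed in the paper's citation, so no further comparison is needed.
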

Note that when $x,y,z$ are dualizable, up to taking their duals, Propositions~\ref{prop:TC3} and~\ref{prop:TC4} are precisely \cite[Theorems 6.2 and 7.3]{GPS}.

\subsubsection{}
Let $\mathcal{T}_c$ be a closed symmetric monoidal derivator. Consider a distinguished triangle in $\mathcal{T}_c(\underline{\mathbf{0}})$
\begin{align}
x\xrightarrow{f}y\xrightarrow{g}z\xrightarrow{h}\Sigma x
\end{align}
and let $t\in\mathcal{T}_c(\underline{\mathbf{0}})$. By Proposition~\ref{prop:TC2}, we have a distinguished triangle
\begin{align}
\underline{Hom}(z,t)
\xrightarrow{\underline{Hom}(g,t)}
\underline{Hom}(y,t)
\xrightarrow{\underline{Hom}(f,t)}
\underline{Hom}(x,t)
\xrightarrow{\underline{Hom}(\Sigma^{-1}h,t)}
\Sigma\underline{Hom}(z,t).
\end{align}
By \cite[Lemma 7.1]{GPS}, we have 
\begin{align}
\underline{Hom}(f,t)\otimes_{[\underline{\mathbf{1}}]}f
\simeq
\underline{Hom}(g,t)\otimes_{[\underline{\mathbf{1}}]}g
\simeq
\underline{Hom}(h,t)\otimes_{[\underline{\mathbf{1}}]}h
\end{align}
and we denote by $u$ this object. The following follows from the proof of \cite[Theorem 10.3]{GPS}:
\begin{proposition}[TC5a]
\label{prop:unit_dual}

With the notations above, there is a map $\bar{\epsilon}:u\to t$ in $\mathcal{T}_c(\underline{\mathbf{0}})$ such that the following incoherent diagrams commute:
\begin{align}
\begin{gathered}
  \xymatrix{
    \underline{Hom}(x,t)\otimes x \ar_-{\epsilon_L}[rd] \ar^-{l_1}[r] & u \ar^-{\bar{\epsilon}}[d] & \underline{Hom}(y,t)\otimes y \ar_-{\epsilon_y}[rd] \ar^-{l_2}[r] & u \ar^-{\bar{\epsilon}}[d] & \underline{Hom}(z,t)\otimes z \ar_-{\epsilon_z}[rd] \ar^-{l_3}[r] & u \ar^-{\bar{\epsilon}}[d] \\
    & t & & t & & t.
  }
\end{gathered}
\end{align}

\end{proposition}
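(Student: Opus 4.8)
The plan is to reduce the statement to its dual form so that \cite[Theorem 10.3]{GPS} applies essentially verbatim. Recall that in \cite{GPS} the object denoted $u$ is built from a distinguished triangle $x\xrightarrow{f}y\xrightarrow{g}z$ together with a ``test'' object, and the map $\bar\epsilon$ is produced as a cancelling tensor product of the evaluation maps along the three rotations of the triangle. Here our triangle plays the role of the input and the ``test'' object is $t$; the three objects $\underline{Hom}(x,t)$, $\underline{Hom}(y,t)$, $\underline{Hom}(z,t)$ sit in a distinguished triangle (obtained from Proposition~\ref{prop:TC2}), and the cancelling tensor products $\underline{Hom}(f,t)\otimes_{[\underline{\mathbf{1}}]}f$, $\underline{Hom}(g,t)\otimes_{[\underline{\mathbf{1}}]}g$, $\underline{Hom}(h,t)\otimes_{[\underline{\mathbf{1}}]}h$ are identified (all equal to $u$) by \cite[Lemma 7.1]{GPS}. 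So the only thing to do is to run the argument of \cite[Theorem 10.3]{GPS} with the formal role of $\otimes$ and $\underline{Hom}$ adjusted: instead of tracing a coevaluation through a coherent diagram, one traces the evaluation maps $\epsilon_x,\epsilon_y,\epsilon_z$ (using the notation $\epsilon_L$ of the paper for the first one) through the coherent refinement of the cancelling tensor product.

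First I would spell out the coherent diagram underlying $u$: by the construction of the cancelling tensor product over $\underline{\mathbf{1}}$, the object $u=\underline{Hom}(f,t)\otimes_{[\underline{\mathbf{1}}]}f$ comes equipped with maps $l_1,l_2,l_3$ from $\underline{Hom}(x,t)\otimes x$, $\underline{Hom}(y,t)\otimes y$, $\underline{Hom}(z,t)\otimes z$ respectively (these are the maps $l_i$ appearing in part (3) of Proposition~\ref{prop:TC3} applied to this particular pair of triangles, together with the identifications of \cite[Lemma 7.1]{GPS}). Next I would define $\bar\epsilon:u\to t$ as in \cite{GPS}: the three evaluation maps $\epsilon_x:\underline{Hom}(x,t)\otimes x\to t$, $\epsilon_y:\underline{Hom}(y,t)\otimes y\to t$, $\epsilon_z:\underline{Hom}(z,t)\otimes z\to t$ are compatible with the connecting maps of the triangle (this is the standard naturality of evaluation, checkable at the incoherent level after lifting to a coherent diagram over $tw(\underline{\mathbf{1}})^{op}$), hence assemble into a map out of the coend, which is exactly $\bar\epsilon$. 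The commutativity of the three incoherent squares is then immediate by construction, since $l_i$ is the canonical map from the $i$-th vertex into the coend and $\bar\epsilon$ restricts to $\epsilon$ on each vertex.

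The main obstacle is purely bookkeeping: verifying that the three evaluation maps genuinely glue along $tw(\underline{\mathbf{1}})^{op}$, i.e. that there is a coherent diagram in $\mathcal{T}_c(tw(\underline{\mathbf{1}})^{op})$ whose underlying incoherent diagram has the $\epsilon$'s as its components and whose induced map on $(\pi_{tw(\underline{\mathbf{1}})^{op}})_\#$ is well-defined. This is where one must be careful about the difference between our setting and \cite{GPS}: in \emph{loc. cit.} the relevant maps are coevaluations $\mathbbold{1}\to\underline{Hom}(M,M)$ produced by dualizability, whereas here $\epsilon_L:L\otimes\underline{Hom}(L,t)\to t$ exists unconditionally for any $L$, so there is no dualizability hypothesis to discharge; the argument is in fact \emph{simpler}, and one only needs the functoriality of the counit $\epsilon$ of the internal-Hom adjunction together with the axioms (TC1)--(TC4) already established. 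Concretely, the coherence witness is obtained by applying $\underline{Hom}(-,t)\otimes(-)$ to the coherent $\underline{\mathbf{1}}$-shaped diagram $f$, then restricting along $(t^{op},s^{op}):tw(\underline{\mathbf{1}})^{op}\to\underline{\mathbf{1}}^{op}\times\underline{\mathbf{1}}$; the counit $\epsilon$ of the $(\otimes,\underline{Hom})$-adjunction, being a morphism of derivators, induces a map from this restricted diagram to the constant diagram at $t$, and $\bar\epsilon$ is the image of this map under $(\pi_{tw(\underline{\mathbf{1}})^{op}})_\#$. The three squares then commute on the nose because they are the components of a single morphism of coherent diagrams evaluated at the three objects of $tw(\underline{\mathbf{1}})^{op}$ corresponding to $x$, $y$, $z$.
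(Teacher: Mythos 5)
Your proposal is correct and coincides with the paper's approach: the paper obtains (TC5a) by simply running the proof of \cite[Theorem 10.3]{GPS} with the dual replaced by $\underline{Hom}(-,t)$, exactly as you do, so that $\bar{\epsilon}$ is the coherent evaluation out of the cancelling tensor product and the three squares commute by its compatibility with the vertex inclusions $l_i$. The only small imprecision is that the counit of the $(\otimes,\underline{Hom})$-adjunction does not act directly on the restricted diagram (whose value at an object $a\to b$ of $tw(\underline{\mathbf{1}})$ is $\underline{Hom}(f_b,t)\otimes f_a$); one must first compose with the canonical coherent comparison between $s^{*}f$ and $t^{*}f$ coming from the $2$-cell $s\Rightarrow t$ over $tw(\underline{\mathbf{1}})$, which is precisely what the GPS construction you invoke supplies.
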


\subsubsection{}
We now discuss the last one of May's axioms, where we work with local duality instead of the usual duality. The following definition is standard (\cite[Definition 4.4.4]{CD1}):
\begin{definition}

We say that an object $t\in\mathcal{T}_c(\underline{\mathbf{0}})$ is \textbf{dualizing} if for any $x\in\mathcal{T}_c(\underline{\mathbf{0}})$, the following canonical map is an isomorphism:
\begin{align}
\label{eq:biduality}
x
\to
\underline{Hom}(\underline{Hom}(x,t),t).
\end{align}
We denote by $\mathbb{D}_t:=\underline{Hom}(\cdot,t):\mathcal{T}_c^{op}\to\mathcal{T}_c$ the \textbf{$t$-dual} functor. We have clearly $\mathbb{D}_t\circ\mathbb{D}_t=id$.
\end{definition}

\begin{lemma}
\label{lemma:dual_hom}
If $t\in\mathcal{T}_c(\underline{\mathbf{0}})$ is a dualizing object, then for any $a\in\mathcal{T}_c(A)$ and $b\in\mathcal{T}_c(B)$, the following canonical map is an isomorphism in $\mathcal{T}_c(A^{op}\times B)$:
\begin{equation}
\label{eq:bidual_hom}
\underline{Hom}(a,b)
\to
\mathbb{D}_t(a\otimes\mathbb{D}_t(b)).
\end{equation}
\proof

The proof is the same as \cite[Corollary 4.4.24]{CD1}: we have a canonical isomorphism
$$
\mathbb{D}_t(a\times c)
\simeq
\underline{Hom}(a,\mathbb{D}_t(c))
$$
and since $t$ is dualizing, the result follows by replacing $c$ by $\mathbb{D}_t(b)$ in the previous map.
\endproof

\end{lemma}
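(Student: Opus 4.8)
The plan is to deduce the statement from the external tensor--hom adjunction together with the biduality property of $t$, transcribing the argument of \cite[Corollary 4.4.24]{CD1} into the derivator language. First I would establish, for $a \in \mathcal{T}_c(A)$ and $c \in \mathcal{T}_c(C)$, a canonical isomorphism $\mathbb{D}_t(a \otimes c) \xrightarrow{\sim} \underline{Hom}(a, \mathbb{D}_t(c))$ in $\mathcal{T}_c(A^{op} \times C^{op})$, natural in both variables. Over the terminal category this is the ordinary tensor--hom adjunction $\underline{Hom}(a \otimes c, t) \simeq \underline{Hom}(a, \underline{Hom}(c,t))$ supplied by the closed symmetric monoidal structure; in the derivator setting it follows by combining, for each finite category $I$, the internal adjunction $(-) \otimes_I (-) \dashv \underline{Hom}_I(-,-)$ with the definitions of the external product and the external $\underline{Hom}$ (pull back along the two projections, then apply the internal operation), using that $\otimes$ and $\underline{Hom}$ are morphisms of derivators and hence commute with the relevant restriction functors.

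Applying this isomorphism with $c := \mathbb{D}_t(b) \in \mathcal{T}_c(B^{op})$ and using $\mathbb{D}_t \circ \mathbb{D}_t = id$ gives a canonical isomorphism $\mathbb{D}_t(a \otimes \mathbb{D}_t(b)) \simeq \underline{Hom}(a, \mathbb{D}_t\mathbb{D}_t(b)) \simeq \underline{Hom}(a, b)$ in $\mathcal{T}_c(A^{op} \times B)$. To finish I would run a short diagram chase showing that the canonical map \eqref{eq:bidual_hom} is identified, under the tensor--hom isomorphism, with $\underline{Hom}(id_a, \beta_b)$, where $\beta_b : b \to \mathbb{D}_t\mathbb{D}_t(b)$ is the biduality map; since $t$ is dualizing $\beta_b$ is an isomorphism, hence so is \eqref{eq:bidual_hom}. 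If one prefers not to invoke $\mathbb{D}_t \circ \mathbb{D}_t = id$ at the level of derivators, one checks that $\beta_b$ is invertible for $b \in \mathcal{T}_c(B)$ by applying the conservative family $b_0^* : \mathcal{T}_c(B) \to \mathcal{T}_c(\underline{\mathbf{0}})$ of axiom (2) in Definition~\ref{def:stable_der}: the functor $b_0^*$ commutes with $\mathbb{D}_t$ and with $\beta$, so $b_0^* \beta_b = \beta_{b_0^* b}$, which is an isomorphism because $t$ is dualizing over the point.

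I do not expect a genuine obstacle here; the argument is formal. The only point requiring care is the bookkeeping with opposite categories, so that the external tensor--hom isomorphism and the compatibility of the external $\underline{Hom}$ with restriction carry the $(-)^{op}$'s that make the target $\mathcal{T}_c(A^{op} \times B)$ of \eqref{eq:bidual_hom} match $\mathbb{D}_t(a \otimes \mathbb{D}_t(b))$; this is routine within the two-variable-morphism-of-derivators formalism of \cite{GPS}.
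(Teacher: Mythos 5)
Your argument is correct and is essentially the paper's own proof: both rest on the canonical (external) tensor--hom isomorphism $\mathbb{D}_t(a\otimes c)\simeq\underline{Hom}(a,\mathbb{D}_t(c))$, specialized to $c=\mathbb{D}_t(b)$, together with the biduality isomorphism coming from $t$ being dualizing. The extra bookkeeping you supply (identifying the canonical map with $\underline{Hom}(id_a,\beta_b)$ and checking invertibility pointwise via the conservativity axiom) is a reasonable elaboration of the same route rather than a different one.
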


Thanks to Lemma~\ref{lemma:dual_hom}, the proof of the following is similar to \cite[Theorem 11.12]{GPS}:
\begin{proposition}[TC5b]

Consider a distinguished triangle
\begin{align}
x\xrightarrow{f}y\xrightarrow{g}z\xrightarrow{h}\Sigma x
\end{align}
in $\mathcal{T}_c(\underline{\mathbf{0}})$ and let $t\in\mathcal{T}_c(\underline{\mathbf{0}})$ be a dualizing object. Then the (TC3') diagram specified in (TC5a) for the triangles $(\mathbb{D}_tg,\mathbb{D}_tf,\mathbb{D}_t\Sigma^{-1}h)$ and $(f,g,h)$ is isomorphic to the $t$-dual of the (TC3D) diagram for the triangles $(f,g,h)$ and $(f,g,h)$.

\end{proposition}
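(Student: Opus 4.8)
The plan is to transcribe, with the evident substitutions, the proof of \cite[Theorem 11.12]{GPS}: everywhere an ordinary monoidal dual $\underline{Hom}(-,\mathbbold{1})$ of a dualizable object appears, replace it by the local dual $\mathbb{D}_t=\underline{Hom}(-,t)$, and replace the input ``the ordinary dual interchanges $\otimes$ and $\underline{Hom}$ on dualizable objects'' by Lemma~\ref{lemma:dual_hom}. The formal facts about $\mathbb{D}_t$ that make this work are: since $t$ is dualizing, $\mathbb{D}_t$ is a levelwise anti-equivalence with $\mathbb{D}_t\circ\mathbb{D}_t\simeq\mathrm{id}$, in particular $t\simeq\mathbb{D}_t\mathbbold{1}$; it satisfies $\mathbb{D}_t\circ\Sigma\simeq\Sigma^{-1}\circ\mathbb{D}_t$; being an equivalence it commutes with the homotopy Kan extensions used to build the relevant derived functors, hence with $\operatorname{cof}$ and with the coends defining the cancelling tensor and $\underline{Hom}$ products (straightening the contravariant slot of $\underline{Hom}$ via $\mathbb{D}_t$); and by Lemma~\ref{lemma:dual_hom} it sends each $\underline{Hom}(a,b)$ to $a\otimes\mathbb{D}_t(b)$ and each $\underline{Hom}_{[\underline{\mathbf{1}}]}(X,Y)$ to a cancelling tensor product of the $\mathbb{D}_t$'s of the two entries, naturally in all variables.

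Granting this, the two coherent diagrams are identified as follows. The $(TC3D)$ diagram for the pair $(f,g,h),(f,g,h)$ is, by Proposition~\ref{prop:TC3}(1) with $x'=x$, $y'=y$, $z'=z$, built from $v=\underline{Hom}_{[\underline{\mathbf{1}}]}(\operatorname{cof}(f),f)$, the objects $\underline{Hom}(a,b)$ and their shifts for $a,b\in\{x,y,z\}$, and structure maps $p_i,j_i$ assembled entirely from units and counits of adjunctions and the exchange isomorphisms for $\otimes$ and $\underline{Hom}$; it also carries a canonical coevaluation $\mathbbold{1}\to v$ compatible with the $\eta_a\colon\mathbbold{1}\to\underline{Hom}(a,a)$ (the ``$\underline{Hom}$-version of (TC5a)''). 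The $(TC3')$ diagram specified in (TC5a) is the one of Proposition~\ref{prop:TC3}(3) for the triangle $\mathbb{D}_tz\xrightarrow{\mathbb{D}_tg}\mathbb{D}_ty\xrightarrow{\mathbb{D}_tf}\mathbb{D}_tx\to\Sigma\mathbb{D}_tz$ (distinguished by Proposition~\ref{prop:TC2}) and the triangle $(f,g,h)$, built from $u=\mathbb{D}_tg\otimes_{[\underline{\mathbf{1}}]}\operatorname{cof}(f)$, objects $\mathbb{D}_t(a)\otimes b$, and $\bar\epsilon\colon u\to t$. Applying $\mathbb{D}_t$ to the $(TC3D)$ diagram: each object maps to the matching object of the $(TC3')$ diagram ($\mathbb{D}_t\underline{Hom}(a,b)\simeq a\otimes\mathbb{D}_t b$, $\mathbb{D}_t v\simeq u$, $\mathbb{D}_t$ of a shift to the opposite shift); each structure map maps to the matching one because all are built from the same natural transformations, which $\mathbb{D}_t$ preserves; and the coevaluation $\mathbbold{1}\to v$ maps to $\mathbb{D}_t v\to\mathbb{D}_t\mathbbold{1}$, i.e. to $\bar\epsilon\colon u\to t$. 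The two sign conventions (the $-1$ of $(TC1)$ entering $\Sigma$, and the sign on the connecting map $\mathbb{D}_t\Sigma^{-1}h$) are reconciled exactly as in \cite{GPS}.

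The main difficulty is organizational rather than conceptual: one must check that the construction of the $(TC3D)$ and $(TC3')$ diagrams in \cite{GPS} is functorial enough that post-composing it with the anti-equivalence $\mathbb{D}_t$ commutes with every step (forming $\operatorname{cof}$, the relevant left and right homotopy Kan extensions, and the coend), and that the contravariance in the first variable of $\underline{Hom}$ is handled coherently. Both are supplied by the fact that $\mathbb{D}_t$ is an equivalence of the underlying derivators together with Lemma~\ref{lemma:dual_hom}, exactly as in \cite[Section 11]{GPS}; once recorded, the proof amounts to running through the proof of \cite[Theorem 11.12]{GPS} with the above substitutions, and nothing genuinely new occurs. (The same argument also identifies the $t$-dual of the $(TC3D')$ diagram with the $(TC3')$ diagram obtained the other way, which we do not need here.)
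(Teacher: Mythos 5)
Your proposal is correct and follows essentially the same route as the paper, which proves (TC5b) precisely by invoking Lemma~\ref{lemma:dual_hom} and rerunning the argument of \cite[Theorem 11.12]{GPS} with the local duality functor $\mathbb{D}_t$ in place of the ordinary dual. Your elaboration of why the substitution goes through (anti-equivalence, $\mathbb{D}_t\circ\mathbb{D}_t\simeq\mathrm{id}$, interchange of $\underline{Hom}$ and $\otimes$, compatibility with the Kan-extension/coend constructions up to the usual left/right interchange handled as in \cite{GPS}) is exactly the content the paper leaves implicit.
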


\begin{remark}
The original (TC5b) statement also requires the (TC4) axiom for the dual diagram up to an involution, which is also true in our case if we write down the corresponding (TC3) and (TC3D') diagrams; but such a fact is not used in \cite{GPS} for the proof of the additivity of traces.
\end{remark}

The following is obtained by taking the $t$-dual of the (TC5a):
\begin{corollary}
Assume that $\mathcal{T}_c(\underline{\mathbf{0}})$ has a dualizing object and consider a distinguished triangle in $\mathcal{T}_c(\underline{\mathbf{0}})$
\begin{align}
x\xrightarrow{f}y\xrightarrow{g}z\xrightarrow{h}\Sigma x.
\end{align}
Let $v$ be element specified in the (TC3D) diagram for the triangles $(f,g,h)$ and $(f,g,h)$.
Then there is a map $\bar{\eta}:\mathbbold{1}\to v$ in $\mathcal{T}_c(\underline{\mathbf{0}})$ such that the following incoherent diagrams commute:
\begin{align}
\begin{gathered}
  \xymatrix{
    \mathbbold{1} \ar^-{\eta_x}[rd] \ar_-{\bar{\eta}}[d] & & \mathbbold{1} \ar^-{\eta_y}[rd] \ar_-{\bar{\eta}}[d] & & \mathbbold{1} \ar^-{\eta_z}[rd] \ar_-{\bar{\eta}}[d] & \\
    v \ar_-{j_3}[r] & \underline{Hom}(x,x) & v \ar_-{j_2}[r] & \underline{Hom}(y,y) & v \ar_-{j_1}[r] & \underline{Hom}(z,z).
  }
\end{gathered}
\end{align}
\end{corollary}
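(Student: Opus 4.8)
The plan is to obtain the corollary by applying the $t$-duality functor $\mathbb{D}_t=\underline{Hom}(\cdot,t)$ to Proposition~\ref{prop:unit_dual} (TC5a) and using (TC5b) to recognize the resulting objects and maps. First I would apply (TC5a) to the given distinguished triangle $x\xrightarrow{f}y\xrightarrow{g}z\xrightarrow{h}\Sigma x$ and the dualizing object $t$: this produces the object $u=\underline{Hom}(f,t)\otimes_{[\underline{\mathbf{1}}]}f$, a map $\bar\epsilon\colon u\to t$, and the three structure maps $l_a\colon\underline{Hom}(a,t)\otimes a\to u$ of the attached (TC3') diagram ($a\in\{x,y,z\}$), such that $\bar\epsilon\circ l_a$ equals the evaluation $\epsilon_a\colon\underline{Hom}(a,t)\otimes a\to t$. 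Since $\operatorname{cof}(f)$ presents the arrow $g$, \cite[Lemma 7.1]{GPS} identifies $u$ with the object $u$ of Proposition~\ref{prop:TC3}(3) attached to the pair of triangles $(\mathbb{D}_tg,\mathbb{D}_tf,\mathbb{D}_t\Sigma^{-1}h)$ and $(f,g,h)$, so the (TC3') diagram here is precisely the one that (TC5b) compares with the (TC3D) diagram for $(f,g,h)$ and $(f,g,h)$.

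Next I would apply $\mathbb{D}_t$ to this data. Because $t$ is dualizing, the biduality map for the unit gives a canonical isomorphism $\mathbbold{1}\simeq\mathbb{D}_t\mathbb{D}_t\mathbbold{1}=\mathbb{D}_tt$, and for each $a\in\{x,y,z\}$ the tensor--hom adjunction together with Lemma~\ref{lemma:dual_hom} gives a canonical identification $\mathbb{D}_t(\underline{Hom}(a,t)\otimes a)\simeq\underline{Hom}(a,a)$, under which $\mathbb{D}_t\epsilon_a$ becomes the coevaluation $\eta_a\colon\mathbbold{1}\to\underline{Hom}(a,a)$ (a standard duality zig-zag identity). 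By (TC5b), the $t$-dual of the (TC3') diagram at hand is isomorphic to the (TC3D) diagram of Proposition~\ref{prop:TC3}(1) for $(f,g,h)$ and $(f,g,h)$; in particular $\mathbb{D}_tu\simeq v$, and under this isomorphism the maps $\mathbb{D}_tl_x,\mathbb{D}_tl_y,\mathbb{D}_tl_z$ are identified with $j_3\colon v\to\underline{Hom}(x,x)$, $j_2\colon v\to\underline{Hom}(y,y)$, $j_1\colon v\to\underline{Hom}(z,z)$ respectively. I would then set $\bar\eta$ to be the composite $\mathbbold{1}\xrightarrow{\sim}\mathbb{D}_tt\xrightarrow{\mathbb{D}_t\bar\epsilon}\mathbb{D}_tu\xrightarrow{\sim}v$, and, applying $\mathbb{D}_t$ to the relations $\bar\epsilon\circ l_a=\epsilon_a$, read off $j_3\circ\bar\eta=\mathbb{D}_t(\bar\epsilon\circ l_x)=\mathbb{D}_t\epsilon_x=\eta_x$, and likewise $j_2\circ\bar\eta=\eta_y$ and $j_1\circ\bar\eta=\eta_z$. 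Since $\mathbb{D}_t$ is a contravariant functor on the ordinary category $\mathcal{T}_c(\underline{\mathbf{0}})$, it carries the incoherent commutative diagrams of (TC5a) to the incoherent commutative diagrams asserted in the corollary.

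Most of this is formal. The routine pieces are the two identifications in the second paragraph --- that $\mathbb{D}_t\epsilon_a$ is $\eta_a$ under biduality, which is a zig-zag chase, and the bookkeeping matching $\mathbb{D}_tl_a$ with the maps $j_k$ of the (TC3D) diagram. The one place where one must genuinely look inside the machinery, and what I would treat as the main point, is checking that the diagram isomorphism furnished by (TC5b) is compatible with the \emph{extra} map $\bar\epsilon$ (and the evaluations) that (TC5a) adjoins to the bare (TC3') diagram, so that its $t$-dual is honestly a map $\mathbbold{1}\to v$ with the stated compatibilities. This is exactly the content of the proof of (TC5b), which runs parallel to \cite[Theorem 11.12]{GPS}; since the local duality functor $\mathbb{D}_t$ plays here the role of an ordinary strong dual, no new difficulty arises.
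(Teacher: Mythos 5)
Your proposal is correct and takes essentially the same route as the paper, whose entire proof is the one-line remark that the corollary "is obtained by taking the $t$-dual of (TC5a)": your argument simply spells this out, using (TC5b) to identify the dualized (TC3') diagram with the (TC3D) diagram, biduality and Lemma~\ref{lemma:dual_hom} for the corner objects, and setting $\bar{\eta}=\mathbb{D}_t\bar{\epsilon}$ up to the canonical isomorphisms.
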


\subsection{Motivic derivators and additivity of traces}

\begin{definition}

The $2$-category $DiaSch$ is defined as follows:
\begin{itemize}
\item An object of $DiaSch$ is a pair $(F,J)$ where $J\in FinCat$ and $F:J\to Sch$ is a covariant functor.
\item An $1$-morphism from $(G,J')$ to $(F,J)$ is the data of a functor $\alpha:J'\to J$ together with a natural transformation of functors $f:G\to F\circ\alpha$.
\item A $2$-morphism from $(f,\alpha)$ to $(f',\alpha')$ as above is a natural transformation $t:\alpha\to\alpha'$ such that $f'=t\circ f$.
\end{itemize}

We say that a $1$-morphism $(f,\alpha):(G,J')\to(F,J)$ is \textbf{Cartesian} if $\alpha$ is an equivalence of categories and for any morphism $i\to j$ in $J'$, the following square is Cartesian:
\begin{align}
\begin{gathered}
  \xymatrix{
    G(i) \ar^-{}[r] \ar_-{}[d] & G(j) \ar^-{}[d]\\
    F(\alpha(i)) \ar^-{}[r] & F(\alpha(i)).
  }
\end{gathered}
\end{align}

If $X\in Sch$ and $J\in FinCat$, we denote by $(X,J)$ the object in $DiaSch$ with constant value $X$.

\end{definition}

The following definition is almost identical to \cite[Definition 2.4.13]{Ayo}:
\begin{definition}
\label{def:stab_alg_der}
A \textbf{stable algebraic derivator} is a (non-strict) $2$-functor $\mathcal{T}_c:(DiaSch)^{op}\to TR$ satisfying the following properties:
\begin{enumerate}
\item $\mathcal{T}_c$ sends coproducts to products.
\item For any $1$-morphism $(f,\alpha):(F,J)\to(G,J')$ in $DiaSch$, the functor $(f,\alpha)^*$ has a right adjoint $(f,\alpha)_*$.
\item For any $1$-morphism $(f,\alpha):(F,J)\to(G,J')$ in $DiaSch$ which is termwise smooth, the functor $(f,\alpha)^*$ has a left adjoint $(f,\alpha)_\#$.

\item \label{enum:ex_der}
If $f:G\to F$ is a morphism of $J$-diagrams of schemes and $\alpha:J'\to J$ is a functor in $FinCat$, then the exchange $2$-morphism
\begin{align}
\alpha^*f_*\to(f_{|J'})_*\alpha^*
\end{align}
associated to the following Cartesian square in $DiaSch$ is invertible:
\begin{equation}
\label{diag:cart_sm_der}
\begin{gathered}
  \xymatrix{
    (G\circ\alpha,J') \ar^-{\alpha}[r] \ar_-{f_{|J'}}[d] & (G,J) \ar^-{f}[d]\\
    (F\circ\alpha,J') \ar^-{\alpha}[r] & (F,J).
  }
\end{gathered}
\end{equation}

\item In the situation of~\eqref{enum:ex_der}, if $f$ is Cartesian and termwise smooth, then the following exchange $2$-morphism associated the square~\eqref{diag:cart_sm_der} is invertible:
$$
(f_{|J'})_\#\alpha^*\to\alpha^*f_\#.
$$

\item For any $X\in Sch$, the $2$-functor
\begin{align}
\begin{split}
\mathcal{T}_c(X,\cdot):FinCat^{op}&\to TR\\
J&\mapsto\mathcal{T}_c(X,J)
\end{split}
\end{align}
is a stable derivator (Definition~\ref{def:stable_der}).
\item The $2$-functor
\begin{align}
\begin{split}
\mathcal{T}_c(\cdot,\underline{\mathbf{0}}):Sch^{op}&\to TR\\
X&\mapsto\mathcal{T}_c(X,\underline{\mathbf{0}})
\end{split}
\end{align}
is the subcategory of constructible objects in a motivic triangulated category $\mathbf{T}$ (\cite[Definition 4.2.1]{CD1}). We call $\mathbf{T}$ the \textbf{underlying motivic triangulated category}.
\end{enumerate}
\end{definition}

Note that by \cite[Corollary 4.4.24]{CD1} and \cite[Theorem 7.3]{CD2}, if the underlying motivic triangulated category satisfies \ref{resol}, then for any scheme $X$, $\mathcal{K}_X$ is a dualizing object of $\mathcal{T}_c(X,\underline{\mathbf{0}})$. We will then write $\mathbb{D}$ for $\mathbb{D}_{\mathcal{K}_X}$ in coherence with the previous sections.

\subsubsection{}
\label{num:4fct_ext}
By \cite[Section 2.4.4]{Ayo}, given a stable algebraic derivator $\mathcal{T}_c$, one can extend the four functors $f^,f_*,f^!,f_!$ to diagrams of schemes in the following form:
\begin{itemize}
\item For any $1$-morphism $(f,\alpha):(F,J)\to(G,J')$ in $DiaSch$, there is a pair of adjoint functors
\begin{align}
(f,\alpha)^*:\mathcal{T}_c(G,J')\rightleftharpoons\mathcal{T}_c(F,J):(f,\alpha)_*.
\end{align}
\item For any $J\in FinCat$ and any Cartesian $J$-shaped $1$-morphism $f:(F,J)\to(G,J)$ in $DiaSch$, there is a pair of adjoint functors
\begin{align}
f_!:\mathcal{T}_c(F,J)\rightleftharpoons\mathcal{T}_c(G,J):f^!.
\end{align}
\end{itemize}
If $f:X\to Y$ is a morphism of schemes, then the four functors associated to morphisms of the form $(f,J):(X,J)\to(Y,J)$ commute with finite limit and colimits along diagrams, and therefore commute with the coend construction (Definition~\ref{def:cancel_prod}): we have a commutative diagram
\begin{align}
\begin{gathered}
  \xymatrix{
    \mathcal{T}_c(Y,A^{op}\times A) \ar^-{f^*}[r] \ar_-{\int^A}[d] & \mathcal{T}_c(X,A^{op}\times A) \ar^-{\int^A}[d]\\
    \mathcal{T}_c(Y,\underline{\mathbf{0}}) \ar^-{f^*}[r] & \mathcal{T}_c(X,\underline{\mathbf{0}})
  }
\end{gathered}
\end{align}
and similarly for the other functors $f_*$, $f^!$ and $f_!$.

We now deal with the monoidal structure.
\begin{definition}
\label{def:cons_mot_der}
A \textbf{constructible motivic derivator} is a (non-strict) $2$-functor $(\mathcal{T}_c,\otimes):DiaSch^{op}\to SMTR$ satisfying the following properties:
\begin{enumerate}
\item The composition $DiaSch^{op}\xrightarrow{\mathcal{T}_c}SMTR\to TR$ is a stable algebraic derivator (Definition~\ref{def:stab_alg_der}), and the monoidal structure agrees with the one on the underlying motivic triangulated category.
\item For any scheme $X$, the $2$-functor
\begin{align}
\begin{split}
(\mathcal{T}_c(X,\cdot),\otimes):FinCat^{op}&\to SMTR\\
J&\mapsto(\mathcal{T}_c(X,J),\otimes)
\end{split}
\end{align}
is a closed symmetric monoidal stable derivator (Definition~\ref{def:sym_mon_stab_der}).
\item For any $J\in FinCat$, any Cartesian $J$-shaped $1$-morphism $f:(F,J)\to(G,J)$ in $DiaSch$ and any pair of objects $(A,B)\in\mathcal{T}_c(G,J)\times\mathcal{T}_c(F,J)$, the following canonical map is an isomorphism:
\begin{align}
f_\#(f^*A\otimes_{G,J}B)\to A\otimes_{F,J}f_\#B.
\end{align}
\end{enumerate}
\end{definition}

\subsubsection{}
We now apply the formalism to the generalized trace map~\eqref{eq:gen_tr}. The following is similar to \cite[Theorem 12.1]{GPS}:
\begin{proposition}
\label{prop:add_tr}
Let $\mathcal{T}_c$ be a constructible motivic derivator whose underlying motivic triangulated category $\mathbf{T}$ satisfies~\ref{resol}. We use the notations in~\ref{num:gen_tr}. Let 
\begin{align}
\begin{gathered}
  \xymatrix{
    L \ar^-{f}[r] \ar_-{}[d] \ar@{}[rd]|{\Gamma} & M \ar^-{g}[d]\\
    \ast \ar_-{}[r] & N
  }
\end{gathered}
\end{align}
be a biCartesian square in $\mathcal{T}_c(X,\Box)$, and let $\phi:c_1^*\Gamma\to c_2^!\Gamma$ be a morphism of squares in $\mathcal{T}_c(C,\Box)$. Then the pairing~\eqref{eq:gen_tr} satisfies
\begin{align}
\langle \phi_M ,1 \rangle=\langle \phi_L ,1 \rangle+\langle \phi_N ,1 \rangle
\end{align}
where $\phi_M:c_1^*M\to c_2^!M$ is the restriction of $\phi$ to a map in $\mathbf{T}_c(C)$, and similarly for the other maps.

\end{proposition}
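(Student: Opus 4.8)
The strategy is to follow closely the additivity argument of \cite{GPS}, systematically replacing the monoidal dual $(\cdot)^{\vee}$ by the local duality functor $\mathbb{D}=\underline{Hom}(\cdot,\mathcal{K}_X)$, the coevaluation $\mathbbold{1}\to x\otimes x^{\vee}$ by the map~\eqref{eq:form_counit}, and the evaluation $x\otimes x^{\vee}\to\mathbbold{1}$ by $\epsilon_{(\cdot)}$. By Proposition~\ref{prop:pair_trace}, each of $\langle\phi_L,1\rangle$, $\langle\phi_M,1\rangle$, $\langle\phi_N,1\rangle$ is adjoint to the composite~\eqref{eq:gen_tr}. The first step is to record that, apart from the coevaluation~\eqref{eq:form_counit}, the evaluation $\epsilon_{(\cdot)}$ and post-composition with $\phi$, this composite is built only from the functors $\delta^{*}$, $c_{!}$, $c^{!}$, $c'_{!}$, the isomorphism $(Ex(\Delta^{*}_{!}))^{-1}$, the counit $ad'_{(c_{!},c^{!})}$ and the symmetry constraint of $\otimes$. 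Each of these is triangulated in the relevant variable and, by~\ref{num:4fct_ext}, commutes with the coend of Definition~\ref{def:cancel_prod}; moreover $\mathbb{D}$ is triangulated (contravariant) and commutes with all of them. Since $\Gamma\in\mathcal{T}_c(X,\Box)$ is a \emph{coherent} biCartesian square and $\phi\colon c_1^{*}\Gamma\to c_2^{!}\Gamma$ is a \emph{coherent} morphism, $\Gamma$ and $\phi$ supply exactly the coherent input on which the derivator-theoretic machinery of \cite{GPS} operates, and the additivity of the three traces will follow once that machinery has been run on this input and the resulting relation has been pushed through the fixed construction above.

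The second step is to collect May's axioms for the closed symmetric monoidal stable derivator $\mathcal{T}_c(X,\cdot)$. Axioms (TC1)--(TC5a) are established above (see in particular Propositions~\ref{prop:TC2}, \ref{prop:TC3}, \ref{prop:TC4} and~\ref{prop:unit_dual}); axiom (TC5b) and the corollary following it hold because, under~\ref{resol}, $\mathcal{K}_X$ is a dualizing object of $\mathcal{T}_c(X,\underline{\mathbf{0}})$ (\cite[Corollary 4.4.24]{CD1}, \cite[Theorem 7.3]{CD2}), so that Lemma~\ref{lemma:dual_hom} applies with $t=\mathcal{K}_X$. Applied to the distinguished triangle $L\xrightarrow{f}M\xrightarrow{g}N\xrightarrow{h}\Sigma L$ carried by $\Gamma$, they produce: the object $u=\mathbb{D}(f)\otimes_{[\underline{\mathbf{1}}]}f$ with a map $\bar\epsilon\colon u\to\mathcal{K}_X$ whose composites with the canonical maps from $\mathbb{D}(L)\otimes L$, $\mathbb{D}(M)\otimes M$, $\mathbb{D}(N)\otimes N$ are $\epsilon_L$, $\epsilon_M$, $\epsilon_N$; the object $v$ appearing in the (TC3D) diagram for $(f,g,h)$ and $(f,g,h)$, with a map $\bar\eta\colon\mathbbold{1}_X\to v$ whose composites with $j_3,j_2,j_1$ are the units $\eta_L,\eta_M,\eta_N$ (hence, through the construction of the first step, the three coevaluations~\eqref{eq:form_counit}); and, by Proposition~\ref{prop:TC4}, a biCartesian square relating $v$, $w=\underline{Hom}_{[\underline{\mathbf{1}}]}(\operatorname{cof}(f),\operatorname{cof}(f))$, $\underline{Hom}(M,M)$ and $\underline{Hom}(N,N)\oplus\underline{Hom}(L,L)$.

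The third step is the diagram chase of \cite[Theorem 12.1]{GPS}, carried out verbatim with this dictionary. By coherence of $\phi$, post-composition with $\phi$ acts compatibly on the entire (TC3D)--(TC4) configuration, and in particular on $v$ and on $u$; threading $\bar\eta$, this $\phi$-action and $\bar\epsilon$, and then applying the fixed triangulated construction of the first step (which carries the result, by the adjunction $(c'_{!},c'^{!})$, to a map $\mathbbold{1}_E\to\mathcal{K}_E$), one obtains on the one hand $\langle\phi_M,1\rangle$, because the leg $j_2\colon v\to\underline{Hom}(M,M)$ of the (TC4) square together with the matching map out of $u$ reconstitutes the composite defining~\eqref{eq:gen_tr} for $\phi_M$; and on the other hand $\langle\phi_L,1\rangle+\langle\phi_N,1\rangle$, because the leg $(j_1,j_3)\colon v\to\underline{Hom}(N,N)\oplus\underline{Hom}(L,L)$ reconstitutes the analogous composites for $\phi_N$ and $\phi_L$, and the square of Proposition~\ref{prop:TC4}, being biCartesian and hence (as $\mathcal{T}_c$ is stable) homotopy cocartesian, forces the map factoring through $\underline{Hom}(M,M)$ to equal the sum of the maps factoring through the two summands, once precomposed with $\bar\eta$ and postcomposed with $\bar\epsilon$. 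This is the asserted equality $\langle\phi_M,1\rangle=\langle\phi_L,1\rangle+\langle\phi_N,1\rangle$.

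The main obstacle is the verification underlying the first and third steps: one must check with care that the structural part of~\eqref{eq:gen_tr} is exact in the relevant variable and commutes with the coend, so that the coherent (TC3)--(TC5b) diagrams of \cite{GPS} pass through it undistorted, and that the coherent morphism $\phi$ of $\Box$-shaped diagrams genuinely induces compatible maps on the auxiliary objects $v$, $u$ and $w$. The axioms of a constructible motivic derivator --- in particular the commutation of the four functors with coends recalled in~\ref{num:4fct_ext} --- are designed precisely so that this goes through; the difficulty is thus one of sustained diagram-chasing rather than of a new idea, but it makes up the bulk of the proof and is the reason the statement is formulated at the level of motivic derivators.
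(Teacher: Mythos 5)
Your proposal is correct and follows essentially the same route as the paper's proof: reduce to the explicit trace \eqref{eq:gen_tr}, run the argument of \cite[Theorem 12.1]{GPS} with the local duality $\mathbb{D}$ in place of the strong dual, invoking (TC5a), the corollary to (TC5b) and the (TC4) biCartesian square for the additivity, the coherence of $\phi$ for the induced maps on the auxiliary coend objects, and the commutation of the four functors with coends (\ref{num:4fct_ext}) together with the exchange and K\"unneth isomorphisms \eqref{Ex^!Hom} and \eqref{dual_hom} to transport everything along $c^!$ and $\delta^*$ --- exactly the diagram the paper writes out with the intermediate objects $v,w,w',w'',u,u'$ (plus \cite[Lemma 6.9]{GPS} for $u\simeq u'$). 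One small caution: your aside that $\mathbb{D}$ ``commutes'' with $\delta^*$, $c_!$, $c^!$ is not literally true (it intertwines them with their adjoint counterparts, e.g.\ $\mathbb{D}c_!\simeq c_*\mathbb{D}$), but the argument never needs this, only the functorial isomorphisms just mentioned, so your overall strategy is unaffected.
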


\proof
It suffices to construct the following incoherent diagram in $\mathcal{T}_c(X,\underline{\mathbf{0}})$
\begin{align}\nonumber
\begin{gathered}
\xymatrix{
    & \delta^*c_!\mathbbold{1}_C \ar^-{\bar{\eta}}[d] \ar^-{\eta_M}[rd] \ar_-{(\eta_L,\eta_N)}[ld] &\\
   \delta^*c_!\left(\underline{Hom}(c_1^*L,c_1^*L)\oplus\underline{Hom}(c_1^*N,c_1^*N)\right) \ar^-{\phi_{L*}\oplus \phi_{N*}}[dd] \ar^-{}[rd]
   & \delta^*c_!v \ar^-{}[r] \ar^-{}[l] 
   & \delta^*c_!\underline{Hom}(c_1^*M,c_1^*M) \ar^-{}[ld] \ar_-{\phi_{M*}}[dd]\\
   & \delta^*c_!w \ar@{.>}^-{}[d]
   &\\
   \delta^*c_!\left(\underline{Hom}(c_1^*L,c_2^!L)\oplus\underline{Hom}(c_1^*N,c_2^!N)\right) \ar^-{}[r] 
   & \delta^*c_!w'
   & \delta^*c_!\underline{Hom}(c_1^*M,c_2^!M) \ar^-{}[l]\\
   \delta^*c_!c^!\left(\underline{Hom}(p_1^*L,p_2^!L)\oplus\underline{Hom}(p_1^*N,p_2^!N)\right) \ar^-{}[r] \ar^-{\wr}_-{Ex}[u] \ar^-{}[d]
   & \delta^*c_!c^!w'' \ar^-{\wr}[u] \ar^-{}[d]
   & \delta^*c_!c^!\underline{Hom}(p_1^*M,p_2^!M) \ar^-{}[l] \ar^-{\wr}_-{Ex}[u] \ar^-{}[d]\\
   \delta^*\left(\underline{Hom}(p_1^*L,p_2^!L)\oplus\underline{Hom}(p_1^*N,p_2^!N)\right) \ar^-{}[r]  
   & \delta^*w'' 
   & \delta^*\underline{Hom}(p_1^*M,p_2^!M) \ar^-{}[l] \\
   \left(L\otimes\mathbb{D}(L)\right)\oplus\left(N\otimes\mathbb{D}(N)\right) \ar^-{}[r] \ar_-{\wr}^{{s}\oplus{s}}[d] \ar^-{\wr}_-{Kun}[u] 
   & u \ar^-{\wr}[u] \ar_-{\wr}[d]
   & M\otimes\mathbb{D}(M) \ar^-{}[l] \ar^-{\wr}_-{Kun}[u] \ar_-{\wr}^{{s}}[d] \\
   \left(\mathbb{D}(L)\otimes L\right)\oplus\left(\mathbb{D}(N)\otimes N\right) \ar^-{}[r] \ar_-{\epsilon_L+\epsilon_N}[rd] 
   & u' \ar^-{\bar{\epsilon}}[d]
   & \mathbb{D}(M)\otimes M \ar^-{}[l] \ar^-{\epsilon_M}[ld] \\
   & \mathcal{K}_X &
 }
 \end{gathered}
\end{align}
where the objects $v$, $w$, $w'$, $w''$, $u$, $u'$ are specified by May's axioms in Section~\ref{sect:stab_der}:
\begin{itemize}
\item $v:=\underline{Hom}_{[\underline{\mathbf{1}}]}(c_1^*g,c_1^*f)$ is specified in the (TC3D) diagram for the triangles $(c_1^*f,c_1^*g,c_1^*h)$ and $(c_1^*f,c_1^*g,c_1^*h)$.
\item $w:=\underline{Hom}_{[\underline{\mathbf{1}}]}(c_1^*g,c_1^*g)$ is specified in the (TC3D') diagram for the triangles $(c_1^*f,c_1^*g,c_1^*h)$ and $(c_1^*f,c_1^*g,c_1^*h)$.
\item $w':=\underline{Hom}_{[\underline{\mathbf{1}}]}(c_1^*g,c_2^!g)$ is specified in the (TC3D') diagram for the triangles $(c_1^*f,c_1^*g,c_1^*h)$ and $(c_2^!f,c_2^!g,c_2^!h)$.
\item $w'':=\underline{Hom}_{[\underline{\mathbf{1}}]}(p_1^*g,p_2^!g)$ is specified in the (TC3D') diagram for the triangles $(p_1^*f,p_1^*g,p_1^*h)$ and $(p_2^!f,p_2^!g,p_2^!h)$.
\item $u:=g\otimes_{[\underline{\mathbf{1}}]}\mathbb{D}g$ is specified in the (TC3D') diagram for the triangles $(f,g,h)$ and $(\mathbb{D}g,\mathbb{D}f,\mathbb{D}\Sigma^{-1}h)$.
\item $u':=\mathbb{D}g\otimes_{[\underline{\mathbf{1}}]}g$ is specified in the (TC3') diagram for the triangles $(\mathbb{D}g,\mathbb{D}f,\mathbb{D}\Sigma^{-1}h)$ and $(f,g,h)$.
\end{itemize}
The commutativity of the diagram follows from the following facts:
\begin{itemize}
\item The two triangles on the top commute by (TC5b), and the two on the bottom commute by (TC5a); the quadrilateral involving $\delta^*c_!v$ and $\delta^*c_!w$ commute by (TC4).
\item Since $\phi$ is a map of biCartesian squares, the functoriality of the (TC3D') diagram gives rise to a dotted map $w\to w'$ making the two adjacent trapezoids commute.
\item By \ref{num:4fct_ext}, the functor $c^!$ commutes with the coend construction, and the functoriality of the (TC3D') diagram together with the exchange isomorphisms~\eqref{Ex^!Hom} give rise to an isomorphism $c^!w''\simeq w'$ making the two adjacent squares commute.
\item Similarly, the functor $\delta^*$ commutes with the coend construction, and the functoriality of the (TC3D') diagram together with the K\"unneth formulas~\eqref{dual_hom} give rise to an isomorphism $u\simeq \delta^*w''$ making the two adjacent squares commute.
\item The map $\delta^*c_!c^!w''\to\delta^*w''$ is simply the conuit of the adjunction, which clearly makes the two squares in the middle commute.
\item By \cite[Lemma 6.9]{GPS}, there exists an isomorphism $u\simeq u'$ making the two adjacent squares commute.
\end{itemize}
\endproof

\begin{lemma}
\label{lem:comp_corr_sq}
Consider the setting in~\ref{num:comp_corr}. For $i\in\{1,2,3\}$, let
\begin{align}
\begin{gathered}
  \xymatrix{
    L_i \ar^-{}[r] \ar_-{}[d] \ar@{}[rd]|{\Gamma_i} & M_i \ar^-{}[d]\\
    \ast \ar_-{}[r] & N_i
  }
\end{gathered}
\end{align}
be a biCartesian square in $\mathcal{T}_c(X_i,\Box)$.  Let $\phi:c_1^{12*}\Gamma_1\to c_2^{12!}\Gamma_2$ and $\psi:c_2^{23*}\Gamma_2\to c_3^{23!}\Gamma_3$ be morphisms of squares in $\mathcal{T}_c(C_{12},\Box)$ and $\mathcal{T}_c(C_{23},\Box)$. Then the composition of correspondences (Definition~\ref{def:comp_corr}) can be lifted to a morphism of squares
\begin{align}
\psi\phi:c_1^{13*}\Gamma_1\to c_3^{13!}\Gamma_3.
\end{align}
\end{lemma}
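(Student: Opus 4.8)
The plan is to repeat, inside the constructible motivic derivator $\mathcal{T}_c(-,\Box)$, the construction of the composition of correspondences of Definition~\ref{def:comp_corr} — that is, the chain of maps defining~\eqref{eq:c_comp}, together with the adjunction $(\otimes,\underline{Hom})$ that presents a correspondence as a section of an internal Hom — and then to check that the resulting morphism of $\Box$-shaped coherent diagrams restricts at each vertex of $\Box$ to the composition of the corresponding restricted correspondences. Concretely, $\phi$ and $\psi$ are morphisms of $\Box$-diagrams over $C_{12}$ and $C_{23}$; feeding them into the recipe of~\eqref{eq:c_comp} (with the notation of~\ref{num:comp_corr}), applied to the schemes $X_{12},X_{23},X_{13},X_{123}$ and the morphisms $c_{12},c_{23},q_1,q_3,c^{13}_{123}$ but with every occurrence of $\mathbf{T}_c(Y)=\mathcal{T}_c(Y,\underline{\mathbf 0})$ replaced by $\mathcal{T}_c(Y,\Box)$, produces a morphism $\psi\phi\colon c_1^{13\ast}\Gamma_1\to c_3^{13!}\Gamma_3$ in $\mathcal{T}_c(C_{13},\Box)$.

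First I would provide the diagram-level ingredients. For any morphism of schemes $g\colon Y\to Z$ the four functors $g^\ast,g_\ast,g_!,g^!$ act on the categories $\mathcal{T}_c(-,\Box)$ and are computed levelwise on diagrams (see~\ref{num:4fct_ext} and \cite[Section 2.4.4]{Ayo}), each $\mathcal{T}_c(Y,\Box)$ is closed symmetric monoidal (Definition~\ref{def:cons_mot_der}), and the adjunctions $(g^\ast,g_\ast)$, $(g_!,g^!)$, $(\otimes,\underline{Hom})$ together with the projection formulas persist; since the exchange structures~\eqref{Ex^!Hom},~\eqref{Ex*!} and the K\"unneth maps~\eqref{eq:cdPQ},~\eqref{eq:hom_comp},~\eqref{dual_hom},~\eqref{Kunneth} entering~\eqref{eq:c_comp} are assembled purely from units and counits of these adjunctions and from the commutation of the four operations with $\otimes$, the same assemblies define their $\Box$-analogues. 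To see that the maps asserted to be isomorphisms remain so, I would use that the four evaluation functors $\mathcal{T}_c(Y,\Box)\to\mathcal{T}_c(Y,\underline{\mathbf 0})$ at the vertices of $\Box$ are jointly conservative (Definition~\ref{def:stable_der}\,(2)) and commute with $g^\ast,g_\ast,g_!,g^!$ (levelwise; for $g_\ast$ also by~\eqref{enum:ex_der}) and with $\otimes$, and that each vertex of $\Gamma_i$ is a constructible object of $\mathcal{T}_c(X_i,\underline{\mathbf 0})$. Hence the restriction to a vertex of the $\Box$-version of each of~\eqref{Ex*!},~\eqref{eq:cdPQ},~\eqref{eq:hom_comp},~\eqref{dual_hom},~\eqref{Kunneth} is the object-level map for the vertices, which is an isomorphism by Lemma~\ref{lem:Ex*!_iso}, Lemma~\ref{Kunneth_formula}, Proposition~\ref{Kunneth_thm} and Proposition~\ref{hom_product} under the hypothesis~\ref{resol}; so the $\Box$-versions are isomorphisms and $\psi\phi$ is well defined.

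Finally, since every functor and every natural transformation entering the $\Box$-version of~\eqref{eq:c_comp} is compatible with the vertex-evaluation functors and with the canonical comparison maps relating evaluation and internal Hom, the restriction of $\psi\phi$ at a vertex $x$ agrees with the composition, in the sense of Definition~\ref{def:comp_corr}, of the restricted correspondences $\phi_x\colon c_1^{12\ast}(\Gamma_1)_x\to c_2^{12!}(\Gamma_2)_x$ and $\psi_x\colon c_2^{23\ast}(\Gamma_2)_x\to c_3^{23!}(\Gamma_3)_x$; applying this to the three non-zero vertices $L_i,M_i,N_i$ yields exactly that $\psi\phi$ lifts the composition of correspondences to a morphism of squares. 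The main obstacle is precisely this last compatibility: internal Hom of coherent diagrams in a derivator is not computed vertexwise — this is the same phenomenon that forces the detailed $(TC3)$-type diagrams of \cite{GPS} used in Proposition~\ref{prop:add_tr} — so one must rewrite, via~\eqref{Ex^!Hom} and~\eqref{dual_hom}, every internal Hom occurring in~\eqref{eq:c_comp} as an $f^!$-pullback of a tensor product (reducing the genuinely delicate part to the local duals $\mathbb{D}(\Gamma_i)$ over $X_i$), and then carefully track the evaluation–comparison $2$-cells through the entire composite; this bookkeeping, while routine, is the real content of the proof.
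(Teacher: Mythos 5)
Your proposal is correct and follows essentially the same route as the paper, whose entire proof is the observation that the six functors lift to the diagram categories $\mathcal{T}_c(\cdot,\Box)$, so the construction of Definition~\ref{def:comp_corr} lifts as well. Your additional bookkeeping (checking the K\"unneth-type isomorphisms vertexwise by conservativity, and handling internal Homs by rewriting them via~\eqref{Ex^!Hom} and~\eqref{dual_hom} as external/local-dual expressions, which are computed pointwise) is a faithful elaboration of that one-line argument rather than a different method.
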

\proof

This is because we can lift the six functors to the level of diagrams in $\mathcal{T}_c(\cdot,\Box)$, and consequently the same is true for the composition of correspondences.
\endproof

From Proposition~\ref{prop:bil_tr}, Proposition~\ref{prop:add_tr} and Lemma~\ref{lem:comp_corr_sq} we deduce the following
\begin{theorem}
\label{th:add_trace}
Let $\mathcal{T}_c$ be a constructible motivic derivator whose underlying motivic triangulated category $\mathbf{T}$ satisfies~\ref{resol}. We use the notations in~\ref{abstract_map}. For $i\in\{1,2\}$, let
\begin{align}
\begin{gathered}
  \xymatrix{
    L_i \ar^-{}[r] \ar_-{}[d] \ar@{}[rd]|{\Gamma_i} & M_i \ar^-{}[d]\\
    \ast \ar_-{}[r] & N_i
  }
\end{gathered}
\end{align}
be a biCartesian square in $\mathcal{T}_c(X_i,\Box)$. Let $\phi:c_1^*\Gamma_1\to c_2^!\Gamma_2$ and $\psi:d_2^*\Gamma_2\to d_1^!\Gamma_1$ be morphisms of squares in $\mathcal{T}_c(C,\Box)$ and $\mathcal{T}_c(D,\Box)$. 
Then the pairing~\eqref{verdier_pairing} satisfies
\begin{align}
\langle \phi_M ,\psi_M \rangle=\langle \phi_L ,\psi_L \rangle+\langle \phi_N ,\psi_N \rangle,
\end{align}
where $\phi_M:c_1^*M_1\to c_2^!M_2$ is the restriction of $\phi$, and similarly for the other maps.
\end{theorem}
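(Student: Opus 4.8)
The proof strategy is dictated by the sentence following the theorem statement in the introduction: \emph{first} reduce the two-entry Verdier pairing to a one-entry generalized trace via Proposition~\ref{prop:bil_tr}, \emph{then} invoke the additivity of the generalized trace (Proposition~\ref{prop:add_tr}) together with the compatibility of composition of correspondences with coherent biCartesian squares (Lemma~\ref{lem:comp_corr_sq}). Concretely, I would proceed as follows.

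\textbf{Step 1: Pass to the composed correspondence.} Apply Proposition~\ref{prop:bil_tr} to rewrite each of the three pairings appearing in the desired identity in terms of the composition of the two correspondences. For the biCartesian squares $\Gamma_1$ and $\Gamma_2$ and the morphisms of squares $\phi\colon c_1^*\Gamma_1\to c_2^!\Gamma_2$, $\psi\colon d_2^*\Gamma_2\to d_1^!\Gamma_1$, Lemma~\ref{lem:comp_corr_sq} produces a morphism of coherent squares $\psi\phi\colon c_1^{13*}\Gamma_1\to c_3^{13!}\Gamma_1$ in $\mathcal{T}_c(F,\Box)$ (here $X_3=X_1$, $\Gamma_3=\Gamma_1$, in the notation of~\ref{num:comp_corr} specialized as in~\ref{prop:bil_tr}); restricting to the three corners $L_1,M_1,N_1$ recovers the compositions $(\psi\phi)_L$, $(\psi\phi)_M$, $(\psi\phi)_N$ of the restricted correspondences. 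Then Proposition~\ref{prop:bil_tr}, applied with $L_1$ replaced by each of $L_i,M_i,N_i$, gives
\begin{align}
\langle\phi_M,\psi_M\rangle=\langle(\psi\phi)_M,1\rangle,\quad
\langle\phi_L,\psi_L\rangle=\langle(\psi\phi)_L,1\rangle,\quad
\langle\phi_N,\psi_N\rangle=\langle(\psi\phi)_N,1\rangle,
\end{align}
where $1$ denotes in each case the identity correspondence of $\mathbbold{1}_{X_1}$. The only subtlety here is that Proposition~\ref{prop:bil_tr} is stated for a single correspondence, so one checks that the formation $(\phi,\psi)\mapsto\psi\phi$ and the identification $\langle-,-\rangle=\langle(-)(-),1\rangle$ are compatible with restriction along the corner inclusions $\underline{\mathbf 0}\to\Box$; this is immediate from the fact, recalled in~\ref{num:4fct_ext}, that all six functors and the coend construction used to build $\langle-,-\rangle$ and the composition of correspondences lift to the derivator level and commute with the restriction functors $(\,\text{corner}\,)^*$.

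\textbf{Step 2: Apply additivity of the generalized trace.} Now the biCartesian square $\Gamma_1$ in $\mathcal{T}_c(X_1,\Box)$ and the morphism of squares $\Phi:=\psi\phi\colon c_1^{13*}\Gamma_1\to c_3^{13!}\Gamma_1$ in $\mathcal{T}_c(F,\Box)$ are exactly the input of Proposition~\ref{prop:add_tr} (with $X=X_1$, $C=F$, and the structural Cartesian diagram the one displayed in~\ref{num:gen_tr} relating $E$, $F$, $X_1$, $X_{12}$). That proposition yields
\begin{align}
\langle\Phi_M,1\rangle=\langle\Phi_L,1\rangle+\langle\Phi_N,1\rangle
\end{align}
as maps $\mathbbold{1}_E\to\mathcal{K}_E$, where $\Phi_L,\Phi_M,\Phi_N$ are the restrictions of $\Phi$ to the three corners, i.e.\ precisely $(\psi\phi)_L,(\psi\phi)_M,(\psi\phi)_N$. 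Combining with the three identities of Step~1 gives $\langle\phi_M,\psi_M\rangle=\langle\phi_L,\psi_L\rangle+\langle\phi_N,\psi_N\rangle$, which is the assertion.

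\textbf{Main obstacle.} The genuinely delicate point is not visible at this level of bookkeeping: it is hidden inside Proposition~\ref{prop:add_tr} (and thus inside this theorem's dependence on it), namely producing the big incoherent commutative diagram that interpolates between the three corner trace maps using May's axioms (TC3D), (TC3D'), (TC3'), (TC4), (TC5a), (TC5b) for the \emph{local} duality functor $\mathbb{D}$ rather than the strong duality functor. For the present proof, assuming Proposition~\ref{prop:add_tr} and Lemma~\ref{lem:comp_corr_sq}, the only work is the compatibility check in Step~1 — verifying that reducing the two-entry pairing to $\langle(\psi\phi),1\rangle$ is natural with respect to restriction from $\Box$-shaped diagrams to their corners, so that the reduction of Proposition~\ref{prop:bil_tr} can be performed uniformly across the biCartesian square. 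This is a routine (if somewhat lengthy) diagram chase using the lifts of the six functors and of the K\"unneth isomorphisms to $\mathcal{T}_c(\cdot,\Box)$, exactly as in the proofs of Proposition~\ref{prop:bil_tr} and Lemma~\ref{lem:comp_corr_sq}, and I would present it by indicating that each constituent square either is a restriction of a square already established at the $\underline{\mathbf 0}$-level or commutes by naturality of an adjunction unit/counit.
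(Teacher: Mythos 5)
Your proposal is correct and follows exactly the paper's own route: the theorem is deduced from Proposition~\ref{prop:bil_tr}, Lemma~\ref{lem:comp_corr_sq} and Proposition~\ref{prop:add_tr}, with the composition of correspondences lifted to coherent squares and the two-entry pairing reduced to the generalized trace before invoking additivity. Your Step~1 compatibility check (commutation of the reduction with restriction to the corners of $\Box$) is precisely the content the paper leaves implicit in this deduction.
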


\begin{remark}

One could also formulate the additivity of traces using the language of motivic $(\infty,1)$-categories developped in \cite{Kha}. Given our result for derivators, it suffices to check that the homotopy derivator of a motivic $(\infty,1)$-category of coefficients (\cite[Chapter 2 3.5.2]{Kha}) satisfies the axioms of a constructible motivic derivator. As remarked in \cite{GPS}, it is expected but is yet to be verified that monoidal structures are carried through this construction.

\end{remark}

\section{The characteristic class of a motive}
\label{section:CC}
In this section we come back to $1$-categorical concerns. Let $\mathbf{T}$ be the underlying motivic triangulated category of a constructible motivic derivator which satisfies the condition~\ref{resol} in \ref{par:devissage}. 
\subsection{The characteristic class and first properties}
\subsubsection{}
We recall briefly the formalism we need from \cite{DJK}:
\footnote{The formalism in loc. cit. is constructed for the stable motivic homotopy category $\mathbf{SH}$, but since the construction is quite formal, it can also be done in any motivic triangulated category.}
\begin{recall}
For any scheme $X$, the \textbf{Thom space} construction is a well-defined group homomorphism $Th_X:K_0(X)\to Pic(\mathbf{T}(X))$ (\cite[2.1.4]{DJK}).

Let $f:X\to S$ be a morphism of schemes and let $V$ be a virtual vector bundle over $X$. The \textbf{(twisted) bivariant group} is defined as
\footnote{This is a particular case of the general formalism, see \cite[Definition 2.12]{DJK}.}
\begin{align}
H_0(X/S,V)=Hom_{\mathbf{T}_c(X)}(Th_X(V),f^!\mathbbold{1}_S).
\end{align}
For any proper morphism $p:Y\to X$, there is a proper covariant functoriality
\begin{align}
p_*:H_0(Y/k,p^*V)\to H_0(X/k,V).
\end{align}
If $V$ is a vector bundle over $X$, the \textbf{Euler class} $e(V):\mathbbold{1}_X\to Th_X(V)$ is an analogue of the top Chern class in the classical setting (\cite[Definition 3.1.2]{DJK}). 
When $V$ is the trivial virtual bundle, we use the notation $H_0(X/k)=H_0(X/k,0)$. If $X$ is a smooth scheme, the class $e(L_{X/k}):\mathbbold{1}_X\to Th_X(L_{X/k})\simeq\mathcal{K}_X$ is an element of $H_0(X/k)$.
\footnote{Note that for $\mathbf{T}_c=\mathbf{DM}_{cdh,c}$, the group $H_0(X/k,V)$ is the Chow group of algebraic cycles of dimension the virtual rank of $V$, and we recover the formalism in \cite{Ful}. 
The Euler class in Chow groups is the top Chern class.}
\end{recall}

\begin{definition}
\label{def:cc}
Let $X$ be a scheme and $M\in\mathbf{T}_c(X)$. The Verdier pairing in Definition~\ref{def:verdier_pairing} in the particular case where $C=D=X_1=X_2=X$ and $L_1=L_2=M$ is a pairing
\begin{align}
\langle\ ,\ \rangle:Hom(M,M)\otimes Hom(M,M)\to H_0(X/k).
\end{align}
For any endomorphism $u\in Hom(M,M)$, the \textbf{characteristic class} of $u$ 
is defined as the element $C_X(M,u)\coloneqq\langle u,1_M \rangle\in H_0(X/k)$.
The characteristic class of a motive $M$ is the characteristic class of the identity $C_X(M)\coloneqq C_X(M,1_M)$.
\end{definition}

We now list some elementary properties of the characteristic class.

\subsubsection{}
Since identity maps are particularly good choices of morphisms of distinguished triangles, Theorem~\ref{th:add_trace} implies the additivity of the characteristic class:
\begin{corollary}
\label{additivity}
 Let $X$ be a scheme and let $L\to M\to N\to L[1]$ be a distinguished triangle in $\mathbf{T}_c(X)$. Then $C_X(M)=C_X(L)+C_X(N)$.
\end{corollary}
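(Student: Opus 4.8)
The plan is to deduce Corollary~\ref{additivity} directly from the general additivity theorem for the Verdier pairing, Theorem~\ref{th:add_trace}, by specializing all the auxiliary data. First I would take, in the notation of~\ref{abstract_map}, the choices $X_1=X_2=C=D=X$, with $c=d=\delta:X\to X\times_kX$ the diagonal morphism, so that $c_1=c_2=d_1=d_2=\operatorname{id}_X$. Then $E=C\times_{X\times_kX}D$ is canonically identified with $X$, and the Verdier pairing specializes to the pairing appearing in Definition~\ref{def:cc}, with $C_X(M,u)=\langle u,1_M\rangle\in H_0(X/k)$.

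Next I would promote the given distinguished triangle $L\to M\to N\to L[1]$ in $\mathbf{T}_c(X)$ to a coherent biCartesian square. Since $\mathbf{T}_c$ underlies a constructible motivic derivator $\mathcal{T}_c$ satisfying~\ref{resol}, axiom~(3) of Definition~\ref{def:stable_der} (fullness and essential surjectivity of $\mathcal{T}_c(X,\Box)\to Fun(\underline{\mathbf{1}},\mathcal{T}_c(X,\underline{\mathbf{0}}))$ applied iteratively, together with the existence of coherent cofiber sequences) lets me lift the map $L\to M$ to a coherent arrow and form the coherent biCartesian square
\begin{align}
\begin{gathered}
  \xymatrix{
    L \ar^-{}[r] \ar_-{}[d] \ar@{}[rd]|{\Gamma} & M \ar^-{}[d]\\
    \ast \ar_-{}[r] & N
  }
\end{gathered}
\end{align}
in $\mathcal{T}_c(X,\Box)$ whose boundary recovers the original triangle up to isomorphism. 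I then take $\Gamma_1=\Gamma_2=\Gamma$, and for the coherent morphisms $\phi:c_1^*\Gamma_1\to c_2^!\Gamma_2$ and $\psi:d_2^*\Gamma_2\to d_1^!\Gamma_1$ I choose the identity morphism of the square $\Gamma$ — this makes sense because with the above specialization $c_1^*\Gamma=\Gamma=c_2^!\Gamma$ (as $c_i$ are identities and so are the induced functors on coherent diagrams), and identity maps are certainly morphisms of coherent squares. Then $\phi_L=1_L$, $\phi_M=1_M$, $\phi_N=1_N$, and similarly for $\psi$.

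With these choices, Theorem~\ref{th:add_trace} yields $\langle 1_M,1_M\rangle=\langle 1_L,1_L\rangle+\langle 1_N,1_N\rangle$, which by Definition~\ref{def:cc} is exactly $C_X(M)=C_X(L)+C_X(N)$. The only point requiring a little care — and the one I would present most carefully — is the claim that the restrictions $\phi_L,\phi_M,\phi_N$ of the chosen coherent identity morphism are the honest identity endomorphisms of $L,M,N$ in the underlying $1$-category $\mathbf{T}_c(X)$; this is immediate since the underlying incoherent diagram functor is faithful on objects and sends the identity coherent morphism to the identity, but it is worth stating explicitly so that the hypothesis "$u=1_M$" in Definition~\ref{def:cc} is correctly matched. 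I expect no genuine obstacle here: the entire content has already been absorbed into Theorem~\ref{th:add_trace}, and the corollary is a pure specialization, so the proof is short.
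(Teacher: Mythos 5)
Your proposal is correct and is essentially the paper's own argument: the paper deduces Corollary~\ref{additivity} from Theorem~\ref{th:add_trace} precisely by noting that identity maps give morphisms of (coherent lifts of) distinguished triangles, which is your choice $\phi=\psi=\mathrm{id}_\Gamma$ with $c=d=\delta$ and $C=D=X_1=X_2=X$. The points you flag (lifting the triangle to a coherent biCartesian square via the strongness axiom, and the restrictions of the identity coherent morphism being the identities $1_L,1_M,1_N$) are exactly the routine checks the paper leaves implicit.
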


\begin{remark}
\begin{enumerate}
\item
For every scheme $X$, the additivity of traces yields a well-defined homomorphism of abelian groups
\begin{align}
\label{eq:K0_biv}
\begin{split}
K_0(\mathbf{T}_c(X))&\to H_0(X/k)\\
[M]&\mapsto C_X(M)
\end{split}
\end{align}
where the left hand side is the Grothendieck group of the triangulated category $\mathbf{T}_c(X)$. For $\mathbf{T}_c=\mathbf{DM}_{cdh,c}$, we have $H_0(X/k)\simeq CH_0(X)$ is the Chow group of zero-cycles over $X$ (up to $p$-torsion). It is conjectured that the map~\eqref{eq:K0_biv} is related to the $0$-dimensional part of the closure of the characteristic cycle (\cite[Conjecture 6.8]{Sai}).

\item
One can also relate the characteristic class with the Grothendieck group of varieties over a base.  Composing the map~\eqref{eq:K0_biv} with the obvious ring homomorphism
\begin{align}
\begin{split}
K_0(Var/X)&\to K_0(\mathbf{T}_c(X))\\
[f:Y\to X]&\mapsto [f_!\mathbbold{1}_Y]
\end{split}
\end{align}
where $K_0(Var/X)$ is the Grothendieck group of varieties over $X$, we obtain a well-defined homomorphism of abelian groups
\begin{equation}
\label{eq:K0_CC}
K_0(Var/X)\to H_0(X/k)
\end{equation}
and gives an additive invariant on $K_0(Var/X)$. In the case $X=\operatorname{Spec}(k)$, the group $H_0(X/k)$ is equal to the endomorphism ring $End(\mathbbold{1}_k)$, and the map~\eqref{eq:K0_CC} is a ring homomorphism, which defines a motivic measure on $K_0(Var/k)$. When $\mathbf{T}=\mathbf{SH}$ and $k$ has characteristic $0$, this agrees with the construction in \cite{Ron}.
\end{enumerate}
\end{remark}

\subsubsection{}
It follows from  
Proposition~\ref{proper_pf} that the characteristic class is compatible with the proper functoriality:
\begin{corollary}
\label{cc_proper}
Let $f:X\to Y$ be a proper morphism. Then we have $f_*C_X(M,u)=C_Y(f_*M,f_*u)$.
\end{corollary}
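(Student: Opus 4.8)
\proof
The plan is to deduce this directly from the proper functoriality of the Verdier pairing established in Corollary~\ref{cor:vp_proper}. Recall from Definition~\ref{def:cc} that $C_X(M,u)=\langle u,1_M\rangle$ is the Verdier pairing~\eqref{verdier_pairing} in the special case $X_1=X_2=C=D=X$, $L_1=L_2=M$, with $c=d=\delta_X\colon X\to X\times_kX$ the diagonal, and with the second correspondence taken to be the identity $1_M\colon M\to M$. In this situation the Cartesian square of~\ref{abstract_map} identifies $E$ with $X$, and the morphism $e\colon E\to X\times_kX$ with $\delta_X$.

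First I would place ourselves in the setting of~\ref{num:vp_proper} with $f_1=f_2=f$, $X'_1=X'_2=C'=D'=Y$, so that $c'=d'=\delta_Y$, $E'\cong Y$, and the induced morphism $f_E\colon E\to E'$ is identified with $f\colon X\to Y$; note $f$ is proper by hypothesis, as are $c=\delta_X$, $d=\delta_X$, $c'=\delta_Y$, $d'=\delta_Y$, so the hypotheses of~\ref{num:vp_proper} and Corollary~\ref{cor:vp_proper} are satisfied. Applying Corollary~\ref{cor:vp_proper} gives
\begin{align}
\langle f_*u,f_*(1_M)\rangle=f_{E*}\langle u,1_M\rangle\colon\mathbbold{1}_Y\to\mathcal{K}_Y.
\end{align}
By construction the left vertical map in Proposition~\ref{proper_pf}, applied to the identity correspondence $1_M$, produces precisely the identity correspondence $1_{f_*M}$: tracing through~\eqref{eq:pf_corr1} and~\eqref{eq:3.3.1} one sees that the map $f_*\underline{Hom}(\delta_{X,1}^*M,\delta_{X,2}^!M)\to\underline{Hom}(\delta_{Y,1}^*f_*M,\delta_{Y,2}^!f_*M)$ sends $\eta_M$ composed with $1_M$ to $\eta_{f_*M}$, using the canonical duality isomorphism $\mathbb{D}(f_!M)\simeq f_*\mathbb{D}(M)$. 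Hence $f_*(1_M)=1_{f_*M}$, so the left-hand side above equals $\langle f_*u,1_{f_*M}\rangle=C_Y(f_*M,f_*u)$.

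For the right-hand side, by definition~\eqref{eq:prop_pf_1k} the operation $w\mapsto f_{E*}w$ on maps $\mathbbold{1}_X\to\mathcal{K}_X$ is exactly the proper covariant functoriality $f_*\colon H_0(X/k)\to H_0(Y/k)$ recalled in the Recall block preceding Definition~\ref{def:cc}; therefore $f_{E*}\langle u,1_M\rangle=f_*C_X(M,u)$. Combining the two identifications yields $f_*C_X(M,u)=C_Y(f_*M,f_*u)$, as claimed. The only point requiring verification beyond citing Corollary~\ref{cor:vp_proper} is the equality $f_*(1_M)=1_{f_*M}$, which is a routine diagram chase through the isomorphisms of~\ref{num:vp_proper}; the identification of $f_{E*}$ with the bivariant pushforward is immediate from the definitions.
\endproof
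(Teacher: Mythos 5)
Your proof is correct and follows essentially the same route as the paper, which derives Corollary~\ref{cc_proper} directly from Proposition~\ref{proper_pf} (equivalently Corollary~\ref{cor:vp_proper}) specialized to $X_1=X_2=C=D=X$, $X'_1=X'_2=C'=D'=Y$ with the diagonal correspondences; the paper gives no further detail, while you additionally record the routine identifications $f_*(1_M)=1_{f_*M}$, $E\cong X$, $E'\cong Y$, and $f_{E*}$ with the bivariant pushforward. Nothing is missing beyond what the paper itself leaves implicit.
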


\subsubsection{}
The following lemma shows a relation between the characteristic class and the trace map:
\begin{lemma}
\label{lem:cc_tr}
Let $X$ be a scheme and let $M,N$ be two objects in $\mathbf{T}_c(X)$ such that $M$ is dualizable. Let $u$ be an endomorphism of $M$ and let $v$ be an endomorphism of $N$. Then $C_X(M\otimes N,u\otimes v)=Tr(u)\cdot C_X(N,v)$.

\end{lemma}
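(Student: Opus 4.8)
The plan is to unwind the definition of the characteristic class via Proposition~\ref{prop:pair_trace}, which expresses $\langle w, 1\rangle$ explicitly as a generalized trace, and then exploit the fact that $M$ is strongly dualizable so that the local-duality functor $\mathbb{D}$ interacts nicely with tensor products. Concretely, since $M$ is dualizable we have $\mathbb{D}(M\otimes N)\simeq\mathbb{D}(M)\otimes\mathbb{D}(N)\simeq M^\vee\otimes\mathbb{D}(N)$ (this follows from $\mathbb{D}(M\otimes N)=\underline{Hom}(M\otimes N,\mathcal{K}_X)\simeq\underline{Hom}(N,\underline{Hom}(M,\mathcal{K}_X))\simeq\underline{Hom}(N,M^\vee\otimes\mathcal{K}_X)\simeq M^\vee\otimes\mathbb{D}(N)$, using dualizability of $M$ in the second-to-last step). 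Under this identification, the formal coevaluation map $\eta'$ of~\eqref{eq:form_counit} for $M\otimes N$ factors as the tensor product of the strong-duality coevaluation $\mathbbold{1}_X\to M^\vee\otimes M$ for $M$ with the local-duality coevaluation $\mathbbold{1}_X\to\mathbb{D}(N)\otimes N$ for $N$, and similarly the evaluation $\epsilon_{M\otimes N}$ factors through $\epsilon_M$ (strong) tensored with $\epsilon_N$ (local).

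The key steps, in order, would be: (i) apply Proposition~\ref{prop:pair_trace} (in the case $c=\delta:X\to X\times_k X$) to rewrite $C_X(M\otimes N, u\otimes v)$ as the composition $\mathbbold{1}_X\xrightarrow{\eta'_{M\otimes N}}\mathbb{D}(M\otimes N)\otimes(M\otimes N)\xrightarrow{1\otimes(u\otimes v)}\mathbb{D}(M\otimes N)\otimes(M\otimes N)\simeq(M\otimes N)\otimes\mathbb{D}(M\otimes N)\xrightarrow{\epsilon_{M\otimes N}}\mathcal{K}_X$; (ii) insert the identifications $\mathbb{D}(M\otimes N)\simeq M^\vee\otimes\mathbb{D}(N)$ and check, by a diagram chase comparing the two coevaluation maps and the two evaluation maps, that this composite is the "external product" of the strong trace diagram for $(M,u)$ with the local-duality trace diagram for $(N,v)$; (iii) use that the strong trace diagram computes $Tr(u)\in\mathrm{End}(\mathbbold{1}_X)$ while the local one computes $C_X(N,v)\in H_0(X/k)=\mathrm{Hom}(\mathbbold{1}_X,\mathcal{K}_X)$, and that the multiplication $\mathrm{End}(\mathbbold{1}_X)\times H_0(X/k)\to H_0(X/k)$ is exactly precomposition/action by $Tr(u)$, yielding $Tr(u)\cdot C_X(N,v)$.

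The main obstacle will be step (ii): one must verify carefully that the coevaluation map~\eqref{eq:form_counit} for $M\otimes N$, which is built out of the Künneth isomorphisms~\eqref{dual_hom} and~\eqref{Kunneth} applied to the diagonal, really does decompose as the tensor of the strong-duality unit $\eta_M:\mathbbold{1}_X\to M^\vee\otimes M$ with the weak-duality unit $\eta'_N:\mathbbold{1}_X\to\mathbb{D}(N)\otimes N$. This amounts to tracking the compatibility of the Künneth maps with the splitting $M\otimes N$ and with strong duality; the relevant commutativities should reduce to the naturality of~\eqref{dual_hom}, \eqref{Kunneth}, and~\eqref{Ex^!Hom}, together with the standard fact (analogous to the identity-map computation at the end of the proof of Proposition~\ref{prop:pair_trace}) that for a dualizable object the local coevaluation factors through the strong one. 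Once this factorization is established, the remainder is a routine bookkeeping of symmetry isomorphisms and the $\mathrm{End}(\mathbbold{1}_X)$-module structure on $H_0(X/k)$.
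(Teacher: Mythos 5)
Your proposal is correct and follows essentially the same route as the paper: the paper's proof is precisely to apply Proposition~\ref{prop:pair_trace} together with the isomorphism $M^\vee\otimes\mathbb{D}(N)\xrightarrow{\sim}\mathbb{D}(M\otimes N)$ coming from the dualizability of $M$. The factorization of the coevaluation and evaluation maps that you flag as the main obstacle is exactly the diagram chase the paper leaves implicit, so your write-up is a faithful (if more detailed) version of the paper's argument.
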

\proof

This follows from Proposition~\ref{prop:pair_trace}, using the fact that the canonical map $M^\vee\otimes\mathbb{D}(N)\to\mathbb{D}(M\otimes N)$ is an isomorphism.
\endproof

\subsubsection{}
\label{num:cc_trace_field}
By Corollary~\ref{cc_proper} and Lemma~\ref{lem:cc_tr}, if $f:X\to\operatorname{Spec}(k)$ is a proper morphism, $M\in\mathbf{T}_c(X)$ and $u$ is an endomorphism of $M$,
then the degree of the class $C_X(M,u)$ is the trace of the map $f_*u:f_*M\to f_*M$. In particular, the degree of the class $C_X(M)$ is the \emph{Euler characteristic} of $M$.

\subsubsection{}
We denote by $\langle-1\rangle_k=\chi(\mathbbold{1}_k(1))\in End(\mathbbold{1}_k)$ the \emph{Euler characteristic} of the Tate twist, defined as the trace of the identity map.
\footnote{For $\mathbf{T}_c=\mathbf{SH}_c$ it is well-known that $\langle-1\rangle_k\in End(\mathbbold{1}_k)=GW(k)$ corresponds to the quadratic form $x\mapsto -x^2$ (\cite[Example 1.7]{Hoy}). This element is reduced to identity in $\mathbf{DM}_{cdh,c}$ or $\ell$-adic \'etale cohomology.}
Then for any morphism $f:X\to {\rm Spec}(k)$, 
we have $\chi(\mathbbold{1}_X(n))=\langle-1\rangle_X^n$, 
where $\langle-1\rangle_X=f^*\langle-1\rangle_k\in End(\mathbbold{1}_X)$. 
As a consequence we obtain
\begin{corollary}
\label{cor:cc_tate}
Let $X$ be a scheme and let $M\in\mathbf{T}_c(X)$. Let $u\in End(M)$ be an endomorphism and denote by $u(n)$ be the corresponding endomorphism of $M(n)$. Then $C_X(M(n),u(n))=\langle-1\rangle_X^n\cdot C_X(M,u)$.

\end{corollary}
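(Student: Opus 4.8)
\textbf{Proof plan for Corollary~\ref{cor:cc_tate}.}
The plan is to reduce the statement to the multiplicativity of the characteristic class under tensor product by a dualizable object, already recorded in Lemma~\ref{lem:cc_tr}. Observe first that the Tate twist $\mathbbold{1}_X(n)$ is an invertible, hence dualizable, object of $\mathbf{T}_c(X)$, and that the endomorphism $u(n)$ of $M(n)=M\otimes\mathbbold{1}_X(n)$ is by definition the tensor product $u\otimes 1_{\mathbbold{1}_X(n)}$ under the canonical identification $M(n)\simeq M\otimes\mathbbold{1}_X(n)$. (Here I use that all six functors, and in particular the internal $\underline{Hom}$ and the duality $\mathbb{D}$, commute with Tate twists via canonical isomorphisms, as recalled in~\ref{par:devissage} and in the discussion preceding Proposition~\ref{Kun_*}; this is what makes the identification $M(n)\simeq M\otimes\mathbbold{1}_X(n)$ compatible with the constructions entering the Verdier pairing.)

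First I would apply Lemma~\ref{lem:cc_tr} with the dualizable object taken to be $N'=\mathbbold{1}_X(n)$, the object $M$ unchanged, the endomorphism of $M$ being $u$ and the endomorphism of $N'$ being the identity $1_{\mathbbold{1}_X(n)}$. This gives
\begin{align}
C_X(M(n),u(n))=C_X(M\otimes\mathbbold{1}_X(n),u\otimes 1_{\mathbbold{1}_X(n)})=Tr(u)\cdot C_X(\mathbbold{1}_X(n),1_{\mathbbold{1}_X(n)})
\end{align}
Wait---this is not quite the shape we want, since it isolates $Tr(u)$ rather than a twist factor. Instead I would apply Lemma~\ref{lem:cc_tr} with the roles reversed: take the dualizable object to be $\mathbbold{1}_X(n)$ with endomorphism $1_{\mathbbold{1}_X(n)}$, and take the (not necessarily dualizable) object to be $M$ with endomorphism $u$. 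The lemma then reads
\begin{align}
C_X(M\otimes\mathbbold{1}_X(n),\,u\otimes 1_{\mathbbold{1}_X(n)})=Tr(1_{\mathbbold{1}_X(n)})\cdot C_X(M,u)
\end{align}
so the point reduces to computing $Tr(1_{\mathbbold{1}_X(n)})$, the Euler characteristic of the Tate twist $\mathbbold{1}_X(n)$ in $\mathbf{T}_c(X)$.

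For this last computation I would use that the trace is multiplicative under tensor product of dualizable objects, so $Tr(1_{\mathbbold{1}_X(n)})=Tr(1_{\mathbbold{1}_X(1)})^n=\chi(\mathbbold{1}_X(1))^n$; and that the Euler characteristic commutes with pullback along the structure map $f:X\to\operatorname{Spec}(k)$, so $\chi(\mathbbold{1}_X(1))=f^*\chi(\mathbbold{1}_k(1))=f^*\langle-1\rangle_k=\langle-1\rangle_X$, as already noted in the paragraph preceding the corollary. Combining, $Tr(1_{\mathbbold{1}_X(n)})=\langle-1\rangle_X^n$, whence $C_X(M(n),u(n))=\langle-1\rangle_X^n\cdot C_X(M,u)$, which is the claim.

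The only genuine subtlety---and the step I would be most careful about---is checking that the identification $M(n)\simeq M\otimes\mathbbold{1}_X(n)$ intertwines $u(n)$ with $u\otimes 1_{\mathbbold{1}_X(n)}$ at the level of the data feeding into Lemma~\ref{lem:cc_tr}, i.e. that the counit $\epsilon_{M(n)}$ and the coevaluation-type map for $M(n)$ correspond, under the canonical twisting isomorphisms of the six functors, to those for $M\otimes\mathbbold{1}_X(n)$. This is a routine but slightly tedious naturality verification, harmless because Tate twists are $\otimes$-invertible and form a Cartesian section; everything else is formal.
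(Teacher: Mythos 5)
Your proposal is correct and follows essentially the same route as the paper: the corollary is deduced from Lemma~\ref{lem:cc_tr} applied with the dualizable object $\mathbbold{1}_X(n)$ (identity endomorphism) tensored against $(M,u)$, together with the computation $\chi(\mathbbold{1}_X(n))=\langle-1\rangle_X^n$ from the paragraph preceding the statement. Your brief initial misapplication of the lemma is corrected within the proposal, and the remaining compatibility of Tate twists with the duality data is exactly the routine point the paper also relies on.
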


\subsubsection{}
We can compute characteristic classes for endomorphisms of primitive Chow motives as follows:
\begin{proposition}
\label{prop:cc_endo_euler}
 let $X$ be a smooth $k$-scheme with tangent bundle $T_{X/k}$ and let $p:X\to S$ be a proper morphism. Then for any endomorphism $u$ of $p_*\mathbbold{1}_X$, the characteristic class $C_X(p_*\mathbbold{1}_X,u)$ is given by the composition
\begin{equation}
\label{eq:chow_cc}
\mathbbold{1}_S
\xrightarrow{}
p_*\mathbbold{1}_X
\xrightarrow{u}
p_*\mathbbold{1}_X
\xrightarrow{p_*e(T_{X/k})}
p_*\mathcal{K}_X
\xrightarrow{ad'_{(p_*,p^!)}}
\mathcal{K}_S.
\end{equation}

\end{proposition}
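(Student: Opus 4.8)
\textbf{Proof plan for Proposition~\ref{prop:cc_endo_euler}.}
The plan is to compute the characteristic class $C_X(p_*\mathbbold{1}_X,u)=\langle u,1\rangle$ by feeding it through the explicit description of the Verdier pairing obtained in Proposition~\ref{prop:pair_trace}, applied to the case $C=D=S_1=S_2=S$, $L=p_*\mathbbold{1}_X$, and $c=\delta_{S/k}:S\to S\times_kS$ the diagonal. By Proposition~\ref{prop:pair_trace} the class $\langle u,1\rangle$ is adjoint to the composition
\begin{align*}
\mathbbold{1}_S
\xrightarrow{\eta_L}
\underline{Hom}(L,L)
\xrightarrow{u_*}
\underline{Hom}(L,L)
\simeq
\delta^!(p_1^*\mathbb{D}(L)\otimes p_2^*L)
\xrightarrow{ad'}
\mathbb{D}(L)\otimes L
\simeq
L\otimes\mathbb{D}(L)
\xrightarrow{\epsilon_L}
\mathcal{K}_S,
\end{align*}
so the whole point is to identify the portion $\mathbbold{1}_S\to\mathbb{D}(L)\otimes L\simeq L\otimes\mathbb{D}(L)\xrightarrow{\epsilon_L}\mathcal{K}_S$ (the coevaluation/evaluation of the ``weak duality'' discussed in the remark after Proposition~\ref{prop:pair_trace}) in concrete terms when $L=p_*\mathbbold{1}_X$ with $p$ proper and $X/k$ smooth.

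\textbf{Step 1: identify the weak dual of a pushed-forward unit.}
Since $p$ is proper, $p_*=p_!$, and by~\eqref{Ex_!Hom} (or~\eqref{eq:intern_Hom_part}) we have $\mathbb{D}(p_*\mathbbold{1}_X)=\underline{Hom}(p_!\mathbbold{1}_X,\mathcal{K}_S)\simeq p_*\underline{Hom}(\mathbbold{1}_X,p^!\mathcal{K}_S)=p_*p^!\mathcal{K}_S=p_*\mathcal{K}_X$, using that $p^!\mathcal{K}_S=p^!q_S^!\mathbbold{1}_k=q_X^!\mathbbold{1}_k=\mathcal{K}_X$. Under the assumption that $X$ is smooth over $k$, purity (the relative purity isomorphism of the six functors formalism, which in the language of~\cite{DJK} is $Th_X(L_{X/k})\simeq\mathcal{K}_X$) gives $\mathcal{K}_X\simeq Th_X(T_{X/k})$ up to the shift/twist bookkeeping, so $\mathbb{D}(p_*\mathbbold{1}_X)\simeq p_*Th_X(T_{X/k})$. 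This is where the Euler class $e(T_{X/k}):\mathbbold{1}_X\to Th_X(T_{X/k})\simeq\mathcal{K}_X$ will enter.

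\textbf{Step 2: trace through the coevaluation and evaluation maps.}
I would then unwind the composition $\mathbbold{1}_S\to\mathbb{D}(L)\otimes L\simeq L\otimes\mathbb{D}(L)\xrightarrow{\epsilon_L}\mathcal{K}_S$ using the two commutative diagrams~\eqref{diag:compat_hom_lower!} and~\eqref{diag:compat_hom_lower!2}, which relate $\epsilon_{f_!\mathbbold{1}_X}$ and $\eta_{f_!\mathbbold{1}_X}$ to the counit $ad'_{(f_!,f^!)}$ and the unit map $\mathbbold{1}_S\to f_*\mathbbold{1}_X$. Concretely, the coevaluation $\mathbbold{1}_S\to\mathbb{D}(L)\otimes L\simeq p_*\mathcal{K}_X\otimes p_*\mathbbold{1}_X$ factors through the unit $\mathbbold{1}_S\to p_*\mathbbold{1}_X$ followed by $p_*$ applied to a map $\mathbbold{1}_X\to \mathcal{K}_X\otimes p^*p_*\mathbbold{1}_X$; the key geometric input is that, after composing with the evaluation $\epsilon_L$, the ``diagonal'' interaction localizes along $\delta_{X/k}:X\to X\times_kX$ and by self-intersection / excess-intersection produces precisely the Euler class $e(T_{X/k})$ of the normal bundle of the diagonal, which is $T_{X/k}$. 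This is the standard mechanism by which the Euler characteristic of a smooth proper variety is the self-intersection of the diagonal; in the six-functors language it is the statement that the weak-duality coevaluation/evaluation pair for $p_*\mathbbold{1}_X$ composes to $p_*e(T_{X/k})$ followed by $ad'_{(p_*,p^!)}$. I expect this to be the main obstacle: carefully matching the purity isomorphism's orientation conventions with the K\"unneth-formula isomorphisms~\eqref{dual_hom},~\eqref{Kunneth},~\eqref{Kunneth_K} used in Proposition~\ref{prop:pair_trace}, and checking that the resulting self-intersection computation indeed yields the Euler class and not, say, its inverse or a twisted variant, so that the composition~\eqref{eq:chow_cc} comes out exactly as stated.

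\textbf{Step 3: incorporate the endomorphism $u$ and conclude.}
Finally, the factor $u_*:\underline{Hom}(L,L)\to\underline{Hom}(L,L)$ in Proposition~\ref{prop:pair_trace} inserts $u:p_*\mathbbold{1}_X\to p_*\mathbbold{1}_X$ into the composition between the unit $\mathbbold{1}_S\to p_*\mathbbold{1}_X$ and the map $p_*e(T_{X/k}):p_*\mathbbold{1}_X\to p_*\mathcal{K}_X$, which is exactly the middle of~\eqref{eq:chow_cc}; the closing $ad'_{(p_*,p^!)}:p_*\mathcal{K}_X=p_*p^!\mathcal{K}_S\to\mathcal{K}_S$ is the adjoint of the identity, matching the description of the bivariant pushforward $p_*$ in the Recall above. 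Assembling Steps~1--3 gives the displayed formula for $C_X(p_*\mathbbold{1}_X,u)$, completing the proof.
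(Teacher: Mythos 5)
Your overall route coincides with the paper's: reduce to the explicit trace composition $\mathbbold{1}_S\to\underline{Hom}(L,L)\to\mathbb{D}(L)\otimes L\simeq L\otimes\mathbb{D}(L)\xrightarrow{\epsilon_L}\mathcal{K}_S$ for $L=p_*\mathbbold{1}_X$ (Proposition~\ref{prop:pair_trace}, or directly~\eqref{eq:cc_trace_intro}), identify $\mathbb{D}(p_*\mathbbold{1}_X)\simeq p_*\mathcal{K}_X$ via~\eqref{eq:intern_Hom_part} and purity, and then let the diagonal produce the Euler class. The problem is that the one genuinely substantive point --- that the coevaluation/evaluation pair coming from the K\"unneth formulas, applied to $p_*\mathbbold{1}_X$, composes to $p_*e(T_{X/k})$ followed by $ad'_{(p_*,p^!)}$ --- is exactly what your Step~2 asserts and then defers as ``the main obstacle''. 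As written, Step~2 restates the conclusion in six-functors language rather than proving it, and you explicitly leave open whether the self-intersection computation yields $e(T_{X/k})$ or a twisted variant. That is a genuine gap: the whole content of the proposition sits in that middle identification, and invoking ``the standard mechanism'' does not settle the normalization issue you yourself raise.

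What closes it in the paper is one precise input from \cite{DJK}: the self-intersection formula (Example 3.2.9 there, as used in the proof of the motivic Gauss--Bonnet theorem 4.6.1), which identifies $e(L_{X/k}):\mathbbold{1}_X\to\mathcal{K}_X$ with the composite $\mathbbold{1}_X\to\delta^!p_1^*\mathcal{K}_X\to\delta^*p_1^*\mathcal{K}_X\simeq\mathcal{K}_X$, where $\delta:X\to X\times_kX$ is the diagonal, the first map is induced by the fundamental class of $\delta$, and the second by the natural transformation $\delta^!\simeq\delta^*\delta_*\delta^!\xrightarrow{ad'_{(\delta_*,\delta^!)}}\delta^*$. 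This is precisely the statement that resolves the ``orientation conventions'' worry, since it expresses the Euler class in the same terms (fundamental class of $\delta$ and the exchange $\delta^!\to\delta^*$) as the K\"unneth-induced map $\underline{Hom}(L,L)\to\mathbb{D}(L)\otimes L$ after restriction along the unit $\mathbbold{1}_S\to p_*\mathbbold{1}_X$; with it in hand, the middle square commutes by a diagram chase, and the outer squares involving $u$ and $ad'_{(p_*,p^!)}$ are straightforward, as you note. So your plan is salvageable and follows the paper's line, but to count as a proof you must import (or reprove) this self-intersection identity and actually run the chase, rather than flag it as an expected difficulty.
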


Note that formula~\eqref{eq:chow_cc} is quite similar to the motivic Gauss-Bonnet formula (\cite[Theorem 4.4.1]{DJK}); when $S=\operatorname{Spec}(k)$ and $u=id$ the two formulas are the same.

\proof
We need to show the commutativity of the following diagram:
\begin{align}
\label{diag:euler_diag}
\begin{gathered}
\xymatrix{
\mathbbold{1}_S \ar[r]^-{u} \ar[d]_-{} & \underline{Hom}(p_*\mathbbold{1}_X,p_*\mathbbold{1}_X) \ar[r]^-{} \ar[d]_-{} & \mathbb{D}(p_*\mathbbold{1}_X)\otimes p_*\mathbbold{1}_X \ar[r]^-{\sim} & p_*\mathbbold{1}_X\otimes\mathbb{D}(p_*\mathbbold{1}_X) \ar[d]^-{\epsilon_{p_*\mathbbold{1}_X}} \\
p_*\mathbbold{1}_X \ar[r]^-{u} & p_*\mathbbold{1}_X \ar[r]^-{p_*e(L_{X/k})} & p_*\mathcal{K}_X \ar[r]^-{ad'_{(p_*,p^!)}} \ar[u]^-{} & \mathcal{K}_S.
}
\end{gathered}
\end{align}
While the two squares on the left and on the right are straightforward, it remains to show the commutativity of the square in the middle. Denote by $\delta:X\to X\times_kX$ the diagonal morphism and $p_1,p_2:X\times_kX\to X$ the projections. 
By the self-intersection formula (\cite[Example 3.2.9]{DJK}) as explained in the proof of \cite[Theorem 4.6.1]{DJK}, the Euler class $e(L_{X/k}):\mathbbold{1}_X\xrightarrow{}\mathcal{K}_X$ agrees with the composition
\begin{align}
\mathbbold{1}_X
\xrightarrow{}
\delta^!p_1^*\mathcal{K}_X
\xrightarrow{}
\delta^*p_1^*\mathcal{K}_X
\simeq
\mathcal{K}_X
\end{align}
where the first map is induced by the fundamental class of the morphism $\delta$, and the second map is induced by the natural transformation $\delta^!\to\delta^*$ given by $\delta^!\simeq\delta^*\delta_*\delta^!\xrightarrow{ad'_{(\delta_*,\delta^!)}}\delta^*$. The commutativity then follows from a standard diagram chase.

\endproof

\begin{example}

As a particular case of Proposition~\ref{prop:cc_endo_euler}, for any smooth $k$-scheme $X$ we have $C_X(\mathbbold{1}_X)=e(L_{X/k})$. The class $e(L_{X/k})$ can be understood as the class of the``self-intersection of the diagonal'', 
and by~\ref{num:cc_trace_field} 
we recover the slogan ``The degree of the self-intersection of the diagonal is the Euler characteristic'' for smooth and proper schemes.
\end{example}

\subsubsection{}
\label{num:cc_ref_gysin}
Alternatively, there is a more geometric description of the characteristic class 
$C_X(p_*\mathbbold{1}_X,u)$ using the refined Gysin map: 
denote by $p_1:X\times_SX\to X$ the projection onto the first summand. 
Then base change and purity isomorphisms induce a canonical isomorphism
\begin{equation}
\label{end_BM}
End(p_*\mathbbold{1}_X)\simeq H_0(X\times_SX/k,p_1^{-1}L_{X/k}).
\end{equation}
Also consider the Cartesian diagram
\begin{align}
\begin{gathered}
  \xymatrix{
    X \ar[r]^-{\delta_{X/S}} \ar@{=}[d] \ar@{}[rd]|{\Delta} & X\times_SX \ar[d]^-{}\\
    X \ar[r]_-{\delta_{X/k}} & X\times_kX
  }
\end{gathered}
\end{align}
since $\delta_{X/k}:X\to X\times_kX$ is a regular closed immersion, we have a refined Gysin map (\cite[Definition 4.3.1]{DJK})
\begin{align}
\Delta^!:H_0(X\times_SX/k,p_1^{-1}L_{X/k})\to H_0(X/k).
\end{align}
Then 
for any endomorphism $u$ of $p_*\mathbbold{1}_X$ we have 
\begin{align}
\label{eq:ref_gysin_endo}
C_X(p_*\mathbbold{1}_X,u)=p_*\Delta^!u'
\end{align}
where $u'\in H_0(X\times_SX/k,p_1^{-1}L_{X/k})$ is the image of $u$ by the map~\eqref{end_BM}.

\subsection{A characterization} In this section we give a characterization of the characteristic class for constructible motives.
\subsubsection{}

Let $\mathbf{T}$ be a motivic triangulated category and let $X$ be a scheme. We denote by $\mathbf{T}_{Chow}(X)$ the idempotent completion of the additive subcategory generated by all primitive Chow motives over $X$, and $\langle\mathbf{T}_{Chow}(X)\rangle$ the triangulated subcategory of $\mathbf{T}(X)$ generated by $\mathbf{T}_{Chow}(X)$.

\subsubsection{}
If if the condition~\ref{resol1} in \ref{par:devissage} holds, then $\langle\mathbf{T}_{Chow}(X)\rangle$ contains all \emph{strictly constructible motives} in the sense of \cite[2.2.3]{Ayo} by \cite[Proposition 2.2.27]{Ayo}. We now show that under some cancellation assumptions, it indeed contains all constructible motives:

\begin{definition}
If $\mathcal{C}$ is a triangulated category, we say that a collection of objects $\mathcal{H}$ of $\mathcal{C}$ is \textbf{negative} if for any $A,B\in\mathcal{H}$ any any integer $i>0$ we have
$$
Hom_{\mathcal{C}}(A,B[i])=0.
$$
\end{definition}

\begin{lemma}
\label{lem:chow_neg}
Let $\mathbf{T}$ be a motivic triangulated category which satisfies the condition~\ref{resol} in \ref{par:devissage}. If $X$ is a scheme such that the collection of primitive Chow motives over $X$ is negative, then we have $\langle\mathbf{T}_{Chow}(X)\rangle=\mathbf{T}_c(X)$.

\end{lemma}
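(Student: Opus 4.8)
The plan is to show the reverse inclusion $\mathbf{T}_c(X)\subseteq\langle\mathbf{T}_{Chow}(X)\rangle$, since $\langle\mathbf{T}_{Chow}(X)\rangle\subseteq\mathbf{T}_c(X)$ is automatic: by the strong devissage translation in~\ref{par:devissage}, $\mathbf{T}_c(X)$ is the smallest thick triangulated subcategory containing all primitive Chow motives, and primitive Chow motives are constructible. So it suffices to prove that $\langle\mathbf{T}_{Chow}(X)\rangle$ is already thick, i.e.\ idempotent complete; combined with the fact that it contains all primitive Chow motives and the strong devissage characterization of $\mathbf{T}_c(X)$, this forces equality. The key input is the negativity hypothesis: it allows us to produce a \emph{weight structure} (in the sense of Bondarko) on $\langle\mathbf{T}_{Chow}(X)\rangle$, or more elementarily, it gives us enough control on the $\mathrm{Hom}$-groups between shifts of Chow motives to run a Postnikov-tower / bounded-weight-filtration argument.

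The key steps, in order, would be as follows. First, fix the subcategory $\mathcal{A}=\mathbf{T}_{Chow}(X)$, an idempotent-complete additive subcategory whose collection of objects is negative by assumption (negativity passes to direct summands since $\mathrm{Hom}(A'\oplus A'',B'\oplus B''[i])$ is a sum of the four pieces). Second, invoke Bondarko's existence theorem for weight structures generated by a negative additive subcategory: the pair $(\langle\mathcal{A}\rangle^{w\le 0},\langle\mathcal{A}\rangle^{w\ge 0})$ generated by $\mathcal{A}$ defines a bounded weight structure on $\langle\mathcal{A}\rangle$ whose heart is exactly the idempotent completion of $\mathcal{A}$, i.e.\ $\mathcal{A}$ itself (this is \cite[Theorem 4.3.2 or Corollary 2.1.2]{Bon}, or the analogous statement in \cite{BI}; it is precisely the mechanism invoked in the introduction for the Chow weight structure on $\mathbf{DM}_{cdh,c}$). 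Third, recall that a triangulated category equipped with a bounded weight structure is idempotent complete as soon as its heart is idempotent complete — this is a standard consequence of the weight complex / weight Postnikov tower formalism (splitting an idempotent on an object $M$ reduces, via the weight filtration, to splitting compatible idempotents on the finitely many heart-objects appearing in a weight Postnikov tower of $M$, which is possible since the heart is idempotent complete). Fourth, conclude: $\langle\mathcal{A}\rangle$ is a thick triangulated subcategory of $\mathbf{T}(X)$ containing all primitive Chow motives, hence contains the smallest such subcategory, which by strong devissage (applicable since $\mathbf{T}$ satisfies~\ref{resol}) is $\mathbf{T}_c(X)$; the reverse inclusion being clear, we get $\langle\mathbf{T}_{Chow}(X)\rangle=\mathbf{T}_c(X)$.

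The main obstacle I expect is verifying the hypotheses of Bondarko's generation theorem in this relative, motivic setting — specifically, that negativity of $\mathbf{T}_{Chow}(X)$ together with the fact that it generates $\langle\mathbf{T}_{Chow}(X)\rangle$ as a triangulated category is enough to produce a weight structure with heart \emph{equal} to $\mathbf{T}_{Chow}(X)$ (and not merely a larger idempotent completion sitting inside $\mathbf{T}(X)$). One must be a little careful: Bondarko's theorem typically produces a weight structure on the \emph{Karoubi envelope} of the generated subcategory, so the cleanest route is to first argue abstractly that $\langle\mathbf{T}_{Chow}(X)\rangle$ is idempotent complete (Step 3 above, which only uses boundedness of the weight structure and idempotent completeness of the heart), and only then identify it with $\mathbf{T}_c(X)$ via strong devissage. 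A secondary technical point is checking that $\mathbf{T}_{Chow}(X)$ as defined — the idempotent completion of the additive hull of primitive Chow motives — is genuinely the heart; this is where the negativity assumption is used in an essential way, as in general the idempotent completion could be strictly larger than what the weight structure sees. All of this is routine given \cite{Bon}, \cite{BI}, but the bookkeeping around idempotent completion versus the ambient category $\mathbf{T}(X)$ is the part that requires care.
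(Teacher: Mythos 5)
Your proposal shares the paper's central input — Bondarko's construction of a bounded Chow weight structure from negativity of the primitive Chow motives, combined with strong devissage — but the concluding mechanism is different and, as written, heavier than what the paper does. The paper applies \cite[Theorem 4.3.2, Proposition 5.2.2]{Bon} to obtain the weight structure on $\mathbf{T}_c(X)$ itself: Bondarko's theorem puts the weight structure on the Karoubi-closed triangulated hull of the primitive Chow motives, and strong devissage identifies that hull with $\mathbf{T}_c(X)$, with heart $\mathbf{T}_{Chow}(X)$. It then concludes in one line with \cite[Corollary 1.5.7]{Bon}: a triangulated category carrying a bounded weight structure is generated by its heart as a plain (non-thick) triangulated subcategory, because every object admits a finite weight Postnikov tower with subquotients in the heart. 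No idempotent-completeness of $\langle\mathbf{T}_{Chow}(X)\rangle$ is needed anywhere.

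Your route — prove $\langle\mathbf{T}_{Chow}(X)\rangle$ is Karoubian, hence thick, then apply strong devissage in the other direction — can be made to work, but it needs two inputs that are not in \cite{Bon} or \cite{BI}: (i) a weight structure living on the strict, non-Karoubi-closed hull $\langle\mathbf{T}_{Chow}(X)\rangle$ with heart exactly $\mathbf{T}_{Chow}(X)$, and (ii) the theorem that a triangulated category with a bounded weight structure and Karoubian heart is itself Karoubian. Both are true, but they are later results of Bondarko--Sosnilo on constructing weight structures and extending them to idempotent completions; in particular (ii) is not the ``standard consequence of the weight Postnikov tower formalism'' you suggest — the weight complex functor is only weakly functorial, which is exactly what makes splitting idempotents through a weight filtration delicate. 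Moreover, your proposed resolution of the Karoubi-envelope issue is circular as stated: arguing that $\langle\mathbf{T}_{Chow}(X)\rangle$ is idempotent complete ``using only boundedness of the weight structure and idempotent completeness of the heart'' presupposes a weight structure on $\langle\mathbf{T}_{Chow}(X)\rangle$, which is precisely what \cite[Theorem 4.3.2]{Bon} does not provide (it provides one on the Karoubi closure, i.e.\ on $\mathbf{T}_c(X)$). So either import the Bondarko--Sosnilo refinements to secure (i) and (ii), or simply switch to the paper's ending: once the bounded weight structure sits on $\mathbf{T}_c(X)$ with heart $\mathbf{T}_{Chow}(X)$, \cite[Corollary 1.5.7]{Bon} already gives $\mathbf{T}_c(X)=\langle\mathbf{T}_{Chow}(X)\rangle$.
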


\proof

By strong devissage and \cite[Theorem 4.3.2 and Proposition 5.2.2]{Bon}, if $X$ is a scheme such that the collection of primitive Chow motives over $X$ is negative, there exists a unique bounded weight structure on $\mathbf{T}_c(X)$ (called the \emph{Chow weight structure}) whose heart is $\mathbf{T}_{Chow}(X)$, and we conclude using \cite[Corollary 1.5.7]{Bon}.
\endproof

\subsubsection{}
The condition in Lemma~\ref{lem:chow_neg} is satisfied for $\mathbf{T}=\mathbf{DM}_{cdh}$ (\cite{BI}) or the homotopy category of $\mathbf{KGL}$-modules (\cite{BL}), for every scheme $X$. 

\begin{theorem}
\label{th:uniqueness_cc}
Assume that the base field $k$ is perfect. Let $\mathbf{T}$ be the underlying motivic triangulated category of 
a constructible motivic derivator which satisfies the condition~\ref{resol} in \ref{par:devissage}. Let $X$ be a scheme.
\begin{enumerate}
\item
The map
\begin{align}
\begin{split}
\langle\mathbf{T}_{Chow}(X)\rangle&\to H_0(X/k)\\
M&\mapsto C_X(M)
\end{split}
\end{align}
is the unique map satisfying the following properties:
\begin{enumerate}
\item For any distinguished triangle $L\to M\to N\to L[1]$ in $\langle\mathbf{T}_{Chow}(X)\rangle$, $C_X(M)=C_X(L)+C_X(N)$.
\item If $p:Y\to X$ is a proper morphism with $Y$ smooth over $k$ and $M$ is the direct summand of a primitive Chow motive $p_*\mathbbold{1}_Y(n)$ defined by an endomorphism $u$, then $C_X(M)=C_X(p_*\mathbbold{1}_Y(n),u)$ is described by Corollary~\ref{cor:cc_tate} and Proposition~\ref{prop:cc_endo_euler} (or~\ref{num:cc_ref_gysin}). %

\end{enumerate}
\item If the collection of primitive Chow motives over $X$ is negative, then we can replace $\langle\mathbf{T}_{Chow}(X)\rangle$ by $\mathbf{T}_c(X)$ in the statement above.
\end{enumerate}
\end{theorem}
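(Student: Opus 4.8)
The plan is to deduce the theorem from the additivity of the characteristic class (Corollary~\ref{additivity}), the explicit computation for primitive Chow motives and their direct summands (Corollary~\ref{cor:cc_tate} and Proposition~\ref{prop:cc_endo_euler}), and a generation statement for $\langle\mathbf{T}_{Chow}(X)\rangle$ inside $\mathbf{T}_c(X)$. The existence half of (1) is immediate: the assignment $M\mapsto C_X(M)$ satisfies property (a) by Corollary~\ref{additivity}, and it satisfies property (b) by construction, since on a direct summand of $p_*\mathbbold{1}_Y(n)$ cut out by an idempotent $u$ one has $C_X(M)=C_X(p_*\mathbbold{1}_Y(n),u)$ (the characteristic class is additive over the direct sum decomposition $p_*\mathbbold{1}_Y(n)\simeq M\oplus M'$ with $M'$ cut out by $1-u$, and the value of $C_X(p_*\mathbbold{1}_Y(n),u)$ is given by Corollary~\ref{cor:cc_tate} together with Proposition~\ref{prop:cc_endo_euler} applied to the smooth proper $Y$).

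For uniqueness, suppose $\Phi\colon\langle\mathbf{T}_{Chow}(X)\rangle\to H_0(X/k)$ is any map satisfying (a) and (b). First I would observe that (a) forces $\Phi$ to factor through $K_0(\langle\mathbf{T}_{Chow}(X)\rangle)$, and more precisely that $\Phi$ is determined on every object once it is known on a set of objects that generates the triangulated category $\langle\mathbf{T}_{Chow}(X)\rangle$ under cones, shifts and retracts; this is where the choice of \emph{triangulated} (rather than merely thick) generation matters, and it is exactly the point that makes additivity along \emph{all} distinguished triangles — not just split ones — the crucial input. By definition $\langle\mathbf{T}_{Chow}(X)\rangle$ is the triangulated subcategory generated by $\mathbf{T}_{Chow}(X)$, i.e.\ by direct summands of primitive Chow motives $p_*\mathbbold{1}_Y(n)$; on such objects property (b) pins down $\Phi$ completely. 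An induction on the length of a presentation of $M$ as an iterated cone of these generators, using (a) at each step, then shows $\Phi(M)=C_X(M)$ for all $M\in\langle\mathbf{T}_{Chow}(X)\rangle$, because $C_X$ itself satisfies (a) and (b).

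For part (2), I would invoke Lemma~\ref{lem:chow_neg}: when the collection of primitive Chow motives over $X$ is negative, Bondarko's Chow weight structure on $\mathbf{T}_c(X)$ has heart $\mathbf{T}_{Chow}(X)$, and \cite[Corollary~1.5.7]{Bon} gives $\langle\mathbf{T}_{Chow}(X)\rangle=\mathbf{T}_c(X)$; hence the statement of (1) applies verbatim with $\mathbf{T}_c(X)$ in place of $\langle\mathbf{T}_{Chow}(X)\rangle$. The hypothesis that $k$ is perfect enters through the condition~\ref{resol} machinery (strong devissage, so that $\mathbf{T}_c(X)$ is generated by primitive Chow motives) and through the applicability of Bondarko's results; it also guarantees that the constructible motivic derivator hypothesis yields the additivity theorem (Theorem~\ref{th:add_trace}), from which Corollary~\ref{additivity} is drawn.

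The main obstacle is the bookkeeping in the uniqueness argument: one must check that \emph{every} object of $\langle\mathbf{T}_{Chow}(X)\rangle$ is reached from direct summands of primitive Chow motives by finitely many cone constructions, and that property (a) can be applied at each stage in a way compatible with the idempotent completion built into $\mathbf{T}_{Chow}(X)$. Concretely, one takes a bounded filtration (a weight filtration, when part (2) applies, or more generally the filtration implicit in the generation statement) and runs the induction on its length; the only subtlety is that a direct summand of a primitive Chow motive is handled by (b) directly, while a genuine cone $L\to M\to N$ is handled by (a), so the two clauses of the characterization together cover the inductive step. Everything else is formal.
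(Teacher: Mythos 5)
Your proposal is correct and follows essentially the same route as the paper: existence from Corollary~\ref{additivity}, Corollary~\ref{cor:cc_tate} and Proposition~\ref{prop:cc_endo_euler}, uniqueness because property (b) pins down the class on (summands of) primitive Chow motives and triangulated generation of $\langle\mathbf{T}_{Chow}(X)\rangle$ together with additivity (a) propagates it, and part (2) via Lemma~\ref{lem:chow_neg}. Your write-up is in fact slightly more explicit than the paper's (the cone-length induction and the split-triangle argument for summands), but it is the same proof.
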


\proof
On the one hand, the characteristic class satisfies these properties by Corollary~\ref{additivity}, Corollary~\ref{cor:cc_tate} and Proposition~\ref{prop:cc_endo_euler}. On the other hand, the second property determine uniquely the characteristic class of all primitive Chow motives, and the uniqueness extends to $\langle\mathbf{T}_{Chow}(X)\rangle$ by additivity of traces. The second part follows from Lemma~\ref{lem:chow_neg}.
\endproof

\begin{remark}
\label{rk:cc_char_perfection}
\begin{enumerate}

\item Alternatively, we can use \cite[5.3.1]{Bon} instead of Lemma~\ref{lem:chow_neg} in the proof.

\item When the field $k$ is not perfect, the following description is suggested to us by D.-C. Cisinski:

Assume that $\mathbf{T}$ is the underlying motivic triangulated category of a constructible motivic derivator which satisfies the condition~\ref{resol2} in \ref{par:devissage}. Assume in addition that $\mathbf{T}$ is extended to noetherian $k$-schemes and is continuous (\cite[Definition 4.3.2]{CD1}). Let $k'$ be the perfect closure of $k$. Then for any scheme $X$, the canonical morphism $\phi_X:X_{k'}=X\times_{k}k'\to X$ is a universal homeomorphism, and by \cite[Theorem 2.1.1]{EK} the functor $\phi_X^*:\mathbf{T}_c(X)\to\mathbf{T}_c(X_{k'})$ is an equivalence of categories. By \cite[Remark 2.1.13]{EK} for any finite surjective radicial morphism $f:Y\to X$ we have a canonical identification $f^*=f^!$, and therefore the Verdier pairing is contravariant for such morphisms (see Remark~\ref{rk:lci_contravariant}). If the collection of primitive Chow motives over $X$ is negative, we conclude that the characteristic class of elements in $\mathbf{T}_c(X)$ is uniquely determined by the functor $\phi_X^*$ and the description in Theorem~\ref{th:uniqueness_cc} for the perfect field $k^s$.

\end{enumerate}
\end{remark}

\subsection{The characteristic class and Riemann-Roch-transformations}
\label{section:cc_rr}
In this section we study the compatibility between the characteristic class and Riemann-Roch-transformations. 

\subsubsection{}
\label{section:rr_eva}
Assume that the pair $(\mathbf{SH},k)$ satisfies condition~\ref{resol} in \ref{par:devissage}. 
Let $\mathbb{E}\in\mathbf{SH}(k)$ be a ring spectrum endowed with a unital associative commutative multiplication. Let $f:S\to k$ be a morphism. Following \cite[7.2.2]{CD1}, the homotopy category of modules over $\mathbb{E}_S=f^*\mathbb{E}$, $Ho(\mathbb{E}_S-Mod)$ is a motivic triangulated category, and the functor
\begin{align}
\label{eq:ass_Emod}
\begin{split}
\mathbf{SH}(S)&\to Ho(\mathbb{E}_S-Mod)\\
M&\mapsto M\otimes\mathbb{E}_S
\end{split}
\end{align}
is a left adjoint of the forgetful functor $Ho(\mathbb{E}_S-Mod)\to\mathbf{SH}(S)$, which preserves constructible objects. The unit map $\phi:\mathbbold{1}_S\to\mathbb{E}_S$ induces the \textbf{$\mathbb{A}^1$-regulator map} (\cite[Definition 4.1.2]{DJK})
\begin{align}
\label{eq:unit_biv}
\phi_*:H_0(S/k)\to \mathbb{E}_0(S/k)
\end{align}
where $\mathbb{E}_0(S/k)=Hom_{\mathbf{SH}(S)}(\mathbbold{1}_S,f^!\mathbb{E})$. 

\subsubsection{}
\label{section:rr_trace}
Let $M\in\mathbf{SH}_c(S)$ be a constructible motivic spectrum, and let $u:M\to M$ be an endomorphism of $M$ in $\mathbf{SH}_c(S)$. Then $u$ induces an endomorphism $u^\mathbb{E}:M\otimes\mathbb{E}_S\to M\otimes\mathbb{E}_S$ in $Ho(\mathbb{E}_S-Mod)_c$. By Definition~\ref{def:cc}, we have the characteristic class $C^{\mathbf{SH}}_S(M,u)\in H_0(S/k)$ in $\mathbf{SH}$, as well as the characteristic class $C^{\mathbb{E}}_S(M\otimes\mathbb{E}_S,u^\mathbb{E})\in \mathbb{E}_0(S/k)$ in $Ho(\mathbb{E}_S-Mod)$.
\begin{proposition}
Via the map~\eqref{eq:unit_biv}, the two classes above satisfy the identity
\begin{align}
\phi_*C^{\mathbf{SH}}_S(M,u)=C^{\mathbb{E}}_S(M\otimes\mathbb{E}_S,u^\mathbb{E})
\end{align}
in $\mathbb{E}_0(S/k)$.
\end{proposition}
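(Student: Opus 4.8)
The plan is to reduce the statement to the explicit formula for the characteristic class provided by Proposition~\ref{prop:pair_trace}, and then to exploit the fact that the free-module functor $\gamma^*:=(-)\otimes\mathbb{E}_S$ from \eqref{eq:ass_Emod} is monoidal and compatible with the six functors on constructible objects. First I would record that, by Proposition~\ref{prop:pair_trace} applied with $C=D=X_1=X_2=S$, $L_1=L_2=M$ and $c=d=\delta\colon S\to S\times_kS$ the diagonal --- equivalently, by the formula \eqref{eq:cc_trace_intro} --- the class $C_S^{\mathbf{SH}}(M,u)\in H_0(S/k)=Hom_{\mathbf{SH}(S)}(\mathbbold{1}_S,q^!\mathbbold{1}_k)$, with $q\colon S\to\operatorname{Spec}(k)$ the structure map, is the composition
\begin{align}
\mathbbold{1}_S
\xrightarrow{u}
\underline{Hom}(M,M)
\to
\mathbb{D}(M)\otimes M
\xrightarrow{\sim}
M\otimes\mathbb{D}(M)
\xrightarrow{\epsilon_M}
\mathcal{K}_S,
\end{align}
where the second arrow comes from the K\"unneth isomorphism \eqref{dual_hom}; the very same recipe computes $C_S^{\mathbb{E}}(M\otimes\mathbb{E}_S,u^{\mathbb{E}})$ inside $Ho(\mathbb{E}_S-Mod)$, with $u^{\mathbb{E}}=\gamma^*u$.

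Next I would check that $\gamma^*$ transports the first composition to the second. Recall from \cite[\S7.2]{CD1} that $\gamma^*$ underlies a monoidal premotivic adjunction $\mathbf{SH}\rightleftharpoons Ho(\mathbb{E}-Mod)$; hence it is symmetric monoidal and commutes with $f^*$, $f_\#$ and $f_!$. Because both categories satisfy~\ref{resol} and $\gamma^*$ preserves constructibility, the canonical exchange transformations $\gamma^*f_*\to f_*\gamma^*$, $\gamma^*f^!\to f^!\gamma^*$ and $\gamma^*\underline{Hom}(A,-)\to\underline{Hom}(\gamma^*A,\gamma^*(-))$ become isomorphisms on the constructible subcategories, by the constructibility theorems of \cite[Theorem 6.4]{CD2} applied on both sides. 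It follows that $\gamma^*$ is compatible with all the K\"unneth maps of Theorem~\ref{th:kun}, with the canonical identification $\gamma^*\mathcal{K}_S=\gamma^*q^!\mathbbold{1}_k\xrightarrow{\sim}q^!_{\mathbb{E}}\gamma^*\mathbbold{1}_k=\mathcal{K}_S^{\mathbb{E}}$ (using that $\mathbbold{1}_k$ is constructible), with the duality isomorphism \eqref{dual_hom}, and with the evaluation and coevaluation maps $\epsilon_M$, $\eta_M$, which are formal consequences of the closed monoidal structure. Applying $\gamma^*$ termwise to the displayed composition I would then obtain exactly the composition computing $C_S^{\mathbb{E}}(M\otimes\mathbb{E}_S,u^{\mathbb{E}})$; in other words, the map on bivariant groups induced by $\gamma^*$ sends $C_S^{\mathbf{SH}}(M,u)$ to $C_S^{\mathbb{E}}(M\otimes\mathbb{E}_S,u^{\mathbb{E}})$.

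It then remains to identify this ``tensor with $\mathbb{E}_S$'' map on bivariant groups with the $\mathbb{A}^1$-regulator $\phi_*$ of \eqref{eq:unit_biv}. Using the adjunction between $\gamma^*$ and the forgetful functor $\omega$, with $\mathbbold{1}_S^{\mathbb{E}}=\gamma^*\mathbbold{1}_S$ and the fact that $\omega$ commutes with $q^!$ and carries $\mathbbold{1}_k^{\mathbb{E}}$ to $\mathbb{E}$, the group $H_0(S/k)$ computed in $Ho(\mathbb{E}_S-Mod)$ is canonically $Hom_{\mathbf{SH}(S)}(\mathbbold{1}_S,q^!\mathbb{E})=\mathbb{E}_0(S/k)$, and under these identifications the induced map is, by construction, the regulator $\phi_*$ of \cite[Definition 4.1.2]{DJK} (note that the unit $\mathbbold{1}_S\to\omega\gamma^*\mathbbold{1}_S$ is the unit morphism $\phi\colon\mathbbold{1}_S\to\mathbb{E}_S$). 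Combining the two steps yields $\phi_*C_S^{\mathbf{SH}}(M,u)=C_S^{\mathbb{E}}(M\otimes\mathbb{E}_S,u^{\mathbb{E}})$.

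The hard part will be the middle step: verifying that the exchange $2$-morphisms relating $\gamma^*$ to $f_*$, $f^!$ and $\underline{Hom}$ restrict to isomorphisms on constructible objects, and that $\gamma^*$ respects all the building blocks (K\"unneth maps, purity and duality isomorphisms, and the unit and counit of the closed monoidal structure) entering the definition of the characteristic class. This is conceptually routine, given the premotivic-adjunction formalism together with the constructibility theorems, but it accounts for essentially all of the bookkeeping; a cleaner alternative would be to isolate, once and for all, a naturality statement for the Verdier pairing under monoidal morphisms of motivic triangulated categories which preserve the six functors on constructible objects, and to apply it to $\gamma^*$.
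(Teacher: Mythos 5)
Your overall route --- apply the free-module functor $\gamma^*=(-)\otimes\mathbb{E}_S$ to the explicit composition \eqref{eq:cc_trace_intro} and then identify the induced map on bivariant groups with $\phi_*$ --- is genuinely different from the paper's, and its two endpoints are fine; but the step you yourself flag as the crux is not justified by what you cite, and as written it is a gap. The constructibility theorem of \cite[Theorem 6.4]{CD2} says that the six operations preserve constructible objects inside each of $\mathbf{SH}$ and $Ho(\mathbb{E}_S\text{-}Mod)$ separately; it says nothing about the exchange maps $\gamma^*f_*\to f_*\gamma^*$, $\gamma^*f^!\to f^!\gamma^*$, $\gamma^*\underline{Hom}(A,-)\to\underline{Hom}_{\mathbb{E}}(\gamma^*A,\gamma^*(-))$ being invertible. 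These are not formal: you are pushing a left adjoint past right adjoints, and $\mathbb{E}_S$ is in general \emph{not} constructible (e.g.\ $\mathbf{KGL}$ or $\oplus_i\mathbf{H\mathbb{Q}}(i)[2i]$ are not compact), so dualizability cannot be invoked directly either. Concretely, via the free--forgetful adjunction the map $\gamma^*\underline{Hom}(M,M)\to\underline{Hom}_{\mathbb{E}}(\gamma^*M,\gamma^*M)$ is $\underline{Hom}(M,M)\otimes\mathbb{E}_S\to\underline{Hom}(M,M\otimes\mathbb{E}_S)$, a projection-formula-type assertion that fails for a general coefficient object over a general base; likewise your identification $\gamma^*\mathcal{K}_S\simeq\mathcal{K}_S^{\mathbb{E}}$ is precisely the assertion that \eqref{eq:nat_upper*!_upper!_1} is invertible for the non-constructible object $\mathbb{E}$, which ``$\mathbbold{1}_k$ is constructible'' does not give. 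The statements you need are in fact true in this situation, but the honest argument is that $\mathbb{E}_S=q^*\mathbb{E}$ is pulled back from the perfect field $k$, hence under \ref{resol} a filtered homotopy colimit of dualizable objects, combined with the fact that for constructible (hence compact) inputs the functors $\underline{Hom}(M,-)$, $f_*$, $f^!$ commute with such colimits; without some such argument the middle step does not go through.

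It is worth comparing with the paper's proof, which never inverts any exchange map: it uses only the adjunction identification $\underline{Hom}_{\mathbb{E}}(A\otimes\mathbb{E}_S,B)\simeq\underline{Hom}(A,B)$ and the naturality of each map in \eqref{eq:cc_trace_intro} with respect to the unit $\phi:\mathbbold{1}_S\to\mathbb{E}_S$ (and $p^!\phi:\mathcal{K}_S\to p^!\mathbb{E}$), assembling the two trace compositions into one commutative diagram whose vertical arrows are induced by $\phi$. If you repair your argument, either supply the colimit/dualizability input above, or reorganize the comparison in the paper's style so that only $\phi$-naturality and the free--forgetful adjunction are used.
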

\proof
We denote by $\underline{Hom}_{\mathbb{E}}$ the internal $Hom$ functor in $Ho(\mathbb{E}_S-Mod)$ and $\underline{Hom}$ the internal $Hom$ functor in $\mathbf{SH}$. We have a canonical identification $\underline{Hom}_{\mathbb{E}}(A\otimes\mathbb{E}_S,B)\simeq\underline{Hom}(A,B)$. The result then follows from the following commutative diagram:
$$
\resizebox{\textwidth}{!}{
  \xymatrix{
    \mathbbold{1}_S \ar[r]^-{u} \ar[d]_-{\phi} & \underline{Hom}(M,M) \ar[d]_-{\phi} \ar[r]^-{} 
    & M\otimes\mathbb{D}(M) \ar[r]^-{\epsilon_M} \ar[d]_-{\phi} & \mathcal{K}_S \ar[d]_-{\phi} \\
    \mathbb{E}_S \ar[rd]_-{u^\mathbb{E}} & \underline{Hom}(M,M\otimes\mathbb{E}_S) \ar[r]^-{} \ar[d]_-{\wr} 
    & M\otimes\mathbb{D}(M)\otimes\mathbb{E}_S \ar[r]^-{\epsilon_M} \ar[d]_-{p^!\phi} & \mathcal{K}_S\otimes\mathbb{E}_S \ar[d]_-{p^!\phi} \\
    & \underline{Hom}_{\mathbb{E}}(M\otimes\mathbb{E}_S,M\otimes\mathbb{E}_S) \ar[r]^-{} \ar[rd]_-{} 
    & M\otimes\underline{Hom}(M,p^!\mathbb{E})\otimes\mathbb{E}_S \ar[r]^-{\epsilon_M} \ar[d]_-{\wr} & p^!\mathbb{E}\otimes\mathbb{E}_S \ar[d]_-{\eqref{eq:nat_upper*!_upper!}} \\
    & 
    & M\otimes\mathbb{E}_S\otimes\underline{Hom}_{\mathbb{E}}(M\otimes\mathbb{E}_S,p^!\mathbb{E}) \ar[r]^-{\epsilon_{M\otimes\mathbb{E}_S}} & p^!\mathbb{E}
  }
}
$$
\endproof

\begin{corollary}
\label{cor:rr_trace}
Let $\mathbb{E},\mathbb{F}\in\mathbf{SH}(k)$ be two ring spectra endowed with unital associative commutative multiplication. Let $\phi:\mathbb{E}\to\mathbb{F}$ be a morphism of ring spectra. With the notations in~\ref{section:rr_trace}, we have
\begin{align}
\phi_*C^{\mathbb{E}}_S(M\otimes\mathbb{E}_S,u^\mathbb{E})
=
C^{\mathbb{F}}_S(M\otimes\mathbb{F}_S,u^\mathbb{F})
\end{align}
where $\phi_*:\mathbb{E}_0(S/k)\to\mathbb{F}_0(S/k)$ is the map induced by $\phi$.
\end{corollary}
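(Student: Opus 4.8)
The plan is to deduce this from the Proposition in~\ref{section:rr_trace}, applied not to $\mathbf{SH}$ but to the motivic triangulated category of $\mathbb{E}$-modules. First I would recall from \cite[7.2.2]{CD1} that, since $(\mathbf{SH},k)$ satisfies the condition~\ref{resol} in~\ref{par:devissage}, the homotopy category $Ho(\mathbb{E}-Mod)$ of modules over $\mathbb{E}$ is again a motivic triangulated category, receiving a premotivic adjunction from $\mathbf{SH}$ through the free-module functor~\eqref{eq:ass_Emod}, and that it again satisfies~\ref{resol}; in particular the Verdier pairing, the characteristic class of Definition~\ref{def:cc}, the dualizing objects $\mathcal{K}_X$, and the whole formalism of~\ref{section:rr_eva} are available over $Ho(\mathbb{E}-Mod)$.

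Next I would make the identifications that allow the Proposition in~\ref{section:rr_trace} to be applied with $\mathbf{SH}$ replaced by $Ho(\mathbb{E}-Mod)$ and the ring spectrum $\mathbb{E}$ replaced by $\mathbb{F}$. A morphism $\phi\colon\mathbb{E}\to\mathbb{F}$ of ring spectra makes $\mathbb{F}$ a ring object with unital associative commutative multiplication inside $Ho(\mathbb{E}-Mod)$, and the induced regulator map of~\ref{section:rr_eva} for this base is precisely $\phi_*\colon\mathbb{E}_0(S/k)\to\mathbb{F}_0(S/k)$. Moreover base change along $\mathbb{E}\to\mathbb{F}$ identifies the homotopy category of $\mathbb{F}_S$-modules internal to $Ho(\mathbb{E}-Mod)$ with $Ho(\mathbb{F}_S-Mod)$ as motivic triangulated categories, compatibly with the six functors (and hence with $\mathcal{K}_X$ and the bivariant groups); under this identification the associated-module functor $N\mapsto N\otimes_{\mathbb{E}_S}\mathbb{F}_S$ for the base $Ho(\mathbb{E}-Mod)$ corresponds to $M\mapsto M\otimes\mathbb{F}_S$, via the canonical isomorphism $(M\otimes\mathbb{E}_S)\otimes_{\mathbb{E}_S}\mathbb{F}_S\simeq M\otimes\mathbb{F}_S$ carrying $u^{\mathbb{E}}\otimes_{\mathbb{E}_S}\mathbb{F}_S$ to $u^{\mathbb{F}}$. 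Feeding the pair $(M\otimes\mathbb{E}_S,u^{\mathbb{E}})\in Ho(\mathbb{E}_S-Mod)_c$ into the Proposition in~\ref{section:rr_trace} then yields
\[
\phi_*C^{\mathbb{E}}_S(M\otimes\mathbb{E}_S,u^{\mathbb{E}})=C^{\mathbb{F}}_S\bigl((M\otimes\mathbb{E}_S)\otimes_{\mathbb{E}_S}\mathbb{F}_S,\,u^{\mathbb{E}}\otimes_{\mathbb{E}_S}\mathbb{F}_S\bigr),
\]
and the right-hand side is $C^{\mathbb{F}}_S(M\otimes\mathbb{F}_S,u^{\mathbb{F}})$ by the identifications above. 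As an alternative, purely self-contained route, one can simply repeat verbatim the diagram chase in the proof of that Proposition, replacing the unit map $\mathbbold{1}_S\to\mathbb{E}_S$ everywhere by $\phi_S\colon\mathbb{E}_S\to\mathbb{F}_S$ and the internal Hom of $\mathbf{SH}$ by that of $Ho(\mathbb{E}-Mod)$, together with the canonical identification $\underline{Hom}_{\mathbb{F}}(A\otimes_{\mathbb{E}_S}\mathbb{F}_S,B)\simeq\underline{Hom}_{\mathbb{E}}(A,B)$.

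The only real work is the bookkeeping in the second step: one must check that the equivalence between "$\mathbb{F}$-modules internal to $\mathbb{E}$-modules" and $\mathbb{F}$-modules over $\mathbf{SH}$ is compatible with the entire six-functor formalism, in particular with $f^!$ and with the dualizing objects $\mathcal{K}_X$ which enter both the definition of the characteristic class and the bivariant groups, and that it intertwines the two regulator maps. All of this is formal and is contained in \cite[Section 7.2]{CD1} and the base-change statements used there, so once it is recorded the corollary is immediate and requires no new geometric input.
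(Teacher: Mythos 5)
Your argument is correct in substance, but it takes a genuinely different and heavier route than the one the paper intends. Since both characteristic classes in Corollary~\ref{cor:rr_trace} are attached to the same $M\in\mathbf{SH}_c(S)$, the intended deduction is simply to apply the Proposition of~\ref{section:rr_trace} twice, once to $\mathbb{E}$ and once to $\mathbb{F}$: writing $\eta_{\mathbb{E}}:\mathbbold{1}\to\mathbb{E}$ and $\eta_{\mathbb{F}}:\mathbbold{1}\to\mathbb{F}$ for the unit maps, one has $\eta_{\mathbb{F}}=\phi\circ\eta_{\mathbb{E}}$ because $\phi$ is a morphism of ring spectra, hence $\phi_*\circ(\eta_{\mathbb{E}})_*=(\eta_{\mathbb{F}})_*$ on bivariant groups (each of these maps is post-composition with $f^!$ applied to the corresponding map of spectra), and therefore $\phi_*C^{\mathbb{E}}_S(M\otimes\mathbb{E}_S,u^{\mathbb{E}})=\phi_*(\eta_{\mathbb{E}})_*C^{\mathbf{SH}}_S(M,u)=(\eta_{\mathbb{F}})_*C^{\mathbf{SH}}_S(M,u)=C^{\mathbb{F}}_S(M\otimes\mathbb{F}_S,u^{\mathbb{F}})$; no new diagram chase or module-category base change is needed. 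Your route instead relativizes the Proposition over $Ho(\mathbb{E}-Mod)$, which is legitimate but requires two points you should record explicitly: first, the Proposition as stated concerns only $\mathbf{SH}$, so you must note that its proof is purely formal and applies to any motivic triangulated category satisfying~\ref{resol} equipped with a ring object; second, the identification of $\mathbb{F}_S$-modules internal to $Ho(\mathbb{E}_S-Mod)$ with $Ho(\mathbb{F}_S-Mod)$, compatibly with the six functors, dualizing objects, bivariant groups and regulators, relies on the structured-ring-spectrum framework of \cite[Section 7.2]{CD1} (with multiplications only up to homotopy this base change is not available), and your ``self-contained'' variant --- rerunning the diagram chase with $\phi_S:\mathbb{E}_S\to\mathbb{F}_S$ in place of the unit and using $\underline{Hom}_{\mathbb{F}}(A\otimes_{\mathbb{E}_S}\mathbb{F}_S,B)\simeq\underline{Hom}_{\mathbb{E}}(A,B)$ --- is the cleaner way to implement it. What the extra effort buys is added generality: your argument proves $\phi_*C^{\mathbb{E}}_S(N,v)=C^{\mathbb{F}}_S(N\otimes_{\mathbb{E}_S}\mathbb{F}_S,v\otimes_{\mathbb{E}_S}\mathbb{F}_S)$ for an arbitrary constructible $\mathbb{E}_S$-module $N$, not only for modules of the form $M\otimes\mathbb{E}_S$, whereas the paper's two-step argument exploits that both modules are induced from the same $M$.
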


\begin{example}

Let $\mathbb{E}=\mathbf{KGL}$ be the algebraic $K$-theory spectrum, $\mathbb{F}=\oplus_{i\in\mathbb{Z}}\mathbf{H\mathbb{Q}}(i)[2i]$ be the periodized rational motivic Eilenberg-Mac Lane spectrum, and $\phi=ch:\mathbf{KGL}\to\oplus_{i\in\mathbb{Z}}\mathbf{H\mathbb{Q}}(i)[2i]$ be the Chern character. Then for any scheme $X$, the map $ch_*=\tau_X:G_0(X)\to\oplus_{i\in\mathbb{Z}}CH_i(X)_{\mathbb{Q}}$ is the Riemann-Roch transformation in \cite[Theorem 18.3]{Ful} (\cite[Example 3.3.12]{Deg}).

If $X$ is a smooth $k$-scheme of dimension $d$, then the $\mathbf{KGL}$-valued characteristic class $C_X^{\mathbf{KGL}}(\mathbbold{1}_X)\in G_0(X)=K_0(X)$ can be written as
\begin{align}
C_X^{\mathbf{KGL}}(\mathbbold{1}_X)
=
\sum_{i=0}^{d}(-1)^i[\Lambda^iL_{X/k}^{\vee}].
\end{align}
On the other hand we have the $\mathbf{H\mathbb{Q}}$-valued characteristic class $C_X^{\mathbf{H\mathbb{Q}}}(\mathbbold{1}_X)\in CH_0(X)$ given by the top Chern class $c_d(L_{X/k})$. By \cite[Example 3.2.5]{Ful} we have
\begin{align}
\begin{split}
ch(C_X^{\mathbf{KGL}}(\mathbbold{1}_X))
&=
\sum_{i=0}^{d}(-1)^ich(\Lambda^iL_{X/k}^{\vee})
=
c_d(L_{X/k})\cdot Td(L_{X/k})^{-1}\\
&=
C_X^{\mathbf{H\mathbb{Q}}}(\mathbbold{1}_X)\cdot Td(L_{X/k})^{-1}.
\end{split}
\end{align}
In other words we have $ch(C_X^{\mathbf{KGL}}(\mathbbold{1}_X))\cdot Td(L_{X/k})=C_X^{\mathbf{H\mathbb{Q}}}(\mathbbold{1}_X)$, where the left hand side is nothing but $\tau_X(C_X^{\mathbf{KGL}}(\mathbbold{1}_X))$, and we recover a particular case of Corollary~\ref{cor:rr_trace}.

\end{example}

 \section{K\"unneth formulas over general bases}\label{sec:KFOGB}
 In this section, we study transversality conditions following the method of \cite{YZ18}, 
and generalize the K\"unneth formulas \eqref{eq:th_upper!} and \eqref{eq:th_hom} to a general base scheme $S$ under these conditions, which allows us to define the relative characteristic class. We consider $\mathbf{T}$ a motivic triangulated category 
which satisfies the condition~\ref{resol} in \ref{par:devissage}.
  
 \subsection{The transversality conditions}\label{subsec:tc}
 
In this Section~\ref{subsec:tc}, we introduce the transversality conditions and prove some elementary properties, which will be used in the formulation of K\"unneth formulas over a general base scheme in Section~\ref{subsec:rkf}. 
\subsubsection{}
Let $f\colon X\to S$ be a morphism of schemes. 
For two objects $A$ and $B$ of $\mathbf{T}(S)$, there is a canonical natural transformation
\begin{equation}
\label{eq:nat_upper*!_upper!}
f^*B\otimes f^!A\to f^!(B\otimes A)
\end{equation}
given by the composition
\begin{align}
f^*B\otimes f^!A
\xrightarrow{ad_{(f_!,f^!)}}
f^!f_!(f^*B\otimes f^!A)
\overset{\eqref{Ex_!tens}}{\simeq}
f^!(B\otimes f_!f^!A)
\xrightarrow{ad'_{(f_!,f^!)}}
f^!(B\otimes A).
\end{align}
In particular when $A=\mathbbold{1}_S$, the map~\eqref{eq:nat_upper*!_upper!} becomes
\begin{equation}
\label{eq:nat_upper*!_upper!_1}
f^*B\otimes f^!\mathbbold{1}_S\to f^!B.
\end{equation}

\subsubsection{}
\label{ex:example_transversal}
If $f\colon X\to S$ is a smooth morphism, or if $B\in\mathbf{T}(S)$ is dualizable, then the map~\eqref{eq:nat_upper*!_upper!} is an isomorphism: the first case follows from purity, and the second case is \cite[5.4]{FHM}.

\begin{definition}\label{def:Ftransversal}
Let $f\colon X\to S$ be a morphism of schemes. 
\begin{enumerate}
\item (\cite[Definition 8.5]{Sai}) Let $B$ be an object of $\mathbf{T}(S)$. We say that the morphism $f\colon X\to S$ is {\bf $B$-transversal} if the map~\eqref{eq:nat_upper*!_upper!_1} is an isomorphism.
\item \label{def:Ftransversal2}
Let $C$ be an object of $\mathbf{T}(X)$.
We say that the morphism $f\colon X\to S$ is {\bf $C$-transversal} if the graph morphism
$\Gamma_f\colon X\to X\times_k S$ of $f$ is $C\boxtimes_k D$-transversal 
for any object $D\in \mathbf{T}_c(S)$.
We say that $f$ is {\bf universally $C$-transversal} if this property holds after any base change (cf. Definition~\ref{def:loc_acyclic}).
\end{enumerate}
\end{definition}

\begin{remark}
\begin{enumerate}
\item It is easy to see that in Definition~\ref{def:Ftransversal}~\eqref{def:Ftransversal2}, $f$ is $C$-transversal if and only if $\Gamma_f$ is $C\boxtimes_k D$-transversal for any object $D\in \mathbf{T}(S)$.
\item
Let $\mathcal{T}_c$ be a constructible motivic derivator and let $f:X\to S$ be a morphism of schemes. Then there is a stable derivator $Fun^{ex}(\mathcal{T}_c(S),\mathcal{T}_c(X))$ given by triangulated functors from $\mathcal{T}_c(S)$ to $\mathcal{T}_c(X)$. We lift the natural transformation $f^*B\otimes f^!\mathbbold{1}_S\xrightarrow{\eqref{eq:nat_upper*!_upper!_1}} f^!B$ to a coherent morphism in $Fun^{ex}(\mathcal{T}_c(S),\mathcal{T}_c(X))(\underline{\mathbf{1}})$; the target of its cofiber (Definition~\ref{def:stable_der}) is a functor $f^\Delta$ in $Fun^{ex}(\mathcal{T}_c(S),\mathcal{T}_c(X))(\underline{\mathbf{0}})$, seen as a functor $\mathcal{T}_c(S)\to\mathcal{T}_c(X)$. In the underlying triangulated category $\mathcal{T}_c(X,\underline{\mathbf{0}})$ we have a canonical distinguished triangle
$$
f^*B\otimes f^!\mathbbold{1}_S
\to
f^!B
\to
f^\Delta B
\to
f^*B\otimes f^!\mathbbold{1}_S[1].
$$
By definition, $f$ is $B$-transversal if and only if $f^\Delta B=0$.
\end{enumerate}
\end{remark}

\subsubsection{}
\label{recall_pur_trans}
Recall that following \cite[4.3.7]{DJK}, for any lci morphism $f\colon X\to S$ with virtual tangent bundle $T_f$ and any object $B\in\mathbf{T}(S)$, there is a natural transformation called the \textbf{purity transformation}
\begin{equation}
\label{eq:pur_trans}
f^*B\otimes Th_X(L_f)\to f^!B,
\end{equation}
which is deduced from the transformation~\eqref{eq:nat_upper*!_upper!}, and the object $B$ is said \textbf{$f$-pure} if the map~\eqref{eq:pur_trans} is an isomorphism. 

If $f\colon X\to S$ is a lci morphism such that $\mathbbold{1}_Y$ is $f$-pure, then for any object $B\in\mathbf{T}(S)$, $f$ is $B$-transversal if and only if $B$ is $f$-pure.

By \cite[4.3.10]{DJK}, if there exists a scheme $S'$ such that $f$ is an $S'$-morphism between smooth $S'$-schemes, or if $f$ is a lci morphism between regular schemes over a field, then $\mathbbold{1}_S$ is $f$-pure for any motivic triangulated category $\mathbf{T}$. 

\begin{lemma}\label{lem:comptrans}
Let $f\colon X\to Y$ and $g\colon Y\to Z$ be morphisms of schemes such that $g$ is lci and $\mathbbold{1}_Z$ is $g$-pure. 
Let $F\in \mathbf{T}(Z)$ be such that $g$ is $F$-transversal. Then the following two conditions are equivalent:
\begin{enumerate}
\item $f$ is $g^\ast F$-transversal.
\item $g\circ f$ is $F$-transversal.
\end{enumerate}
\end{lemma}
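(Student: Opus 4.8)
\textbf{Proof plan for Lemma~\ref{lem:comptrans}.}

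The plan is to unwind the definition of $C$-transversality (Definition~\ref{def:Ftransversal}~\eqref{def:Ftransversal2}) in both conditions and compare the relevant purity/transversality transformations, using that compositions of graph morphisms interact well with the natural transformation~\eqref{eq:nat_upper*!_upper!}. First I would record the geometry: the graph $\Gamma_{gf}\colon X\to X\times_k Z$ factors as
\begin{align}
X
\xrightarrow{\Gamma_f}
X\times_k Y
\xrightarrow{\mathrm{id}_X\times g}
X\times_k Z,
\end{align}
so that being $(gf)$-transversal amounts to $\Gamma_{gf}$ being $F\boxtimes_k D$-transversal for all $D\in\mathbf{T}_c(Z)$, while being $(g^\ast F)$-transversal amounts to $\Gamma_f$ being $(g^\ast F)\boxtimes_k D'$-transversal for all $D'\in\mathbf{T}_c(Y)$. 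The key point is that $\mathrm{id}_X\times g$ is lci (being a base change of $g$) with $\mathbbold{1}$ $(\mathrm{id}_X\times g)$-pure (base change of the $g$-purity of $\mathbbold{1}_Z$, using~\ref{recall_pur_trans}), so by the last sentence of~\ref{recall_pur_trans} the transformation~\eqref{eq:nat_upper*!_upper!_1} for $\mathrm{id}_X\times g$ is an isomorphism precisely on $(\mathrm{id}_X\times g)$-pure objects, and $F\boxtimes_k D$ is such because $g$ is $F$-transversal (so $\mathrm{id}_X\times g$ is $F\boxtimes_k D$-transversal, again by base change).

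Next I would invoke the composition compatibility of the purity/transversality transformation, i.e. the analogue of \cite[4.3.7, 4.3.10]{DJK} (or equivalently the functoriality of~\eqref{eq:nat_upper*!_upper!} along composites, which follows from the compatibility of $Ex(f^*_!,\otimes)$ and the adjunction units with horizontal composition): the transformation attached to $\Gamma_{gf}=(\mathrm{id}_X\times g)\circ\Gamma_f$ on an object $F\boxtimes_k D$ is the composite of the one attached to $\mathrm{id}_X\times g$ on $F\boxtimes_k D$ (an isomorphism by the previous paragraph) with the one attached to $\Gamma_f$ on $(\mathrm{id}_X\times g)^\ast(F\boxtimes_k D)=(g^\ast F)\boxtimes_k D$. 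Hence $\Gamma_{gf}$ is $F\boxtimes_k D$-transversal if and only if $\Gamma_f$ is $(g^\ast F)\boxtimes_k D$-transversal. This immediately gives $(1)\Rightarrow(2)$: if $\Gamma_f$ is $(g^\ast F)\boxtimes_k D'$-transversal for all $D'\in\mathbf{T}_c(Y)$, then in particular for $D'=g^\ast D$ with $D\in\mathbf{T}_c(Z)$, so $\Gamma_{gf}$ is $F\boxtimes_k D$-transversal for all such $D$.

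For the converse $(2)\Rightarrow(1)$ one needs to produce enough objects of $\mathbf{T}_c(Y)$ from those of $\mathbf{T}_c(Z)$; here I would use devissage together with the projection/base-change formalism: it suffices to check $(g^\ast F)\boxtimes_k D'$-transversality of $\Gamma_f$ for $D'$ ranging over a generating family of $\mathbf{T}_c(Y)$, and since $g$ is $F$-transversal one can reduce — via the localization and smooth-pullback stability of transversality, exactly as in~\ref{ex:example_transversal} and the second remark after Definition~\ref{def:Ftransversal} — to the case $D'=g^\ast D$, at which point condition $(2)$ applies. The main obstacle is precisely this last reduction: one must verify that the class of $D'\in\mathbf{T}(Y)$ for which $\Gamma_f$ is $(g^\ast F)\boxtimes_k D'$-transversal is a thick subcategory stable under the relevant operations and contains the generators $\varphi_\#\mathbbold{1}_W(n)$, and that these generators are (after further pullback) obtained by base change from objects over $Z$; everything else is a formal manipulation of the six functors and the compatibility of~\eqref{eq:nat_upper*!_upper!} with composition.
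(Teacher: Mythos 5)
There is a genuine gap, and it starts with a misreading of which transversality notion the lemma uses. Here $F\in\mathbf{T}(Z)$ and $g^\ast F\in\mathbf{T}(Y)$ are objects on the \emph{targets} of $g\circ f$, $g$ and $f$ respectively, so ``$g\circ f$ is $F$-transversal'' and ``$f$ is $g^\ast F$-transversal'' are instances of Definition~\ref{def:Ftransversal}(1), i.e.\ the assertion that the map~\eqref{eq:nat_upper*!_upper!_1} is an isomorphism; they are not instances of the graph-based notion of Definition~\ref{def:Ftransversal}(2), which applies only to objects on the \emph{source}. Your reformulation ``$\Gamma_{gf}$ is $F\boxtimes_k D$-transversal for all $D\in\mathbf{T}_c(Z)$'' does not even parse in this situation ($F$ lives on $Z$, not on $X$, and similarly $(g^\ast F)\boxtimes_k D'$ puts $g^\ast F$ in the slot reserved for an object of $\mathbf{T}(X)$; also $(\mathrm{id}_X\times g)^\ast(F\boxtimes_k D)$ would be $F\boxtimes_k g^\ast D$, not $(g^\ast F)\boxtimes_k D$). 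The paper's proof works directly with~\eqref{eq:nat_upper*!_upper!_1}: purity of $g$ makes $g^!\mathbbold{1}_Z$ dualizable, so the maps~\eqref{eq:lem:comptrans-p3} and~\eqref{eq:lem:comptrans-p4} are isomorphisms by~\ref{ex:example_transversal}, and combining them with the transversality isomorphisms for $g$ and $f$ produces the isomorphism~\eqref{eq:lem:comptrans-p5}, which one checks is the canonical map for $g\circ f$; the converse is obtained by running the same chain backwards. No graphs, no devissage, and no quantification over auxiliary coefficient objects is needed.

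Even granting your reformulation, two further steps would not go through as stated. First, your ``composition compatibility'' of the transversality transformation along $\Gamma_{gf}=(\mathrm{id}_X\times g)\circ\Gamma_f$ is not a formal identity: relating $\Gamma_f^!(\mathrm{id}_X\times g)^!\mathbbold{1}$ to $\Gamma_f^!\mathbbold{1}\otimes\Gamma_f^\ast(\mathrm{id}_X\times g)^!\mathbbold{1}$ is exactly where dualizability of the relative dualizing object (purity) must be invoked, and the claim that $\mathbbold{1}$ remains pure after the base change $\mathrm{id}_X\times g$ of $g$ is itself not automatic (purity and transversality are not stable under arbitrary base change in this paper --- that is why ``universally transversal'' is a separate condition); justifying it would require a K\"unneth-type argument. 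Second, your direction $(2)\Rightarrow(1)$ reduces to coefficients of the form $D'=g^\ast D$ with $D\in\mathbf{T}_c(Z)$ and appeals to devissage to reach general $D'\in\mathbf{T}_c(Y)$; but $\mathbf{T}_c(Y)$ is not generated by pullbacks from $Z$, so this reduction fails. In the correct (target-object) reading of the lemma no such quantifier appears and the equivalence is a direct two-line consequence of the displayed isomorphisms, which is why the paper's proof needs none of this machinery.
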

\proof
We show that the first condition implies the second, the converse being similar. Since $g$ is $F$-transversal and $f$ is $g^\ast F$-transversal, the following maps are isomorphisms:
\begin{equation}
\label{eq:lem:comptrans-p1}
f^\ast g^\ast F\otimes f^!\mathbbold{1}_Y
\xrightarrow{\eqref{eq:nat_upper*!_upper!_1}}
f^!g^\ast F,
\end{equation}
\begin{equation}
\label{eq:lem:comptrans-p2}
g^\ast F\otimes g^!\mathbbold{1}_Z
\xrightarrow{\eqref{eq:nat_upper*!_upper!_1}}
g^!F.
\end{equation}
Since $g$ is lci and $\mathbbold{1}_Z$ is $g$-pure, it follows that $g^!\mathbbold{1}_Z$ is dualizable, and by~\ref{ex:example_transversal} the following canonical maps are isomorphisms:
\begin{equation}
\label{eq:lem:comptrans-p3}
f^\ast g^!\mathbbold{1}_Z\otimes f^!\mathbbold{1}_Y
\xrightarrow{\eqref{eq:nat_upper*!_upper!_1}}
f^!g^!\mathbbold{1}_Z,
\end{equation}
\begin{equation}
\label{eq:lem:comptrans-p4}
f^\ast g^!\mathbbold{1}_Z\otimes f^!g^\ast F
\xrightarrow{\eqref{eq:nat_upper*!_upper!}}
f^! (g^!\mathbbold{1}_Z\otimes g^\ast F).
\end{equation}
Therefore we have the following isomorphism
\begin{align}
\label{eq:lem:comptrans-p5}
\begin{split}
f^\ast g^\ast F\otimes f^!g^!\mathbbold{1}_Z
&\overset{{\eqref{eq:lem:comptrans-p3}}}{\simeq}
f^\ast g^\ast F\otimes f^!\mathbbold{1}_Y\otimes f^\ast g^!\mathbbold{1}_Z
\overset{{\eqref{eq:lem:comptrans-p1}}}{\simeq}
f^!g^\ast F\otimes f^\ast g^!\mathbbold{1}_Z\\
&\overset{{\eqref{eq:lem:comptrans-p4}}}{\simeq}
f^!(g^\ast F\otimes g^!\mathbbold{1}_Z)
\overset{{\eqref{eq:lem:comptrans-p2}}}{\simeq}
f^!g^! F.
\end{split}
\end{align}
It is straightforward to check that the map~\eqref{eq:lem:comptrans-p5} is induced by the map~\eqref{eq:nat_upper*!_upper!_1}, and therefore $g\circ f$ is $F$-transversal.
\endproof

\begin{lemma}[{\cite[Proposition 8.7]{Sai}}]\label{lem:dualtranequi}
Let $f\colon X\to Y$ be a $k$-morphism of schemes. 
Then the following statements hold:
\begin{enumerate}
\item
\label{num:dualtranequi_first}
If $f$ is lci and $\mathbbold{1}_Y$ is $f$-pure, then for any $G\in \mathbf{T}_c(Y)$,
$f$ is $G$-transversal if and only if $f$ is $\mathbb{D}(G)$-transversal.

\item
\label{num:dualtranequi_second}
If $X$ is smooth over $k$ and $f$ factors through an open subscheme $Y_0$ of $Y$ which is smooth over $k$, then for any $F\in \mathbf{T}_c(X)$, 
$f$ is $F$-transversal if and only if $f$ is $\mathbb{D}(F)$-transversal.
\end{enumerate}
\end{lemma}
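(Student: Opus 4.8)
The plan is to derive both statements from the fact that, over a field satisfying~\ref{resol}, the local duality functor $\mathbb{D}=\underline{Hom}(\cdot,\mathcal{K})$ is an anti-equivalence on constructible objects which intertwines $f^*$ and $f^!$; the second statement is then reduced to the first applied to the graph of $f$.

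For~\ref{num:dualtranequi_first}, observe first that since $f$ is lci and $\mathbbold{1}_Y$ is $f$-pure, the purity transformation~\eqref{eq:pur_trans} for $B=\mathbbold{1}_Y$ together with $f^*\mathbbold{1}_Y=\mathbbold{1}_X$ shows that $\ell:=f^!\mathbbold{1}_Y\simeq Th_X(L_f)$ is $\otimes$-invertible. By Definition~\ref{def:Ftransversal}~(1), $f$ is $G$-transversal if and only if the map $f^*G\otimes\ell\xrightarrow{\eqref{eq:nat_upper*!_upper!_1}}f^!G$ is an isomorphism. I would apply $\mathbb{D}$ to this map. Using the exchange isomorphism~\eqref{Ex^!Hom} and the identity $\mathcal{K}_X=f^!\mathcal{K}_Y$ one gets, for every $H\in\mathbf{T}_c(Y)$, a canonical isomorphism $f^!\mathbb{D}(H)\simeq\mathbb{D}(f^*H)$; taking $H=G$ and $H=\mathbb{D}(G)$ and using biduality $\mathbb{D}\mathbb{D}\simeq\mathrm{id}$ yields $\mathbb{D}(f^*G)\simeq f^!\mathbb{D}(G)$ and $\mathbb{D}(f^!G)\simeq f^*\mathbb{D}(G)$. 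Since $\ell$ is invertible, $\mathbb{D}(f^*G\otimes\ell)\simeq\ell^{-1}\otimes\mathbb{D}(f^*G)\simeq\ell^{-1}\otimes f^!\mathbb{D}(G)$, so after tensoring by $\ell$ the $\mathbb{D}$-dual of the above map is identified with a morphism $f^*\mathbb{D}(G)\otimes\ell\to f^!\mathbb{D}(G)$. Because $\mathbb{D}$ reflects isomorphisms on $\mathbf{T}_c(X)$ and tensoring by the invertible object $\ell$ does too, it then suffices to know that this last morphism is the map~\eqref{eq:nat_upper*!_upper!_1} for $\mathbb{D}(G)$.

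This identification of the $\mathbb{D}$-dual of~\eqref{eq:nat_upper*!_upper!_1} with~\eqref{eq:nat_upper*!_upper!_1} for the dual object is the delicate point: it is a compatibility between the unit/counit of $(f_!,f^!)$, the projection formula~\eqref{Ex_!tens} and the exchange isomorphism~\eqref{Ex^!Hom}, which I would establish by unwinding the definitions in a (somewhat lengthy but formal) diagram chase, along the lines of \cite[Proposition 8.7]{Sai}. This is the only real obstacle; the rest is bookkeeping.

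For~\ref{num:dualtranequi_second}, I would reduce to~\ref{num:dualtranequi_first} applied to the graph morphism of $f$. Writing $f$ as $X\xrightarrow{f_0}Y_0\hookrightarrow Y$ with $X$ and $Y_0$ smooth over $k$, the graph $\Gamma_f\colon X\to X\times_kY$ factors as the closed immersion $\Gamma_{f_0}\colon X\to X\times_kY_0$ between the regular $k$-schemes $X$ and $X\times_kY_0$ (hence a regular immersion, in particular lci), followed by the open immersion $X\times_kY_0\to X\times_kY$; thus $\Gamma_f$ is lci, and since $\mathbbold{1}$ is $\Gamma_{f_0}$-pure by~\ref{recall_pur_trans} and trivially pure for the \'etale map, $\mathbbold{1}_{X\times_kY}$ is $\Gamma_f$-pure (purity transformations compose). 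Fix $D\in\mathbf{T}_c(Y)$; then $F\boxtimes_kD\in\mathbf{T}_c(X\times_kY)$, so~\ref{num:dualtranequi_first} applied to $\Gamma_f$ shows that $\Gamma_f$ is $(F\boxtimes_kD)$-transversal if and only if it is $\mathbb{D}(F\boxtimes_kD)$-transversal. By the K\"unneth isomorphisms~\eqref{Kunneth_K} and~\eqref{Kunneth_part} (Proposition~\ref{hom_product}, with $M_1=F$, $N_1=\mathcal{K}_X$, $M_2=D$, $N_2=\mathcal{K}_Y$) one has $\mathbb{D}(F\boxtimes_kD)\simeq\mathbb{D}(F)\boxtimes_k\mathbb{D}(D)$. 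As $D$ ranges over $\mathbf{T}_c(Y)$ so does $\mathbb{D}(D)$, because $\mathbb{D}$ is an anti-equivalence of $\mathbf{T}_c(Y)$; hence, quantifying over all $D$ and invoking Definition~\ref{def:Ftransversal}~(2), $f$ is $F$-transversal if and only if $f$ is $\mathbb{D}(F)$-transversal.
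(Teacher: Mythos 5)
Your proposal is correct and follows essentially the same route as the paper: part (1) by dualizing the purity/transversality map via $\mathbb{D}(f^*G)\simeq f^!\mathbb{D}(G)$, $\mathbb{D}(f^!G)\simeq f^*\mathbb{D}(G)$ and invertibility of $Th_X(L_f)$ (the compatibility you defer to a diagram chase is exactly the step the paper also asserts without detail), and part (2) by the same reduction to part (1) applied to the lci graph $\Gamma_f$. The only cosmetic difference is in (2): you compute $\mathbb{D}(F\boxtimes_k D)\simeq\mathbb{D}(F)\boxtimes_k\mathbb{D}(D)$ directly from Proposition~\ref{hom_product} and~\eqref{Kunneth_K} and then reindex $D\mapsto\mathbb{D}(D)$, whereas the paper reindexes first and identifies $\mathbb{D}(F\boxtimes_k\mathbb{D}(H))$ with $\mathbb{D}(F)\boxtimes_k H$ up to a harmless Thom twist via Lemma~\ref{lemma:dual_hom} and purity for $p_2$ — the same argument in a slightly different order.
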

\begin{proof}

\begin{enumerate}
\item
By~\ref{recall_pur_trans}, we need to show that $G$ is $f$-pure if and only if $\mathbb{D}(G)$ is $f$-pure. By duality, the map
\begin{equation}
f^\ast G\otimes Th_X(L_f)
\xrightarrow{\eqref{eq:pur_trans}}
f^! G
\end{equation}
is an isomorphism if and only if its dual $\mathbb{D}(f^! G)\to\mathbb{D}(f^\ast G\otimes Th_X(L_f))$ is an isomorphism. This is equivalent to say that the canonical map 
\begin{equation}
f^*\mathbb{D}(G)
\xrightarrow{\eqref{eq:pur_trans}}
f^!\mathbb{D}(G)\otimes Th_X(-L_f)
\end{equation}
is an isomorphism, i.e. $\mathbb{D}(G)$ is $f$-pure.
\item
We know that the graph $\Gamma_f\colon X\to X\times_k Y$ is lci and $\mathbbold{1}_{X\times_k Y}$ is $\Gamma_f$-pure. By~\eqref{num:dualtranequi_first} and duality, the following statements are equivalent:
\begin{enumerate}
\item For any $H\in \mathbf{T}_c(Y)$, $\Gamma_f$ is $F\boxtimes_k H$-transversal.
\item For any $H\in \mathbf{T}_c(Y)$, $\Gamma_f$ is $\mathbb D(F\boxtimes_k H)$-transversal.
\item For any $H\in \mathbf{T}_c(Y)$, $\Gamma_f$ is $\mathbb D(F\boxtimes_k \mathbb D(H))$-transversal.
\end{enumerate}
Denote by $p_1:X\times_k Y\to X$ and $p_2:X\times_k Y\to Y$ the projections. Then $p_2$ is a smooth morphism, and we have the following isomorphism:
\begin{align}
\begin{split}
\mathbb D(F\boxtimes_k \mathbb D(H))
&\overset{\eqref{eq:bidual_hom}}{\simeq}
\underline{Hom}(p_1^\ast F,p_2^\ast H)
\simeq
\underline{Hom}(p_1^\ast F,p_2^! H)\otimes Th_{X\times_k Y}(-L_{p_2})\\
&\overset{\eqref{dual_hom}}{\simeq}
(\mathbb D(F)\boxtimes H)\otimes Th_{X\times_k Y}(-L_{p_2}).
\end{split}
\end{align}
It follows that $\Gamma_f$ is $F\boxtimes_k H$-transversal for any $H\in \mathbf{T}_c(Y)$ if and only if $\Gamma_f\colon X\to X\times_k Y$ is $\mathbb D(F)\boxtimes_k H$-transversal for any $H\in \mathbf{T}_c(Y)$. This proves~\eqref{num:dualtranequi_second}.
\end{enumerate}
\end{proof}

\begin{lemma}[{\cite[Lemma 2.3.4]{YZ18}}]\label{lem:diagtrans}
Let $X$ be a smooth $k$-scheme and let $F_1$ and $F_2$ be two objects of $\mathbf{T}_c(X)$.
If the diagonal morphism $\delta\colon X\to X\times_k X$ is 
$\mathbb{D}(F_1)\boxtimes_k F_2$-transversal, then the following canonical map is an isomorphism:
\begin{align}
\underline{Hom}(F_1, \mathbbold{1})\otimes F_2\rightarrow \underline{Hom}(F_1,F_2).
\end{align}

\end{lemma}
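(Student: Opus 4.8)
The plan is to realize the canonical map $\underline{Hom}(F_1,\mathbbold{1}_X)\otimes F_2\to\underline{Hom}(F_1,F_2)$ as an instance of the natural transformation~\eqref{eq:nat_upper*!_upper!_1} for the diagonal $\delta\colon X\to X\times_k X$ evaluated on the object $B:=\mathbb{D}(F_1)\boxtimes_k F_2=p_1^*\mathbb{D}(F_1)\otimes p_2^*F_2$, after which the assertion is exactly the $B$-transversality of $\delta$ assumed in the statement. Concretely the steps are: (i) compute the source of~\eqref{eq:nat_upper*!_upper!_1} and identify it with $\underline{Hom}(F_1,\mathbbold{1}_X)\otimes F_2$; (ii) compute the target and identify it with $\underline{Hom}(F_1,F_2)$; (iii) check that under these identifications~\eqref{eq:nat_upper*!_upper!_1} is the canonical map.

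For step (i) I would first record the purity data. Since $X$ is smooth over $k$, the morphism $\delta$ is a regular closed immersion between regular $k$-schemes, so by the purity statements recalled in~\ref{recall_pur_trans} there are canonical isomorphisms $\mathcal{K}_X\simeq Th_X(L_{X/k})$ (purity for the structural morphism $X\to\operatorname{Spec}(k)$) and $\delta^!\mathbbold{1}_{X\times_k X}\simeq Th_X(L_\delta)$; moreover $L_\delta=-L_{X/k}$ in $K_0(X)$, since $p_1\circ\delta=\mathrm{id}_X$ and $L_{p_1}\simeq p_2^*L_{X/k}$, so $\delta^!\mathbbold{1}_{X\times_k X}$ is $\otimes$-invertible with inverse $\mathcal{K}_X$. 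Then $\delta^*B=\delta^*p_1^*\mathbb{D}(F_1)\otimes\delta^*p_2^*F_2=\mathbb{D}(F_1)\otimes F_2=\underline{Hom}(F_1,\mathcal{K}_X)\otimes F_2$, and tensoring with $\delta^!\mathbbold{1}_{X\times_k X}$ and moving the invertible Thom twist through $\underline{Hom}(F_1,-)$ gives $\delta^*B\otimes\delta^!\mathbbold{1}_{X\times_k X}\simeq\underline{Hom}(F_1,\mathbbold{1}_X)\otimes F_2$. For step (ii): $F_1$ is constructible and $\mathbf{T}$ satisfies~\ref{resol}, so the Künneth isomorphism~\eqref{dual_hom} gives $B\simeq\underline{Hom}(p_1^*F_1,p_2^!F_2)$, whence by~\eqref{Ex^!Hom} and the identities $\delta^*p_1^*=\mathrm{id}$ and $\delta^!p_2^!=(p_2\circ\delta)^!=\mathrm{id}$ one obtains $\delta^!B\simeq\underline{Hom}(\delta^*p_1^*F_1,\delta^!p_2^!F_2)=\underline{Hom}(F_1,F_2)$. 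Thus~\eqref{eq:nat_upper*!_upper!_1} for $f=\delta$ and $A=B$ becomes a map $\underline{Hom}(F_1,\mathbbold{1}_X)\otimes F_2\to\underline{Hom}(F_1,F_2)$, which is an isomorphism precisely because $\delta$ is $B$-transversal.

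The main — and really the only nontrivial — step is (iii): verifying that the map produced above is the canonical one in the statement, i.e. the map adjoint to $F_1\otimes\underline{Hom}(F_1,\mathbbold{1}_X)\otimes F_2\xrightarrow{\epsilon_{F_1}\otimes\mathrm{id}}F_2$. This is a diagram chase that unwinds the construction of~\eqref{eq:nat_upper*!_upper!_1} (assembled from the unit and counit of $(\delta_!,\delta^!)$ and the projection formula~\eqref{Ex_!tens}), the definitions of the Künneth isomorphisms~\eqref{dual_hom} and~\eqref{Ex^!Hom}, and the purity isomorphisms, together with the usual compatibility of Thom twists with $\underline{Hom}$; it runs exactly as in the proof of \cite[Lemma 2.3.4]{YZ18}, to which we reduce. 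I expect this to require only routine bookkeeping rather than any new input.
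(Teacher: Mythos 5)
Your proposal is correct and follows essentially the same route as the paper's proof: identify $\underline{Hom}(F_1,\mathbbold{1}_X)\otimes F_2$ with $\delta^*(\mathbb{D}(F_1)\boxtimes_k F_2)\otimes\delta^!\mathbbold{1}_{X\times_kX}$ via purity, apply the transversality hypothesis through the map~\eqref{eq:nat_upper*!_upper!_1}, and compute $\delta^!(\mathbb{D}(F_1)\boxtimes_k F_2)\simeq\underline{Hom}(F_1,F_2)$ using~\eqref{dual_hom} and~\eqref{Ex^!Hom}, ending with the same check that the resulting isomorphism is the canonical map.
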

\begin{proof}
Since $X$ is smooth over $k$, by purity we have $\mathcal K_X\simeq Th_X(L_{X/k})$ and $\delta^!\mathbbold{1}_{X\times_k X}\simeq Th(-L_{X/k})$.
For $i=1,2$, denote by $p_i\colon X\times_k X\to X$ the $i$-th projection. Since $\delta$ is $\mathbb{D}(F_1)\boxtimes_k F_2$-transversal, we have the following isomorphism:
\begin{align}
\label{eq:diagtrans}
\begin{split}
\underline{Hom}(F_1, \mathbbold{1})\otimes F_2
&\simeq
\underline{Hom}(F_1, \mathcal K_X)\otimes F_2\otimes \delta^!\mathbbold{1}_{X\times_k X}\\
&=
\delta^\ast(\underline{Hom}(F_1, \mathcal K_X)\boxtimes_k F_2)\otimes \delta^!\mathbbold{1}_{X\times_k X}\\
&
\overset{\eqref{eq:nat_upper*!_upper!_1}}{\simeq}
\delta^! (\underline{Hom}(F_1, \mathcal K_X)\boxtimes_k F_2)
\overset{\eqref{dual_hom}}{\simeq}
\delta^! \underline{Hom}(p_1^\ast F_1, p_2^! F_2)\\
&\simeq
\underline{Hom}(\delta^\ast p_1^\ast F_1, \delta^! p_2^! F_2)
=
\underline{Hom}(F_1,F_2).
\end{split}
\end{align}
One can check that the map~\eqref{eq:diagtrans} agrees with the canonical map, and the result follows.
\end{proof}

\begin{lemma}
\label{lem:Hom_upper*}
Let $f:X\to Y$ be a lci morphism and let $F$ and $G$ be two objects of $\mathbf{T}(Y)$. If both $G$ and $\underline{Hom}(F,G)$ are $f$-pure, then the following canonical map is an isomorphism:
\begin{align}
f^*\underline{Hom}(F,G)
\to
\underline{Hom}(f^*F,f^*G).
\end{align}
\end{lemma}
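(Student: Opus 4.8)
The plan is to reduce the statement to the $\otimes$-invertibility of the Thom object $Th_X(L_f)$. Since $f$ is lci it has a virtual tangent bundle $L_f\in K_0(X)$, and $Th_X(L_f)\in Pic(\mathbf{T}(X))$ is $\otimes$-invertible; hence the functor $-\otimes Th_X(L_f)$ is an autoequivalence of $\mathbf{T}(X)$ and in particular reflects isomorphisms. Therefore it suffices to show that the canonical map becomes an isomorphism after tensoring with $Th_X(L_f)$. Recall that the canonical map in question is the one adjoint to the composition $f^*F\otimes f^*\underline{Hom}(F,G)\simeq f^*(F\otimes\underline{Hom}(F,G))\xrightarrow{f^*\epsilon_F}f^*G$.

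First I would use the $f$-purity of $\underline{Hom}(F,G)$: by the discussion in~\ref{recall_pur_trans}, the purity transformation~\eqref{eq:pur_trans} gives a canonical isomorphism $f^*\underline{Hom}(F,G)\otimes Th_X(L_f)\xrightarrow{\sim}f^!\underline{Hom}(F,G)$. Next I would apply the exchange isomorphism $Ex(f^!,\underline{Hom})$ of~\eqref{Ex^!Hom} to get $f^!\underline{Hom}(F,G)\simeq\underline{Hom}(f^*F,f^!G)$. Then, using the $f$-purity of $G$ and again the invertibility of $Th_X(L_f)$, one has $\underline{Hom}(f^*F,f^!G)\simeq\underline{Hom}(f^*F,f^*G\otimes Th_X(L_f))\simeq\underline{Hom}(f^*F,f^*G)\otimes Th_X(L_f)$, the last step being the projection formula for $\underline{Hom}$ against the invertible (hence dualizable) object $Th_X(L_f)$. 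Composing these isomorphisms produces an isomorphism $f^*\underline{Hom}(F,G)\otimes Th_X(L_f)\simeq\underline{Hom}(f^*F,f^*G)\otimes Th_X(L_f)$, and cancelling the invertible factor will give the result.

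The hard part will be the compatibility check: one has to verify that the composite of the three isomorphisms above coincides, after applying the autoequivalence $-\otimes Th_X(L_f)$, with the canonical comparison map. This reduces to unwinding the definition of the purity transformation~\eqref{eq:pur_trans} (which is built from~\eqref{eq:nat_upper*!_upper!}, hence from the unit and counit of the adjunction $(f_!,f^!)$ and the projection formula~\eqref{Ex_!tens}), together with the definition of $Ex(f^!,\underline{Hom})$ in~\eqref{Ex^!Hom}, and assembling them by a diagram chase into the monoidal comparison map for the internal $\underline{Hom}$. This is a routine but somewhat lengthy verification of the compatibility of canonical $2$-morphisms in the six-functor formalism, and it is the only nontrivial point in the argument.
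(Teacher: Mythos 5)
Your proposal is correct and follows essentially the same route as the paper: both exploit invertibility of $Th_X(L_f)$, use the $f$-purity of $\underline{Hom}(F,G)$ and of $G$ together with the exchange isomorphism~\eqref{Ex^!Hom} to identify $f^*\underline{Hom}(F,G)\otimes Th_X(L_f)$ with $\underline{Hom}(f^*F,f^*G)\otimes Th_X(L_f)$, and then check that this composite is induced by the canonical map before cancelling the invertible factor. The compatibility verification you flag as the hard part is exactly the step the paper dismisses as "straightforward to check."
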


\proof
By hypothesis, we have the following isomorphism:
\begin{align}
\label{eq:Hom_upper*}
\begin{split}
&f^*\underline{Hom}(F,G)\otimes Th_X(L_f)
\overset{\eqref{eq:pur_trans}}{\simeq}
f^!\underline{Hom}(F,G)
\overset{\eqref{Ex^!Hom}}{\simeq}
\underline{Hom}(f^*F,f^!G)\\
\overset{\eqref{eq:pur_trans}}{\simeq}
&\underline{Hom}(f^*F,f^*G\otimes Th_X(L_f))
\simeq
\underline{Hom}(f^*F,f^*G)\otimes Th_X(L_f).
\end{split}
\end{align}
It is straightforward to check that~\eqref{eq:Hom_upper*} is induced by the canonical map, and the result follows.
\endproof

\begin{corollary}
\label{cor:Hom_upper*_1}
Let $f:X\to Y$ be a morphism between smooth $k$-schemes. Let $F$ be an object of $\mathbf{T}_c(Y)$ such that $f$ is $F$-transversal. Then the following canonical map is an isomorphism:
\begin{align}
\label{eq:Hom_upper*_1}
f^*\underline{Hom}(F,\mathbbold{1}_Y)
\to
\underline{Hom}(f^*F,\mathbbold{1}_X).
\end{align}
\end{corollary}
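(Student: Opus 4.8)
The plan is to deduce Corollary~\ref{cor:Hom_upper*_1} from Lemma~\ref{lem:Hom_upper*} by verifying its two hypotheses in the present situation. Since $f\colon X\to Y$ is a morphism between smooth $k$-schemes, it is lci and, by \ref{recall_pur_trans} (via \cite[4.3.10]{DJK}), the unit $\mathbbold{1}_Y$ is $f$-pure. First I would take $G=\mathbbold{1}_Y$ in Lemma~\ref{lem:Hom_upper*}: the first hypothesis, that $G=\mathbbold{1}_Y$ is $f$-pure, is then exactly the statement just recalled. It remains to check the second hypothesis, namely that $\underline{Hom}(F,\mathbbold{1}_Y)=\mathbb{D}(F)\otimes Th_Y(-L_{Y/k})$ is $f$-pure; by \ref{recall_pur_trans} again, since $\mathbbold{1}_Y$ is $f$-pure, being $f$-pure is equivalent to being $f$-transversal, so I must show that $f$ is $\underline{Hom}(F,\mathbbold{1}_Y)$-transversal.

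The second step is therefore the heart of the argument: upgrade ``$f$ is $F$-transversal'' to ``$f$ is $\underline{Hom}(F,\mathbbold{1}_Y)$-transversal''. By Lemma~\ref{lem:dualtranequi}~\eqref{num:dualtranequi_second}, applied with the roles there instantiated to our $f\colon X\to Y$ (which automatically factors through the smooth-over-$k$ scheme $Y$ itself, the hypothesis on $Y_0$ being vacuous), $f$ is $F$-transversal if and only if $f$ is $\mathbb{D}(F)$-transversal. Next, $\underline{Hom}(F,\mathbbold{1}_Y)$ differs from $\mathbb{D}(F)=\underline{Hom}(F,\mathcal{K}_Y)$ only by the invertible twist $Th_Y(-L_{Y/k})$, using $\mathcal{K}_Y\simeq Th_Y(L_{Y/k})$ from purity on the smooth $k$-scheme $Y$. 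So it suffices to observe that $B$-transversality of a morphism is stable under tensoring $B$ with a $\otimes$-invertible object: indeed the transformation~\eqref{eq:nat_upper*!_upper!_1} for $B\otimes\mathcal{L}$ with $\mathcal{L}$ invertible is obtained from the one for $B$ by tensoring with $f^*\mathcal{L}$, hence is an isomorphism iff the original one is. Chaining these, $f$ $F$-transversal $\Rightarrow$ $f$ $\mathbb{D}(F)$-transversal $\Rightarrow$ $f$ $\bigl(\mathbb{D}(F)\otimes Th_Y(-L_{Y/k})\bigr)$-transversal $=$ $f$ $\underline{Hom}(F,\mathbbold{1}_Y)$-transversal, which is the missing hypothesis.

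With both hypotheses of Lemma~\ref{lem:Hom_upper*} established for $G=\mathbbold{1}_Y$, the lemma gives that the canonical map $f^*\underline{Hom}(F,\mathbbold{1}_Y)\to\underline{Hom}(f^*F,f^*\mathbbold{1}_Y)=\underline{Hom}(f^*F,\mathbbold{1}_X)$ is an isomorphism, which is precisely~\eqref{eq:Hom_upper*_1}. The one point that needs a line of care — and which I expect to be the main (mild) obstacle — is the compatibility of the several identifications: that the composite isomorphism produced by transporting $f$-transversality of $F$ through duality and through the invertible twist is literally the map~\eqref{eq:nat_upper*!_upper!_1} for $\underline{Hom}(F,\mathbbold{1}_Y)$, rather than merely \emph{an} isomorphism between the same objects. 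This is the same kind of bookkeeping already carried out inside the proofs of Lemmas~\ref{lem:dualtranequi} and~\ref{lem:Hom_upper*} (the ``it is straightforward to check that the map agrees with the canonical map'' clauses), so I would simply invoke those naturality checks; no genuinely new computation is required, and the twist $Th_Y(-L_{Y/k})$ is harmless since all the structural maps commute with Tate and Thom twists.
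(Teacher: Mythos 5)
Your argument is correct and follows essentially the same route as the paper's own proof: reduce to Lemma~\ref{lem:Hom_upper*} with $G=\mathbbold{1}_Y$, and obtain $f$-purity of $\underline{Hom}(F,\mathbbold{1}_Y)$ from $\mathbb{D}(F)$-transversality together with the $\otimes$-invertibility of $\mathcal{K}_Y\simeq Th_Y(L_{Y/k})$. One correction to the write-up: since $F$ lives on the target $Y$, the duality step must invoke Lemma~\ref{lem:dualtranequi}~\eqref{num:dualtranequi_first} (whose hypotheses --- $f$ lci and $\mathbbold{1}_Y$ $f$-pure --- you have already verified), not part~\eqref{num:dualtranequi_second}, which concerns objects of $\mathbf{T}_c(X)$ and the graph notion of transversality from Definition~\ref{def:Ftransversal}~\eqref{def:Ftransversal2}; with that citation fixed the proof coincides with the paper's.
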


\proof
By Lemma~\ref{lem:dualtranequi}, $f$ is $\mathbb{D}(F)$-transversal. Since $\mathcal{K}_Y\simeq Th_Y(L_{Y/k})$ is $\otimes$-invertible, we know that $f$ is also $\underline{Hom}(F,\mathbbold{1}_Y)$-transversal. By hypothesis and~\ref{recall_pur_trans}, both $F$ and $\underline{Hom}(F,\mathbbold{1}_Y)$ are $f$-pure. We conclude by applying Lemma~\ref{lem:Hom_upper*}.
\endproof

\begin{lemma}\label{lem:smoothtranComp}
Let $p:X\to Y$ and $g:Y\to S$ be two morphisms of schemes and let $f=g\circ p:X\to S$ be their composition. Denote by $\Gamma_p$, $\Gamma_g$ and $\Gamma_f$ the graph morphisms of $p$, $g$ and $f$ respectively. Let $F$ be an object of $\mathbf{T}(X)$. Then the following statements hold:
\begin{enumerate}
\item If $g$ is smooth and $p$ is $F$-transversal, then $f$ is $F$-transversal.
\item \label{lem:propertranComp}
Assume that $p$ is proper, $f$ is $F$-transversal and the canonical map
\begin{align}
\label{eq:propertranComp}
p^\ast\Gamma_g^!\mathbbold{1}_{Y\times_kS}
\xrightarrow{\eqref{Ex*!}}
\Gamma_f^!\mathbbold{1}_{X\times_kS}
\end{align}
associated to the Cartesian square of schemes
\begin{align}
\begin{gathered}
\xymatrix{
X\ar[r]^-{p}\ar[d]_-{\Gamma_f}&Y\ar[d]^-{\Gamma_g}\\
X\times_k S\ar[r]^-{p\times id_S}&Y\times_kS.
}
\end{gathered}
\end{align}
is an isomorphism. Then $g$ is $p_\ast(F)$-transversal.
\end{enumerate}
\end{lemma}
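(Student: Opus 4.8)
The plan is to prove the two statements in Lemma~\ref{lem:smoothtranComp} by unwinding the definition of $C$-transversality (Definition~\ref{def:Ftransversal}~\eqref{def:Ftransversal2}) in terms of the purity transformation~\eqref{eq:nat_upper*!_upper!_1} for the relevant graph morphisms. For the first statement, fix an arbitrary $D\in\mathbf{T}_c(S)$; I must show that $\Gamma_f\colon X\to X\times_k S$ is $F\boxtimes_k D$-transversal, knowing that $\Gamma_p\colon X\to X\times_k Y$ is $F\boxtimes_k E$-transversal for every $E\in\mathbf{T}_c(Y)$ (in particular for $E=g^*D$). The point is that $\Gamma_f$ factors through $\Gamma_p$ via the smooth morphism $\operatorname{id}_X\times g\colon X\times_k Y\to X\times_k S$; more precisely one has a Cartesian square expressing $\Gamma_f$ as the base change of $\Gamma_p$ along $\operatorname{id}_X\times g$, or better, the composition $X\xrightarrow{\Gamma_p}X\times_k Y\xrightarrow{\operatorname{id}_X\times g}X\times_k S$ equals $\Gamma_f$. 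Since $\operatorname{id}_X\times g$ is smooth, it is $(F\boxtimes_k D)$-transversal (by~\ref{ex:example_transversal}), and $(\operatorname{id}_X\times g)^*(F\boxtimes_k D)\simeq F\boxtimes_k g^*D$; so Lemma~\ref{lem:comptrans} (with the roles $X\xrightarrow{\Gamma_p}X\times_k Y\xrightarrow{\operatorname{id}_X\times g}X\times_k S$) reduces $(F\boxtimes_k D)$-transversality of $\Gamma_f$ to $(F\boxtimes_k g^*D)$-transversality of $\Gamma_p$, which holds by hypothesis since $g^*D\in\mathbf{T}_c(Y)$ (as $g^*$ preserves constructibility). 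This requires checking the hypothesis of Lemma~\ref{lem:comptrans} that $\operatorname{id}_X\times g$ is lci with $\mathbbold{1}$ pure — true because it is smooth.

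For the second statement, the strategy is to produce, for each $D\in\mathbf{T}(S)$, an isomorphism witnessing that $\Gamma_g\colon Y\to Y\times_k S$ is $(p_*F\boxtimes_k D)$-transversal, by pushing forward along $p$ the transversality of $\Gamma_f$ for $F\boxtimes_k p_S^*D$ where $p_S^*D$ denotes the pullback of $D$ to $X$. Concretely, I would use the Cartesian square in the statement relating $\Gamma_f$ and $\Gamma_g$ via $p\times\operatorname{id}_S$, which is proper since $p$ is. The map~\eqref{eq:nat_upper*!_upper!_1} for $\Gamma_g$ applied to $p_*F\boxtimes_k D$ reads
\begin{align}
\Gamma_g^*(p_*F\boxtimes_k D)\otimes\Gamma_g^!\mathbbold{1}_{Y\times_k S}\to\Gamma_g^!(p_*F\boxtimes_k D).
\end{align}
I would rewrite $p_*F\boxtimes_k D=(p\times\operatorname{id}_S)_*(F\boxtimes_k D)$ using the K\"unneth formula for $f_!=f_*$ (Lemma~\ref{Kunneth_formula}, as $p\times\operatorname{id}_S$ is proper), then apply base change along the proper square together with the projection formula~\eqref{Ex_!tens} and the exchange~\eqref{upper!lower*} for $q_*f^!$, so as to transport everything to $X\times_k S$ where $\Gamma_f$-transversality of $F\boxtimes_k D$ (which holds since $f$ is $F$-transversal and $D\in\mathbf{T}(S)$ arbitrary, by the remark after Definition~\ref{def:Ftransversal}) gives the desired isomorphism. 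The hypothesis~\eqref{eq:propertranComp} is exactly what is needed to identify $p^*\Gamma_g^!\mathbbold{1}$ with $\Gamma_f^!\mathbbold{1}$ and thereby match the two purity transformations after applying $p_*$; without it the pushed-forward statement would only be about $p_*\Gamma_f^!\mathbbold{1}$ rather than $\Gamma_g^!\mathbbold{1}$.

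The main obstacle I anticipate is the bookkeeping in step two: one must carefully track the compatibility of the purity/exchange transformations~\eqref{eq:nat_upper*!_upper!_1}, \eqref{Ex*!}, \eqref{Ex_!tens} and \eqref{upper!lower*} under the proper base change square, and verify that the isomorphism assembled from these pieces is genuinely the canonical map~\eqref{eq:nat_upper*!_upper!_1} for $\Gamma_g$ and not merely some isomorphism between the same source and target. This is the familiar ``the map you built is the map you wanted'' verification, and it rests on the naturality and horizontal/vertical composition compatibilities of the exchange 2-morphisms that are collected in~\ref{num:fct_upper!}. Step one, by contrast, is essentially immediate once Lemma~\ref{lem:comptrans} is invoked, modulo the elementary observation that $g^*$ preserves constructibility and the identification of the composite graph factorization. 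I would present step one in a few lines and devote the bulk of the write-up to the diagram chase in step two, possibly displaying the key commutative square relating the two purity transformations via $p_*$.
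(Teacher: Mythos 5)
Your proposal is correct and follows essentially the same route as the paper: part (1) rests on the factorization $\Gamma_f=(\mathrm{id}_X\times_k g)\circ\Gamma_p$ with $\mathrm{id}_X\times_k g$ smooth (the paper transfers the Thom twist by a direct computation rather than invoking Lemma~\ref{lem:comptrans}, but the content is identical), and part (2) applies $p_*$ to the $\Gamma_f$-purity isomorphism for $F\boxtimes_k H$ and identifies source and target with those of the $\Gamma_g$-purity map for $p_*F\boxtimes_k H$ via the hypothesis~\eqref{eq:propertranComp}, the projection formula, proper base change, the exchange~\eqref{upper!lower*}, and the K\"unneth identification $(p\times_k\mathrm{id}_S)_*(F\boxtimes_k H)\simeq p_*F\boxtimes_k H$, exactly as in the paper. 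The only slips are cosmetic and self-correcting: $\Gamma_f$ is the composite of $\Gamma_p$ with $\mathrm{id}_X\times_k g$, not its base change, and the object to push forward is $F\boxtimes_k D$ on $X\times_k S$ rather than ``$F\boxtimes_k p_S^*D$''.
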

\proof
\begin{enumerate}
\item
We have a  commutative diagram of schemes
\begin{align}
\begin{gathered}
\xymatrix{
X\ar[r]^-{\Gamma_p}\ar[rd]_-{\Gamma_f}&X\times_k Y\ar[d]^-{id_X\times_kg}\\
&X\times_kS.
}
\end{gathered}
\end{align}
Let $H$ be an object of $\mathbf{T}_c(S)$. Since $g$ is smooth,
we have canonical isomorphisms
\begin{align}
\label{eq:prop:smoothtranComp-p1}
\begin{split}
\Gamma_f^\ast(F\boxtimes_k H)\otimes\Gamma_f^!\mathbbold{1}_{X\times_kS}
&=
\Gamma_p^\ast(id_X\times_kg)^\ast(F\boxtimes_k H)\otimes\Gamma_p^!(id_X\times_kg)^!\mathbbold{1}_{X\times_kS}\\
&\simeq
\Gamma_p^\ast(F\boxtimes_k g^!H)\otimes\Gamma_p^!\mathbbold{1}_{X\times_kY},
\end{split}
\end{align}
\begin{align}
\label{eq:prop:smoothtranComp-p2}
\Gamma_f^!(F\boxtimes_k H)
=
\Gamma_p^!(id_X\times_kg)^!(F\boxtimes_k H)
\simeq
\Gamma_p^!(F\boxtimes_kg^!H).
\end{align}
Since $\Gamma_p$ is $F\boxtimes_kg^!H$-transversal, the following canonical map is an isomorphism:
\begin{align}
\label{eq:prop:smoothtranComp-p3}
\Gamma_p^\ast(F\boxtimes_kg^!H)\otimes\Gamma_p^!\mathbbold{1}_{X\times_kY}
\xrightarrow{\eqref{eq:nat_upper*!_upper!_1}}
\Gamma_p^!(F\boxtimes_kg^!H).
\end{align}
By \eqref{eq:prop:smoothtranComp-p1}, \eqref{eq:prop:smoothtranComp-p2} and \eqref{eq:prop:smoothtranComp-p3}, the following canonical map is an isomorphism:
\begin{align}
\Gamma_f^\ast(F\boxtimes_k H)\otimes\Gamma_f^!\mathbbold{1}_{X\times_kS}
\xrightarrow{\eqref{eq:nat_upper*!_upper!_1}}
\Gamma_f^!(F\boxtimes_k H).
\end{align}
In other words $\Gamma_f$ is $F\boxtimes_k H$-transversal. Since this is true for any $H$, by definition $f$ is $F$-transversal.

\item
Let $H$ be an object of $\mathbf{T}_c(S)$.  
Then $\Gamma_f$ is $F\boxtimes_k H$-transversal, and the following canonical map is an isomorphism:
\begin{align}\label{eq:prop:propertranComp-p1}
\Gamma_f^\ast(F\boxtimes_k H)\otimes \Gamma_f^!\mathbbold{1}_{X\times_kS}
\xrightarrow{\eqref{eq:nat_upper*!_upper!_1}}
\Gamma_f^!(F\boxtimes_k H).
\end{align}
Since $p$ is proper, we have canonical isomorphisms:
\begin{align}\label{eq:prop:propertranComp-p2}
\begin{split}
p_\ast(\Gamma_f^\ast(F\boxtimes_k H)\otimes \Gamma_f^!\mathbbold{1}_{X\times_kS})
&\overset{\eqref{eq:propertranComp}}{\simeq}
p_\ast(\Gamma_f^\ast(F\boxtimes_k H)\otimes p^\ast\Gamma_g^!\mathbbold{1}_{Y\times_kS})\\
&\overset{\eqref{Ex_*tens}}{\simeq}
p_\ast\Gamma_f^\ast(F\boxtimes_k H)\otimes \Gamma_g^!\mathbbold{1}_{Y\times_kS}\\
&\overset{\eqref{Ex**}}{\simeq}
\Gamma_g^\ast (p\times id_S)_\ast (F\boxtimes_k H)\otimes \Gamma_g^!\mathbbold{1}_{Y\times_kS}\\
&\overset{\eqref{Kunneth_*}}{\simeq}
\Gamma_g^\ast(p_\ast F\boxtimes_k H)\otimes \Gamma_g^!\mathbbold{1}_{Y\times_kS},
\end{split}
\end{align}
\begin{align}
\label{eq:prop:propertranComp-p3}
p_\ast(\Gamma_f^!(F\boxtimes_k H))
&\overset{\eqref{upper!lower*}}{\simeq}
\Gamma_g^! (p\times id)_\ast (F\boxtimes_k H)
\overset{\eqref{Kunneth_*}}{\simeq}
\Gamma_g^!(p_\ast F\boxtimes_k H).
\end{align}
By \eqref{eq:prop:propertranComp-p1}, \eqref{eq:prop:propertranComp-p2}
and \eqref{eq:prop:propertranComp-p3}, the following canonical map is an isomorphism:
\begin{align}
\Gamma_g^\ast(p_\ast F\boxtimes_k H)\otimes \Gamma_g^!\mathbbold{1}_{Y\times_kS}
\xrightarrow{\eqref{eq:nat_upper*!_upper!_1}}
\Gamma_g^!(p_\ast F\boxtimes_k H).
\end{align}
In other words $\Gamma_g$ is $p_\ast F\boxtimes_k H$-transversal. Since this is true for any $H$, by definition $g$ is $p_\ast F$-transversal.
\end{enumerate}
\endproof

\begin{remark}
Note that the map~\eqref{eq:propertranComp} is an isomorphism if $p$ is smooth, or if $g$ factors through an open subscheme of $S$ which is smooth over $k$.
\end{remark}

\begin{proposition}
\label{prop:kuntypeTrans}
Let $f_1:X_1\to Y_1$ and $f_2:X_2\to Y_2$ be two morphisms of schemes, and let $f:X_1\times_k X_2\to Y_1\times_k Y_2$ be their product.
For $i=1,2$,  let  $G_i\in\mathbf{T}(Y_i)$. 
Then we have the following K\"unneth type result for the transversality condition:
 if $f_i$ is $G_i$-transversal for $i=1,2$, then $f$ is $G_1\boxtimes G_2$-transversal.
\end{proposition}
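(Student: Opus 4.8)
The statement is a K\"unneth-type compatibility for transversality, so the natural strategy is to reduce it, via the definition of $C$-transversality, to a statement about graph morphisms and then to exploit the fact that the relevant graph morphism for $f$ factors (up to a product) through graph morphisms for $f_1$ and $f_2$. Concretely, by Definition~\ref{def:Ftransversal}~\eqref{def:Ftransversal2}, we must show that the graph morphism $\Gamma_f\colon X_1\times_k X_2\to (X_1\times_k X_2)\times_k S$ is $(G_1\boxtimes_k G_2)\boxtimes_k D$-transversal for every $D\in\mathbf{T}_c(S)$ (equivalently every $D\in\mathbf{T}(S)$, by the first item of the Remark following Definition~\ref{def:Ftransversal}). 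Since the ambient base in the definition is always $\operatorname{Spec}(k)$, we are free to take $S$ to be a point to begin with, i.e. it suffices to prove the following cleaner assertion: if $f_1$ and $f_2$ are morphisms of $k$-schemes and $\Gamma_{f_i}\colon X_i\to X_i\times_k Y_i$ is $G_i$-related-transversal in the sense that $\Gamma_{f_i}$ is $(\cdot)\boxtimes_k(\cdot)$-transversal as needed, then the product graph $\Gamma_{f_1}\times_k\Gamma_{f_2}$ is transversal with respect to the external box product. Then the general $S$ follows by reorganizing the external factors, absorbing $D$ into one of the $Y_i$-slots.

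**Key steps.** First I would set up the identification $\Gamma_f=\Gamma_{f_1}\times_k\Gamma_{f_2}$ up to the canonical reshuffling isomorphism of factors $X_1\times_k X_2\times_k Y_1\times_k Y_2\simeq (X_1\times_k Y_1)\times_k(X_2\times_k Y_2)$; this is a routine but essential bookkeeping step. Second, I would prove the general ``product of transversal morphisms is transversal'' lemma: if $h_i\colon A_i\to B_i$ is $H_i$-transversal (in the bare sense that $h_i^*H_i\otimes h_i^!\mathbbold{1}_{B_i}\xrightarrow{\eqref{eq:nat_upper*!_upper!_1}}h_i^!H_i$ is an isomorphism) for $i=1,2$, then $h=h_1\times_k h_2$ is $H_1\boxtimes_k H_2$-transversal. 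Here the input is the K\"unneth formula for $f^!$ over a field, Theorem~\ref{th:kun}\eqref{eq:th_upper!} (applied to $h_1,h_2$ with base $\operatorname{Spec}(k)$, which gives $h^!(H_1\boxtimes_k H_2)\simeq h_1^!H_1\boxtimes_k h_2^!H_2$), together with the K\"unneth formula for $f_!$ (Lemma~\ref{Kunneth_formula}, which holds over any base, in particular to compute $h^!\mathbbold{1}=h^!(\mathbbold{1}\boxtimes_k\mathbbold{1})\simeq h_1^!\mathbbold{1}\boxtimes_k h_2^!\mathbbold{1}$) and the fact that $\otimes$ commutes with external products via $p^*$. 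Chasing the definition of~\eqref{eq:nat_upper*!_upper!_1} one writes
\begin{align}
h^*(H_1\boxtimes_k H_2)\otimes h^!\mathbbold{1}
&\simeq (h_1^*H_1\otimes h_1^!\mathbbold{1})\boxtimes_k(h_2^*H_2\otimes h_2^!\mathbbold{1})\\
&\xrightarrow{\sim} (h_1^!H_1)\boxtimes_k(h_2^!H_2)\simeq h^!(H_1\boxtimes_k H_2),
\end{align}
and one checks the composite agrees with~\eqref{eq:nat_upper*!_upper!_1} for $h$. Third, I would deduce the proposition: applying this lemma to $h_i=\Gamma_{f_i}$ and to the objects $H_1=G_1\boxtimes_k(\text{first half of }D)$, $H_2=G_2\boxtimes_k(\text{second half of }D)$—or more cleanly, handle the extra $D$ by noting that $\mathbf{T}_c(S)$-transversality of $\Gamma_{f_i}$ with respect to $G_i\boxtimes_k D$ for all $D$, combined with the product lemma, yields $\Gamma_f$-transversality against $G_1\boxtimes_k G_2\boxtimes_k D$ for all $D\in\mathbf{T}_c(S)$, which is exactly $C$-transversality of $f$ against $G_1\boxtimes_k G_2$.

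**Main obstacle.** The genuinely delicate point is the compatibility check in the product lemma: one must verify that the isomorphism assembled from the two K\"unneth isomorphisms (for $f^!$ and for $f_!$) and the external-product compatibility of $\otimes$ is literally the canonical map~\eqref{eq:nat_upper*!_upper!_1} for the product morphism $h$, not merely an abstract isomorphism between the same source and target. This requires unwinding the definition of~\eqref{eq:nat_upper*!_upper!} through the unit/counit of $(h_!,h^!)$ and the projection formula~\eqref{Ex_!tens}, and matching it factor-by-factor using the naturality of the K\"unneth maps (in particular the decomposition in Remark~\ref{onebyone} and the one-variable-at-a-time reductions used throughout Section~\ref{chap_kun}). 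The argument for the extra $D$-factor and for passing from $S=\operatorname{Spec}(k)$ to general $S$ is then purely formal, essentially a re-bracketing of external products, and should not present difficulty; I would relegate it to a short paragraph of diagram bookkeeping. I expect the whole proof to be, modulo this one coherence verification, a clean application of the field-base K\"unneth formulas already established.
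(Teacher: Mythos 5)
The mathematical core of your proposal --- the ``product of transversal morphisms is transversal'' lemma in your second key step --- is exactly the paper's proof of this proposition: one rewrites $f^*(G_1\boxtimes_k G_2)\otimes f^!\mathbbold{1}_{Y_1\times_k Y_2}$ as $(f_1^*G_1\otimes f_1^!\mathbbold{1}_{Y_1})\boxtimes_k(f_2^*G_2\otimes f_2^!\mathbbold{1}_{Y_2})$ using the K\"unneth isomorphism for $f^!$ over the base field (Proposition~\ref{Kunneth_thm}), applies the two hypotheses, reassembles via~\eqref{Kunneth}, and then verifies that the composite isomorphism agrees with the canonical map~\eqref{eq:nat_upper*!_upper!_1}; the coherence check you single out as the main obstacle is precisely the ``straightforward to check'' step in the paper. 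One small citation slip: the identification $f^!\mathbbold{1}_{Y_1\times_kY_2}\simeq f_1^!\mathbbold{1}_{Y_1}\boxtimes_k f_2^!\mathbbold{1}_{Y_2}$ is again an instance of the $f^!$-K\"unneth isomorphism (Proposition~\ref{Kunneth_thm}, which needs the condition~\ref{resol} and base $\operatorname{Spec}(k)$), not of the $f_!$-formula of Lemma~\ref{Kunneth_formula}, which only produces the comparison map by adjunction.

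However, your framing misreads the statement. Here $G_i\in\mathbf{T}(Y_i)$ lives on the \emph{target} of $f_i$, so ``$G_i$-transversal'' is the plain condition of the first part of Definition~\ref{def:Ftransversal}, and likewise $G_1\boxtimes_k G_2\in\mathbf{T}(Y_1\times_kY_2)$ lives on the target of $f$: there is no graph morphism to introduce and no auxiliary object $D$ to quantify over. Your ``cleaner assertion'' is therefore not a reduction of the proposition --- it \emph{is} the proposition, and your first and third steps are unnecessary. Worse, if they were needed they would constitute a genuine gap: your claim that one can pass from transversality against external products $D_1\boxtimes_k D_2$ to transversality against arbitrary constructible $D$ by ``re-bracketing'' fails, because constructible objects on a product are not generated by external products. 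This is exactly the obstruction that makes the source-object analogue of this statement only a conjecture in the paper (Conjecture~\ref{conj:kuntypeTrans}), established there only in the special case $Y_2=\operatorname{Spec}(k)$ (Proposition~\ref{prop:kuntypeTransconj}) by a different argument. So: keep your second step, which is correct and is the paper's argument, and discard the graph-theoretic wrapper.
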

\proof
By assumption and Proposition~\ref{Kunneth_thm}, we have the following isomorphism:
\begin{align}
\label{eq:kuntypeTrans}
\begin{split}
f^\ast(G_1\boxtimes_k G_2)\otimes f^!\mathbbold 1_{Y_1\times_k Y_2}
&\overset{{\eqref{Kunneth}}}{\simeq}
(f_1^\ast(G_1)\boxtimes_k f_2^\ast(G_2))\otimes (f_1^!\mathbbold 1_{Y_1}\boxtimes_k f_2^!\mathbbold 1_{Y_2})\\
&\simeq
(f_1^\ast G_1\otimes f_1^!\mathbbold 1_{Y_1})\boxtimes_k (f_2^\ast G_2\otimes f_2^!\mathbbold 1_{Y_2})\\
&\overset{{\eqref{eq:nat_upper*!_upper!_1}}}{\simeq} 
f_1^!G_1\boxtimes_S f_2^! G_2
\overset{\eqref{Kunneth}}{\simeq}
f^!(G_1\boxtimes_k G_2).
\end{split}
\end{align}
It is straightforward to check that the map~\eqref{eq:kuntypeTrans} agrees with~\eqref{eq:nat_upper*!_upper!_1}, and therefore $f$ is $G_1\boxtimes_k G_2$-transversal.
\endproof
\begin{conjecture}\label{conj:kuntypeTrans}
Let $f_1:X_1\to Y_1$ and $f_2:X_2\to Y_2$ be two morphisms of schemes, and let $f:X_1\times_k X_2\to Y_1\times_k Y_2$ be their product.
For $i=1,2$,  let $F_i\in\mathbf{T}(X_i)$. 
 If $f_i$ is $F_i$-transversal for $i=1,2$, then $f$ is $F_1\boxtimes F_2$-transversal.
\end{conjecture}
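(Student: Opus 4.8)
The plan is to reduce the ``upstairs'' transversality K\"unneth statement to the already-established ``downstairs'' one, Proposition~\ref{prop:kuntypeTrans}, together with a devissage argument. The starting point is the elementary geometric observation that the graph of a product is, after the reordering isomorphism $(X_1\times_k X_2)\times_k(Y_1\times_k Y_2)\cong(X_1\times_k Y_1)\times_k(X_2\times_k Y_2)$, the product of the graphs: under this identification one has $\Gamma_f=\Gamma_{f_1}\times_k\Gamma_{f_2}$, and an object of the form $(F_1\boxtimes_k F_2)\boxtimes_k(D_1\boxtimes_k D_2)$ corresponds to $(F_1\boxtimes_k D_1)\boxtimes_k(F_2\boxtimes_k D_2)$. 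Since the six functors are compatible with such reorderings, unwinding Definition~\ref{def:Ftransversal}\eqref{def:Ftransversal2} shows that ``$f$ is $F_1\boxtimes_k F_2$-transversal'' is equivalent to: for every $D\in\mathbf{T}_c(Y_1\times_k Y_2)$, the morphism $\Gamma_{f_1}\times_k\Gamma_{f_2}$ is $(F_1\boxtimes_k F_2)\boxtimes_k D$-transversal.

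For $D$ of the special external form $D_1\boxtimes_k D_2$ with $D_i\in\mathbf{T}_c(Y_i)$ this is immediate: the hypothesis that $f_i$ is $F_i$-transversal means exactly that $\Gamma_{f_i}$ is $F_i\boxtimes_k D_i$-transversal for all $D_i$, so Proposition~\ref{prop:kuntypeTrans} applied to $\Gamma_{f_1}$ and $\Gamma_{f_2}$ gives that $\Gamma_{f_1}\times_k\Gamma_{f_2}$ is $(F_1\boxtimes_k D_1)\boxtimes_k(F_2\boxtimes_k D_2)$-transversal, which by the previous paragraph is what is wanted for such $D$.

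To pass from external $D$ to an arbitrary $D$ I would argue by devissage in the variable $D$. Using the functor $f^{\Delta}$ of the remark following Definition~\ref{def:Ftransversal}, the assignment $D\mapsto\Gamma_f^{\Delta}\bigl((F_1\boxtimes_k F_2)\boxtimes_k D\bigr)$ is triangulated and additive and commutes with Tate twists, so the full subcategory $\mathcal{D}\subseteq\mathbf{T}_c(Y_1\times_k Y_2)$ of those $D$ for which $\Gamma_f$ is $(F_1\boxtimes_k F_2)\boxtimes_k D$-transversal is a thick triangulated subcategory stable under Tate twists. By strong devissage (valid under~\ref{resol}, and reducing from a finite purely inseparable extension of $k$ to $k$ itself by the usual $\mathcal{Rad}$-base change arguments) it then suffices to check that $\mathcal{D}$ contains every primitive Chow motive, i.e. that $\Gamma_f$ is $(F_1\boxtimes_k F_2)\boxtimes_k g_*\mathbbold{1}_W$-transversal for every projective morphism $g\colon W\to Y_1\times_k Y_2$ with $W$ smooth over $k$. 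Writing $Z=X_1\times_k X_2$ and $S=Y_1\times_k Y_2$, proper base change along $\mathrm{pr}_S\colon Z\times_k S\to S$ together with the projection formula~\eqref{Ex_*tens} identify $(F_1\boxtimes_k F_2)\boxtimes_k g_*\mathbbold{1}_W$ with $(id_Z\times_k g)_*$ of the pullback of $F_1\boxtimes_k F_2$ to $Z\times_k W$; one then factors $g=a\circ\Gamma_g$ with $a\colon S\times_k W\to S$ the projection (smooth, since $W$ is smooth over $k$) and $\Gamma_g$ the graph closed immersion, handles the smooth factor $a$ by Lemma~\ref{lem:smoothtranComp}(1), and tries to push transversality of $\Gamma_f$ through the proper morphism $id_Z\times_k g$ via Lemma~\ref{lem:smoothtranComp}\eqref{lem:propertranComp}, ending up with an external object to which Proposition~\ref{prop:kuntypeTrans} applies again.

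The hard part is precisely this last, proper, step. To invoke Lemma~\ref{lem:smoothtranComp}\eqref{lem:propertranComp} one needs the base-change comparison map~\eqref{eq:propertranComp} to be an isomorphism; but in our situation neither the relevant proper morphism is smooth nor does its target factor through an open subscheme smooth over $k$ (because $S=Y_1\times_k Y_2$ is an arbitrary scheme), so the sufficient conditions recorded in the remark after Lemma~\ref{lem:smoothtranComp} do not apply, and there appears to be no purely formal reason for~\eqref{eq:propertranComp} to be invertible. In the \'etale setting this missing input is supplied by the geometry of singular supports (their compatibility with proper push-forward and with external products, as used in \cite{YZ18}); in the present categorical framework that tool is unavailable, which is why the statement is only a conjecture. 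A complete proof along the above lines would therefore have to be carried out either under an extra hypothesis forcing~\eqref{eq:propertranComp}, or by first establishing an equivalence between $F$-transversality and a suitable universal local acyclicity property, as in Section~\ref{section:purity_la_trans}, where the product case is more approachable because the K\"unneth formula~\eqref{eq:th_lower!} for $f_!$ holds over an arbitrary base.
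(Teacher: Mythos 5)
The statement you were asked to prove is stated in the paper only as a conjecture: the paper itself offers no proof of Conjecture~\ref{conj:kuntypeTrans}, and establishes only the special case $Y_2=\operatorname{Spec}(k)$ in Proposition~\ref{prop:kuntypeTransconj}. Your proposal is correctly calibrated to this situation: you do not claim a complete proof, and the partial reductions you do carry out are sound. The identification of $\Gamma_f$ with $\Gamma_{f_1}\times_k\Gamma_{f_2}$ under the reordering isomorphism, the observation that for external coefficients $D=D_1\boxtimes_k D_2$ the required transversality of $\Gamma_f$ follows from Proposition~\ref{prop:kuntypeTrans} applied to the two graphs, and the devissage in the variable $D$ (the class of good $D$ is thick, triangulated and twist-stable, so under~\ref{resol} one reduces to primitive Chow motives $g_*\mathbbold{1}_W(n)$) are all correct. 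Note that your external-coefficient case already subsumes the paper's Proposition~\ref{prop:kuntypeTransconj}: when $Y_2=\operatorname{Spec}(k)$, every constructible object of $\mathbf{T}_c(Y_1\times_kY_2)\simeq\mathbf{T}_c(Y_1)$ is trivially of external form, which is exactly why that case is provable; the paper argues it slightly more directly, via the base change map \eqref{Ex*!} and the K\"unneth isomorphism \eqref{Kunneth}, without any devissage.

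The obstruction you isolate is genuine and is precisely why the general statement remains a conjecture: to treat $D=g_*\mathbbold{1}_W(n)$ for a projective morphism $g\colon W\to Y_1\times_kY_2$ one would need to propagate transversality through a proper morphism, i.e. invoke Lemma~\ref{lem:smoothtranComp}\eqref{lem:propertranComp}, whose hypothesis is the invertibility of the exchange map \eqref{eq:propertranComp}; since the relevant proper morphism is not smooth and the base $Y_1\times_kY_2$ need not admit a smooth open through which the graph factors, none of the criteria available in the paper applies, and there is no formal reason for \eqref{eq:propertranComp} to be an isomorphism. In the $\ell$-adic setting the analogous K\"unneth statement for transversality is obtained in \cite{YZ18} using singular supports (their compatibility with proper push-forward and external products), a geometric input with no motivic substitute at present. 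So your write-up contains no error, but also no proof of the full statement — which matches its status in the paper.
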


\begin{proposition}\label{prop:kuntypeTransconj}
Conjecture \ref{conj:kuntypeTrans} is true if $Y_2=\operatorname{Spec}(k)$: let $f_1\colon X_1\to Y_1$ be a morphism of schemes.
Let $F_1\in{\mathbf T}(X_1)$ such that $f_1$ is $F_1$-transversal. 
Then for any $k$-scheme $X_2$ and any object $F_2\in \mathbf T(X_2)$,
the composition morphism $f\colon X_1\times_k X_2\xrightarrow{p_1^{12}}X_1\xrightarrow{f_1}Y_1$ is
$F_1\boxtimes_k F_2$-transversal.
\end{proposition}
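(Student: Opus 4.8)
The plan is to reduce the claim to Proposition~\ref{prop:kuntypeTrans} by recognizing the graph of $f$ as a product morphism one of whose factors is an identity. Write $p_1\colon X_1\times_k X_2\to X_1$ for the projection, so that $f=f_1\circ p_1$; by the very definition of $C$-transversality (Definition~\ref{def:Ftransversal}) it is enough to fix $D\in\mathbf{T}_c(Y_1)$ and to show that the graph morphism $\Gamma_f\colon X_1\times_k X_2\to(X_1\times_k X_2)\times_k Y_1$ is $((F_1\boxtimes_k F_2)\boxtimes_k D)$-transversal. The key elementary observation is that, under the canonical permutation isomorphism $\tau\colon(X_1\times_k Y_1)\times_k X_2\xrightarrow{\sim}(X_1\times_k X_2)\times_k Y_1$, one has $\Gamma_f=\tau\circ(\Gamma_{f_1}\times_k id_{X_2})$ and $\tau^*((F_1\boxtimes_k F_2)\boxtimes_k D)\simeq(F_1\boxtimes_k D)\boxtimes_k F_2$. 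Since the transversality condition is stable under composing with an isomorphism on either side, $\Gamma_f$ is $((F_1\boxtimes_k F_2)\boxtimes_k D)$-transversal if and only if $\Gamma_{f_1}\times_k id_{X_2}$ is $((F_1\boxtimes_k D)\boxtimes_k F_2)$-transversal.

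Next I would apply Proposition~\ref{prop:kuntypeTrans} to the two $k$-morphisms $\Gamma_{f_1}\colon X_1\to X_1\times_k Y_1$ and $id_{X_2}\colon X_2\to X_2$, taking the objects on the \emph{targets} to be $F_1\boxtimes_k D\in\mathbf{T}(X_1\times_k Y_1)$ and $F_2\in\mathbf{T}(X_2)$. The hypothesis that $f_1$ is $F_1$-transversal is, by Definition~\ref{def:Ftransversal} applied to $f_1$ and the chosen $D$, exactly the statement that $\Gamma_{f_1}$ is $(F_1\boxtimes_k D)$-transversal; and $id_{X_2}$ is trivially $F_2$-transversal because $id^*F_2\otimes id^!\mathbbold{1}_{X_2}\simeq F_2\simeq id^!F_2$ compatibly with~\eqref{eq:nat_upper*!_upper!_1}. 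Hence Proposition~\ref{prop:kuntypeTrans} yields that $\Gamma_{f_1}\times_k id_{X_2}$ is $((F_1\boxtimes_k D)\boxtimes_k F_2)$-transversal, so by the previous paragraph $\Gamma_f$ has the required transversality. As $D\in\mathbf{T}_c(Y_1)$ was arbitrary, Definition~\ref{def:Ftransversal} shows that $f$ is $F_1\boxtimes_k F_2$-transversal.

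The argument is essentially bookkeeping, so I do not expect a real obstacle; the only step deserving care is the identification $\Gamma_f=\tau\circ(\Gamma_{f_1}\times_k id_{X_2})$ together with the matching of the coefficient objects under $\tau$, which one verifies on the universal properties of the fibre products (or by a short diagram chase). If one prefers not to invoke Proposition~\ref{prop:kuntypeTrans}, its proof can simply be transcribed in this special case: since the base is $\operatorname{Spec}(k)$ and $\mathbf{T}$ satisfies~\ref{resol}, the K\"unneth isomorphism~\eqref{Kunneth} for the functor $f^!$ applies to $h:=\Gamma_{f_1}\times_k id_{X_2}$, both $h^!\mathbbold{1}$ and $h^*(-)$ decompose as external products over $k$, the external product commutes with $\otimes$, and the transversality isomorphism for $\Gamma_{f_1}$ combined with the triviality of the one for $id_{X_2}$ produces the isomorphism for $h$, which one checks to coincide with the canonical map~\eqref{eq:nat_upper*!_upper!_1}.
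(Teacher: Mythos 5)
Your proof is correct. The route is a mild repackaging of the paper's: the paper also rests everything on the identification $\Gamma_f=\Gamma_{f_1}\times_k\mathrm{id}_{X_2}$ (it simply orders the target as $X_1\times_kY_1\times_kX_2$, so your permutation $\tau$ is suppressed notationally), but instead of quoting Proposition~\ref{prop:kuntypeTrans} it writes out the chain of isomorphisms directly: the base-change isomorphism~\eqref{Ex*!} for the square over $\Gamma_{f_1}$, the hypothesis that $\Gamma_{f_1}$ is $F_1\boxtimes_kG$-transversal, and the K\"unneth isomorphism~\eqref{Kunneth} for $\Gamma_f^!$, then checks the composite agrees with the canonical map~\eqref{eq:nat_upper*!_upper!_1}. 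Your reduction --- feed $\Gamma_{f_1}$ and $\mathrm{id}_{X_2}$ with coefficients $F_1\boxtimes_kD$ and $F_2$ into Proposition~\ref{prop:kuntypeTrans}, using that the identity is transversal for everything and that transversality is invariant under post-composition with an isomorphism --- is legitimate (the hypotheses match Definition~\ref{def:Ftransversal} exactly, and no constructibility is needed on the $G_i$ in Proposition~\ref{prop:kuntypeTrans}) and buys a shorter, more formal argument; what it hides is exactly the same computation, since the proof of Proposition~\ref{prop:kuntypeTrans} is the K\"unneth chain the paper repeats here. Both versions share the same residual bookkeeping obligation, namely verifying that the constructed isomorphism coincides with the canonical map under the identifications ($\tau^!\simeq\tau^*$, $\Gamma_f\simeq\tau\circ(\Gamma_{f_1}\times_k\mathrm{id})$), which you correctly flag as a short diagram chase and the paper dismisses as a straightforward check.
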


\proof
Denote by $p_1$, $p_2$, $p_3$ the projections of $X_1\times_k Y_1\times_k X_2$ to its components.
By definition, we need to show that for any $G\in\mathbf T(Y_1)$, the graph of $f$
\begin{align}
\Gamma_f
=
\Gamma_{f_1}\times_k id_{X_2}
\colon 
X_1\times_k X_2
\to
X_1\times_k Y_1\times_k X_2
\end{align}
is $p_1^\ast F_1\otimes p_2^\ast G \otimes p_3^\ast F_2$-transversal, 
namely the following canonical map is an isomorphism:
\begin{align}\label{eq:prop:kuntypeTransconj-p1}
\Gamma_f^\ast(p_1^\ast F_1\otimes p_2^\ast G\otimes p_3^\ast F_2)
\otimes \Gamma_f^!\mathbbold 1_{X_1\times_k Y_1\times_k X_2}
\xrightarrow{\eqref{eq:nat_upper*!_upper!_1}}
\Gamma_f^!(p_1^\ast F_1\otimes p_2^\ast G\otimes p_3^\ast F_2).
\end{align}

Since $\Gamma_{f_1}$ is $F_1\boxtimes_k G$-transversal, the following canonical map is an isomorphism:
\begin{align}\label{eq:prop:kuntypeTransconj-p2}
\Gamma_{f_1}^\ast(F_1\boxtimes_k G)\otimes\Gamma_{f_1}^!\mathbbold 1_{X_1\times_k Y_1}
\xrightarrow{\eqref{eq:nat_upper*!_upper!_1}}
\Gamma_{f_1}^!(F_1\boxtimes_k G).
\end{align}
By Proposition~\ref{Kunneth_thm}, we have the following isomorphism:
\begin{align}
\label{eq:prop:kuntypeTransconj-p3}
\begin{split}
&\Gamma_f^\ast(p_1^\ast F_1\otimes p_2^\ast G\otimes p_3^\ast F_2)
\otimes \Gamma_f^!\mathbbold 1_{X_1\times_k Y_1\times_k X_2}\\
\overset{\eqref{Ex*!}}{\simeq}
&(\Gamma_{f_1}^\ast(F_1\boxtimes_k G)\boxtimes_k  F_2)
\otimes p_1^{12*}\Gamma_{f_1}^!\mathbbold 1_{X_1\times_k Y_1}\\
\overset{\eqref{eq:prop:kuntypeTransconj-p2}}{\simeq}
&\Gamma_{f_1}^!(F_1\boxtimes_k G)\boxtimes_k F_2
\overset{\eqref{Kunneth}}{\simeq}
\Gamma_f^!(p_1^\ast F_1\otimes p_2^\ast G\otimes p_3^\ast F_2).
\end{split}
\end{align}
It is straightforward to check that the map~\eqref{eq:prop:kuntypeTransconj-p3} agrees with~\eqref{eq:prop:kuntypeTransconj-p1}, and therefore the latter is an isomorphism, which finishes the proof.
\endproof

\subsection{Relative K\"unneth formulas and the relative Verdier pairing}\label{subsec:rkf}
In this section we use the results in Section~\ref{subsec:tc} to extend the K\"unneth formulas to the relative setting, under some transversality assumptions. Using such results we define the relative Verdier pairing as in \cite{YZ18}.
\subsubsection{}\label{transSec4:notation} Let $S$ be a scheme, and 
let $\pi_1\colon X_1\to S$ and $ \pi_2\colon X_2\to S$ be two morphisms. 
For $i=1,2$, we denote by $p_i:X_1\times_SX_2\to X_i $ the projections.
\begin{lemma}\label{lem:boxtimesTrans} 
We use the notations of \ref{transSec4:notation}, and assume that the morphism $\pi_2$ is smooth. Let $F_1$ be an object of $ \mathbf{T}(X_1)$ such that the morphism $\pi_1\colon X_1\to S$ is universally $F_1$-transversal.
Then the canonical closed 
immersion  $\iota\colon X_1\times_SX_2 \to X_1\times_kX_2$ is 
 $F_1\boxtimes_k F_2$-transversal for any $F_2\in \mathbf{T}(X_2)$
\end{lemma}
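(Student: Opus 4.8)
The statement reduces the transversality of the closed immersion $\iota\colon X_1\times_S X_2\to X_1\times_k X_2$ to that of $\pi_1$, so the plan is to factor $\iota$ through an intermediate scheme over which one of the two known properties (universal $F_1$-transversality of $\pi_1$, smoothness of $\pi_2$) applies directly. The natural candidate is the Cartesian square
\begin{align}
\begin{gathered}
\xymatrix{
X_1\times_S X_2 \ar[r]^-{\iota} \ar[d]_-{p_2} & X_1\times_k X_2 \ar[d]^-{\pi_1\times_k \mathrm{id}_{X_2}}\\
X_2 \ar[r]^-{\Gamma_{\pi_2}} & S\times_k X_2
}
\end{gathered}
\end{align}
where $\Gamma_{\pi_2}$ is the graph of $\pi_2$; thus $\iota$ is the base change of the closed immersion $\Gamma_{\pi_2}$ along $\pi_1\times_k\mathrm{id}_{X_2}$. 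First I would record the definition of $F_1\boxtimes_k F_2$-transversality of $\iota$: by Definition~\ref{def:Ftransversal}~\eqref{def:Ftransversal2} one must show that for every $D\in\mathbf{T}_c(X_1\times_k X_2)$, the graph $\Gamma_\iota\colon X_1\times_S X_2\to (X_1\times_S X_2)\times_k(X_1\times_k X_2)$ is $(F_1\boxtimes_k F_2)\boxtimes_k D$-transversal — equivalently, via the remark following Definition~\ref{def:Ftransversal}, that the map~\eqref{eq:nat_upper*!_upper!_1} is an isomorphism for the relevant objects.

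The key step is then to exploit that $\pi_1$ is \emph{universally} $F_1$-transversal, so its base change $\pi_1\times_k\mathrm{id}_{X_2}\colon X_1\times_k X_2\to S\times_k X_2$ is $(F_1\boxtimes_k \mathbbold{1}_{X_2})$-transversal — indeed $F_1\boxtimes_k\mathbbold{1}_{X_2}$ is the restriction $(\mathrm{pr}_1)^*F_1$, and universal transversality is stable under base change by definition (cf. Definition~\ref{def:loc_acyclic}). Separately, since $\pi_2$ is smooth, its graph $\Gamma_{\pi_2}$ is a regular closed immersion between schemes smooth over $S\times_k X_2$-locally, hence lci with $\mathbbold{1}_{S\times_k X_2}$ being $\Gamma_{\pi_2}$-pure by~\ref{recall_pur_trans}; moreover $\Gamma_{\pi_2}$ is $B$-transversal for every $B$ because $\Gamma_{\pi_2}$ is a section of the smooth projection $S\times_k X_2\to S$ composed with... more precisely, one uses the standard fact from~\ref{ex:example_transversal} that a smooth morphism is transversal to everything, together with the purity transformation, to see that the relevant map~\eqref{eq:nat_upper*!_upper!} is an isomorphism for $\Gamma_{\pi_2}$. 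The heart of the argument is a compatibility statement: transversality conditions pull back along arbitrary base change when the object on the base is appropriately transversal — this should be either an instance of, or provable by the same method as, Lemma~\ref{lem:comptrans} and Lemma~\ref{lem:smoothtranComp}. Concretely, I would write $\iota = (\text{base change of }\Gamma_{\pi_2})$ and chase the exchange isomorphism $Ex(\Delta^{*!})$ of~\eqref{Ex*!}, which is an isomorphism here because $\pi_1\times_k\mathrm{id}_{X_2}$ satisfies base change by Proposition~\ref{SD_ula} (it lies over $k$, so $\mathcal{S}_k$-base change applies after the further base change). Combining the $\Gamma_{\pi_2}$-purity isomorphism with this exchange isomorphism and the projection/Künneth formulas gives that~\eqref{eq:nat_upper*!_upper!_1} for $\iota$ is an isomorphism.

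The main obstacle I anticipate is bookkeeping rather than conceptual: one must check that the chain of canonical isomorphisms assembled from $Ex(\Delta^{*!})$, the purity transformation~\eqref{eq:pur_trans}, and the Künneth maps~\eqref{Kunneth},~\eqref{dual_hom} actually \emph{agrees} with the single canonical map~\eqref{eq:nat_upper*!_upper!_1} whose invertibility is being asserted — this is the recurring ``it is straightforward to check that the map agrees with the canonical map'' step that appears throughout Section~\ref{subsec:tc} (e.g. in Lemma~\ref{lem:comptrans}, Lemma~\ref{lem:diagtrans}, Proposition~\ref{prop:kuntypeTransconj}). The second, smaller, point of care is handling the quantifier over $D\in\mathbf{T}_c(X_1\times_k X_2)$ correctly: since $\pi_1$ is assumed \emph{universally} $F_1$-transversal, the needed transversality of $\pi_1\times_k\mathrm{id}_{X_2}$ holds against all such $D$ simultaneously, so no devissage on $D$ is required — but one should state this cleanly, perhaps isolating it as the observation that universal $C$-transversality of a morphism $g$ implies $g$ is $C\boxtimes_k D$-transversal for every $D$, which follows by unwinding the definitions. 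With these two points dispatched, the proof is a short diagram chase.
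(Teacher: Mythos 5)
Your proposal has genuine gaps, and the steps meant to carry the argument would fail. First, the statement to be proved is the first notion of Definition~\ref{def:Ftransversal}: a single isomorphism \eqref{eq:nat_upper*!_upper!_1} for $\iota$ and the object $F_1\boxtimes_kF_2\in\mathbf{T}(X_1\times_kX_2)$ on the \emph{target} of $\iota$; your unwinding via the graph of $\iota$ and all $D\in\mathbf{T}_c(X_1\times_kX_2)$ is not what is asserted (and is ill-typed, since $F_1\boxtimes_kF_2$ does not live on the source of $\iota$). More seriously, the inputs you feed into your base-change square are wrong. The bottom arrow $\Gamma_{\pi_2}\colon X_2\to S\times_kX_2$ is a base change of the diagonal $\delta_{S/k}$, so for a singular $S$ it need not be a regular immersion and $\mathbbold{1}$ need not be $\Gamma_{\pi_2}$-pure (take $X_2=S$ and $\pi_2=\mathrm{id}$ with $S$ a cuspidal curve: $\Gamma_{\pi_2}=\delta_{S/k}$); smoothness of $\pi_2$ over $S$ is irrelevant here, since no smoothness of $S$ over $k$ is assumed. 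The claim that $\Gamma_{\pi_2}$ is transversal to every object via~\ref{ex:example_transversal} is also false: that item covers smooth morphisms and dualizable coefficients, and a closed immersion is not $B$-transversal for general $B$ (already $i^!j_!\mathbbold{1}\neq 0=i^*j_!\mathbbold{1}$ for a closed point of $\mathbb{A}^1$). Likewise, $Ex(\Delta^{*!})$ for your square is not an isomorphism for formal reasons: $\pi_1\times_k\mathrm{id}_{X_2}$ is not in the class $\mathcal{S}_k$, so Proposition~\ref{SD_ula} and Lemma~\ref{lem:Ex*!_iso} do not apply; controlling $\iota^!$ along this (non-regular) closed immersion is exactly where the hypothesis on $\pi_1$ must enter, not a formal base-change fact. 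Finally, the ``key compatibility statement'' you defer to — that transversality pulls back along base change of the square — is not an instance of Lemma~\ref{lem:comptrans} (a composition statement) or Lemma~\ref{lem:smoothtranComp}, and in the present situation it is essentially the lemma itself, so invoking it is circular; a genuine proof along your lines would require a version of Lemma~\ref{lem:BGB3} with a non-regular bottom immersion, which is not available.

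The paper avoids all of this by a different decomposition: factor $\iota$ as the graph $\Gamma_{p_2}\colon X_1\times_SX_2\to(X_1\times_SX_2)\times_kX_2$ of $p_2$ followed by the projection $p_{13}\colon(X_1\times_SX_2)\times_kX_2\to X_1\times_kX_2$. Smoothness of $\pi_2$ is used to make $p_{13}$ smooth (since $X_1\times_SX_2\to X_1$ is a base change of $\pi_2$), so $p_{13}$ is lci, $\mathbbold{1}$ is $p_{13}$-pure and $p_{13}$ is $(F_1\boxtimes_kF_2)$-transversal; universal $F_1$-transversality of $\pi_1$ is used through its base change $p_2$, which is universally $p_1^*F_1$-transversal, and by the very definition of the second notion this says that $\Gamma_{p_2}$ is $p_1^*F_1\boxtimes_kF_2=p_{13}^*(F_1\boxtimes_kF_2)$-transversal. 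Lemma~\ref{lem:comptrans} applied to $\iota=p_{13}\circ\Gamma_{p_2}$ then concludes. Your correct observation that universal transversality passes to base changes of $\pi_1$ is indeed the right ingredient, but it must be applied to $p_2$ and combined with the composition lemma, not with a base-change-of-the-square argument.
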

\begin{proof}
Denote by $p_{13}:X_1\times_S X_2\times_k X_2\to X_1\times_k X_2$ the projection, and the graph of $p_2$
\begin{align}
\Gamma_{p_2}\colon X_1\times_SX_2\to X_1\times_S X_2\times_k X_2,
\end{align}
with the commutative diagram 
\begin{align}
\begin{gathered}
\xymatrix{
X_1\times_S X_2\ar[r]^-{\iota}\ar[rd]_<(.2){\Gamma_{p_2}}&X_1\times_k X_2\\
&X_1\times_S X_2\times_k X_2\ar[u]_-{p_{13}}.
}
\end{gathered}
\end{align}
Let $F_2$ be an object of $\mathbf{T}(X_2)$. Since $\pi_1\colon X_1\to S$ is universally $F_1$-transversal, the morphism $p_2\colon X_1\times_SX_2\to X_2$ is also universally $p_1^\ast F_1$-transversal.
Consequently $\Gamma_{p_2}$ is $p_1^\ast F_1\boxtimes_k F_2=p_{13}^\ast(F_1\boxtimes_k F_2)$-transversal. 
By assumption $p_{13}$ is smooth, and by~\ref{ex:example_transversal}, $p_{13}$ is universally $F_1\boxtimes_k F_2$-transversal.
By Lemma \ref{lem:comptrans}, the composition $\iota=p_{13}\circ\Gamma$ is  
$F_1\boxtimes_k F_2$-transversal, which finishes the proof.
\end{proof}

\begin{proposition}[{\cite[Proposition 3.1.3]{YZ18}}]\label{thm:relKuHom}
We use the notations of \ref{transSec4:notation}, and let $E_i$ and $F_i$ be 
objects of $ \mathbf{T}_c(X_i)$ for $i=1,2$. 
Assume that the following conditions are satisfied:
\begin{enumerate}
\item The morphisms $\pi_1$ and $\pi_2$ are smooth, and both $X_1$ and $X_2$ are smooth $k$-schemes.
\item \label{num:relKuHom_diag}
For $i=1,2$, the diagonal morphism $X_i\to X_i\times_k X_i$ is 
$\mathbb D(E_i)\boxtimes_k F_i$-transversal. 
\item For $i=1,2$, $\pi_i$ is universally $E_i$-transversal and universally $F_i$-transversal.
\end{enumerate}
Then the following canonical map is an isomorphism:
\begin{align}
\label{eq:relKuHom}
\underline{Hom}(E_1,F_1)\boxtimes_S \underline{Hom}(E_2,F_2)
\xrightarrow{\eqref{Kunneth_part}} 
\underline{Hom} (E_1\boxtimes_S E_2, F_1\boxtimes_S F_2).
\end{align}

\end{proposition}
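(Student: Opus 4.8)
The plan is to reduce the relative statement over $S$ to the absolute Künneth formula over $k$ proven in Section~\ref{chap_kun} (Proposition~\ref{hom_product}, encoded in~\eqref{Kunneth_part}), exactly as one reduces the isomorphism~\eqref{dual_hom} to the field case. First I would dispose of the internal $\underline{Hom}$'s separately: under hypothesis~\eqref{num:relKuHom_diag} and hypothesis (3), Lemma~\ref{lem:diagtrans} gives canonical isomorphisms $\underline{Hom}(E_i,\mathbbold{1}_{X_i})\otimes F_i\xrightarrow{\sim}\underline{Hom}(E_i,F_i)$ over each $X_i$, and since $X_i$ is smooth over $k$ we have $\mathcal{K}_{X_i}\simeq Th_{X_i}(L_{X_i/k})$ invertible, so $\underline{Hom}(E_i,F_i)\simeq\mathbb{D}(E_i)\otimes F_i\otimes Th_{X_i}(-L_{X_i/k})$. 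Pulling back along $p_i\colon X_1\times_S X_2\to X_i$ and tensoring, the left-hand side of~\eqref{eq:relKuHom} becomes $p_1^*\mathbb{D}(E_1)\otimes p_1^*F_1\otimes p_2^*\mathbb{D}(E_2)\otimes p_2^*F_2$ up to a Thom twist, and since Thom twists commute with all six functors (see~\ref{par:devissage}) they can be ignored. Similarly Lemma~\ref{lem:diagtrans} applied over $X_1\times_S X_2$ — whose diagonal transversality I must verify — reduces the right-hand side $\underline{Hom}(E_1\boxtimes_S E_2,F_1\boxtimes_S F_2)$ to $\mathbb{D}(E_1\boxtimes_S E_2)\otimes(F_1\boxtimes_S F_2)$ up to twist.

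The second step is to compute $\mathbb{D}(E_1\boxtimes_S E_2)$ in terms of $\mathbb{D}(E_1)$ and $\mathbb{D}(E_2)$. Here I would factor $\boxtimes_S$ through $\boxtimes_k$ via the closed immersion $\iota\colon X_1\times_S X_2\to X_1\times_k X_2$, so $E_1\boxtimes_S E_2=\iota^*(E_1\boxtimes_k E_2)$. Using hypothesis (3) (universal $E_i$-transversality and $F_i$-transversality) together with Lemma~\ref{lem:boxtimesTrans}, the morphism $\iota$ is $E_1\boxtimes_k E_2$-transversal and $F_1\boxtimes_k F_2$-transversal; combined with the fact that $\iota$ is lci with invertible Thom space and $\mathbbold{1}$ is $\iota$-pure (both source and target are smooth over $k$, so~\ref{recall_pur_trans} applies), Lemma~\ref{lem:Hom_upper*} and Corollary~\ref{cor:Hom_upper*_1} give $\iota^*\mathbb{D}(E_1\boxtimes_k E_2)\simeq\mathbb{D}(\iota^*(E_1\boxtimes_k E_2))$ up to Thom twist. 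Then the \emph{absolute} Künneth formula over $k$ — Proposition~\ref{hom_product}, i.e.~\eqref{Kunneth_part} with $S=\operatorname{Spec}(k)$, which applies since $E_1,E_2$ are constructible — yields $\mathbb{D}(E_1\boxtimes_k E_2)\simeq\mathbb{D}(E_1)\boxtimes_k\mathbb{D}(E_2)$, hence $\mathbb{D}(E_1\boxtimes_S E_2)\simeq\mathbb{D}(E_1)\boxtimes_S\mathbb{D}(E_2)$ up to twist. Assembling everything, both sides of~\eqref{eq:relKuHom} are identified with $p_1^*(\mathbb{D}(E_1)\otimes F_1)\otimes p_2^*(\mathbb{D}(E_2)\otimes F_2)$ up to a common Thom twist, and a diagram chase (checking that the composite of these canonical isomorphisms is precisely the canonical map~\eqref{Kunneth_part}) concludes.

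The main obstacle I anticipate is exactly that last verification: showing the composite isomorphism is the canonical map, rather than differing from it by an automorphism. This is the same kind of coherence bookkeeping as in the proofs of Proposition~\ref{Kunneth_thm} and Proposition~\ref{hom_product}, and it will require carefully matching the unit/counit maps $\eta_{E_i},\epsilon_{E_i}$ and the purity transformations~\eqref{eq:pur_trans} at each stage. A subsidiary technical point is establishing the diagonal transversality of $X_1\times_S X_2$ needed to apply Lemma~\ref{lem:diagtrans} on the right-hand side; I expect this to follow from hypothesis~\eqref{num:relKuHom_diag} for the $X_i$ together with the smoothness of $\pi_1,\pi_2$ and Lemma~\ref{lem:boxtimesTrans} (or a direct product argument in the spirit of Proposition~\ref{prop:kuntypeTrans}), but it must be spelled out since it is not literally among the stated hypotheses. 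If this diagonal transversality is not automatic, an alternative is to bypass Lemma~\ref{lem:diagtrans} on the target entirely and instead build the isomorphism~\eqref{eq:relKuHom} directly by dévissage in $E_1$ and $E_2$, reducing via weak devissage to the case where $E_i=\pi_i'^*(\text{Chow motive})$ and manipulating the functors as in Section~\ref{chap_kun}; the transversality hypotheses are precisely what make these manipulations legal over the base $S$ rather than over $k$.
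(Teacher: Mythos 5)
Your proposal is correct and follows essentially the same route as the paper's proof: Lemma~\ref{lem:diagtrans} is applied on each $X_i$ and again on $X_1\times_S X_2$, and the identification $E_1^\vee\boxtimes_S E_2^\vee\simeq(E_1\boxtimes_S E_2)^\vee$ is obtained through the closed immersion $\iota\colon X_1\times_SX_2\to X_1\times_kX_2$ using Lemma~\ref{lem:boxtimesTrans}, Corollary~\ref{cor:Hom_upper*_1} and the absolute K\"unneth formula (Proposition~\ref{hom_product}), exactly as in the paper. The diagonal transversality over $X_1\times_S X_2$ that you flag as the remaining subsidiary point is settled in the paper by precisely the tools you anticipate, namely Lemmas~\ref{lem:comptrans}, \ref{lem:dualtranequi}, \ref{lem:boxtimesTrans} together with Proposition~\ref{prop:kuntypeTrans}.
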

\proof
We use the following notation: if $X$ is a scheme and $F\in\mathbf{T}(X)$, we denote $F^{\vee}\coloneq  \underline{Hom}(F, \mathbbold 1_{X})$.

By assumption the diagonal morphism 
$X_i\to X_i\times_k X_i$ is 
$\mathbb D(E_i)\boxtimes_k F_i$-transversal,
and by Lemma \ref{lem:diagtrans} the following canonical map is an isomorphism:
\begin{equation}
\label{eq:FE^vee}
 F_i\otimes  E_i^{\vee}=  F_i\otimes\underline{Hom}( E_i, \mathbbold 1_{X_i}) 
 \xrightarrow{\sim}  
 \underline{Hom}(E_i,  F_i).
\end{equation}
Hence we have the following isomorphism:
\begin{align}
\label{isom.lhs}
\begin{split}
 \underline{Hom}( E_1, F_1)\boxtimes_S \underline{Hom}( E_2, F_2)
&\overset{\eqref{eq:FE^vee}}{\simeq}
( F_1\otimes E_1^{\vee})\boxtimes_S( F_2 \otimes E_2^{\vee})\\
&\simeq 
( F_1\boxtimes_S F_2)\otimes( E_1^{\vee} \boxtimes_S E_2^{\vee}).
\end{split}
\end{align}

Denote by $\iota$ the canonical closed immersion $\iota\colon X_1\times_SX_2\to X_1\times_kX_2$. By Proposition~\ref{hom_product}, Corollary~\ref{cor:Hom_upper*_1} and Lemma \ref{lem:boxtimesTrans}, we have the following isomorphism:
\begin{align}
\label{isom.dual} 
\begin{split}
  E_1^{\vee} \boxtimes_S  E_2^{\vee}
=
\iota^*(E_1^{\vee} \boxtimes_k  E_2^{\vee})
\overset{\eqref{Kunneth_part}}{\simeq}
\iota^*(E_1\boxtimes_kE_2)^{\vee}
\overset{\eqref{eq:Hom_upper*_1}}{\simeq}
(\iota^*(E_1\boxtimes_kE_2))^{\vee}
=
(E_1\boxtimes_S E_2)^{\vee}.
 \end{split}
 \end{align}

By assumption~\eqref{num:relKuHom_diag}, Lemma \ref{lem:comptrans}, Lemma \ref{lem:dualtranequi} and Lemma \ref{lem:boxtimesTrans},
the diagonal morphism $X_i\to X_i\times_S X_i$ is 
$\mathbb D(E_i)\boxtimes_S F_i$-transversal for $i=1,2$. 
By Proposition \ref{prop:kuntypeTrans}, the diagonal morphism
$X_1\times_S X_2\to (X_1\times_S X_2)\times_k(X_1\times_S X_2)$ is $\mathbb D(E_1\boxtimes_SE_2)\boxtimes_k(F_1\boxtimes_S F_2)$-transversal.
Thus by Lemma \ref{lem:diagtrans}, the following canonical map is an isomorphism:
\begin{align}\label{eq:isomKunDual}
 (F_1\boxtimes_S F_2)\otimes (E_1\boxtimes_S E_2)^{\vee}
\to
 \underline{Hom}( E_1\boxtimes_S E_2,  F_1\boxtimes_SF_2).
\end{align}
We deduce from \eqref{isom.lhs}, \eqref{isom.dual} and \eqref{eq:isomKunDual} that the map~\eqref{eq:relKuHom} is an isomorphism, which finishes the proof.
\endproof

\begin{corollary}\label{cor:SSKunn}
We use the notations of \ref{transSec4:notation}, and let $F_i$ be 
an object of $ \mathbf{T}_c(X_i)$ for $i=1,2$. 
Assume that the following conditions are satisfied:
\begin{enumerate}
\item The morphisms $\pi_1$ and $\pi_2$ are smooth, and both $X_1$ and $X_2$ are smooth $k$-schemes.

\item For $i=1,2$, $\pi_i\colon X_i\to S$ is universally $F_i$-transversal.
\end{enumerate}
Then the map 
\begin{align}
\label{eq:SSKunn}
 F_1\boxtimes_S \underline{Hom}( F_2, \pi_2^!\mathbbold 1_S) 
 \to 
 \underline{Hom}(p_2^* F_2, p_1^! F_1)
\end{align}
given by the composition
\begin{align}
\begin{split}
 &F_1\boxtimes_S \underline{Hom}( F_2, \pi_2^!\mathbbold 1_S) 
 \to
 \underline{Hom}(p_2^* F_2, p_1^\ast  F_1\otimes p_2^*\pi_2^!\mathbbold 1_S)\\
 \xrightarrow{\eqref{Ex*!}}
 &\underline{Hom}(p_2^* F_2, p_1^\ast  F_1\otimes p_1^!\mathbbold 1_{X_1})
 \xrightarrow{\eqref{eq:nat_upper*!_upper!_1}}
 \underline{Hom}(p_2^* F_2, p_1^! F_1).
\end{split}
\end{align}
is an isomorphism.
\end{corollary}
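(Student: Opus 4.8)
The plan is to derive Corollary~\ref{cor:SSKunn} as a special case of Proposition~\ref{thm:relKuHom} by taking $E_1=\mathbbold 1_{X_1}$, $E_2=F_2$, and replacing the second entry $F_1$ there by $F_1$ as well, while taking the first entry to be the dualizing-type object. More precisely, I would apply Proposition~\ref{thm:relKuHom} with the pair $(E_1,F_1)=(\mathbbold 1_{X_1},F_1)$ over $X_1$ and $(E_2,F_2)=(F_2,\pi_2^!\mathbbold 1_S)$ over $X_2$; the left-hand side of~\eqref{eq:relKuHom} then reads $\underline{Hom}(\mathbbold 1_{X_1},F_1)\boxtimes_S\underline{Hom}(F_2,\pi_2^!\mathbbold 1_S)\simeq F_1\boxtimes_S\underline{Hom}(F_2,\pi_2^!\mathbbold 1_S)$, which is the source of~\eqref{eq:SSKunn}, and the right-hand side reads $\underline{Hom}(\mathbbold 1_{X_1}\boxtimes_SF_2,\,F_1\boxtimes_S\pi_2^!\mathbbold 1_S)=\underline{Hom}(p_2^*F_2,\,p_1^*F_1\otimes p_2^*\pi_2^!\mathbbold 1_S)$, which after applying $Ex(\Delta^{*!})$ in~\eqref{Ex*!} (an isomorphism since $\pi_1$, hence $p_2$, is smooth, or directly by transversality) and the map~\eqref{eq:nat_upper*!_upper!_1} becomes $\underline{Hom}(p_2^*F_2,p_1^!F_1)$. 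So the first step is to set up this dictionary and check that the composite built from~\eqref{Kunneth_part} matches the composite displayed in the statement; this last point is the "straightforward check" in the same spirit as the verifications in~\ref{subsec:tc}.

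The second step is to verify that the three hypotheses of Proposition~\ref{thm:relKuHom} are met for this choice of data. Hypothesis (1) is immediate: $\pi_1,\pi_2$ smooth and $X_1,X_2$ smooth over $k$ are exactly the hypotheses of the Corollary. For hypothesis (3): $\pi_1$ is universally $F_1$-transversal and universally $\mathbbold 1_{X_1}$-transversal (the latter holds automatically since $\pi_1$ is smooth, so $\mathbbold 1_{X_1}$ is $\pi_1$-pure by~\ref{recall_pur_trans}, equivalently $\pi_1$ is $\mathbbold 1_S$-transversal — note $\mathbbold 1_{X_1}=\pi_1^*\mathbbold 1_S$); $\pi_2$ is universally $F_2$-transversal by hypothesis, and universally $\pi_2^!\mathbbold 1_S$-transversal because $\pi_2$ is smooth so $\pi_2^!\mathbbold 1_S\simeq Th_{X_2}(L_{\pi_2})$ is $\otimes$-invertible, and transversality with respect to an invertible object follows from transversality with respect to $\mathbbold 1_{X_2}$, which again holds by smoothness. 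Hypothesis (2) requires that the diagonal $X_1\to X_1\times_kX_1$ be $\mathbb D(\mathbbold 1_{X_1})\boxtimes_k F_1$-transversal and that $X_2\to X_2\times_kX_2$ be $\mathbb D(F_2)\boxtimes_k\pi_2^!\mathbbold 1_S$-transversal. The first reduces, since $\mathbb D(\mathbbold 1_{X_1})=\mathcal K_{X_1}\simeq Th_{X_1}(L_{X_1/k})$ is invertible, to the diagonal being $F_1$-transversal "in the second slot"; I expect this to follow from universal $F_1$-transversality of $\pi_1$ together with Lemma~\ref{lem:boxtimesTrans}/Lemma~\ref{lem:comptrans} applied to the factorization of $\delta$ through $X_1\times_SX_1$ — this is in fact precisely the argument used inside the proof of Proposition~\ref{thm:relKuHom} (the sentence "the diagonal morphism $X_i\to X_i\times_SX_i$ is $\mathbb D(E_i)\boxtimes_SF_i$-transversal"). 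The second, by Lemma~\ref{lem:dualtranequi}~\eqref{num:dualtranequi_second} and invertibility of $\pi_2^!\mathbbold 1_S$, reduces to the diagonal of $X_2$ being $F_2$-transversal, again handled the same way.

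The honest way to present this, to avoid re-checking hypothesis (2) by hand, is to observe that Proposition~\ref{thm:relKuHom}'s proof already establishes the relevant diagonal-transversality statements from condition (3) alone when one of the entries is a unit or an invertible object; so I would phrase the proof of the Corollary as: "apply Proposition~\ref{thm:relKuHom} with $E_1=\mathbbold 1_{X_1}$, $F_1=F_1$, $E_2=F_2$, $F_2=\pi_2^!\mathbbold 1_S$; conditions (1) and (3) are immediate, and condition (2) holds because $\mathcal K_{X_1}$ and $\pi_2^!\mathbbold 1_S$ are $\otimes$-invertible, so by Lemma~\ref{lem:dualtranequi} it reduces to the $F_i$-transversality of the diagonals, which follows from universal $F_i$-transversality of $\pi_i$ via Lemma~\ref{lem:boxtimesTrans} exactly as in the proof of Proposition~\ref{thm:relKuHom}." Then identify~\eqref{eq:relKuHom} with~\eqref{eq:SSKunn} under the substitutions, using $\underline{Hom}(\mathbbold 1_{X_1},F_1)\simeq F_1$, the smoothness isomorphism $p_2^*\pi_2^!\mathbbold 1_S\simeq p_1^!\mathbbold 1_{X_1}$ from~\eqref{Ex*!}, and~\eqref{eq:nat_upper*!_upper!_1}. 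The main obstacle I anticipate is purely bookkeeping: making sure the canonical map produced by Proposition~\ref{thm:relKuHom}, namely~\eqref{Kunneth_part} in the relative setting, is literally the composite written in the Corollary's statement — i.e. that no sign or twist is lost when passing from the $\boxtimes_k$-formulation through the closed immersion $\iota\colon X_1\times_SX_2\to X_1\times_kX_2$ to the $\boxtimes_S$-formulation — but this is a diagram chase of the same routine type as those already carried out in Section~\ref{subsec:tc}, and I would simply assert it with "it is straightforward to check that the composite agrees with~\eqref{Kunneth_part}."
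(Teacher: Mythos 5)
Your overall strategy is exactly the one the paper uses: the paper's own proof of Corollary~\ref{cor:SSKunn} consists of applying Proposition~\ref{thm:relKuHom} with $E_1=\mathbbold 1_{X_1}$, $E_2=F_2$ and second entries $F_1$, $\pi_2^!\mathbbold 1_S$, so that the source of \eqref{eq:relKuHom} becomes $F_1\boxtimes_S\underline{Hom}(F_2,\pi_2^!\mathbbold 1_S)$ and its target $\underline{Hom}(p_2^*F_2,p_1^*F_1\otimes p_2^*\pi_2^!\mathbbold 1_S)$, and then rewriting the target as $\underline{Hom}(p_2^*F_2,p_1^!F_1)$ using that $p_1$ is smooth, so that \eqref{Ex*!} and \eqref{eq:nat_upper*!_upper!_1} are isomorphisms. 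Your dictionary, the final two identifications, and the verification of hypothesis (1) and of the $F_1$-, $F_2$-parts of hypothesis (3) all match the paper (which, in fact, states none of this and just invokes the proposition).

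The genuine weak point is your treatment of hypothesis (3) for the unit and invertible entries. The transversality required there is the one of Definition~\ref{def:Ftransversal}(2), since $E_i,F_i$ live on the sources $X_i$: it demands that, after \emph{every} base change $T\to S$ and for every $D\in\mathbf T_c(T)$, the graph of the base-changed morphism be $\mathbbold 1\boxtimes_k D$-transversal. Your justification ``$\mathbbold 1_{X_1}$ is $\pi_1$-pure, equivalently $\pi_1$ is $\mathbbold 1_S$-transversal'' only gives the much weaker sense-(1) condition of Definition~\ref{def:Ftransversal}(1), and the two are not interchangeable: after base change to a possibly singular $T$ the sense-(2) statement for $\mathbbold 1$ amounts to transversality of the diagonal $\delta_T\colon T\to T\times_kT$ with respect to $pr_2^*D$ for all $D$, which is not a purity statement and is established nowhere in the paper; the same conflation occurs in ``transversality with respect to an invertible object \dots again holds by smoothness'' for $\pi_2^!\mathbbold 1_S$. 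The correct repair is essentially your own closing remark, made precise in a different way than you state it: do not try to verify hypothesis (3) for the unit/invertible entries at all, but note that in the proof of Proposition~\ref{thm:relKuHom} the universal transversality hypotheses enter only through Lemma~\ref{lem:boxtimesTrans}, which needs the universal hypothesis on just \emph{one} of the two factors (the other projection being smooth), so every invocation can be run from the $F_1$- or $F_2$-side provided by the Corollary; and the diagonal conditions of hypothesis (2), after stripping the invertible factors $\mathcal K_{X_1}$ and $\pi_2^!\mathbbold 1_S$, are automatic from Lemma~\ref{lem:comptrans} applied with the smooth projections $X_i\times_kX_i\to X_i$ (resp.\ $X_i\times_SX_i\to X_i$), since their composite with the diagonal is the identity --- no universal transversality and no appeal to ``smooth $\Rightarrow$ universally $\mathbbold 1$-transversal'' is needed. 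With that adjustment your argument is the paper's proof, fleshed out.
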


\begin{proof}
Since $p_1$ is smooth, by Proposition~\ref{thm:relKuHom}, we have the following isomorphism
\begin{align}
\begin{split}
F_1\boxtimes_S \underline{Hom}( F_2, \pi_2^!\mathbbold 1_S) 
\overset{\eqref{eq:relKuHom}}{\simeq}  
\underline{Hom}(p_2^*  F_2, p_1^\ast  F_1\otimes p_2^*\pi_2^!\mathbbold 1_S)
\simeq
\underline{Hom}(p_2^* F_2, p_1^! F_1),
\end{split}
\end{align}
which shows that the map~\eqref{eq:SSKunn} is an isomorphism.
\end{proof}

\begin{proposition}[{\cite[Proposition 3.1.9]{YZ18}}]\label{thm:fUpperIsomKun}
For $i=1,2$, consider a commutative diagram of $S$-morphisms of the form
\begin{align}
\begin{gathered}
\xymatrix{ X_i \ar[rr]^-{f_i}\ar[dr]_-{\pi_i} && Y_i \ar[dl]^-{q_i}\\
&S,&
}
\end{gathered}
\end{align}
We denote $X\coloneq X_1\times_SX_2$, $Y\coloneq Y_1\times_SY_2$ and $f\coloneq f_1\times_Sf_2\colon X \to  Y$. Let $M_i$ be objects of $\mathbf{T}_c(Y_i)$ for $i=1,2$. 
Assume that the following conditions are satisfied:
\begin{enumerate}
\item The morphisms $\pi_i$ and $q_i$ are smooth, and both $X_i$, $Y_i$ are smooth $k$-schemes. 
\item For $i=1,2$, $q_i\colon Y_i\to S$ is universally $M_i$-transversal.
\end{enumerate}
Then the following canonical map is an isomorphism:
\begin{align}
f_1^! M_1\boxtimes_S f_2^!  M_2 
\xrightarrow{\eqref{Kunneth}}
 f^!(  M_1\boxtimes_S  M_2).
\end{align}
\end{proposition}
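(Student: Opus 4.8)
\textbf{Proof plan for Proposition~\ref{thm:fUpperIsomKun}.}

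The plan is to reduce the relative K\"unneth isomorphism for $f^!$ to the absolute one over $\operatorname{Spec}(k)$ already established in Proposition~\ref{Kunneth_thm}, exploiting the transversality hypotheses to commute the relevant functors past the closed immersions that compare fiber products over $S$ with fiber products over $k$. First I would introduce the two canonical closed immersions $\iota_X\colon X_1\times_SX_2\to X_1\times_kX_2$ and $\iota_Y\colon Y_1\times_SY_2\to Y_1\times_kY_2$, together with $\widetilde{f}\coloneqq f_1\times_kf_2\colon X_1\times_kX_2\to Y_1\times_kY_2$, fitting into a Cartesian square whose vertical arrows are $\iota_X,\iota_Y$ and whose horizontal arrows are $f$ and $\widetilde f$. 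Since all four schemes $X_i,Y_i$ are smooth over $k$ and the maps $\pi_i,q_i$ are smooth, the immersions $\iota_X,\iota_Y$ are regular (they are base changes of the diagonal $S\to S\times_kS$ along a smooth morphism) and $\mathbbold{1}$ is pure for them by~\ref{recall_pur_trans}.

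The key computation is then to rewrite the left-hand side $f_1^!M_1\boxtimes_S f_2^!M_2$ as $\iota_X^*$ applied to $f_1^!M_1\boxtimes_k f_2^!M_2$, twisted appropriately. Concretely: $f_i\colon X_i\to Y_i$ is an $S$-morphism between smooth $k$-schemes, $q_i$ is universally $M_i$-transversal, and by Lemma~\ref{lem:smoothtranComp}(1) applied with the smooth $q_i$, the composite $\pi_i=q_i\circ f_i$ is $f_i^*M_i$-transversal; more usefully, transversality of $q_i$ lets us identify $f_i^!M_i\simeq f_i^*M_i\otimes Th_{X_i}(L_{X_i/k}-f_i^*L_{Y_i/k})$ by purity, since both $X_i$ and $Y_i$ are smooth over $k$ and $f_i$ is lci. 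Using the absolute K\"unneth isomorphism~\eqref{Kunneth} over $k$ for the morphism $\widetilde f$, one gets $f_1^!M_1\boxtimes_k f_2^!M_2\simeq \widetilde f^!(M_1\boxtimes_k M_2)$, and again by purity $\widetilde f^!(M_1\boxtimes_kM_2)\simeq \widetilde f^*(M_1\boxtimes_kM_2)\otimes Th(L_{\widetilde f})$. Now apply $\iota_X^*$: the hypothesis that $q_i$ is universally $M_i$-transversal, combined with Lemma~\ref{lem:boxtimesTrans} and Corollary~\ref{cor:Hom_upper*_1}, ensures that $\iota_X^*$ commutes with the relevant internal Hom / tensor expressions, and one matches this against $f^!(M_1\boxtimes_S M_2)$ rewritten via the purity isomorphism for $f$ (noting $f$ is an $S$-morphism between $k$-smooth schemes, hence lci with $\mathbbold 1$ pure, after checking universal $M_i$-transversality of $q_i$ descends to $M_1\boxtimes_S M_2$-transversality of $q_1\times_S q_2$ through Proposition~\ref{prop:kuntypeTrans} and Lemma~\ref{lem:comptrans}). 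Chaining these isomorphisms produces the desired map, and a diagram chase — entirely parallel to the one in the proof of Proposition~\ref{thm:relKuHom} — checks that the composite agrees with the canonical map~\eqref{Kunneth}.

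The main obstacle I expect is the bookkeeping of Thom twists and the verification that the chain of identifications genuinely reconstructs the \emph{canonical} comparison map~\eqref{Kunneth} rather than merely \emph{some} isomorphism; this is the same subtlety that appears in Proposition~\ref{thm:relKuHom} and Proposition~\ref{prop:kuntypeTrans}, where one must trace through the definitions of $Ex(\Delta^{*!})$, the purity transformation~\eqref{eq:pur_trans}, and the K\"unneth map to see the compatibility. A secondary technical point is confirming that $q_i$ being universally $M_i$-transversal is exactly what is needed to apply Corollary~\ref{cor:Hom_upper*_1} and Lemma~\ref{lem:boxtimesTrans} to the immersions $\iota_X$ and $\iota_Y$; this should follow formally, but care is required since these lemmas are stated for morphisms between \emph{smooth} $k$-schemes, which is guaranteed here by hypothesis~(1). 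Once the transversality inputs are in place, the argument is formal and mirrors the proof of Proposition~\ref{thm:relKuHom} step for step, replacing the internal Hom there by $f^!$ here.
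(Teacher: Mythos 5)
Your overall strategy (compare the fibre products over $S$ and over $k$ via the closed immersions $\iota_X,\iota_Y$ and feed in the absolute K\"unneth formula, Proposition~\ref{Kunneth_thm}) is in the spirit of how the paper proves the companion statement Proposition~\ref{thm:relKuHom}, but your execution has a genuine gap: you repeatedly invoke purity of the coefficients along $f_i$, $\widetilde f$ and $f$ --- the claims $f_i^!M_i\simeq f_i^*M_i\otimes Th_{X_i}(L_{f_i})$, $\widetilde f^!(M_1\boxtimes_k M_2)\simeq \widetilde f^*(M_1\boxtimes_k M_2)\otimes Th(L_{\widetilde f})$, and the analogous ``purity isomorphism for $f$'' --- and none of these follows from the hypothesis that $q_i$ is universally $M_i$-transversal. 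That hypothesis is a condition on the graph of $q_i$, i.e.\ on the behaviour of $M_i$ relative to the base $S$ (by Proposition~\ref{prop:BGB2} it is essentially universal local acyclicity over $S$); it says nothing about transversality of $M_i$ along an arbitrary $S$-morphism $f_i\colon X_i\to Y_i$, which is a constraint in the fibre directions. Concretely, for $S=\operatorname{Spec}(k)$ the hypothesis is (in its local-acyclicity formulation) satisfied by every constructible $M_i$, yet $M_i$ is almost never $f_i$-pure: take $Y_1=\mathbb{A}^1$, $M_1=i_{0*}\mathbbold{1}$ and $f_1=i_0$ the inclusion of the origin, where $f_1^!M_1\simeq\mathbbold{1}$ while $f_1^*M_1\otimes Th(L_{f_1})\simeq\mathbbold{1}(-1)[-2]$. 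If your purity claims were available, Proposition~\ref{Kunneth_thm} itself would be a triviality, which it is not. (A smaller instance of the same confusion: Lemma~\ref{lem:smoothtranComp}(1) requires $f_i$ to be $f_i^*M_i$-transversal before you may compose with the smooth $q_i$, and that is exactly what you do not have.)

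To make your reduction-to-$k$ route work you would have to compare $\iota_X^*\widetilde f^!(M_1\boxtimes_k M_2)$ with $f^!\iota_Y^*(M_1\boxtimes_k M_2)$ using purity along the regular immersions $\iota_X,\iota_Y$ only. Lemma~\ref{lem:boxtimesTrans} does give $\iota_Y$-transversality of $M_1\boxtimes_k M_2$ from the hypotheses, but on the source you would need $\iota_X$-transversality of $\widetilde f^!(M_1\boxtimes_k M_2)\simeq f_1^!M_1\boxtimes_k f_2^!M_2$, which would follow from Lemma~\ref{lem:boxtimesTrans} only if you knew that $\pi_1=q_1\circ f_1$ is universally $f_1^!M_1$-transversal; this is neither among the hypotheses nor proved anywhere in the paper, and it is the real content your argument would have to supply. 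The paper's proof avoids the issue altogether: it first reduces by Lemma~\ref{Ex!*_comp} to the case $f_2=\mathrm{id}$, then uses the dualizing object to write $M_2\simeq\underline{Hom}(L_2,q_2^!\mathbbold{1}_S)$, transfers the transversality hypothesis to $L_2$ by Lemma~\ref{lem:dualtranequi}, and applies Corollary~\ref{cor:SSKunn} twice to identify both sides with $\underline{Hom}((f_1\times\mathrm{id})^*p_2^*L_2,\,p_1^!f_1^!M_1)$, so that no purity along $f_1$ (or $f$) is ever needed.
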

\begin{proof}
By Lemma~\ref{Ex!*_comp}, we may assume that $X_2=Y_2$ and $f_2=id_{X_2}$, i.e. it suffices to show that the following canonical map is an isomorphism:
\begin{equation}\label{eq:prop:fUpperIsomKun100}
f_1^! M_1\boxtimes_S  M_2 
\xrightarrow{}
(f_1\times id_{X_2})^!(  M_1\boxtimes_S   M_2).
\end{equation}
We assume that $q_2\colon Y_2\to S$ is universally $M_2$-transversal, the other case is very similar.
By assumption and duality, we have the following isomorphism:
\begin{align}
M_2
\underset{\sim}{\xrightarrow{\eqref{eq:biduality}}}
\underline{Hom}(\mathbb{D}(M_2),\mathcal{K}_{Y_2})
\simeq
\underline{Hom}(\mathbb{D}(M_2)\otimes Th(L_{q_2}-T_{Y_2}), q_2^!\mathbbold{1}_S).
\end{align}
In other words $M_2\simeq\underline{Hom}(L_2, q_2^!\mathbbold{1}_S)$ for some $ L_2\in \mathbf T(Y_2)$.
By assumption, $q_2^!\mathbbold{1}_S$ is a dualizing object in $\mathbf{T}(Y_2)$, and by Lemma \ref{lem:dualtranequi}, the morphism $q_2\colon Y_2\to S$ is universally $L_2$-transversal.
By Corollary \ref{cor:SSKunn}, we have the following isomorphisms:
\begin{align}\label{eq:prop:fUpperIsomKun101}
M_1\boxtimes_S \underline{Hom}( L_2, q_2^!\mathbbold{1}_S)
\overset{\eqref{eq:SSKunn}}{\simeq}
\underline{Hom}(p_2^\ast L_2, p_1^! M_1),
\end{align}
\begin{align}
\label{eq:prop:fUpperIsomKun102}
f_1^!  M_1\boxtimes_S \underline{Hom} (  L_2, q_2^!\mathbbold{1}_S)
\overset{\eqref{eq:SSKunn}}{\simeq}
\underline{Hom}((f_1\times id_{X_2})^*p_2^*  L_2, p_1^!f_1^!  M_1). 
\end{align}
We deduce from~\eqref{eq:prop:fUpperIsomKun101} and~\eqref{eq:prop:fUpperIsomKun102} the following isomorphism:
\begin{align}
\label{eq:prop:fUpperIsomKun103}
\begin{split}
&(f_1\times {\rm id})^!( M_1\boxtimes_S  M_2)
=
(f_1\times id_{X_2})^!( M_1\boxtimes_S   \underline{Hom}( L_2, q_2^!\mathbbold{1}_S))\\
\overset{\eqref{eq:prop:fUpperIsomKun101}}{\simeq} 
&(f_1\times id_{X_2})^!\underline{Hom}(p_2^\ast L_2, p_1^! M_1)
\overset{\eqref{Ex^!Hom}}{\simeq}  
\underline{Hom}((f_1\times id_{X_2})^*p_2^* L_2, (f_1\times id_{X_2})^!p_1^! M_1)\\
=
&\underline{Hom}((f_1\times id_{X_2})^*p_2^* L_2, p_1^!f_1^! M_1)
\overset{\eqref{eq:prop:fUpperIsomKun102}}{\simeq} 
f_1^! M_1\boxtimes_S   \underline{Hom} ( L_2, q_2^!\mathbbold{1}_S) 
=
f_1^! M_1\boxtimes_S  M_2.
\end{split}
\end{align}
One can check that the map~\eqref{eq:prop:fUpperIsomKun103} agrees with the map~\eqref{eq:prop:fUpperIsomKun100}, and the result follows.
\end{proof}

\subsubsection{}
We summarize the relative K\"unneth formulas we have obtained in Propositions~\ref{Kun_*},~\ref{thm:relKuHom} and~\ref{thm:fUpperIsomKun}, extending Theorem~\ref{th:kun}:
\begin{theorem}
\label{th:kun_rel}
Let $S$ be a scheme and let $f_1:X_1\to Y_1$, $f_2:X_2\to Y_2$ be two $S$-morphisms. Denote by $f:X_1\times_S X_2\to Y_1\times_S Y_2$ be their product. 
Let $\mathbf{T}$ be a motivic triangulated category. For $i=1,2$, consider objects $L_i\in\mathbf{T}(X_i)$ and $M_i,N_i\in\mathbf{T}_c(Y_i)$. Then the following maps in Theorem~\ref{th:kun}
\begin{align}
\label{eq:th_lower*_rel}
f_{1\ast}L_1\boxtimes_S f_{2\ast}L_2
\xrightarrow{\eqref{eq:th_lower*}}
f_\ast(L_1\boxtimes_S L_2)
\end{align}
\begin{align}
\label{eq:th_upper!_rel}
f^!_1M_1\boxtimes_S f^!_2M_2
\xrightarrow{\eqref{eq:th_upper!}}
f^!(M_1\boxtimes_S M_2)
\end{align}
\begin{align}
\label{eq:th_hom_rel}
\underline{Hom}(M_1, N_1)\boxtimes_S \underline{Hom}(M_2, N_2)
\xrightarrow{\eqref{eq:th_hom}}
\underline{Hom}(M_1\boxtimes_S M_2, N_1\boxtimes_S N_2)
\end{align}
are such that
\begin{enumerate}
\item If for $i=1,2$, $f_i$ is universally strongly locally acyclic relatively to $L_i$ , then then map~\eqref{eq:th_lower*_rel} is an isomorphism.
\item If for $i=1,2$, both $X_i$ and $Y_i$ are smooth over $S$ and smooth over $k$, and the structure morphism $Y_i\to S$ is universally $M_i$-transversal, then the map~\eqref{eq:th_upper!_rel} is an isomorphism.
\item For $i=1,2$, denote by $q_i:Y_i\to S$ the structure morphism. Then the map~\eqref{eq:th_hom_rel} is an isomorphism if the following conditions hold:
\begin{enumerate}
\item The morphisms $q_1$ and $q_2$ are smooth, and $Y_1$ and $Y_2$ are smooth $k$-schemes.
\item For $i=1,2$, the diagonal morphism $Y_i\to Y_i\times_k Y_i$ is 
$\mathbb D(M_i)\boxtimes_k N_i$-transversal. 
\item For $i=1,2$, $q_i$ is universally $M_i$-transversal and universally $N_i$-transversal.
\end{enumerate}
\end{enumerate}
\end{theorem}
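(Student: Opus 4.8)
The plan is to read off each of the three isomorphism assertions from a result already established earlier in the paper; no new input is required, and the only genuine work is matching hypotheses and recalling that the three maps occurring here are the same maps as the ones treated there.

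For statement (1): by the construction of~\eqref{eq:th_lower*} in Theorem~\ref{th:kun}, the map~\eqref{eq:th_lower*_rel} is the map $Kun_*$ of~\eqref{Kunneth_*}, which is defined for an arbitrary base scheme $S$. Proposition~\ref{Kun_*} establishes that $Kun_*$ is an isomorphism under exactly a universal strong local acyclicity hypothesis on the relevant morphisms, so invoking it yields (1) at once. Here one uses that universal strong local acyclicity is stable under base change (Definition~\ref{def:loc_acyclic}), which controls the auxiliary squares entering the definition of $Kun_*$.

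For statement (3): the map~\eqref{eq:th_hom_rel} is the map~\eqref{Kunneth_part}, and I would apply Proposition~\ref{thm:relKuHom} after relabeling, taking $(X_i,\pi_i,E_i,F_i)$ there to be $(Y_i,q_i,M_i,N_i)$ here. Under this substitution, the three hypotheses of that proposition --- $q_i$ smooth with $Y_i$ smooth over $k$; the diagonal $Y_i\to Y_i\times_k Y_i$ being $\mathbb{D}(M_i)\boxtimes_k N_i$-transversal; and $q_i$ universally $M_i$-transversal and universally $N_i$-transversal --- are precisely conditions (a), (b), (c) of hypothesis (3), and its conclusion is that~\eqref{Kunneth_part}, hence~\eqref{eq:th_hom_rel}, is an isomorphism.

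For statement (2): the map~\eqref{eq:th_upper!_rel} is the map~\eqref{Kunneth}, and this is exactly the content of Proposition~\ref{thm:fUpperIsomKun}, whose hypotheses --- the morphisms $X_i\to S$ and $Y_i\to S$ smooth with $X_i,Y_i$ smooth over $k$, and $Y_i\to S$ universally $M_i$-transversal --- coincide with hypothesis (2), reading ``smooth over $S$'' as smoothness of the structure morphisms. The one point deserving a sentence rather than a mere citation is the verification that the chain of isomorphisms~\eqref{eq:prop:fUpperIsomKun103} produced in the proof of that proposition agrees with the canonical comparison map~\eqref{Kunneth}; this is a routine, if somewhat lengthy, diagram chase, and it is the only place where I anticipate any real bookkeeping. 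Beyond this identification of maps there is no obstacle: the theorem is essentially a dictionary between the absolute statements of Section~\ref{chap_kun} and their relative counterparts.
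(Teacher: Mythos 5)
Your proposal matches the paper's own treatment exactly: Theorem~\ref{th:kun_rel} is stated there as a summary of Propositions~\ref{Kun_*}, \ref{thm:relKuHom} and~\ref{thm:fUpperIsomKun}, and reading off parts (1), (3), (2) from those three propositions (with the relabeling you give) is all the paper does. One small caveat, inherited from the paper rather than introduced by you: Proposition~\ref{Kun_*} assumes the structure morphisms $X_i\to S$ are universally strongly locally acyclic relative to $L_i$, whereas part (1) of the theorem is phrased in terms of $f_i$, so the citation in (1) should be understood with the proposition's hypothesis.
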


\begin{remark}

By Proposition~\ref{prop:BGB2} below, the universal transversality conditions in Theorem~\ref{th:kun_rel} can be replaced by strong universal local acyclicity conditions.

\end{remark}

\subsubsection{}
Given Corollary~\ref{cor:SSKunn}, we are now ready to define the realtive Verdier pairing in the same way as we have done in Section~\ref{section_vpairing}. We fix a base scheme $S$, and for any morphism $h:X\to S$ we denote $\mathcal{K}_{X/S}=h^!\mathbbold{1}_S$. 

Let $X_1$ and $X_2$ be two smooth $S$-schemes which are also smooth over $k$. We denote by $X_{12}=X_1\times_S X_2$ 
and $p_i:X_{12}\to X_i$ the projections. Let $L_i\in\mathbf{T}_c(X_i)$ and
let $q_i:X_i\to S$ be the structure map for $i=1,2$. Let $c:C\to X_{12}$ and $d:D\to X_{12}$ be two morphisms, and let $E=C\times_{X_{12}}D$ with $e:E\to X_{12}$ the canonical morphism. For $i=1,2$ denote by $c_i=p_i\circ c:C\to X_i$ and $d_i=p_i\circ d:D\to X_i$. Assume that for $i=1,2$, $q_i$ is universally $L_i$-transversal. By Corollary~\ref{cor:SSKunn}, we produce the following map in the same way as the map~\eqref{e*KE}:
\begin{equation}
\label{e*KES}
c_*\underline{Hom}(c_1^*L_1, c_2^!L_2)\otimes d_*\underline{Hom}(d_2^*L_2, d_1^!L_1)
\to 
e_*\mathcal{K}_{E/S}
\end{equation}
\begin{definition}
\label{def:verdier_pairing_rel}
In the situation above, for two maps $u:c_1^*L_1\to c_2^!L_2$ and $v:d_2^*L_2\to d_1^!L_1$, we define the \textbf{relative Verdier pairing}
\begin{equation}
\label{verdier_pairing_rel}
\langle u,v\rangle:\mathbbold{1}_{E}\to\mathcal{K}_{E/S}
\end{equation}
obtained by adjunction from the composition
\begin{align}
\begin{split}
&\mathbbold{1}_{X_{12}}
\to
c_*\mathbbold{1}_C\otimes d_*\mathbbold{1}_D
\to 
c_*\underline{Hom}(c_1^*L_1,c_1^*L_1)\otimes d_*\underline{Hom}(d_2^*L_2,d_2^*L_2)\\
&\xrightarrow{u_*\otimes v_*}
c_*\underline{Hom}(c_1^*L_1,c_2^!L_2)\otimes d_*\underline{Hom}(d_2^*L_2,d_1^!L_1)
\xrightarrow{\eqref{e*KES}}
e_*\mathcal{K}_{E/S}.
\end{split}
\end{align}
\end{definition}

\subsubsection{}
The relative Verdier pairing satisfies a proper covariance similar to Proposition~\ref{proper_pf} (see \cite[Theorem 3.3.2]{YZ18}). It satisfies an additivity property along distinguished triangles similar to Theorem~\ref{th:add_trace}.

\subsubsection{}
We can define the relative characteristic class as in Definition~\ref{def:cc}:
\begin{definition}
\label{def:cc_rel}
Let $X$ be a smooth $S$-scheme which is also smooth over $k$. Let $M\in\mathbf{T}_c(X)$ be such that the structure morphism $X\to S$ is universally $M$-tranversal. The Verdier pairing in Definition~\ref{def:verdier_pairing_rel} in the particular case where $C=D=X_1=X_2=X$ and $L_1=L_2=M$ is a pairing
\begin{align}
\langle\ ,\ \rangle:Hom(M,M)\otimes Hom(M,M)\to H_0(X/S),
\end{align}
where $H_0(X/S)=Hom_{\mathbf{T}_c(X)}(\mathbbold{1}_X,\mathcal{K}_{X/S})$.
For any endomorphism $u\in Hom(M,M)$, the \textbf{relative characteristic class} of $u$ 
is defined as the element $C_{X/S}(M,u)\coloneqq\langle u,1_M \rangle\in H_0(X/S)$.
The relative characteristic class of $M$ is the characteristic class of the identity $C_{X/S}(M)\coloneqq C_{X/S}(M,1_M)$.
\end{definition}
The following result is similar to Proposition~\ref{prop:cc_endo_euler}:
\begin{proposition}
\label{smooth_zerocycle_rel}
Let $X$ be a smooth $S$-scheme which is also smooth over $k$, with tangent bundle $L_{X/S}$. Then we have $C_{X/S}(\mathbbold{1}_X(n))=(\langle-1\rangle_X)^ne(L_{X/S})$.
\end{proposition}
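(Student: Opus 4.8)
\textbf{Proof proposal for Proposition~\ref{smooth_zerocycle_rel}.}
The plan is to reduce this relative statement to the computation carried out in Proposition~\ref{prop:cc_endo_euler}, but now for the relative structure morphism $\pi\colon X\to S$ in place of a morphism to $\operatorname{Spec}(k)$. First I would unwind Definition~\ref{def:cc_rel} in the case $M=\mathbbold{1}_X(n)$: by Corollary~\ref{cor:cc_tate} applied in the relative setting (the Tate twist argument there is purely formal and passes verbatim to the relative Verdier pairing, using that all six functors commute with Tate twists), it suffices to treat $n=0$, so the goal becomes $C_{X/S}(\mathbbold{1}_X)=e(L_{X/S})$. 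Here the Euler class $e(L_{X/S})\colon\mathbbold{1}_X\to Th_X(L_{X/S})\simeq\mathcal{K}_{X/S}$ makes sense precisely because $\pi$ is smooth, so that $\mathcal{K}_{X/S}=\pi^!\mathbbold{1}_S\simeq Th_X(L_{X/S})$ by relative purity, and in particular $\pi$ is automatically $\mathbbold{1}_X$-transversal, so Definition~\ref{def:cc_rel} indeed applies to $M=\mathbbold{1}_X$.

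Next I would write out the relative analogue of the diagram~\eqref{diag:euler_diag}. Let $\delta=\delta_{X/S}\colon X\to X\times_S X$ be the relative diagonal and $p_1,p_2\colon X\times_S X\to X$ the projections; note $\delta$ is a regular closed immersion since $X$ is smooth over $S$. The relative Verdier pairing for $M=\mathbbold{1}_X$, by the same reasoning as in Proposition~\ref{prop:pair_trace} but with $\mathcal{K}_X$ replaced by $\mathcal{K}_{X/S}$ and the relative K\"unneth isomorphism of Corollary~\ref{cor:SSKunn} in place of~\eqref{dual_hom}, computes $C_{X/S}(\mathbbold{1}_X)$ as the composition
\begin{align}
\mathbbold{1}_X
=
\delta^*\delta_!\mathbbold{1}_X
\xrightarrow{}
\delta^*(p_1^*\mathcal{K}_{X/S}\otimes p_2^*\mathbbold{1}_X)
\simeq
\mathcal{K}_{X/S}
\end{align}
where the middle map is induced by the fundamental class of $\delta$ composed with $\delta^!\to\delta^*$. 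By the self-intersection formula (\cite[Example 3.2.9]{DJK}) applied to the Cartesian square defining $\delta$ relative to $S$ — exactly as in the proof of \cite[Theorem 4.6.1]{DJK}, which already works over an arbitrary base — this composition equals the relative Euler class $e(L_{X/S})$. The transversality hypotheses in Corollary~\ref{cor:SSKunn} needed to make these K\"unneth isomorphisms available hold because $\pi$ is smooth (hence universally $\mathbbold{1}_X$-transversal by~\ref{ex:example_transversal}) and $X$ is smooth over $k$.

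The one place requiring genuine care — the main obstacle — is checking that the relative Verdier pairing of Definition~\ref{def:verdier_pairing_rel} really does unwind, in the case $C=D=X_1=X_2=X$ and $L_1=L_2=\mathbbold{1}_X$, to the explicit formula above, i.e. proving the relative counterpart of Proposition~\ref{prop:pair_trace}. The absolute proof of that proposition is a long diagram chase through adjunctions and K\"unneth maps; I would argue that every step of it is compatible with replacing $q_i\colon X_i\to\operatorname{Spec}(k)$ by the smooth structure morphisms to $S$ and $\mathcal{K}_{X_i}$ by $\mathcal{K}_{X_i/S}$, since the only inputs used are: the exchange isomorphisms~\eqref{Ex^!Hom},~\eqref{Ex*!},~\eqref{eq:nat_upper*!_upper!}, the relative K\"unneth isomorphism~\eqref{eq:SSKunn}, and standard unit/counit manipulations, all of which hold over $S$ under the stated smoothness assumptions. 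Granting this compatibility — which is routine but tedious to verify line by line — the computation of Proposition~\ref{prop:cc_endo_euler} (specialized to $u=\mathrm{id}$, $p=\mathrm{id}_X$, and with the base being $S$ rather than a field) yields $C_{X/S}(\mathbbold{1}_X)=e(L_{X/S})$, and combining with the Tate-twist formula of Corollary~\ref{cor:cc_tate} gives $C_{X/S}(\mathbbold{1}_X(n))=\langle-1\rangle_X^n\, e(L_{X/S})$, as claimed.
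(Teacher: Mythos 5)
Your proposal is correct and follows essentially the route the paper intends: Proposition~\ref{smooth_zerocycle_rel} is only asserted to be ``similar to Proposition~\ref{prop:cc_endo_euler}'', and your argument --- transporting Proposition~\ref{prop:pair_trace} to the relative pairing via the K\"unneth isomorphism of Corollary~\ref{cor:SSKunn}, applying the self-intersection formula of \cite[Example 3.2.9]{DJK} to the relative diagonal $\delta_{X/S}$ (whose normal bundle is $L_{X/S}$), and twisting as in Corollary~\ref{cor:cc_tate} --- is exactly that reduction. One small imprecision: your claim that $\pi$ is ``automatically $\mathbbold{1}_X$-transversal'' by~\ref{ex:example_transversal} appeals to transversality in the sense of Definition~\ref{def:Ftransversal}(1), whereas Definition~\ref{def:cc_rel} requires the graph condition of Definition~\ref{def:Ftransversal}(2); this hypothesis is built into Definition~\ref{def:cc_rel}, and for $\pi$ smooth with $X$ smooth over $k$ it can be verified by factoring $\Gamma_\pi=(\mathrm{id}_X\times_k\pi)\circ\delta_{X/k}$ and combining Lemma~\ref{lem:comptrans} with the purity of a section of a smooth morphism, rather than by citing~\ref{ex:example_transversal} alone.
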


\subsubsection{}
We now establish a link between the relative characteristic class and the (absolute) characteristic class via specialization of cycles (\cite[4.5.1]{DJK}). Let $S$ be a smooth $k$-scheme and let $s:\operatorname{Spec}(k)\to S$ be a $k$-rational point. Let $f:X\to S$ be a smooth morphism, and form the Cartesian square
\begin{align}
\begin{gathered}
\xymatrix{ 
X_s \ar[d]_-{f_s} \ar[r]^-{s_X} \ar@{}[rd]|{\Delta} & X \ar[d]^-{f}\\
k \ar[r]^-{s} & S.
}
\end{gathered}
\end{align}
Then by \cite[2.2.7(1)]{DJK}, there is a canonical specialization map induced by the base change
\begin{align}
\label{eq:bc_spec}
\Delta^*:H_0(X/S)\to H_0(X_s/k).
\end{align}

\begin{proposition}
\label{prop:cc_rel_spec}
Let $M\in\mathbf{T}_c(X)$ be such that $f:X\to S$ is universally $M$-tranversal, and denote by $M_s\coloneq M_{|X_s}=s_X^*M\in\mathbf{T}_c(X_s)$. Let $u\in Hom(M,M)$ be an endomorphism of $M$, and denote by $u_s\in Hom(M_s,M_s)$ the induced endomorphism of $M_s$. Then via the specialization map~\eqref{eq:bc_spec}, the relative characteristic class $C_{X/S}(M,u)\in H_0(X/S)$ in Definition~\ref{def:cc_rel} and the characteristic class $C_{X_s}(M_s,u_s)\in H_0(X_s/k)$ in Definition~\ref{def:cc} satisfy
\begin{align}
\Delta^*C_{X/S}(M,u)=C_{X_s}(M_s,u_s).
\end{align}
\end{proposition}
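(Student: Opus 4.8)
The plan is to unwind both characteristic classes into their trace–type descriptions and to check, step by step, that restriction to the fibre along $s$ carries the relative datum to the absolute one. Write $s_X\colon X_s\to X$ for the base change of $s$; it is a regular closed immersion, and $X_s$ is smooth over $k$ because $f_s\colon X_s\to\operatorname{Spec}(k)$ is a base change of the smooth morphism $f$.

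First I would recall that the specialization map $\Delta^{*}$ of~\eqref{eq:bc_spec} is, by \cite{DJK}, the composite of $s_X^{*}\colon Hom_{\mathbf{T}_c(X)}(\mathbbold{1}_X,\mathcal{K}_{X/S})\to Hom_{\mathbf{T}_c(X_s)}(\mathbbold{1}_{X_s},s_X^{*}\mathcal{K}_{X/S})$ with the isomorphism $s_X^{*}\mathcal{K}_{X/S}=s_X^{*}f^{!}\mathbbold{1}_S\xrightarrow{\sim}f_s^{!}\mathbbold{1}_k=\mathcal{K}_{X_s}$ furnished by the exchange transformation $Ex(\Delta^{*!})$ of~\eqref{Ex*!}, which is an isomorphism since $f$ is smooth. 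On the other hand, specializing the construction of the relative Verdier pairing (Definition~\ref{def:verdier_pairing_rel}) to $C=D=X_1=X_2=X$ and $L_1=L_2=M$, the relative characteristic class $C_{X/S}(M,u)$ of Definition~\ref{def:cc_rel} is the arrow $\mathbbold{1}_X\to\mathcal{K}_{X/S}$ obtained by adjunction from the map~\eqref{e*KES}; arguing exactly as in Proposition~\ref{prop:pair_trace} one rewrites it, in the spirit of~\eqref{eq:form_counit} and~\eqref{eq:cc_trace_intro}, as the composite
\[
\mathbbold{1}_X\xrightarrow{u_{*}\circ\eta_M}\underline{Hom}(M,M)\longrightarrow\underline{Hom}(M,\mathcal{K}_{X/S})\otimes M\xrightarrow{\ \sim\ }M\otimes\underline{Hom}(M,\mathcal{K}_{X/S})\xrightarrow{\epsilon_M}\mathcal{K}_{X/S},
\]
whose middle isomorphism is induced by restricting the relative K\"unneth isomorphism~\eqref{eq:SSKunn} of Corollary~\ref{cor:SSKunn} along the relative diagonal $\delta\colon X\to X\times_SX$. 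The absolute class $C_{X_s}(M_s,u_s)$ of Definition~\ref{def:cc} admits the identical description with $S$ replaced by $\operatorname{Spec}(k)$, $\times_S$ by $\times_k$, and~\eqref{eq:SSKunn} by the K\"unneth isomorphism~\eqref{dual_hom}. Since $\eta$, $\epsilon$, $u$ and the adjunction counits are natural for $s_X^{*}$, it is enough to show that, after applying $s_X^{*}$ and the identification $s_X^{*}\mathcal{K}_{X/S}\simeq\mathcal{K}_{X_s}$, the relative K\"unneth isomorphism for $M$ becomes the absolute one for $M_s$.

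This is the heart of the matter. I would base change the diagram $X\xrightarrow{\delta}X\times_SX\rightrightarrows X$ along $s$, observing that $(X\times_SX)\times_S\operatorname{Spec}(k)=X_s\times_kX_s$ and that $\delta$ restricts to the absolute diagonal of $X_s/k$, and then chase through the construction of~\eqref{eq:SSKunn} in the proof of Corollary~\ref{cor:SSKunn} — which itself rests on Proposition~\ref{thm:relKuHom}, Lemma~\ref{lem:Hom_upper*}, Corollary~\ref{cor:Hom_upper*_1} and the exchange isomorphisms~\eqref{Ex*!},~\eqref{Ex^!Hom},~\eqref{eq:nat_upper*!_upper!_1}. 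Each constituent of that construction is natural in the base, and one checks that it stays an isomorphism after pulling back along the natural map $X_s\times_kX_s\to X\times_SX$: the instances involving the smooth maps $f$ and the projections $p_i$ are isomorphisms for free by purity, and the remaining transversality inputs survive precisely because $f$ is assumed \emph{universally} $M$-transversal, so the hypotheses of Corollary~\ref{cor:SSKunn} hold both over $S$ and, after base change along $s$, over $\operatorname{Spec}(k)$. The conclusion is that $s_X^{*}$ carries~\eqref{eq:SSKunn} to~\eqref{dual_hom}, hence~\eqref{e*KES} to~\eqref{e*KE}, for $M_s$.

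Assembling the steps, $s_X^{*}$ takes the composite defining $C_{X/S}(M,u)$ to the one defining $C_{X_s}(M_s,u_s)$, which is the asserted equality $\Delta^{*}C_{X/S}(M,u)=C_{X_s}(M_s,u_s)$. I expect the only real difficulty to be the third step: Corollary~\ref{cor:SSKunn} must be used not as a black box but with the knowledge that its proof commutes with restriction to the fibre, i.e. that a chain of exchange isomorphisms is base change compatible — and the universal transversality hypothesis is exactly what keeps every link of that chain invertible after specialization. One could alternatively phrase the whole argument as a $*$-pullback functoriality of the relative Verdier pairing, in the spirit of Remark~\ref{rk:lci_contravariant}, specialized to the diagonal situation.
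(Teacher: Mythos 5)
Your proposal is correct and follows essentially the same route as the paper: both arguments unwind the relative and absolute classes as generalized trace composites and then check that restriction along $s_X$ identifies the relative duality data with the absolute one, with the transversality hypothesis supplying the needed purity along the fibre inclusion. The paper condenses your base-change chase through Corollary~\ref{cor:SSKunn} into the single isomorphism $s_X^*\mathbb{D}_{X/S}(M)\xrightarrow{\sim}\mathbb{D}_{X_s}(M_s)$, obtained from Corollary~\ref{cor:Hom_upper*_1} together with Lemma~\ref{lem:BGB3} (the precise statement converting $M$-transversality of $f$ into purity of $M$ along the fibre inclusion, which your sketch leaves implicit), and then concludes by a short commutative diagram.
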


\proof

By Corollary~\ref{cor:Hom_upper*_1} and Lemma~\ref{lem:BGB3} below, the following canonical map is an isomorphism:
\begin{align}
s_X^*\mathbb{D}_{X/S}(M)
\xrightarrow{\sim}
\mathbb{D}_{X_s}(M_s).
\end{align}
The result then follows from the following commutative diagram:
$$
\resizebox{\textwidth}{!}{
\xymatrix{ 
\mathbbold{1}_{X_s} \ar@{=}[d] \ar[r]^-{u_s} & \underline{Hom}(M_s,M_s) \ar[r]^-{} & \mathbb{D}_{X_s}(M_s)\otimes M_s \ar[r]^-{\sim} & M_s\otimes\mathbb{D}_{X_s}(M_s) \ar[r]^-{\epsilon_{M_s}} & \mathcal{K}_{X_s}\\
s_X^*\mathbbold{1}_{X} \ar[r]^-{s_X^*u} & s_X^*\underline{Hom}(M,M) \ar[r]^-{} \ar[u]^-{} & s_X^*\mathbb{D}_{X/S}(M)\otimes s_X^*M \ar[r]^-{\sim} \ar[u]^-{\wr} & s_X^*M\otimes s_X^*\mathbb{D}_{X/S}(M) \ar[r]^-{s_X^*\epsilon_{M}} \ar[u]^-{\wr} & s_X^*\mathcal{K}_{X/S} \ar[u]^-{\wr}.
}
}
$$
\endproof

\begin{example}
\begin{enumerate}
\item
Assume that $S$ is a smooth $k$-scheme of dimension $n$. For $\mathbf{T}_c=\mathbf{DM}_{cdh,c}$, we have $H_0(X/S)\simeq CH_n(X)$ is the Chow group of $n$-cycles over $X$ (up to $p$-torsion), and the specialization map~\eqref{eq:bc_spec} is Fulton's specialization map of algebraic cycles (\cite[Section 10.1]{Ful}). By Proposition~\ref{prop:cc_rel_spec}, the relative characteristic class of a motive can be seen as an $n$-cycle spanned by a family of $0$-cycles given by the characteristic classes of its fibers. It is conjectured that the relative characteristic class is related to the the relative characteristic cycle (see \cite[Conjecture 3.2.6]{YZ18}).

\item
If we work in $\mathbf{T}_c=\mathbf{SH}_{c}$ and apply the $\mathbb{A}^1$ regulator map with values in the Milnor-Witt spectrum (see~\ref{section:rr_eva}), then we get a quadratic refinement of the previous case, namely a family of Chow-Witt $0$-cycles (\cite[Example 4.5.5]{DJK}).
\end{enumerate}
\end{example}

\subsection{Purity, local acyclicity and transversality}
\label{section:purity_la_trans}
In this section we clarify the link between the notions of purity, local acyclicity and transversality conditions. We also study the relation with the Fulton-style specialization map in \cite{DJK}.
\subsubsection{}
Let $\mathbf{T}$ be a motivic triangulated category which satisfies the condition~\ref{resol} in \ref{par:devissage}. Let $f:X\to S$ be a morphism of schemes with $S$ smooth over $k$. Recall from~\ref{recall_pur_trans} that if $\mathbbold{1}_S$ is $f$-pure, then for any $B\in\mathbf{T}(X)$, $f$ is $B$-transversal if and only if for any $C\in\mathbf{T}_c(S)$, $B\boxtimes_kC$ is $\Gamma_f$-pure, where $\Gamma_f:X\to X\times_kS$ is the graph of $f$. This amounts to say that the following canonical map is an isomorphism:
\begin{equation}
\label{eq:pur_trans_dual}
B\otimes f^*C\otimes f^*Th_S(-L_{S/k})
\to
\mathbb{D}(\mathbb{D}(B)\otimes\mathbb{D}(f^!C)).
\end{equation}

\subsubsection{}
\label{rem:pur_trans_dual}
The map~\eqref{eq:pur_trans_dual} is always an isomorphism when $C$ is dualizable.

\begin{recall}
Consider a Cartesian square of schemes
\begin{equation}
\begin{gathered}
  \xymatrix{
    Y \ar[r]^-{q} \ar[d]_-{g} \ar@{}[rd]|{\Delta} & X \ar[d]^-{f}\\
    T \ar[r]_-{p} & S
  }
\end{gathered}
\end{equation}
with $p$ a lci morphism. For $K\in\mathbf{T}(X)$, there is a canonical map called \emph{refined purity transformation}
\begin{equation}
\label{eq:ref_pur_trans} 
q^*K\otimes g^*Th_T(L_p)\to q^!K
\end{equation}
(\cite[Definition 4.2.5]{DJK}). We say that $K$ is \textbf{$\Delta$-pure} if the map~\eqref{eq:ref_pur_trans} is an isomorphism.
\end{recall}

\begin{lemma}[{\cite[Lemma B.3]{BG}}]
\label{lem:BGB3}
Consider a Cartesian square of schemes
\begin{equation}
\begin{gathered}
  \xymatrix{
    Y \ar[r]^-{i'} \ar[d]_-{g} \ar@{}[rd]|{\Delta} & X \ar[d]^-{f}\\
    T \ar[r]_-{i} & S
  }
\end{gathered}
\end{equation}
with $i$ a regular immersion. Let $K\in\mathbf{T}(X)$.
\begin{enumerate}
\item If both $S$ and $T$ are smooth over $k$ and $f$ is $K$-transversal, then $K$ is $\Delta$-pure.
\item If $\mathbbold{1}_S$ is $i$-pure and $f$ is strongly locally acyclic relatively to $K$, then $K$ is $\Delta$-pure.
\end{enumerate}
\end{lemma}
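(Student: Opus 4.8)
\textbf{Proof plan for Lemma~\ref{lem:BGB3}.}
The plan is to reduce both statements to the compatibility of the refined purity transformation~\eqref{eq:ref_pur_trans} with the (non-refined) purity transformation~\eqref{eq:pur_trans} along the graph factorization, which is the content of \cite[Definition 4.2.5]{DJK} and the surrounding functoriality results. Concretely, since $i$ is a regular immersion, it is lci, and we may factor the Cartesian square $\Delta$ through the graph $\Gamma_f\colon X\to X\times_k S$: the square $\Delta$ is the outer rectangle of the composition
\begin{align}
\begin{gathered}
  \xymatrix{
    Y \ar[r]^-{} \ar[d]_-{} & X\times_k T \ar[r]^-{} \ar[d]^-{} \ar@{}[rd]|{\Delta'} & X \ar[d]^-{f}\\
    T \ar[r]_-{\Gamma_i} & X\times_k S \ar[r]_-{} & S
  }
\end{gathered}
\end{align}
where the left square identifies $Y$ with the base change of $X\times_k T$ along $\Gamma_f$, and $\Delta'$ is a Cartesian square of the projection $X\times_k S\to S$ pulled back along $i$. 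By the compatibility of the refined purity transformation with horizontal composition of Cartesian squares (\cite[Section 4.2]{DJK}), the map~\eqref{eq:ref_pur_trans} for $\Delta$ factors as the composite of the refined purity transformation for $\Delta'$ (which is an isomorphism because $X\times_k S\to S$ is smooth, hence the map is the usual purity isomorphism) with a map obtained by pulling back along $\Gamma_f$ the purity transformation~\eqref{eq:pur_trans} associated to $\Gamma_f$ applied to the object $K\boxtimes_k \mathbbold{1}_T$ (up to a Thom twist). Thus $K$ is $\Delta$-pure if and only if $K\boxtimes_k \mathbbold{1}_T$ is $\Gamma_f$-pure after base change along $\Gamma_i$, equivalently if the relevant instance of~\eqref{eq:pur_trans_dual} is an isomorphism.

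For part~(1): when $S$ and $T$ are smooth over $k$, we know by \cite[4.3.10]{DJK} that $\mathbbold{1}_S$ is $f$-pure (since $f$ is then a morphism between smooth $k$-schemes, at least locally — more precisely $X$ is smooth over $k$ whenever $f$ is; if not, one works with the graph which is always lci with $\mathbbold{1}$ pure), so the transversality hypothesis on $f$ relative to $K$ says precisely, by~\ref{recall_pur_trans}, that $K\boxtimes_k C$ is $\Gamma_f$-pure for all $C\in\mathbf{T}_c(S)$. Applying this with $C=i_*\mathbbold{1}_T$ (a constructible object since $i$ is a closed immersion, hence proper) and using proper base change to identify the pullback along $\Gamma_i$ of $\Gamma_f$-purity with $\Delta$-purity, we conclude. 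For part~(2): when $\mathbbold{1}_S$ is $i$-pure, the refined purity transformation for $\Delta$ agrees, via \cite[4.3.7--4.3.10]{DJK}, with the natural transformation~\eqref{ExDelta**} type map whose invertibility for arbitrary base change $p=i$ is exactly strong local acyclicity relative to $K$ (Definition~\ref{def:loc_acyclic}); so the hypothesis that $f$ is strongly locally acyclic relatively to $K$ gives that~\eqref{eq:ref_pur_trans} is an isomorphism directly.

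The main obstacle will be bookkeeping the various Thom twists and checking that the factorization of~\eqref{eq:ref_pur_trans} through the graph is the one predicted by the compatibilities in \cite{DJK}, i.e. that no sign or orientation discrepancy enters; this is routine but requires care in matching the conventions of~\ref{recall_pur_trans} with those of \cite[Section 4.2]{DJK}. A secondary point to verify is that in part~(2) the identification of $\Delta$-purity with the map~\eqref{ExDelta**} being invertible genuinely uses only $i$-purity of $\mathbbold{1}_S$ and not smoothness of $S$; this follows because the Thom twist $Th_T(L_i)$ is trivialized by the $i$-purity hypothesis, reducing the refined purity transformation to the exchange transformation appearing in the definition of strong local acyclicity.
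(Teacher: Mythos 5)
Your treatment of part (1) is essentially workable and, despite the different packaging, amounts to the same computation as the paper's: the paper plugs $C=i_*\mathbbold{1}_T$ into the dual reformulation~\eqref{eq:pur_trans_dual} of $K$-transversality and untwists using smoothness of $T$ over $k$, whereas you plug $D=i_*\mathbbold{1}_T$ into the definition of transversality as $\Gamma_f$-purity of $K\boxtimes_kD$ and then use base change for the proper map $id_X\times i$ (namely $\Gamma_f^*(id_X\times i)_*\simeq i'_*(\,\cdot\,)^*$ and $\Gamma_f^!(id_X\times i)_*\simeq i'_*(\,\cdot\,)^!$) before applying $i'^*$, resp.\ conservativity of $i'_*$; in both versions the same bookkeeping remains, namely that the resulting isomorphism is the map~\eqref{eq:ref_pur_trans}. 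Note, however, that your displayed factorization is garbled: there is no natural morphism $T\to X\times_kS$ (the graph of $i$ lands in $T\times_kS$), and the right-hand square you draw does not commute; the factorization your prose actually describes is the vertical one, pulling back $\Gamma_f$ along $id_X\times i\colon X\times_kT\to X\times_kS$ and pulling back the projection $X\times_kS\to S$ along $i$, and the compatibility you need is with this composition, not a ``horizontal'' one.

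Part (2) has a genuine gap. The refined purity transformation~\eqref{eq:ref_pur_trans} is a map $i'^*K\otimes g^*Th_T(L_i)\to i'^!K$ on $Y$; it is not, even after using the $i$-purity identification $i^!\mathbbold{1}_S\simeq Th_T(L_i)$ (which is what $i$-purity gives --- it does not trivialize $Th_T(L_i)$), the exchange map~\eqref{ExDelta**} whose invertibility defines strong local acyclicity. In fact, for the square over the proper morphism $i$ the map~\eqref{ExDelta**}, $K\otimes f^*i_*L\to i'_*(i'^*K\otimes g^*L)$, is \emph{always} an isomorphism, so invoking local acyclicity with $p=i$ carries no information; the entire content of $\Delta$-purity is the passage from $i'^*K\otimes(\text{twist})$ to $i'^!K$, which acyclicity along $i$ cannot see. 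What the hypothesis really buys is acyclicity over the open complement $j\colon U=S\setminus T\to S$: after reducing to $i$ a closed immersion, one compares $K\otimes f^*(-)$ applied to the localization triangle $i_*i^!\mathbbold{1}_S\to\mathbbold{1}_S\to j_*\mathbbold{1}_U$ with the triangle $i'_*i'^!K\to K\to j'_*j'^*K$ on $X$; the middle comparison map is the identity and the right-hand one is invertible by strong local acyclicity applied to the \emph{non-proper} morphism $j$, so the left-hand map --- which, via base change, $i$-purity and the projection formula, is identified with $i'_*$ applied to~\eqref{eq:ref_pur_trans} --- is invertible, and one concludes by conservativity of $i'_*$. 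This localization argument is the missing idea; the statement does not follow ``directly'' from the definition of strong local acyclicity as you assert.
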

\proof
\begin{enumerate}
\item We have an isomorphism
\begin{equation}
\label{eq:trans_pure}
K\otimes f^*i_*\mathbbold{1}_T\otimes f^*Th_S(-L_{S/k})
\xrightarrow{\sim}
i'_*(i'^!K\otimes g^*Th_T(-L_{T/k}))
\end{equation}
given by the composition
\begin{align*}
&K\otimes f^*i_*\mathbbold{1}_T\otimes f^*Th_S(-L_{S/k})
\simeq
\mathbb{D}(\mathbb{D}(K)\otimes\mathbb{D}(f^!i_*\mathbbold{1}_T))\\
\simeq 
&\mathbb{D}(\mathbb{D}(K)\otimes\mathbb{D}(i'_*g^!\mathbbold{1}_T))
\simeq
i'_*\mathbb{D}(\mathbb{D}(i'^!K)\otimes\mathbb{D}(g^!\mathbbold{1}_T))\\
\simeq
&i'_*(i'^!K\otimes g^*Th_T(-L_{T/k}))
\end{align*}
where the first isomorphism follows from the transversality condition, and the last isomorphism follows from~\ref{rem:pur_trans_dual}. The result follows by applying the functor $i'^*$ to the map~\eqref{eq:trans_pure}.

\item Without loss of generality we can assume that $i$ is a regular closed immersion.
We have a canonical map
\begin{equation}
\label{eq:la_pure_trans}
K\otimes f^*i_*i^!\mathbbold{1}_S
\xrightarrow{}
i'_*i'^!K
\end{equation}
given by the composition
\begin{align}
\begin{split}
K\otimes f^*i_*i^!\mathbbold{1}_S
&\simeq
K\otimes i'_*g^*i^!\mathbbold{1}_S
\simeq
K\otimes i'_*g^*Th_T(L_i)\\
&\simeq
i'_*(i'^*K\otimes g^*Th_T(L_i))
\xrightarrow{}
i'_*i'^!K
\end{split}
\end{align}
where the second isomorphism comes from purity, and the last map is the functor $i'_*$ applied to the map~\eqref{eq:ref_pur_trans}. It follows from the local acyclicity and the localization sequence that the map~\eqref{eq:la_pure_trans} is an isomorphism, which implies that $K$ is $\Delta$-pure.

\end{enumerate}
\endproof

\begin{proposition}[{\cite[Theorem B.2]{BG}}]
\label{prop:BGB2}
Assume that $k$ is a perfect field. Let $f:X\to S$ be a morphism of schemes which factors through an open subscheme $S_0$ of $S$ which is smooth over $k$. Let $K\in\mathbf{T}(X)$. We consider Cartesian squares of the form
\begin{equation}
\label{diag:Cart_BGB2}
\begin{gathered}
  \xymatrix{
    Y \ar[r]^-{q} \ar[d]_-{g} \ar@{}[rd]|{\Delta} & X \ar[d]^-{f}\\
    T \ar[r]_-{p} & S.
  }
\end{gathered}
\end{equation}
Then the following statements hold:
\begin{enumerate}
\item If for any Cartesian square~\eqref{diag:Cart_BGB2} with $p$ smooth,
$g$ is strongly locally acyclic relatively to $q^*K$, then $f$ is $K$-transversal.
\item If for any Cartesian square~\eqref{diag:Cart_BGB2} with $p$ smooth,
$g$ is $q^*K$-transversal, then $f$ is strongly locally acyclic relatively to $K$.
\end{enumerate}

\end{proposition}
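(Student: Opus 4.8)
The plan is to deduce both statements of Proposition~\ref{prop:BGB2} from Lemma~\ref{lem:BGB3}, following the same division of labour already set up there: the notion of being $\Delta$-pure for Cartesian squares along regular immersions is the common intermediary between transversality (a statement about the graph $\Gamma_f$) and strong local acyclicity (a statement about base change). First I would reduce to the situation where $p$ is a closed immersion by a standard localization argument: any morphism $p\colon T\to S$ can be localized so as to factor through an affine space over $T$ (or, for local acyclicity, one reduces via devissage and the compatibility of~\eqref{ExDelta**} with compositions of squares), hence it suffices to test the relevant property on squares~\eqref{diag:Cart_BGB2} with $p$ smooth and with $p$ a regular closed immersion; the smooth case is automatic by purity (resp. is part of the hypothesis), so the content lies in the regular immersion case. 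Because $k$ is perfect and $f$ factors through an open subscheme $S_0$ smooth over $k$, the hypotheses of Lemma~\ref{lem:BGB3} are available: over the open part every scheme is smooth over $k$, and over the closed part $\mathbbold{1}_S$ is $i$-pure by~\ref{recall_pur_trans}.

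For part~(1), I would argue as follows. By~\ref{recall_pur_trans}, to show $f$ is $K$-transversal it suffices to show that for any $C\in\mathbf{T}_c(S)$ the object $K\boxtimes_k C$ is $\Gamma_f$-pure, equivalently that the map~\eqref{eq:pur_trans_dual} is an isomorphism; by~\ref{rem:pur_trans_dual} this holds when $C$ is dualizable, and using strong devissage (every constructible $C$ is built from objects of the form $h_*\mathbbold{1}_W(n)$ with $h$ proper and $W$ smooth over a purely inseparable extension, which are dualizable over a field-like base after a smooth pull-back) we may reduce to checking a $\Delta$-purity statement for squares obtained from the graph $\Gamma_f\colon X\to X\times_k S$ by base change along regular immersions. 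This $\Delta$-purity is exactly what Lemma~\ref{lem:BGB3}(2) supplies, once we know that the vertical morphism in the relevant square is strongly locally acyclic relatively to the appropriate pullback of $K$; and this last fact is precisely the hypothesis of part~(1), since a base change of $\Gamma_f$ along a smooth morphism of $S$ produces a square of the form~\eqref{diag:Cart_BGB2} with $p$ smooth. Part~(2) is the mirror image: strong local acyclicity of $f$ relatively to $K$ means the map~\eqref{ExDelta**} is an isomorphism for all $p$, which after localization reduces to the case $p$ a regular closed immersion, where the desired isomorphism is equivalent to the $\Delta$-purity of $K$ for that square; and Lemma~\ref{lem:BGB3}(1) delivers this $\Delta$-purity from the transversality hypothesis of part~(2), again after observing that base-changing the original square by a smooth morphism keeps us inside the class of squares~\eqref{diag:Cart_BGB2} with $p$ smooth for which the hypothesis is assumed.

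The main obstacle I anticipate is bookkeeping the reduction steps correctly: one must be careful that the localization of $p$ (to reduce to closed immersions) is compatible with the "for all smooth base changes" quantifier in the hypotheses, i.e. that after such a localization one still has access to the assumed property on the auxiliary squares, and that the devissage on $C$ in part~(1) interacts well with the graph construction $\Gamma_f$ and with the perfectness of $k$ (which is what guarantees that the generators coming from strong devissage pull back to dualizable objects over the smooth locus). Once the reductions are organized, the core is a direct appeal to the two halves of Lemma~\ref{lem:BGB3}, so I would not expect any new geometric input beyond what is already in the excerpt.
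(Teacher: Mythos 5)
Your overall skeleton (reduce by devissage, then use Lemma~\ref{lem:BGB3} as the bridge through $\Delta$-purity) is the paper's, but two of your reduction steps fail as stated. First, localizing an arbitrary $p\colon T\to S$ through a closed embedding into $\mathbb{A}^n_S$ does not put you in the range of Lemma~\ref{lem:BGB3}: that immersion is regular only if $T$ is nice, and in part (2) $T$ is arbitrary; moreover the smooth piece $\mathbb{A}^n_S\to S$ of such a factorization is not ``automatic'' for the map~\eqref{ExDelta**} (only proper $p$ is automatic). The paper instead first reduces to $S=S_0$ smooth over $k$, and then devisses the \emph{coefficient}: for (2) it replaces $L\in\mathbf{T}(T)$ by $\varphi_*\mathbbold{1}_W$ with $\varphi$ projective and $W$ smooth over $k$ and composes squares, and for (1) it uses strong devissage in $\mathbf{T}_c(S)$ to test against $s_*\mathbbold{1}_{S'}$ with $S'$ smooth over $k$; then $s\colon S'\to S$ is lci, and the regular immersion to which Lemma~\ref{lem:BGB3} is applied is the graph $\Gamma_s\colon S'\to S'\times_kS$, whose square has vertical map the smooth base change $f\times\mathrm{id}_{S'}$ of $f$ --- this is exactly where the ``for every smooth $p$'' hypothesis enters. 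Your identification of the relevant squares as base changes of $\Gamma_f$ along regular immersions, and the claim that a base change of $\Gamma_f$ along a smooth morphism of $S$ is a square of the form~\eqref{diag:Cart_BGB2}, are incorrect: base changes of $\Gamma_f$ have graph-type vertical maps into products $X\times_kT$, to which the hypothesis on base changes of $f$ does not directly apply.

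The more serious gap is the missing bridge between $\Delta$-purity and the statements to be proved. Transversality of $f$ relative to $K$, tested on a generator $s_*\mathbbold{1}_{S'}$, is the map~\eqref{eq:pur_trans_dual} with $C=s_*\mathbbold{1}_{S'}$, while strong local acyclicity tested along $s$ is the map~\eqref{ExDelta**}; neither is itself a $\Delta$-purity statement. The heart of the paper's proof is the commutative diagram~\eqref{diag:BGB3}, assembled from proper base change, the projection formula and the dualizability of Thom twists (\ref{rem:pur_trans_dual}), which exhibits the transversality map and the local-acyclicity map as differing precisely by two canonical isomorphisms and the refined purity transformation~\eqref{eq:ref_pur_trans}. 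Only with this diagram can one transfer ``isomorphism'' from one map to the other: in (1) one needs both that the local-acyclicity map is an isomorphism (the hypothesis applied with $p=\mathrm{id}_S$) and that the refined purity map is an isomorphism (Lemma~\ref{lem:BGB3}(2) fed by the hypothesis after smooth base change), and dually in (2). Your proposal treats the target as if it \emph{were} the $\Delta$-purity statement (``we may reduce to checking a $\Delta$-purity statement''), so this duality/base-change computation is absent; relatedly, the parenthetical claim that the generators $h_*\mathbbold{1}_W(n)$ become dualizable after a smooth pull-back is not true and is not how the reduction works.
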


\proof
By hypothesis, $f$ factors through a morphism $f_0:X\to S_0$. It is easy to see that $f$ is strongly locally acyclic relatively to $K$ (resp. $K$-transversal) if and only if $f_0$ is strongly locally acyclic relatively to $K$ (resp. $K$-transversal). Therefore by working with $f_0$ we can assume that $S=S_0$ is smooth over $k$.

We consider a Cartesian square of schemes
\begin{equation}
\begin{gathered}
  \xymatrix{
    X' \ar[r]^-{r} \ar[d]_-{f'} \ar@{}[rd]|{\Delta} & X \ar[d]^-{f}\\
    S' \ar[r]^-{s} & S
  }
\end{gathered}
\end{equation}
where $S'$ is smooth over $k$, and a fortiori $s$ is a lci morphism. 
Then we have the following diagram
\begin{equation}
\label{diag:BGB3}
\begin{gathered}
  \xymatrix{
    K\otimes f^*s_*\mathbbold{1}_{S'}\otimes f^*Th_S(-L_{S/k}) \ar[r]^-{(a)} \ar[d]_-{(b)} & \mathbb{D}(\mathbb{D}(K)\otimes\mathbb{D}(f^!s_*\mathbbold{1}_{S'})) \ar[d]^-{(c)} \\
    r_*r^*K\otimes f^*Th_S(-L_{S/k}) \ar[d]_-{(d)} & r_*\mathbb{D}(\mathbb{D}(r^!K)\otimes\mathbb{D}(f'^!\mathbbold{1}_{S'})) \\
   r_*\mathbb{D}(\mathbb{D}(r^*K\otimes f'^*Th_{S'}(-L_{S'/k}))\otimes\mathbb{D}(f_0^!\mathbbold{1}_{S'})) \ar[ru]_-{(e)} & 
  }
\end{gathered}
\end{equation}
where
\begin{itemize}
\item The map (a) is~\eqref{eq:pur_trans}.
\item The map (b) is~\eqref{ExDelta**}.
\item The map (c) is an isomorphism induced by base change and projection formula.
\item The map (d) is an isomorphism deduced from~\eqref{eq:pur_trans} by~\ref{rem:pur_trans_dual}.
\item The map (e) is~\eqref{eq:ref_pur_trans}.
\end{itemize}
One can check that the diagram is commutative. 

\begin{enumerate}
\item If $f$ is strongly locally acyclic relatively to $K$, then the map (b) above is an isomorphism. If the local acyclicity condition holds after any smooth base change, then by Lemma~\ref{lem:BGB3} $K$ is $\Delta$-pure and the map (e) above is an isomorphism, which implies that the map (a) above is an isomorphism. It follows from strong devissage that $f$ is $K$-transversal. 

\item If $f$ is $K$-transversal, then the map (a) above is an isomorphism. If the transversality condition holds after any smooth base change, then by Lemma~\ref{lem:BGB3} $K$ is $\Delta$-pure and the map (e) above is an isomorphism, which implies that the map (b) above is an isomorphism. It follows from strong devissage that $f$ is strongly locally acyclic relatively to $K$. 

\end{enumerate}
\endproof

\subsubsection{}
We now show that the strong local acyclicity is equivalent to the following property, similar to the one in \cite[Proposition 8.11]{Sai}:
\begin{definition}\label{def:la-CDY}

Let $f\colon X\to S$ be a morphism of schemes and let $K\in {\mathbf T}(X)$. We say that the morphism $f$ is \textbf{strongly fibrewise locally acyclic} relatively to $K$ if the following condition holds:

For any schemes $S^{\prime}$ and $S^{\prime\prime}$ smooth over a finite extension $k^{\prime}$ of $k$, and for any cartesian diagram of schemes
\begin{align}\label{eq:def:la}
\begin{gathered}
\xymatrix{
X^{\prime\prime}\ar[r]^-h\ar[d]_-g& X^{\prime}\ar[r]^-{p'}\ar[d]^{f'}&X\ar[d]^-f \\
S^{\prime\prime}\ar[r]^-i&S^{\prime}\ar[r]^-p&S
}
\end{gathered}
\end{align}
where $p$ is proper and generically finite and $i$ is a closed immersion, and for any $L\in\mathbf{T}(S')$ the following composition map is an isomorphism:
\begin{equation}
\label{eq:def:la1}
h^\ast(p'^\ast K)\otimes g^\ast i^!L
\xrightarrow{\eqref{Ex*!}}
h^\ast(p'^\ast K)\otimes h^!f'^*L
\xrightarrow{\eqref{eq:nat_upper*!_upper!}}
h^!(p'^\ast K\otimes f'^*L)
\end{equation}

We say that $f$ is \textbf{universally strongly fibrewise locally acyclic} relatively to $K$ if 
the condition above holds after any base change.
\end{definition}

The proof of the following statement is inspired by \cite[Proposition 7.2]{CD2}:
\begin{proposition}
\label{prop:la_fib}
Let $f\colon X\to S$ be a morphism of schemes and let $K\in {\mathbf T}(X)$. 
Then $f$ is universally strongly locally acyclic relatively to $K$ if and only if $f$ is universally strongly fibrewise locally acyclic relatively to $K$.

\end{proposition}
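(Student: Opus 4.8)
The plan is to establish the two implications separately. Passing from strong local acyclicity to the fibrewise condition is comparatively formal and rests on Lemma~\ref{lem:BGB3} together with a duality manipulation; propagating the fibrewise condition to arbitrary test diagrams is the substance of the statement and proceeds by a d\'evissage modelled on the argument of~\cite[Proposition 7.2]{CD2}.

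\textbf{Strong local acyclicity implies the fibrewise condition.} Since both properties are stable under arbitrary base change, it suffices to deduce the non-universal fibrewise condition from the hypothesis that $f$ is universally strongly locally acyclic relatively to $K$. Fix a diagram~\eqref{eq:def:la} as in Definition~\ref{def:la-CDY}. Then $f'\colon X'\to S'$ is a base change of $f$, hence strongly locally acyclic relatively to $p'^\ast K$, and $i\colon S''\to S'$ is a closed immersion between schemes smooth over the finite extension $k'$, so an lci morphism for which $\mathbbold{1}_{S'}$ is $i$-pure by~\ref{recall_pur_trans}. For $i$ a closed immersion, the composite~\eqref{eq:def:la1} is the Verdier-dual avatar of the strong local acyclicity of $f'$ relative to $p'^\ast K$ along the base change $i$: the passage between the $\ast$-form~\eqref{ExDelta**} and the $!$-form~\eqref{eq:def:la1} is exactly the duality computation carried out in the proof of Lemma~\ref{lem:BGB3}(2), where the $\Delta$-purity of $p'^\ast K$ extracted from local acyclicity is reinterpreted through the refined purity transformation~\eqref{eq:ref_pur_trans}. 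A routine check --- reducing $L$ to a projective motive by strong d\'evissage when necessary, and using proper base change for $i^!$ applied to such a motive --- then shows that~\eqref{eq:def:la1} is an isomorphism.

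\textbf{The fibrewise condition implies strong local acyclicity.} Again using stability under base change, it suffices to show that if $f$ is universally strongly fibrewise locally acyclic relatively to $K$, then $f$ is strongly locally acyclic relatively to $K$, i.e. the map~\eqref{ExDelta**} is an isomorphism for every $p\colon T\to S$ and every $L\in\mathbf{T}(T)$. The case of $p$ proper being automatic, a Nagata compactification of $p$ reduces the general case to that of an open immersion; applying the localization triangle~\eqref{eq:loc_seq} to $L$ and running a Noetherian induction on $T$ --- the map~\eqref{ExDelta**} being a natural transformation, hence a morphism of distinguished triangles --- reduces further to the case where $p$ is a closed immersion and, after a stratification permitted by~\ref{resol}, where moreover $T$ is regular. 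One then applies the resolution hypothesis~\ref{resol} to dominate $S$ by a proper generically finite surjective morphism $p_0\colon S'\to S$ with $S'$ smooth over a finite extension $k'$ of $k$, stratifies $S'$ by locally closed $k'$-smooth subschemes $i\colon S''\to S'$, and observes that over each such stratum the fibrewise hypothesis~\eqref{eq:def:la1} supplies precisely the isomorphism needed; a final descent --- again via the localization triangles, now in both $S$ and $S'$, together with the conservativity properties of $\mathbf{T}$ (condition~(8) of Definition~\ref{def:mot_tri_cat} and~\cite[Theorem 6.4]{CD2}) --- transports this back to~\eqref{ExDelta**} over $S$.

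The hard part will be this last descent step. One must simultaneously match the $\ast$-form~\eqref{ExDelta**} of local acyclicity with the $!$-form~\eqref{eq:def:la1} of the fibrewise condition (the translation being the dualization of Lemma~\ref{lem:BGB3}) and organize the nested Noetherian inductions on $T$ and on $S$ so that the base cases --- smooth strata over a finite extension of $k$ --- are exactly the situations covered by the hypothesis. This is the point at which the precise shape of Definition~\ref{def:la-CDY}, with $p$ proper generically finite and $i$ a closed immersion, is tailored so that the argument of~\cite[Proposition 7.2]{CD2} goes through in the present generality.
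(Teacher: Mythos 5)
Your first implication is fine and matches the paper's (deliberately brief) treatment: it is exactly the argument of Lemma~\ref{lem:BGB3} adapted to a general coefficient $L$. The converse, which you correctly identify as the substance of the proposition, has a genuine gap, and it sits exactly at the step you defer. Two concrete problems. First, your intermediate reduction is incoherent: after Nagata compactification you are left with a dense open immersion $j_1\colon V\to\bar V$ over a base change of $S$, and localization plus noetherian induction let you shrink the \emph{source} $V$ of the test morphism, but they do not convert an open-immersion test into a closed-immersion test with regular target; nor is that the form the hypothesis can consume, since Definition~\ref{def:la-CDY} requires an ambient scheme smooth over a finite extension of $k$, proper and generically finite over the base, with the boundary controlled by a strict normal crossing divisor. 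Second, and more seriously, ``localization triangles in $S$ and $S'$ together with the conservativity properties of $\mathbf{T}$'' do not transport the isomorphism from an alteration of the base back down: pullback along a proper, generically finite, surjective morphism is not conservative in $\mathbf{T}$, condition (8) of Definition~\ref{def:mot_tri_cat} only gives conservativity for a complementary open/closed pair, and \cite[Theorem 6.4]{CD2} is a constructibility statement, not a conservativity statement. So the ``final descent'' as sketched does not go through, and you acknowledge as much.

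What actually closes the argument (and is how the paper proceeds, following \cite[Proposition 7.2]{CD2}) is a noetherian induction on the source $V$ of the test morphism, with the resolution applied not to $S$ but to the compactification $\bar V$ of $V$ over the current (base-changed) base: one chooses $h\colon\tilde V\to\bar V$ proper surjective, birational under~\ref{resol1}, or generically finite of degree prime to a fixed $l$ under~\ref{resol2}, with $\tilde V$ smooth over a finite extension of $k$ and the preimage of $\bar V\setminus V$ a strict normal crossing divisor; the universal fibrewise hypothesis, applied over the base change $\bar V$ of $S$, then yields the isomorphism~\eqref{eq:fiberw_la_map} for the diagram $h^{-1}(V)\hookrightarrow\tilde V\to\bar V$. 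The descent is then achieved not by conservativity but by identifying the pushforward of the restriction of the test object to a suitable dense open $V_0\subset V$ with the pushforward of a test object from the alteration: an equality under~\ref{resol1} (where $h$ is an isomorphism over $V_0$), and a direct summand under~\ref{resol2}, working prime by prime with $\mathbb{Z}_{(l)}$-coefficients and invoking the trace argument of \cite[Proposition 2.2.2]{EK} for the finite prime-to-$l$ map over $V_0$; the exceptional locus and the complement of $V_0$ are then absorbed by the noetherian induction via the localization triangle. Your sketch contains neither the direct-summand/trace mechanism nor the prime-by-prime localization of coefficients that the alteration case requires (under~\ref{resol2} the field need not be perfect, so resolution is unavailable), and without these ingredients the hard direction is not proved.
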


\proof

If $f$ is universally strongly locally acyclic relative to $F$, then by an argument similar to Lemma~\ref{lem:BGB3} we know that $f$ is universally strongly fibrewise locally acyclic relatively to $F$.

Now assume that $f$ is universally strongly fibrewise locally acyclic relatively to $F$. Then it follows that for any Cartesian diagram
\begin{equation}
\begin{gathered}
  \xymatrix{
    Y_U \ar[r]^-{j_Y} \ar[d]_-{f_U} & Y' \ar[r]^-{q_Y} \ar[d]^-{f_{T'}} & Y \ar[d]^-{f_{T}}\\
    U \ar[r]^-{j} & T' \ar[r]^-{q} &T
  }
\end{gathered}
\end{equation}
where $f_T:Y\to T$ is a base change of $f$, $q$ is proper and generically finite, $j$ is an open immersion with complement a strict normal crossing divisor and both $U$ and $T'$ are smooth over a finite extension $k'$ of $k$, and any $M\in\mathbf{T}(U)$, the following canonical map is an isomorphism:
\begin{equation}
\label{eq:fiberw_la_map}
K_{|Y}\otimes f_T^*q_*j_*M
\xrightarrow{\eqref{ExDelta**}}
q_{Y*}j_{Y*}(j_Y^*q_Y^*K_{|Y}\otimes f_U^*M).
\end{equation}

We need to prove that for any Cartesian square
\begin{equation}
\begin{gathered}
  \xymatrix{
    Y_V \ar[r]^-{r_Y} \ar[d]_-{f_V} & Y \ar[d]^-{f_{T}}\\
    V \ar[r]^-{r} & T
  }
\end{gathered}
\end{equation}
where $f_T:Y\to T$ is a base change of $f$, and any $N\in\mathbf{T}(V)$, the following canonical map is an isomorphism:
\begin{equation}
K_{|Y}\otimes f_T^*r_*N
\xrightarrow{\eqref{ExDelta**}}
r_{Y*}(r_Y^*K_{|Y}\otimes f_V^*N).
\end{equation}
We prove this claim by noetherian induction on $V$. By the existence of a compactification, we can factor the morphism $r:V\to T$ above as an open immersion with dense image $j_1:V\to\bar{V}$ followed by a proper morphism $p:\bar{V}\to T$.
\begin{equation}
\begin{gathered}
  \xymatrix{
    Y_V \ar[r]^-{j_{1Y}} \ar[d]_-{f_V} & Y_{\bar{V}} \ar[d]^-{f_{\bar{V}}}\\
    V \ar[r]^-{j_1} & \bar{V}
  }
\end{gathered}
\end{equation}
Since $p$ is proper, it suffices to prove that under the assumption~\ref{resol1} in \ref{par:devissage} (respectively under the assumption~\ref{resol2}, for every prime number $l$ different from the characteristic of $k$), there exists a non-empty open immersion $j_2:V'\to V$ such that the following canonical map is an isomorphism (resp. is an isomorphism with coefficients in $\mathbb{Z}_{(l)}$):
\begin{equation}
K_{|Y_{\bar{V}}}\otimes f_{\bar{V}}^*j_{1*}j_{2*}j_2^*N
\xrightarrow{\eqref{ExDelta**}}
j_{1Y*}(j_{1Y}^*K_{|Y_{\bar{V}}}\otimes f_V^*j_{2*}j_2^*N).
\end{equation}
We can assume that $\bar{V}$ is integral. By the assumption~\ref{resol1} (resp. by de Jong-Gabber alteration (\cite[X. Theorem 2.1]{ILO})), there exists a proper surjective morphism $h:\tilde{V}\to\bar{V}$ which is birational (respectively generically flat, generically finite with degree prime to $l$) such that $\tilde{V}$ is smooth over $k$ (resp. smooth over a finite extension $k'$ of $k$ of degree prime to $l$), and such that the inverse image of $\bar{V}\setminus V$ in $\tilde{V}$ is a strict normal crossing divisor. Let $j_2:V_0\to V$ be an open immersion such that the induced morphism $h_{V_0}:V_0':=h^{-1}(V_0)\to V_0$ is an isomorphism (resp. is finite lci with trivial virtual tangent bundle), and form the following Cartesian squares:
\begin{equation}
\begin{gathered}
  \xymatrix{
    V_0' \ar[r]^-{j_2'} \ar[d]_-{h_{V_0}} & V' \ar[r]^-{j_1'} \ar[d]^-{h_V} & \tilde{V} \ar[d]^-{h}\\
    V_0 \ar[r]^-{j_2} & V \ar[r]^-{j_1} & \bar{V}.
  }
\end{gathered}
\end{equation}
It follows that $j_{2*}j_2^*N$ is equal to $j_{2*}h_{V_0*}h_{V_0}^*j_2^*N=h_{V*}j'_{2*}j'^*_{2}h_{V}^*N$ (resp. is a direct summand of $h_{V*}j'_{2*}j'^*_{2}h_{V}^*N$ by \cite[Proposition 2.2.2]{EK}). By~\eqref{eq:fiberw_la_map} the canonical map
\begin{equation}
K_{|Y}\otimes f_{\bar{V}}^*j_{1*}h_{V*}h_V^*N
\xrightarrow{\eqref{ExDelta**}}
j_{1Y*}(j_{1Y}^*K_{|Y_{\bar{V}}}\otimes f_V^*h_{V*}h_V^*N)
\end{equation}
is an isomorphism. Given the localization distinguished triangle
\begin{equation}
h_{V*}i'_{2!}i'^!_{2}h_{V}^*N
\xrightarrow{}
h_{V*}h_V^*N
\xrightarrow{}
h_{V*}j'_{2*}j'^*_{2}h_{V}^*N
\xrightarrow{}
h_{V*}i'_{2!}i'^!_{2}h_{V}^*N[1]
\end{equation}
where $i'_2:Z'_0\to V'$ is the reduced closed complement of $j'_1$, and since $h_{V}i'_{2}$ factors through $h(Z'_0)$ which is a proper closed subscheme of $V$, we conclude using the induction hypothesis.
\endproof

\subsubsection{}
We now establish a link between strong local acyclicity and the Fulton-style specialization map defined in \cite{DJK}. Consider Cartesian squares of schemes
\begin{equation}
\label{diag:DJK_sp}
\begin{gathered}
  \xymatrix{
    X_Z \ar[r]^-{i_X} \ar[d]_-{f_Z} & X \ar[d]^-{f} & X_U \ar[l]_-{j_X} \ar[d]^-{f_U}\\
    Z \ar[r]^-{i} & S & U \ar[l]_-{j}
  }
\end{gathered}
\end{equation}
where $i$ is a regular closed immersion and $j$ the complementary open immersion.
Assume that the Euler class $e(-L_i)$, namely the Euler class of the normal bundle of $i$, is zero. Recall from \cite[4.5.6]{DJK} that for $A\in\mathbf{T}(X)$ there is a natural transformation of functors
\begin{equation}
\label{eq:DJK_sp}
i_{X*}(i_X^*A\otimes f_Z^*Th_Z(L_i))
\xrightarrow{}
j_{X!}j_X^!A.
\end{equation}
By Lemma~\ref{lem:BGB3}, if $\mathbbold{1}_S$ is $i$-pure and if $f$ is strongly locally acyclic relative to $A$, the following refined purity transformation~\eqref{eq:ref_pur_trans} is an isomorphism:
\begin{equation}
i_X^*A\otimes f_Z^*Th_Z(L_i)
\xrightarrow{}
i_X^!A.
\end{equation}
In particular from the construction of~\eqref{eq:DJK_sp} we deduce the following:
\begin{corollary}
\label{cor:sp_split}
We use the notations in~\eqref{diag:DJK_sp} and assume that
\begin{enumerate}
\item The Euler class $e(-L_i)$ is zero.
\item $\mathbbold{1}_S$ is $i$-pure.
\item $f$ is strongly locally acyclic relative to $A\in\mathbf{T}(X)$.
\end{enumerate}
Then there exists a canonical map
\begin{equation}
\label{eq:sp_ij}
i_{X*}i_X^!A
\xrightarrow{}
j_{X!}j_X^!A
\end{equation}
such that the canonical map $i_{X*}i_X^!A\xrightarrow{ad'_{(i_{X!},i_X^!)}}A$ agrees with the following composition
\begin{equation}
i_{X*}i_X^!A
\xrightarrow{\eqref{eq:sp_ij}}
j_{X!}j_X^!A
\xrightarrow{ad'_{(j_{X!},j_X^!)}}
A.
\end{equation}
Equivalently, there exists  a canonical map
\begin{equation}
\label{eq:sp_ji}
j_{X*}j_X^*A
\xrightarrow{}
i_{X*}i_X^*A
\end{equation}
such that the canonical map $A\xrightarrow{ad_{(i_{X}^*,i_{X*})}}i_{X*}i_X^*A$ agrees with the following composition
\begin{equation}
A
\xrightarrow{ad_{(j_{X}^*,j_{X*})}}
j_{X*}j_X^*A
\xrightarrow{\eqref{eq:sp_ji}}
i_{X*}i_X^*A.
\end{equation}
\end{corollary}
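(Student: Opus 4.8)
The plan is to build the map~\eqref{eq:sp_ij} from the Fulton-style specialization map~\eqref{eq:DJK_sp} together with the refined purity transformation~\eqref{eq:ref_pur_trans}, using that our hypotheses make the latter invertible. First I note that hypotheses (2) and (3) are precisely the assumptions of Lemma~\ref{lem:BGB3}~(2) for the left-hand Cartesian square of~\eqref{diag:DJK_sp}: since $\mathbbold{1}_S$ is $i$-pure and $f$ is strongly locally acyclic relatively to $A$, the object $A$ is $\Delta$-pure, that is, the refined purity transformation
\begin{equation*}
\theta\colon i_X^*A\otimes f_Z^*Th_Z(L_i)\xrightarrow{\ \eqref{eq:ref_pur_trans}\ }i_X^!A
\end{equation*}
is an isomorphism. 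Applying $i_{X*}$ (which equals $i_{X!}$ as $i_X$ is a closed immersion) gives an isomorphism $i_{X*}\theta$. As hypothesis (1), $e(-L_i)=0$, is exactly what makes~\eqref{eq:DJK_sp} available by \cite[4.5.6]{DJK}, I define~\eqref{eq:sp_ij} as the composite
\begin{equation*}
i_{X*}i_X^!A\xrightarrow{\ (i_{X*}\theta)^{-1}\ }i_{X*}(i_X^*A\otimes f_Z^*Th_Z(L_i))\xrightarrow{\ \eqref{eq:DJK_sp}\ }j_{X!}j_X^!A.
\end{equation*}

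The compatibility assertion then reduces to the commutativity of the square
\begin{align}
\begin{gathered}
\xymatrix{
i_{X*}(i_X^*A\otimes f_Z^*Th_Z(L_i)) \ar[r]^-{\eqref{eq:DJK_sp}} \ar[d]_-{i_{X*}\theta}^-{\wr} & j_{X!}j_X^!A \ar[d]^-{ad'_{(j_{X!},j_X^!)}}\\
i_{X*}i_X^!A \ar[r]_-{ad'_{(i_{X!},i_X^!)}} & A,
}
\end{gathered}
\end{align}
since precomposing its two equal composites with $(i_{X*}\theta)^{-1}$ yields exactly the factorization of $ad'_{(i_{X!},i_X^!)}$ through~\eqref{eq:sp_ij}. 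This square says that the specialization map of \cite{DJK} refines the localization counit. I expect to prove it by unwinding the deformation-to-the-normal-cone construction of~\eqref{eq:DJK_sp} and comparing the two resulting maps to $A$; this is the step I anticipate being the main obstacle, as it requires carrying the purity and Thom-space identifications through the deformation diagram, although the compatibility may well already be implicit in \cite[Section 4.5]{DJK}.

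For the equivalent reformulation involving~\eqref{eq:sp_ji} I would pass to cofibers. As $j$ is an open immersion, $j_X^!=j_X^*$, and the localization triangle~\eqref{eq:loc_seq} applied to $i_X$, together with the dual localization triangle $j_{X!}j_X^*\to 1\to i_{X*}i_X^*\to j_{X!}j_X^*[1]$ applied to $j_X$, give distinguished triangles
\begin{equation*}
i_{X*}i_X^!A\xrightarrow{ad'_{(i_{X!},i_X^!)}}A\xrightarrow{ad_{(j_X^*,j_{X*})}}j_{X*}j_X^*A\to i_{X*}i_X^!A[1],
\end{equation*}
\begin{equation*}
j_{X!}j_X^!A\xrightarrow{ad'_{(j_{X!},j_X^!)}}A\xrightarrow{ad_{(i_X^*,i_{X*})}}i_{X*}i_X^*A\to j_{X!}j_X^!A[1].
\end{equation*}
The square above, viewed as a morphism from the first triangle to the second which is the identity on $A$ (its left square is exactly the factorization established above), extends to a morphism of triangles whose third component is the desired map~\eqref{eq:sp_ji}; the commutativity of the middle square of that morphism of triangles is precisely the claimed identity $ad_{(i_X^*,i_{X*})}=\eqref{eq:sp_ji}\circ ad_{(j_X^*,j_{X*})}$. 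Canonicity of~\eqref{eq:sp_ji} is obtained by choosing the third component functorially from the construction of~\eqref{eq:DJK_sp} rather than an arbitrary completion to a morphism of triangles.
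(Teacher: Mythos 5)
Your proposal matches the paper's argument: the paper likewise obtains \eqref{eq:sp_ij} by inverting the refined purity transformation via Lemma~\ref{lem:BGB3} (using hypotheses (2) and (3)) and composing with the specialization map \eqref{eq:DJK_sp}, with the compatibility asserted to follow directly from the construction in \cite[4.5.6]{DJK}. The step you flag as the main obstacle is in fact immediate, since that specialization map is constructed precisely as a lift of the purity/counit map through the localization triangle once $e(-L_i)=0$, and your passage to the triangle \eqref{eq:sp_ji} is exactly the ``equivalently'' the paper invokes.
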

Note that by Lemma~\ref{lem:BGB3}, a similar result holds for the transversality condition when $Z$ and $S$ are smooth over a field.

\begin{remark}
Corollary~\ref{cor:sp_split} is a consequence of the strong local acyclicity and therefore gives a criterion to detect it. In the usual derived category of \'etale sheaves, the \emph{vanishing cycle formalism} (\cite[XIII]{SGA7}) gives an insightful interpretation of this phenomena: the failure of local acyclicity is precisely described by the stalks of the vanishing cycle complex. We do not know how to realize such a picture in the motivic world.
\end{remark}

\end{document}